\documentclass[11pt,letterpaper]{amsart}

\usepackage[english]{babel}

\usepackage{amssymb,amsfonts,amsthm,amsmath}
\usepackage{enumitem, caption, array, esint, bm}

\usepackage{pgfplots}
\pgfplotsset{compat=1.15}
\usepackage{mathrsfs}
\usetikzlibrary{arrows}
\usetikzlibrary{shapes}
\usepackage{wrapfig}
\usepackage[normalem]{ulem}

\newcommand{\interval}[3]{\draw ({#1*cos(#2)}, {#1*sin(#2)}) arc ({270+#2}:{90+#3}:{#1*tan((#3-#2)/2)})}
\newcommand{\intervalpartial}[4]{\draw[densely dotted] ({#1*cos(#2)}, {#1*sin(#2)}) arc ({270+#2}:{270-180*#4+#4*#3+(1-#4)*#2}:{#1*tan((#3-#2)/2)})}
\usepackage[margin=1.1in,textwidth=6.2in]{geometry}
\usepackage{xr-hyper} % References to external document
\newcommand{\eqrefmain}[1]{(\ref*{main-#1})}
\newcommand{\refmain}[1]{\ref*{main-#1}}
\newcommand{\sumstar}{\sideset{}{^*}\sum}

\usepackage{hyperref}

\theoremstyle{plain}
\newtheorem{theorem}{Theorem}[section]
\newtheorem{lemma}[theorem]{Lemma}
\newtheorem{corollary}[theorem]{Corollary}

\newtheorem{proposition}[theorem]{Proposition}

\newtheorem{problem}[theorem]{Problem}

\theoremstyle{remark}
\newtheorem{remark}[theorem]{Remark}

\newcommand\numberthis{\stepcounter{equation}\tag{\theequation}}

\renewcommand{\setminus}{\smallsetminus}
\newcommand{\ssum}[1]{\sum_{\substack{#1}}}
\newcommand{\df}{\mathop{}\!\mathrm{d}}
\newcommand{\dfs}{\mathop{}\!\mathrm{ds}}
\newcommand{\eps}{{\varepsilon}}

\let\C\relax
\let\P\relax
\let\H\relax

\newcommand{\1}{{\mathbf 1}}

\newcommand{\bbl}[1]{\expandafter\newcommand\csname #1\endcsname{\mathbb #1}}
\newcommand{\call}[1]{\expandafter\newcommand\csname c#1\endcsname{\mathcal #1}}
\newcommand{\scrl}[1]{\expandafter\newcommand\csname #1#1\endcsname{\mathscr #1}}

\bbl{P}
\bbl{E}
\bbl{H}
\bbl{C}
\bbl{D}
\bbl{R}
\bbl{Q}
\bbl{Z}
\bbl{N}

\scrl{C}
\scrl{L}
\scrl{N}
\scrl{R}
\scrl{P}

\call{B}
\call{C}
\call{D}
\call{E}
\call{F}
\call{G}
\call{H}
\call{I}
\call{M}
\call{L}
\call{R}
\call{T}
\call{U}
\call{V}

\newcommand{\hE}{h^\cE}
\newcommand{\hL}{h^L}

\newcommand{\vchi}{{\bm \chi}}
\newcommand{\vmu}{{\bm \mu}}
\newcommand{\vSigma}{{\bm \Sigma}}

\newcommand{\cb}{{\mathfrak b}}

\newcommand{\fd}{{\mathfrak d}}
\newcommand{\fm}{{\mathfrak m}}

\newcommand{\dD}{{\partial\D}}
\newcommand{\dH}{{\partial\H}}

\DeclareMathOperator{\e}{e}
\DeclareMathOperator{\sg}{sg}
\DeclareMathOperator{\sign}{sgn}
\DeclareMathOperator{\dist}{dist}
\DeclareMathOperator{\im}{Im}
\DeclareMathOperator{\Dom}{Dom}

\DeclareMathOperator{\Id}{Id}

\DeclareMathOperator{\Vol}{Vol}

\DeclareMathOperator{\GL}{GL}

\DeclareMathOperator{\SL}{SL}
\DeclareMathOperator{\SU}{SU}

\DeclareMathOperator{\tr}{tr}
\DeclareMathOperator{\denom}{denom}

\DeclareMathOperator{\rank}{rank}
\DeclareMathOperator{\diam}{diam}

\newcommand{\vth}{{\vartheta}}
\renewcommand{\tilde}{\widetilde}
\renewcommand{\bar}{\overline}
\renewcommand{\hat}{\widehat}

\newcommand{\norm}[1]{{\left\| {#1} \right\|}}

\newcommand{\vt}{{\bm t}}
\newcommand{\vL}{{\bm L}}

\newcommand{\vphi}{{\bm \phi}}
\newcommand{\inpd}[1]{{\left\langle #1 \right\rangle}}
\newcommand{\binpd}[1]{{\big\langle #1 \big\rangle}}

\newcommand{\abs}[1]{{\left| {#1} \right|}}
\newcommand{\babs}[1]{{\big| {#1} \big|}}

\renewcommand\Re{\operatorname{\mathfrak{Re}}}
\renewcommand\Im{\operatorname{\mathfrak{Im}}}

\newcommand{\smat}[1]{\left(\begin{smallmatrix} #1 \end{smallmatrix}\right)}

\numberwithin{equation}{section}
\title{Central values of additive twists of automorphic $L$-series}

\date{\today}

\author{Sandro Bettin}
\address{Dipartimento di Matematica, Università di Genova, via Dodecaneso 35, 16146 Genova, Italy}
\email{bettin@dima.unige.it}

\author{Sary Drappeau}
\address{Laboratoire de Mathématiques Blaise Pascal, Université Clermont Auvergne, Institut Universitaire de France, 3 place Vasarely, 63178 Aubière, France}
\email{sary.drappeau@uca.fr}

\author{Jungwon Lee}
\address{Caltech, Division of PMA, 
1200 E California Blvd, Pasadena, CA 91125, USA}
\email{jungwon@caltech.edu}

\begin{document}

\begin{abstract}
  We study series~$L(x, s) := \sum_n \rho(n) \e^{2\pi i n x} / n^s$ with coefficients~$\rho(n)$ given by Fourier coefficients of an automorphic form~$\phi$ for a discrete cofinite subgroup~$\Gamma\subset \SL(2, \R)$, and~$x\in \R$. 

  Let~$C(\Gamma)\subset \R\cup\{\infty\}$ be the set of parabolic fixed points of~$\Gamma$. Ordering~$C(\Gamma)$ by increasing denominators, we prove that the values~$L(x, 1/2)$ for~$x\in C(\Gamma)$, suitably normalized, become equidistributed according to a standard complex Gaussian law, under natural assumptions on the Laplace eigenvalue of~$\phi$. Previously this was known either when~$\Gamma = \SL(2, \Z)$ or when~$\phi$ is associated to a holomorphic cusp form.
%  Let~$\cb\in\R\cup\{\infty\}$ be a representative of a cusp of~$\Gamma$.
%  Ordering the orbit~$\Gamma \cb$ by increasing denominators, we prove that the values~$L(x, 1/2)$ for~$x\in \Gamma \cb$, suitably normalized, become equidistributed according to a standard complex Gaussian law, under natural assumptions on the Laplace eigenvalue of~$\phi$. Previously this was known either when~$\Gamma = \SL(2, \Z)$ or when~$\phi$ is associated to a holomorphic cusp form.

  This applies in particular to central values at~$s=1/2$ of additive twists of Maa\ss{} form $L$-functions~$\sum_{n\geq 1} \lambda(n) \e^{2\pi i n x} / n^s$, or to the Dirichlet-theta series~$\sum_{n\geq 1} \chi(n) \e^{2\pi i n^2 x}/n^s$.
\end{abstract}

\maketitle

\tableofcontents

\section{Introduction}
The present paper is concerned with the statistical distribution, as~$x$ varies among certain subsets of real numbers, of series of the shape
\begin{equation}
  L(x) := \sum_{n\geq 1} a(n) \e^{2\pi i n x}, \label{eq:Lx-naive}
\end{equation}
where~$a(n)$ is a sequence resulting from a number-theoretic construction. We normalize the sequence so that~$a(n) \simeq n^{-1/2 + o(1)}$ on average, and we write
$$ a(n) = \frac{\rho(n)}{n^{1/2}}. $$
This question arose historically from investigations on equidistribution modulo~$1$ of various sequences, and can be traced back at least to works of Fortet~\cite{Fortet1940}, Kac~\cite{Kac1946} and Salem--Zygmund~\cite{SalemZygmund1947}; we refer to the survey~\cite{Kac1949} and the recent paper~\cite{AistleitnerBerkes2010} for references. We are interested here in cases where the sequence~$\rho(n)$ arises from an arithmetic construction related to~$L$-functions.

When~$\rho(n)$ is constant or periodic, the function~$x\mapsto L(x)$ is a linear combination of polylogarithms, and its values are distributed along the values of a real-analytic map on~$\R$ with countably many points removed. This is the case when~$\rho(n)$ are the coefficients of a degree-$1$ $L$-function (in the sense \emph{e.g.} of~\cite[Chapter~5]{IwaniecKowalski2004}).

A natural next step is to consider coefficients~$\rho(n)$ of a degree~$2$ $L$-function, for instance the Dirichlet convolution~$\rho = \chi_1 \ast \chi_2$ of two Dirichlet characters, or the coefficients of the $L$-function associated to an automorphic form~$\phi$ for a congruence subgroup of~$\SL(2, \Z)$ (\emph{i.e.}\ $a(n)$ are Fourier coefficients of $\phi$).
The definition of the series~\eqref{eq:Lx-naive} then requires more care; it must be understood as a value obtained by a process of analytic continuation, which involves an interplay between properties of~$x$ and of the form~$\phi$. In the two cases just mentioned, we may define it for all~$x\in \Q$ by means of analytic continuation to the central point $s=1/2$ of the series~$L(x, s) := \sum_{n\geq 1} \rho(n) \e^{2\pi i n x} / n^s$, an argument going back to Hecke~\cite{Hecke1936} in the case of classical modular forms; see \emph{e.g.}~\cite[Proposition~2.3]{DrappeauNordentoft2022}.

This construction generalizes to forms~$\phi$ which are automorphic with respect to an arbitrary discrete, cofinite group~$\Gamma\subset \SL(2, \R)$ with cusps. In this context, the set~$C(\Gamma)\subset \R\cup\{\infty\}$ of fixed points of parabolic elements of~$\Gamma$ plays the role of rational numbers. There is also a corresponding notion of denominators~$\denom(x)$, depending on a seed cusp, which we will recall in detail below; in particular there are asymptotically~$Q^2 /(\pi \Vol(\Gamma\backslash \H))$ elements in~$\Z\backslash\Gamma\cb$ with~$\denom(x)\leq Q$, in each~$\Gamma$-equivalence class of cusps.
The process of analytic continuation of~$L(x, s)$ is sensitive to the properties of~$x \in C(\Gamma)$ as a cusp, which prompts the question of how the values~$L(x)$ distribute in the complex plane as~$x$ varies.
\begin{problem}\label{pb:CLT}
  Fix a form~$\phi$ as above and let~$(\rho(n))$ be the Fourier coefficients of~$\phi$ at some cusp.
  As~$Q\to\infty$, prove a central limit theorem for the values
  $$ \{ L(x), x \in C(\Gamma)/\Z, \denom(x) \leq Q \}. $$
\end{problem}

This problem has so far been solved in the following cases:
\begin{itemize}
  \item When~$\phi$ varies among weight~$2$ holomorphic cusp forms, the map~$\phi \mapsto L_\phi(x)$ coincides with the modular symbol~$\{\infty, x\}$. This family includes those forms associated with an elliptic curve over~$\Q$, which prompted interest due to its link with the rank growth problem for elliptic curves, see~\cite{MazurRubin2023,DavidEtAl2004}. Petridis--Risager~\cite{PetridisRisager2018} resolved Problem~\ref{pb:CLT} in this case, using a construction of perturbations of Goldfeld--Eisenstein series~\cite{Goldfeld1999, Petridis2002} (see~\cite{PetridisRisager2004} and~\cite{Constantinescu2022} for related results). A completely different proof was found by the third-named author and Sun~\cite{LeeSun2019}, using dynamical systems. Finally Nordentoft~\cite{Nordentoft2021} showed how to solve the problem for holomorphic cusp forms of any even weight. The methods of~\cite{PetridisRisager2018, Nordentoft2021} work for even weight cusp forms over any discrete, cofinite group~$\Gamma\subset\SL(2,\R)$ with cusps.
  \item When~$\rho(n) = d(n)$, the number of divisors of~$n$, then the value~$L(x)$ coincides with the central value~$D(\frac12, x)$ of the Estermann function~\cite{Estermann1930}, which Estermann used, in relation with the circle method, to count solutions to the determinant equation. This case corresponds to~$\phi$ being a weight~$0$ Eisenstein series for~$\SL(2, \Z)$. The first two named authors proved in~\cite{BettinDrappeau2022} that the central limit theorem of Problem~\ref{pb:CLT} holds true in this case.
  \item When~$\rho(n)$ are the coefficients of an $L$-function attached to a Maa\ss{} cusp form for~$\SL(2, \Z)$, the second-named author and Nordentoft~\cite{DrappeauNordentoft2022} showed, again, that Problem~\ref{pb:CLT} holds true.
\end{itemize}

To summarize, the central limit theorem in Problem~\ref{pb:CLT} has been proven in two different cases:
\begin{itemize}
  \item The proofs in~\cite{PetridisRisager2018, Nordentoft2021} rely on a \emph{holomorphicity} and \emph{cuspidality} assumption on the form~$\phi$, with essentially no assumptions on the underlying group~$\Gamma$. The holomorphicity assumption plays a crucial role in both arguments, see the discussion in~\cite[Section~1.5]{DrappeauNordentoft2022}, while the assumption of cuspidality is unavoidable both for the construction in~\cite{PetridisRisager2018} and for the method of moments used in~\cite{Nordentoft2021}.
  \item The proofs in~\cite{BettinDrappeau2022, DrappeauNordentoft2022} hold for the \emph{modular group} $\Gamma = \SL(2, \Z)$, with essentially no assumptions on the form itself. The assumption~$\Gamma=\SL(2,\Z)$ originates from the use of a theorem in~\cite{BettinDrappeau2022} on rational orbits of the Gauss map, which derives from work of Baladi--Vallée~\cite{BaladiVallee2005}.
\end{itemize}

The above results then leave open the cases where the group is not~$\SL(2, \Z)$ and the form is either non-holomorphic or non-cuspidal holomorphic.
The goal of this paper is to complete the solution of Problem~\ref{pb:CLT} in full generality.

\begin{theorem}\label{th:main}
  The central limit theorem claimed in Problem~\ref{pb:CLT} holds true in general for automorphic forms of integer weight and mildly growing Fourier coefficients. More precisely, let~$\Gamma\subset\SL(2,\R)$ be discrete cofinite, with a singular cusp at~$\infty$, and let~$\phi\neq 0$ be automorphic for~$\Gamma$ of integer weight~$k$, with spectral parameter~$s_\phi$ and~eigenvalue~$\lambda_\phi = s_\phi(1-s_\phi)$. Assume that:
  \begin{enumerate}[label=(\alph*)]
    \item either~$\phi$ is cuspidal, or else~$\Re(s_\phi) = 1/2$; 
    \item if~$s_\phi \equiv k/2\pmod{1}$ then~$\phi(z)=y^{k/2}f(z)$ for some holomorphic form~$f$.
  \end{enumerate}
  Let~$a(n)$ be the Fourier coefficients of~$\phi$ at~$\infty$.
  Then for some constant~$\sigma_\phi>0$ and~$\beta_\phi \in \{0, 1, 3\}$, the multiset
  \begin{equation}
    \Big\{\frac{L(x)}{\sigma_\phi \sqrt{(\log Q) (\log \log Q)^{\beta_\phi}}}, x\in C(\Gamma)/\Z, \denom(x) \leq Q \Big\}\label{eq:main-multiset}
  \end{equation}
  converges as~$Q\to \infty$ to a standard complex Gaussian density. We have
  \begin{enumerate}[label=(\roman*)]
    \item $\beta_\phi = 0$ if~$\phi$ is cuspidal,
    \item $\beta_\phi = 1$ if~$\phi$ is not cuspidal and\footnote{Here $d_\H$ denotes the hyperbolic distance.}~$\phi(z) = O(\e^{d_\H(z, i)/2})$,
    \item $\beta_\phi = 3$ otherwise.
  \end{enumerate}
  Moreover, when~$\phi$ is cuspidal, the value of~$\sigma_\phi$ is given explicitly in terms of~$\phi$ by the expression~\eqref{eq:def-variance-cusp} below.
\end{theorem}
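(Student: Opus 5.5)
We plan to follow the dynamical strategy of~\cite{BettinDrappeau2022, DrappeauNordentoft2022}, replacing the Gauss map (which is tied to~$\SL(2,\Z)$) by a reduction scheme adapted to~$\Gamma$. The argument has four steps: a reciprocity formula, a cocycle decomposition, a transfer-operator analysis, and a moment-generating-function estimate.

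\textbf{Step 1 (reciprocity formula).} For~$\gamma=\smat{a&b\\c&d}\in\Gamma$ we seek an identity of the form
$$ L(\gamma x) - \chi(\gamma)\, L(x) = h_\gamma(x), $$
valid for cusps~$x$, where~$\chi$ is the automorphy factor determined by the weight~$k$. We derive it from the analytic continuation of~$L(x,s)$, which we write as a Mellin integral of~$\phi(x+iy)$ along a vertical geodesic ending at~$x$. Moving the geodesic by~$\gamma$, the error~$h_\gamma$ becomes an integral of~$\phi$ over a geodesic segment joining~$\gamma^{-1}\infty$ and~$\infty$. We then need~$h_\gamma$ to be a sufficiently regular function on an interval, with an explicit asymptotic as~$x$ approaches the cusp~$\gamma^{-1}\infty$. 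When~$\phi$ is cuspidal,~$h_\gamma$ is bounded in the relevant mean-square sense. When~$\phi$ is not cuspidal, the constant term~$y^{s_\phi}+\cdots$ with~$\Re s_\phi=1/2$ produces a singular contribution of size~$\log$ (or~$\log^2$, coming from the double-pole case). These singular terms are the source of~$\beta_\phi\in\{1,3\}$. Hypothesis~(b) excludes the degenerate case in which the Mellin continuation produces extra poles.

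\textbf{Step 2 (cocycle decomposition).} Each~$x\in\Gamma\cb$ with~$\denom(x)\le Q$ is reduced to~$\cb$ by a finite word in a fixed finite generating set of parabolic and hyperbolic elements. For this we use a Bowen--Series-type boundary map~$T$ attached to a Ford fundamental domain of~$\Gamma$; the map is Markov and expanding on~$\R\cup\{\infty\}$. Iterating Step~1 along the~$T$-orbit writes~$L(x)$ as a Birkhoff sum~$\sum_{j} \psi(T^j x)$, up to bounded phases and a negligible error. Here~$\psi$ is an observable on the Markov partition. For cuspidal~$\phi$,~$\psi$ has finite variance; otherwise~$\psi$ has logarithmic singularities at the parabolic points, where~$T$ is only weakly expanding. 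Since~$T$ is indifferent at parabolic points, we induce on the complement of cusp neighbourhoods, which makes the induced map uniformly expanding. The price is that the induced observable acquires a tail of the form~$\sum_{m\le M}\log m$ over excursions into cusps.

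\textbf{Step 3 (counting rational orbits).} This step replaces the Baladi--Vallée theorem. We introduce the twisted transfer operators
$$ \mathcal L_{s,w} f(x)=\sum_{Ty=x} |T'(y)|^{-s}\e^{w\psi(y)} f(y) $$
acting on a Banach space of piecewise analytic functions. We need a spectral gap near~$s=1$, together with a Dolgopyat-type estimate giving polynomial decay of~$\|\mathcal L_{s,w}^n\|$ for~$|\Im s|$ large. Counting cusps with~$\denom(x)\le Q$ then corresponds to counting periodic-like orbits ending at~$\cb$, weighted by~$\e^{w\psi}$. A Perron formula followed by a contour shift gives
$$\sum_{\denom(x)\le Q}\e^{wL(x)}\sim c\,Q^{2}\,\e^{\Lambda(w)\log Q}.$$
This requires a quasi-compactness statement and the Dolgopyat bound for the Bowen--Series map of a general cofinite~$\Gamma$. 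We expect this to be the \textbf{main obstacle}. We would first try to prove the non-integrability (UNI) condition from the non-arithmeticity of the length spectrum of~$\Gamma$, following Naud and Stoyanov.

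\textbf{Step 4 (CLT).} A Taylor expansion gives~$\Lambda(w)=\sigma^2 w\bar w/4+O(|w|^3)$, with~$\sigma>0$ because~$\psi$ is not cohomologous to a constant (a coboundary would contradict~$\phi\neq0$). Applying Lévy's continuity theorem to~$w=it/\sqrt{\log Q}$ then yields the complex Gaussian limit. In the cuspidal case the second derivative can be identified with the explicit Petersson-type expression~\eqref{eq:def-variance-cusp}. In the non-cuspidal cases the observable has infinite variance, so we instead run a truncated moment method: we use the stable-law-type estimate~$\int\e^{w\psi}\approx 1-c|w|^2\log(1/|w|)^{\beta}$ coming from the~$\log$- or~$\log^2$-singularities of Step~1. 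This produces the normalizations~$(\log\log Q)^{1}$ and~$(\log\log Q)^3$ respectively, with~$\beta_\phi=1$ precisely when the growth bound~$\phi=O(\e^{d_\H(z,i)/2})$ removes the double-log term.
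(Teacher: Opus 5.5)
Your overall architecture matches the paper's: period functions from the Eichler integral, a Bowen--Series coding induced away from the cusps, and twisted transfer operators with a Dolgopyat-type bound in the spirit of Baladi--Vall\'ee (the paper imports this spectral and counting input from~\cite{BettinEtAl}). However, your mechanism for the non-cuspidal case is wrong, and it is exactly what determines~$\beta_\phi$.

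A constant term with $\Re(s_\phi)=\frac12$ produces singularities of \emph{power} type in the period function, not logarithmic ones. One has $h_\gamma(x)\approx u_\gamma(x)A_\infty G_k(s_\phi,x)$ with $|G_k(s_\phi,x)|\asymp|x|^{1/2}$ as $x\to\infty$ (that is, $|x-1|^{-1/2}$ in the disk). For a parabolic power, $h_{p^n}\approx A_\omega G_k(s_\phi,m_0n)\asymp n^{1/2}$, with one extra $\log$ factor when some $B_\cb\neq0$ (Lemma~\ref{lem:hL-parab}). Excursions of depth~$n$ into a cusp carry weight $\asymp n^{-2}$, so the observable has tails $\P(|\psi|>T)\asymp T^{-2}$ (resp.\ $T^{-2}\log^2T$). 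This is the borderline case of the Gaussian domain of attraction, and it is what produces $t^2\log(1/t)$ (resp.\ $t^2\log(1/t)^3$) in Lemma~\ref{lem:fourier}. With merely logarithmic singularities the observable would have all moments finite, giving $\beta_\phi=0$. With your excursion tail $\sum_{m\le M}\log m\approx M\log M$, the observable would not even be integrable against weights $M^{-2}$, and no Gaussian limit would hold. So Steps~1--2 as written are inconsistent with the conclusion of Step~4.

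The second gap is that Step~4 simply asserts what costs the most work. You need that there is no linear term ($\vmu=0$), that the quadratic form is isotropic (required for a \emph{standard} complex Gaussian with no centering), and, in the cuspidal case, that the variance is positive and equals~\eqref{eq:def-variance-cusp}. The dynamics only yields some $\vmu$ and some positive semi-definite $\vSigma$, expressed through period functions that have no usable closed form. The argument ``$\psi$ is not a coboundary since $\phi\neq0$'' is neither proved nor sufficient for isotropy.

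The idea you are missing is a perturbation in~$s$. Replace the period functions by their non-central analogues built from $L(\phi,s,x)$ with $s\to\frac12^{+}$, which are genuinely continuous in~$x$. Using the leading eigenfunction of $\LL_s$ and the invariance relation~\eqref{cvl}, the two terms of each period function recombine. Then $\vmu$ becomes $(1-\lambda(s))$ times a continuous average over~$x$ of $L(\phi,s,x)$. That average stays bounded because the Fourier coefficients of the eigenfunction decay, so $\vmu$ vanishes at the central point where $\lambda=1$. In the cuspidal case, $\vSigma$ becomes a multiple of $\lim_{s\to\frac12^{+}}(s-\frac12)\int_{\R}|L(\phi,s,x)|^2\zeta(x)\df x$, with $\zeta$ the perturbed invariant density transported to~$\R$. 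This is evaluated by Rankin--Selberg on the diagonal (combining $\phi$ and $R_k\phi$ to isolate $n\geq1$) and by Poincar\'e series off the diagonal, with the analogous integral of $L^2$ shown to be $o(1/(s-\frac12))$.

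In the non-cuspidal case, isotropy and positivity come instead from explicit symmetries. These are the involution $w_0\mapsto\hat{w_0}$ with $\upsilon_{\hat{w_0}}=-\upsilon_{w_0}$ together with~\eqref{eq:hparab-MT-flip}, and the $\CC$-invariance of~$\varphi$ at the cusp~$1$.

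Finally, the phases are not negligible. $L(x)$ is a Birkhoff sum twisted by the multiplier cocycle, so the transfer operator must act on $K\times\Theta$. You then need a topological mixing statement with prescribed $\ell_K$-length (Proposition~\ref{prop:topological-irred}). This can fail for small fundamental domains (the paper's $\Gamma_1(4)$ example), which is why the paper passes to a free subgroup of finite index and rank at least~$6$.
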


By standard complex Gaussian we mean~$N_\C(\Id) = X + iY$ where~$X, Y$ are independent standard real Gaussian random variables.

Our more general statement is Theorem~\ref{th:main-cusp} below in the cuspidal case, and Theorem~\ref{th:main-eis} otherwise.
\begin{remark}\label{rmk:beta-growth}
  The distinction between the various cases for the parameter~$\beta_\phi$ is better explained in terms of the shape of the growth of~$\phi$ at cusps. This growth is characterized by the behaviour at infinity of maps~$\phi_\cb$ of the shape~\eqref{eq:def-phiinfty}. The case~$\beta_\phi=0$ arises when~$\phi$ vanishes at all cusps. The case~$\beta_\phi=1$ arises when the maps~$\phi_\cb$ grow asymptotically as~$y^{1/2}$, and the case~$\beta_\phi=3$ arises when some~$\phi_\cb$ behaves asymptotically as~$y^{1/2}\log y$.

  For non-cuspidal~$\phi$ for which~$\lambda_\phi < 1/4$, we don't expect a Gaussian limit in general, however we do expect that our methods could be adapted to prove convergence to a stable law with stability parameter given by the spectral parameter; see also Remark~\ref{rmk:r2k} below.

  The condition (b) excludes, for simplicity, those forms for which~$s_\phi = k/2 + \ell$, $\ell\in\Z_{\neq 0}$, and in the case~$s_\phi = k/2$, restricts the behaviour at cusps ($B_\cb = 0$ in~\eqref{eq:def-phiinfty}).
  
  Condition (b) excludes those~$\phi$ obtained from holomorphic forms by weight-raising and lowering operators, as well as those related to certain non-holomorphic harmonic Maa\ss{} forms (see~\cite{KudlaEtAl1999}).
  The present arguments apply to these cases with minor adjustments; in fact, in many cases, there is a related form with the same coefficients~$a(n)$, $n\geq 1$, for which our theorem applies.
\end{remark}

\begin{remark}
  Regarding the existence of cuspidal Maa\ss{} forms for non-arithmetic groups, we refer to the surveys~\cite{Sarnak1986,Sarnak1990} and the computations in~\cite{FarmerLemurell2005,Avelin2007,Stroemberg2019}.
\end{remark}
\begin{remark}
  In Hecke congruence groups, there are about~$q^{1 + o(1)}$ parabolic fixed points~$x$ with~$\denom(x) = q$ modulo~$1$. This raises the question of whether, in Problem~\ref{pb:CLT}, the limit law holds on the sets of fixed denominators~$\denom(x) = q$ as~$q\to\infty$. The computations in~\cite{MazurRubin2023}, motivated by the case of weight~$2$ holomorphic forms, indicate that this should be the case, however this is a much more difficult question, for which neither the dynamical methods used here nor the Goldfeld--Eisenstein series methods of~\cite{PetridisRisager2018, Nordentoft2021} are suitable. The best result in this direction, due to Blomer--Fouvry--Kowalski--Michel--Milićević--Sawin~\cite[Theorem~9.2]{BlomerEtAl2023}, is the estimation of the second moment~$\sum_{\denom(x) = q} L(x)^2$ for prime~$q$. See also~\cite{MilicevicQinWu,Pascadi2026} for a related result for general $q$, and~\cite{DiamantisEtAl2020} for a detailed analysis on the first moment.

  For generic groups, a natural condition could be~$\denom(x) \in [q, q+1)$, however it is unlikely that anything relevant can be asked for fixed~$\denom(x)=q$. Indeed, based on numerical computations with the deformations of~$\Gamma_0(5)$ considered in~\cite{FarmerLemurell2005}, we are led to believe that for generic~$\Gamma$, the multiplicities~$\{x\in C(\Gamma), \denom(x) = q\}$ remain bounded with respect to~$q$, and thus statistical studies for individual values of~$q$ are not meaningful. As a partial progress in that direction, the proof of Theorem~\ref{th:main} could be strengthened to give a limit law when restricting the denominator to lie in~$[Q, Q+Q^{1-\delta}]$ for some sufficiently small~$\delta>0$.
\end{remark}

\begin{remark}
  For a cuspidal, weight~$k=0$ form~$\phi$, Constantinescu-Nordentoft~\cite{ConstantinescuNordentoft2025} prove that asymptotically almost all closed geodesics periods~$\int_\gamma f$ are non-vanishing. Using Theorem~\ref{th:main}, it can be shown that their results holds for any group~$\Gamma$, see Remark~1.14 there.
\end{remark}

\begin{remark}
  Using a convexity bound~\cite[Chapter~5.2]{IwaniecKowalski2004}, it is easy to prove that~$L(x)=L^*(x)+O(1)$, where $L^*(x):=\sum_{n\geq 1} a(n) \e^{2\pi i n x}f(n/\denom(x)^2)$ and $f$ is a smooth cut-off function with $f(0)=1$. Thus, our work could equivalently be viewed as providing a central limit theorem for $L^*(x)$.

  This raises the following natural question: instead of looking at~$L(x,s)$ at the central point $s=\frac12$, we could truncate the sum~\eqref{eq:Lx-naive} at some parameter $Q$ going to infinity, with $x$ being chosen at random in~$[0, 1]$. This question will be the subject of forthcoming work~\cite{BettinEtAl3}.
\end{remark}

As a particular case of our results, consider~$r_2(n)$, the number of ways to write~$n$ as sums of two squares, and for~$x\in \Q$, form the series
$$ R(s, x) := \sum_{n\geq 1} \frac{r_2(n)\e^{2\pi i nx}}{n^s}. $$
This is initially defined on~$\Re(s)>1$ and has an analytic continuation to~$\C\setminus\{1\}$.
\begin{theorem}\label{th:main-r2}
  For some~$\sigma>0$, the multiset
  $$ \Big\{ \frac{R(1/2, x)}{\sigma \sqrt{\log Q \log\log Q}}, x \in \Q/\Z, \denom(x)\leq Q\Big\} $$
  becomes equidistributed, as~$Q\to\infty$, according to a standard complex Gaussian distribution.\footnote{Here~$\denom$ is the usual denominator.}
\end{theorem}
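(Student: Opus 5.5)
This will be a direct specialization of Theorem~\ref{th:main}, case (ii). The plan is to exhibit $r_2(n)$ as the Fourier coefficients of a non-cuspidal automorphic form that satisfies hypotheses (a) and (b) and has the growth required for $\beta_\phi = 1$. Then $C(\Gamma)/\Z$ has to be matched with $\Q/\Z$ and $\denom$ with the usual denominator.

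\emph{Step 1: the form.} Write $\theta(z) = \sum_{n\in\Z} \e^{2\pi i n^2 z}$. Then $\theta(z)^2 = 1 + \sum_{n\geq 1} r_2(n)\e^{2\pi i n z}$ is a holomorphic modular form of weight~$1$ for $\Gamma_0(4)$ with character $\chi_{-4}$. It is an Eisenstein series, since $r_2 = 4(1\ast\chi_{-4})$. To land in integer weight with trivial multiplier on a subgroup of $\SL(2,\R)$, I would take $\Gamma = \Gamma_1(4)$. This contains $\smat{1&1\\0&1}$, on which the character is trivial, and $-I\notin\Gamma_1(4)$, so weight~$1$ is consistent. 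Set $\phi(z) = y^{1/2}\theta(z)^2$. This has weight $k=1$ and spectral parameter $s_\phi = 1/2 = k/2$, so $\Re(s_\phi)=1/2$ gives (a). Since $s_\phi\equiv k/2$ and $\phi$ is $y^{k/2}$ times a holomorphic form, (b) also holds. The Fourier coefficients of $\phi$ at $\infty$ in the normalization of the paper are $r_2(n)$ up to a harmless constant factor.

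The value $L(x)$ from the introduction is then $R(1/2,x)$ up to that constant. This holds because both are defined by the analytic continuation of $\sum r_2(n)\e^{2\pi i n x} n^{-s}$ to $s=1/2$. The constant term $1$ of $\theta^2$ does not enter the Dirichlet series.

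\emph{Step 2: growth and cusps.} The cusp $\infty$ of $\Gamma_1(4)$ is singular, with trivial multiplier on the stabilizer. I would check the behaviour of $\phi_\cb$ at each cusp of $\Gamma_1(4)$ (namely $\infty, 0, 1/2$) in the sense of~\eqref{eq:def-phiinfty}. Because $\theta^2$ is holomorphic of weight~$1$, each expansion is $y^{1/2}(c_\cb + O(\e^{-cy}))$ with no $y^{1/2}\log y$ term. At least one $c_\cb$ is nonzero, since $c_\infty=1$, so $\phi$ is not cuspidal. Hence $\phi(z)=O(\e^{d_\H(z,i)/2})$, and Theorem~\ref{th:main} gives $\beta_\phi = 1$, which is the normalization $\sqrt{\log Q\log\log Q}$ in the statement.

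\emph{Step 3: parabolic points and denominators; the main obstacle.} We have $C(\Gamma_1(4)) = \Q\cup\{\infty\}$. However, Theorem~\ref{th:main} counts $x$ with multiplicity over the set with $\denom$ defined relative to $\Gamma$, possibly as a union over the classes of cusps. I must check two things. First, for $x = a/q$ in the orbit $\Gamma\cb$, the $\Gamma$-denominator is the usual $q$ up to a fixed scaling depending only on the cusp class (coming from the width and the choice of scaling matrix $\sigma_\cb$). Second, each rational occurs exactly once. The difficulty is that the classes have different scalings, so a cutoff $\denom\le Q$ corresponds to $q\le \kappa_\cb Q$ with $\kappa_\cb$ depending on the class. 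To handle this I would use the more general Theorem~\ref{th:main-eis}, applied to each cusp orbit separately, with a rescaled $Q$. Each class then gives a complex Gaussian with variance $\sigma_\cb^2\log Q\log\log Q$. Rescaling $Q$ by a constant does not change the leading term $\log Q\log\log Q$. If the variance constants $\sigma_\cb$ coincide, or after mixing with weights given by the relative counts $\propto \kappa_\cb^2$, the union is still asymptotically a centered complex Gaussian with a single $\sigma$. This uses the fact that the limits are standard complex Gaussians after normalization by $\sigma_\cb$. If the $\sigma_\cb$ differ, the mixture is not Gaussian, so I expect to need the explicit formula showing that the variance depends only on the growth constants $|c_\cb|$ in a class-independent way. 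Alternatively, I would pass to $\Gamma_0(4)$ with the character $\chi_{-4}$ treated as a multiplier, if Theorem~\ref{th:main-eis} allows that. This last point is where I expect the real work to lie.
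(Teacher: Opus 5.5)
Your Steps 1--2 agree with the paper's one-line proof. $y^{1/2}\theta^2$ is, up to normalization, the Eisenstein series attached to $\1\ast\chi_{-4}$ on $\Gamma_0(4)$ with character $\chi_{-4}$. The paper's setting explicitly allows such a character, and since the paper always adjoins $-\Id$, your $\Gamma_1(4)$ becomes exactly this. Here $s_\phi=k/2=1/2$ and all $B_\cb=0$, so $\beta_\phi=1$, and $L_\phi(x)=(4\pi)^{-1/2}R(1/2,x)$.

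The gap is Step~3, which you leave open, and neither remedy you suggest closes it. The paper gives no formula for the non-cuspidal variance in terms of the constants $c_\cb$ alone; the constant involves the invariant density and the coding, see \eqref{pra1}. Passing to $\Gamma_0(4)$ changes nothing: it has the same three orbits $\{4\mid q\}$, $\{q\text{ odd}\}$, $\{q\equiv 2 \bmod 4\}$. It also has the same seed-cusp denominators, namely $q$, $2q$ and $q$ respectively; the cusp $0$ has width $4$ and scaling matrix $\smat{0&-1/2\\2&0}$. And, as you note yourself, unequal variances would make the union a non-Gaussian mixture.

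The missing observation is that the constant $\sigma_\phi$ in Theorem~\ref{th:main-eis} does not depend on the seed cusp $\cb$. In its proof, the characteristic function satisfies \eqref{eq:rel-chi-I}, $\chi_{Q,\cb}(\vt)=\exp\{(\log Q)(I(\vt)+O(|\vt|^2))+O(Q^{-\delta}+|\vt|)\}$. Here $I(\vt)$, see \eqref{fii}, is built only from $\Phi_c$, $\varphi$, $\fd$, $\fm$, that is, from the group, the coding and the form; $\cb$ affects only the error terms. This is the same mechanism that makes $\vmu,\vSigma$ independent of the index $j$ of Lemma~\ref{lem:reduction-gamma} in the cuspidal case.

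Combine this with $\vmu=0$ (Proposition~\ref{prop:average}). Each of the three orbits, taken with seed cutoff $Q$, $2Q$, $Q$ respectively, then yields $N_\C(\Id)$ after division by one and the same $\sigma\sqrt{\log Q\log\log Q}$. Replacing $Q$ by $2Q$ changes this normalization only by a factor $1+o(1)$. A finite disjoint union of pieces with a common limit law has that limit law, whatever the proportions of the pieces.

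Finally, to pass from the $\psi$-weighted measures to uniform counting on $\Q/\Z$, choose $\psi\geq 0$ smooth and compactly supported with $\sum_{n\in\Z}\psi(x+n)\equiv 1$. Then use that the orbits are $\Z$-invariant and that $x\mapsto R(1/2,x)$ is $1$-periodic. With these two points your argument becomes the paper's.
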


This follows from our more general Theorem~\ref{th:main-eis} below, upon writing~$r_2(n) = 4 (\1\ast \chi)(n)$ with~$\chi$ the non-trivial character~$\mod 4$.

\begin{remark}\label{rmk:r2k}
  One may wonder more generally about the values at~$s=1/2$ of the analytic continuation of
  $$ R_k(s, x) := \sum_{n\geq 1} \frac{r_{2k}(n) \e^{2\pi i nx}}{n^{s+(k-1)/2}}. $$
  The underlying automorphic form has eigenvalue~$\lambda_\phi = \frac k2(1 - \frac k2) < 1/4$ if~$k>1$. On the basis of our arguments, we don't expect a Gaussian law to hold for the values~$\{R_k(1/2, x)\}$, however our arguments suggest the convergence towards a stable law of stability parameter~$2/k$, \emph{e.g.}\ a Cauchy law for~$k=2$.
\end{remark}

Problem~\ref{pb:CLT} is relevant for non-integer weights as well, and our approach generalizes with little modification. Here we focus in particular on the case of theta series. Given an integer~$N\geq 1$, a primitive character $\chi$ modulo $N$ and~$x\in \Q$, we define
\begin{equation}\label{lcdf}
  L_\chi(s, x) := \sum_{n\geq 1}\chi(n) \frac{\e^{2\pi i n^2x}}{n^s}
\end{equation}
for $\Re(s)>1$ and by analytic continuation for~$s\in\C\setminus\{1\}$. In this case, we obtain a central limit theorem for $L_\chi(1/2,x)$ with variance of order~$\log Q$, independently of whether the underlying modular form is cuspidal or not.

\begin{theorem}[CLT for theta series]\label{thethe}
  Let $\chi$ be a primitive character mod $N$ and let $\sigma_\chi:=\sqrt{\frac{\varphi (N)}{2N}}$ where $\varphi$ is Euler's totient function. Then the multiset
  $$ \Big\{ \frac{L_{\chi}(1/2, x)}{\sigma_\chi \sqrt{\log Q}}, x \in \Q/\Z, \denom(x)\leq Q\Big\} $$
  becomes equidistributed, as~$Q\to\infty$, according to a standard complex Gaussian distribution.
\end{theorem}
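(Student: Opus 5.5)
Theorem~\ref{thethe} will be deduced from the same dynamical machinery as Theorem~\ref{th:main}, adapted to half-integral weight. The form is the theta series $\theta_\chi(z) = \sum_{n} \chi(n) n^{\nu}\e^{2\pi i n^2 z}$, with $\nu\in\{0,1\}$ the parity of $\chi$. It is a holomorphic modular form of weight $1/2+\nu$ for $\Gamma_0(4N^2)$ with the theta multiplier, twisted by $\chi_{4}^{\nu}$-type characters. After normalizing, the series $L_\chi(s,x)$ is the additive twist at the cusp $\infty$ of $y^{k/2}\theta_\chi$, with coefficients indexed by squares.

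\textbf{Step 1: reciprocity formula.} For $x=a/q$ and $\gamma=\smat{a & b\\ q & d}$, I would use the transformation of $\theta_\chi$ under $\gamma$ (via Gauss sums $G(a,q)$ and the primitive Gauss sum of $\chi$). The goal is an approximate reciprocity relation
\begin{equation*}
  L_\chi(1/2, x) = \omega(\gamma)\, L_{\chi'}(1/2, -1/x) + \psi(x) + (\text{error}),
\end{equation*}
where $\omega$ is a unimodular factor and $\chi'$ is in a finite set of twisted characters/cusps. The term $\psi(x)$ is, to leading order, $c\, \chi\text{-Gauss sum}\cdot \log(1/x)$ near $0$ when the cusp $0$ of $\theta$ is non-cuspidal, and bounded otherwise. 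Iterating along the continued fraction expansion of $x$ (Gauss map, as in~\cite{BettinDrappeau2022}) writes $L_\chi(1/2,x)$ as a Birkhoff-type sum $\sum_j \omega_j \psi_j(T^j x)$. This is a cocycle over the Gauss map with values in a finite (or compact) group encoding the cusp, the Gauss-sum phase, and the character.

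\textbf{Step 2: variance and phases.} The key difference from the integer-weight Eisenstein case is the scale. In Theorem~\ref{th:main}(ii) the variance $\log Q\log\log Q$ comes from the log-singularity of $\psi$ adding coherently. Here the Gauss-sum phases $\omega_j$, essentially $\e(\cdot/8)$ and $\chi(\cdot)$ factors, are equidistributed in the finite group. So the $\log$ terms cancel in mean, and the sum behaves like a sum of $\asymp\log Q$ weakly dependent terms with finite second moment in the relevant sense. The heavy tail $\psi\sim\log(1/x)$ still has all moments under Gauss measure, so no extra $\log\log$ appears. The constant $\sigma_\chi^2=\varphi(N)/(2N)$ should emerge as the average of $|\psi|^2$ under the invariant measure. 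The factor $\varphi(N)/N$ arises because $\chi(n)$ kills the proportion of cusps/steps where $(q,N)>1$. I would verify that correlations between $\psi_j$ and $\psi_{j'}$ vanish because of the random phases. This is what makes $\sigma_\chi$ explicit.

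\textbf{Step 3: CLT on rational orbits.} I would apply the group-extension version of the Baladi--Vallée/Bettin--Drappeau spectral theorem for rational trajectories. Characteristic functions $\sum_{\denom(x)\le Q}\e(\Re(\bar t L))$ are expressed via transfer operators twisted by $\e^{i\langle t,\psi\rangle}$ and by the finite-group cocycle. A quasi-compact perturbation with the dominant eigenvalue expansion $1-\sigma^2|t|^2/2+o(|t|^2)$ then gives Gaussian convergence after scaling by $\sqrt{\log Q}$. The restriction to the fixed-cusp orbit $\Q/\Z$ is handled by the finite extension.

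\textbf{Main obstacle.} I expect the main difficulty to be Step~1 together with the spectral gap for the skew product. Tracking the theta multiplier and Gauss sums precisely enough to get an explicit finite cocycle requires care. One must also check that the twisted transfer operator has no peripheral eigenvalue except the trivial one, i.e.\ that the phase cocycle is not a coboundary. I would first attempt this directly: show the Gauss-sum phases generate the full group by exhibiting explicit short continued fractions with distinct phases.
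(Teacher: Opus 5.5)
Your dynamical route is a different one from the paper's. You use the Gauss map with a finite extension tracking the cosets of $\Gamma_0(4N^2)$ in $\SL(2,\Z)$ and the theta multiplier. The paper instead runs the Bowen--Series/cuspidal-acceleration machinery of~\cite{BettinEtAl} directly on $\Gamma(4N^2)$, with $\Theta=\{\pm1,\pm i\}$. Your route is viable in principle, since the group is congruence, though you would have to prove the skew-product rational-trajectory theorem yourself.

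\textbf{The main gap: Step~2.} This is exactly where the explicit constant $\sigma_\chi$ must come from. You claim $\sigma_\chi^2$ is the invariant-measure average of $|\psi|^2$ because correlations between $\psi_j$ and $\psi_{j'}$ vanish by the random phases. They do not. On the skew product $\hat T(x,\vartheta)=(Tx,\vartheta\,\vartheta_g(x))$ with $\Phi(x,\vartheta)=\vartheta\psi(x)$, the fibre variable cancels in Hermitian correlations:
$\int\Phi\,\overline{\Phi\circ\hat T^m}=\int\psi\,\overline{\vartheta^{(m)}\,\psi\circ T^m}\,\mathrm{d}\mu$, where $\vartheta^{(m)}$ is the deterministic $m$-step cocycle. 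Mixing makes these correlations decay exponentially, but nothing makes them zero. For a phase group such as $\{\pm1,\pm i\}$, where $\sum_\vartheta\vartheta=\sum_\vartheta\vartheta^2=0$, averaging kills only $\int\Phi$ and the pseudo-covariances $\int\Phi\cdot(\Phi\circ\hat T^m)$. That gives mean zero and circular symmetry, but not the size of the variance.

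So the Green--Kubo series does not reduce to $\int|\psi|^2$. Since $\psi$ has no closed form, this route cannot produce $\varphi(N)/(2N)$, nor even non-degeneracy; this is the obstruction described in Section~\ref{sec:comp-moments-overview}. The paper treats the dynamical output as a Gaussian law with undetermined parameters. It then identifies them by computing directly, via the approximate functional equation for the Hurwitz zeta function as in~\cite{BettinDrappeau2022,DrappeauNordentoft2022}:
$\E_Q[L_\chi(1/2,x)]=o(\sqrt{\log Q})$, $\E_Q[L_\chi(1/2,x)^2]=o(\log Q)$ and $\E_Q[|L_\chi(1/2,x)|^2]\sim\frac{\varphi(N)}{N}\log Q$.
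A bound on the third absolute moment supplies the uniform integrability needed to pass second moments to the limit. You need this step, or the perturbation/Rankin--Selberg argument of Section~\ref{sec:comp-moments}.

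\textbf{The observable has a power singularity, not a logarithm.} The $\sqrt{\log Q}$ scaling survives this, but your description is wrong. When $\chi$ is totally even, $\theta_\chi$ has constant terms $A_\cb y^{1/4}$, so the period function satisfies $\vphi_h(x)\ll n^{1/4}+|x-1|^{-1/4}$. Concretely, near a singular cusp $a/q$, $L_\chi(1/2,a/q+\epsilon)$ is a Gauss sum times $\sum_n e(n^2\epsilon)n^{-1/2}\asymp|\epsilon|^{-1/4}$. Hence only moments of order $<4$ exist, which is why the paper uses the third rather than the fourth moment, and your spectral theorem must allow such observables.

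For the same reason your account of the $\log\log Q$ in Theorem~\ref{th:main}(ii) is off. It comes from the borderline $|x|^{-1/2}$ singularity (tail $t^{-2}$) forced by a constant term $Ay^{1/2}$, not from logarithms adding coherently. Phases do not remove it; see the case $|\Theta|\geq3$ in Section~\ref{sec:proof-limitthm-eis}. Here it is absent simply because $1/4<1/2$.

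\textbf{Two smaller points.}
\begin{itemize}
\item In Step~1, $z\mapsto-1/z$ sends $\theta_\chi$ to a multiple of $\theta_{\bar\chi}(z/4N^2)$, up to the automorphy factor. So the closed finite family must be a vector of theta series with periodic coefficients and rescaled argument, not $L_{\chi'}(1/2,-1/x)$.
\item Showing that the phases generate the group does not give aperiodicity. You need every phase to be attained at all large lengths; the $\Gamma_1(4)$ remark after Proposition~\ref{prop:topological-irred} shows a parity obstruction can occur.
\end{itemize}
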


Theorem~\ref{thethe} remains true if the character~$\chi$ is replaced by any non-zero, $N$-periodic function $f : \Z \to \C$, see Remarks~\ref{tfeo} and~\ref{rmk:theta-general-f} below. In this case, the variance is $\sigma_f^2 := \frac{1}{2N} \sum_{n \pmod N} |f(n)|^2$. We refer to~\cite{JurkatHorne1981,JurkatHorne1983,Marklof1999,CellarosiMarklof2016} for other types of distributional results for theta sums.

The Gaussian behaviour in Theorem~\ref{th:main} is reminiscent of results and conjectures on the behaviour of central~$L$-values~$\{L(\pi, \frac12)\}$ as $\pi$ varies in certain families of automorphic representations~\cite{Selberg1992,Montgomery1973,KeatingSnaith2000,ConreyEtAl2005,RadziwillSoundararajan2015}. The driving heuristic in these questions is the expected asymptotic independence of values~$\{\rho(p), p\text{ prime}\}$, which should translate into a CLT for the \emph{logarithm}~$\log L(\pi, \frac12)$ due to the existence of the Euler product expansion.
It is no surprise that the values~\eqref{eq:Lx-naive}, which do not possess an Euler product, behave in a different way. Our situation is instead more similar to that of the vertical distribution of the Hurwitz or periodic zeta-function with irrational parameters. For these no Euler product is available and it is conjectured that they also satisfy a central limit theorem~\cite{HeapSahay2025}.

\begin{remark}
  The relationship between the additive twist and~$\GL(2)$ automorphy is ultimately due to the shape of unipotent subgroups of~$\SL(2)$. It is a natural question to ask what remains of the limit law for sequences related to~$\GL(3)$ automorphic forms, starting for example with the third-order divisor function~$d_3(n) = \1\ast\1\ast\1(n)$. The additive twists
  $$ \sum_{n\geq 1} \frac{d_3(n)}{\sqrt{n}}\e^{2\pi i n x} $$
  can be defined in the same way as for the usual divisor function, however contrary to our case here, they do not seem to enjoy the quantum modularity property on which our methods rely, which leaves open the question of the existence of a limiting law for the distribution of their values.
\end{remark}

\subsection*{Acknowledgements}

We thank the organizers of the April 2023 MPI Bonn workshop ``Arithmetic statistics, automorphic forms and ergodic methods'' (N.\ Diamantis, M.\ Lee, P.\ Moree and Y.\ Petridis), and also the DIMA (Univ.\ of Genoa), the I2M and CIRM (Univ.\ Aix-Marseille) where parts of this work have been carried out since 2022.

We thank Yiannis Petridis, Morten Risager and Asbjørn Nordentoft for many discussions related to the topic of this work.

Large language models (LLM) were used to check for errors in an earlier version of this manuscript, and suggested a simplification of the proof of Lemma~\ref{lem:hE-at-half}, the manipulation underlying~\eqref{eq:I-infty-symmetry} \and the bounds~\eqref{eq:bounds-diff-sigma-chis}.

This work benefited from financial support from FWF-I4945-N/ANR-20-CE91-0006, SNF-10.003.145/ANR-24-CE93-0016, H2020-EU.1.1 (grant agreement 787304), ERC-Advanced Grant
833802 (Resonances), and from the INdAM group GNAMPA.

During the preparation of this work, S.B. was partially supported by PRIN 2022 ``The arithmetic of motives and $L$-functions'', by the Curiosity Driven grant “Value distribution of quantum modular forms” of the University of Genoa, funded by the European Union -- NextGenerationEU, and by the MIUR Excellence Department Project awarded to Dipartimento di Matematica, Università di Genova, CUP D33C23001110001.

\section{Setting and results}

\subsection{Automorphic forms}\label{sec:automorphic-forms}

Let~$\Gamma\subset \SL(2, \R)$ be a discrete, cofinite group with at least one cusp which, up to conjugation, we can assume to be at infinity and of width~$1$. We assume without loss of generality that $-\Id\in\Gamma$, extending the group if necessary. Denote by~$\H$ the Poincaré upper half-plane, and by~$C(\Gamma) \subset \P^1(\R)$ the set of parabolic fixed points of~$\Gamma$, which for brevity we will refer to as \emph{cusps} of~$\Gamma$ (although we do not identify~$\Gamma$-equivalent points).
In the following we let~$k\in \Z$, except in Section~\ref{sec:theta} (where~$k$ will be half-integral).
Let~$\chi :\Gamma\to\C^\times$ be a finite-order character, and assume the compatibility condition~$\chi(-\Id) = (-1)^k$.
We form the multiplier
\begin{align}\label{multp}
  u_\gamma(z) = j_\gamma(z)^k \chi(\gamma), \qquad \text{where} \quad \begin{cases} j(\gamma, z) := cz + d \\ j_\gamma(z) := \frac{j(\gamma, z)}{\abs{j(\gamma, z)}}, \end{cases} \qquad \big(\gamma = \smat{*&*\\c&d},\ z\neq \gamma^{-1}\infty \big).
\end{align}

To each cusp~$\cb \in C(\Gamma)$, we associate its stabilizer~$\Gamma_\cb$ in~$\Gamma$, and a \emph{scaling matrix} $\sigma_\cb \in \SL(2, \R)$ satisfying
\begin{equation}
  \sigma_\cb\infty = \cb, \qquad \Gamma_\cb =\pm \sigma_\cb \smat{1&\Z\\ & 1} \sigma_\cb^{-1}.\label{eq:def-scaling-matrix}
\end{equation}

Let~$\phi$ be an automorphic form for~$\Gamma$ of weight~$k$ with character~$\chi$, which means that
$$ \phi : \H \to \C $$
is a smooth map satisfying the following conditions:
\begin{enumerate}[label=$(H_{\arabic*})$]
  \item\label{hyp:modularity} For all~$\gamma \in \Gamma$ and~$z\in \H$, we have~$\phi(\gamma z) = u_\gamma(z) \phi(z)$.
  \item\label{hyp:decay} For all~$\cb\in C(\Gamma)$, we have~$\phi(\sigma_\cb z) \ll y^{O(1)}$ for~$y = \Im(z) \geq 1$.
  \item\label{hyp:laplacian} The map~$\phi$ is an eigenfunction of the hyperbolic Laplacian
  $$ \Delta_k = y^2\Big( \frac{\partial^2}{\partial x^2} + \frac{\partial^2}{\partial y^2}\Big) - iky \frac{\partial}{\partial x} $$
  with eigenvalue~$-\lambda_\phi$, where~$\lambda_\phi = s_\phi(1-s_\phi) = 1/4 + t_\phi^2$ with~$\Re(s_\phi) \geq 1/2$ (here~$s_\phi = 1/2 + it_\phi$).
\end{enumerate}
These hypotheses imply the existence of a Fourier expansion at every cusp. Those cusps~$\cb\in C(\Gamma)$ for which~$\chi(\Gamma_\cb\cap\{\tr(\gamma)=2\}) = \{1\}$ are called singular or essential (see~\cite[eq.~(2.65)]{Iwaniec1997} or \cite[eq.~(8)]{Proskurin2003}). For a singular cusp~$\cb$, the Fourier expansion takes the shape
\begin{equation}\label{eq:phi-Fourier}
  \begin{aligned}
    & j_{\sigma_\cb}(z)^{-k} \phi(\sigma_\cb z) \\
    & \quad = \phi_\cb(\Im(z)) + \sum_{n\neq 0} a_\cb(n) \e(n\Re(z)) W_{\frac k2 \sign(n), it_\phi}(4\pi \abs{n} \Im(z)), \qquad (\cb \text{ singular})
  \end{aligned}
\end{equation}
where~$W_{\ast}$ is the Whittaker function, 
$$ \e(z) := \e^{2\pi i z}, $$
and
\begin{equation}
  \phi_\cb(y) = \begin{cases} A_\cb y^{s_\phi} + B_\cb y^{1-s_\phi}, & (s_\phi \neq 1/2), \\ A_\cb y^{1/2} + B_\cb y^{1/2} \log y, & (s_\phi = 1/2). \end{cases}\label{eq:def-phiinfty}
\end{equation}
We remark that $\phi_\cb(y)$ also depends mildly on the choice of the scaling matrix $\sigma_\cb$, as changing $\sigma_{\cb}\mapsto-\sigma_{\cb}$ sends
$ \phi_\cb(y)\mapsto (-1)^k \phi_\cb(y)$.

For non-singular cusps, a similar expansion holds with all terms having a uniform exponential decay as~$\Im(z) \to \infty$, see~\cite[eq.~(13)]{Proskurin2003}. Thus, for some~$c>0$ and all non-singular cusps~$\cb$, we have
$$ \phi(\sigma_\cb z) \ll_\cb \e^{-c \Im(z)}, \qquad (\Im(z) \gg 1, \cb \text{ non-singular}). $$
We say that~$\phi$ is \emph{cuspidal} if~$A_\cb = B_\cb = 0$ for all singular~$\cb\in C(\Gamma)$.

We assume further the following bound on the Fourier coefficients:
\begin{enumerate}[label=$(H_{\arabic*})$]
  \setcounter{enumi}{3}
  \item\label{hyp:bound-fourier} The Fourier coefficients~$a_\cb(n)$ in~\eqref{eq:phi-Fourier} satisfy
    \begin{equation}
      \sum_{1\leq \abs{n}\leq X} \abs{n} \abs{a_\cb(n)}^2 \ll_\phi \begin{cases} X & (\phi\text{ cuspidal}), \\ X^{1+\eps} & (\text{otherwise}), \end{cases} \qquad (X\geq 1).\label{eq:rankinselberg-upper}
    \end{equation}
    In particular, we have
    \begin{equation}
      a_\cb(n) \ll_{\phi,\eps} \abs{n}^\eps \qquad \text{and} \qquad \sum_{n\neq 0} \frac{\abs{a_\cb(n)}}{\abs{n}^{1/2+\eps}} \ll_{\phi, \eps} 1. \label{eq:bound-meanabs-an}
    \end{equation}
\end{enumerate}
The hypothesis~\ref{hyp:bound-fourier} is automatically satisfied when~$\phi$ is cuspidal by the argument in~\cite[Theorem~3.2]{Iwaniec2002}. This argument can also be carried out when~$\phi$ is not cuspidal and~$\Re(s_\phi) = \frac12$ (\emph{i.e.}~$\lambda_\phi \geq 1/4$), using the~$L^2$ bound~$\int_{\Gamma\backslash \H} \abs{E^Y(z)}^2 \df\mu(z) \ll Y^\eps$ on truncated Eisenstein series in~\cite[eqs.~(6.35)--(6.36)]{Iwaniec2002}.

In the following we assume that either~$s_\phi \not\equiv \frac k2\pmod{1}$, or~$y^{-k/2} \phi(z)$ is holomorphic. Note also that if~$\phi$ is cuspidal with~$s_\phi \not \equiv k/2 \pmod{1}$, then necessarily~$s_\phi \in [1/2, 1)\cup(\frac12+i\R)$~\cite[p.~508]{DukeEtAl2002}.

\medskip{}

In the rest of the paper, the form~$\phi$ is fixed, and apart from Section~\ref{sec:theta}, we assume everywhere that either~$\phi$ is cuspidal, or~$\Re(s_\phi) = \frac12$.
Our main focus is on the Fourier coefficients at~$\infty$: we assume that~$\chi(\smat{1&1\\0&1}) = 1$, so that~$\infty$ is a singular cusp, and we denote
$$ a(n) := a_\infty(n). $$

\subsection{Additive twists}

Given~$\phi$ as above, we define for all~$x\in C(\Gamma)\setminus\{\infty\}$ the following~$L$-series\footnote{In this work we focus on the sum over positive~$n$ only; the arguments in~\cite{DrappeauNordentoft2022} could be adapted with minor adjustments if one wishes to include the sum over~$n\leq -1$ in the analysis.}:
\begin{equation}
  L(\phi, s, x) := \sum_{n\geq 1} a(n) \e(n x) n^{1/2-s}. \label{eq:def-Lphixs}
\end{equation}
By~\cite[Section~5]{DrappeauNordentoft2022}, this function of~$s$, initially defined for~$\Re(s)>1$, extends to a meromorphic function on~$\C$ which is regular at~$s=1/2$.

We define
$$ L_\phi(x) := L(\phi, 1/2, x), \qquad (x\in C(\Gamma)). $$

\subsection{Orbits and denominators}

In the case~$\Gamma = \SL(2, \Z)$, or more generally for congruence groups, the set of cusps is simply~$C(\Gamma) = \Q\cup\{\infty\}$, and we have a natural ordering according to denominators. We recall from~\cite{PetridisRisager2018} the appropriate generalization for cofinite Fuchsian groups with cusps, the role of the ``denominator'' being played by the bottom-left coefficient of an associated matrix.

Let~$\cb\in C(\Gamma)$ be some fixed cusp, and~$\sigma_\cb$ be a scaling matrix.
We order the set $\Gamma \cb$ of cusps equivalent to~$\cb$ according to the following definition: for all~$Q>0$, let
\begin{equation} \label{cqdfn}
  C_{\cb}(\Gamma; Q) := \{ \gamma \cb, \gamma \in \Gamma / \Gamma_\cb, \gamma \sigma_\cb = \smat{\ast&\ast\\ c & \ast}, 0 < |c| \leq Q\}.
\end{equation}

In the notation of~\cite[eq.~(1.5)]{PetridisRisager2018}, we have~$C_\cb(\Gamma; Q) = T_{\infty\cb}(Q) + \Z$. Thus, for example, $C_\infty(\SL(2, \Z); Q)$ is the set of all rationals of denominator at most~$Q$. The sets~$C_\cb(\Gamma; Q)$ are all invariant under translation by~$\Z$.

\subsection{Probability space}

We fix from now on a cusp~$\cb\in C(\Gamma)$, and consider the space
$$ \Omega_Q := C_\cb(\Gamma; Q), $$
for~$Q\geq 1$.

The following lemma, which corresponds essentially to~\cite[Theorem~3.8]{PetridisRisager2018}, is borrowed from~\cite[Lemma~\refmain{lem:psi-totalmass}]{BettinEtAl}.
\begin{lemma}
  There exists~$\delta>0$ such that for all smooth and compactly supported~$\psi:\R\to \C$, we have
  $$ \sum_{x\in \Omega_Q} \psi(x) = \frac{Q^2}{\pi \Vol(\Gamma\backslash\H)} \int_{\R} \psi(r) \df r + O_{\psi,\cb}(Q^{2-\delta}). $$
\end{lemma}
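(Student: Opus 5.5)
The plan is to reduce the count to the asymptotic behaviour of a Kloosterman-type zeta function at $s=1$, exactly as in the Petridis--Risager treatment of modular symbols.

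\textbf{Parametrizing the points.} Elements $x=\gamma\cb\in\Omega_Q$, with $\gamma\in\Gamma/\Gamma_\cb$, correspond bijectively to double cosets. Write $\gamma\sigma_\cb=\smat{a&b\\c&d}$. Multiplying $\gamma$ on the right by $\Gamma_\cb=\pm\sigma_\cb\smat{1&\Z\\&1}\sigma_\cb^{-1}$ changes $d\mapsto d+nc$ (and possibly an overall sign), while $x=a/c$ is unchanged. Multiplying on the left by $\Gamma_\infty$ translates $x$ by integers. So $x$ modulo $1$ corresponds to a pair $(c, a\bmod c)$, in the notation $T_{\infty\cb}$ of~\cite{PetridisRisager2018}.

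\textbf{Harmonic analysis.} Since $\psi$ is compactly supported, I will periodize: $\sum_{x\in\Omega_Q}\psi(x)=\sum_{x\in\Omega_Q/\Z}\Psi(x)$, where $\Psi(x)=\sum_m\psi(x+m)$ has Fourier coefficients $\hat\Psi(n)=\hat\psi(n)$, which decay rapidly. Hence
$$\sum_{x\in\Omega_Q}\psi(x)=\sum_{n\in\Z}\hat\psi(n)\sum_{0<|c|\le Q}\ \sum_{a\bmod c} \e(na/c).$$
The term $n=0$ is the counting function $N(Q)=\#\{(c,a)\}$. The terms $n\ne0$ are partial sums of the Kloosterman-type sums $S_{\infty\cb}(n,0;c)$ appearing in the Fourier coefficients of the Eisenstein series $E_\cb(z,s)$ at $\infty$. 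Equivalently, I will study the Dirichlet series $Z_n(s)=\sum_c S(n,0;c)|c|^{-2s}$. This is the $n$-th Fourier coefficient of $E_\cb(\cdot,s)$ at the cusp $\infty$, up to the factor $\pi^{s}|n|^{s-1}/\Gamma(s)$ (and for $n=0$ it is the scattering entry $\phi_{\infty\cb}(s)$).

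\textbf{Main term and error.} The meromorphic continuation of $E_\cb$ gives the following. For $n=0$, $\phi_{\infty\cb}(s)$ has a simple pole at $s=1$ with residue $1/\Vol(\Gamma\backslash\H)$. For $n\ne0$, the coefficients are regular at $s=1$, since the residual form is constant. Any remaining poles lie at finitely many exceptional points $s_j\in(1/2,1)$. Applying a smoothed Perron formula and shifting the contour to $\Re s=1-\delta'$ then gives
$$N(Q)=\frac{Q^2}{\pi\Vol(\Gamma\backslash\H)}+O(Q^{2-\delta}),$$
where the $1/\pi$ comes from the normalization of the residue and from counting both signs of $c$. For $n\neq 0$ the same argument gives $O_n(Q^{2-\delta})$. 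For the $n=0$ term, $\hat\psi(0)=\int\psi$, which produces the claimed main term.

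\textbf{Main obstacle.} The difficulty is the contour shift. It needs polynomial bounds in $|\Im s|$ and in $n$ for the Eisenstein coefficients in a strip $\Re s\ge 1-\delta'$, so that the rapid decay of $\hat\psi(n)$ controls the sum over $n$. I expect this to follow from standard bounds via the spectral expansion, or directly from~\cite[Theorem~3.8]{PetridisRisager2018}, which I would cite for it. The exponent $\delta$ is then determined by the largest exceptional eigenvalue together with the convexity loss from truncating Perron's formula.
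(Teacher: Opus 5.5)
Your proposal is correct and matches the paper. The paper gives no argument of its own: it imports the statement from \cite[Theorem~3.8]{PetridisRisager2018} via \cite{BettinEtAl}. Your sketch is the standard Eisenstein-series argument behind such results, namely Fourier expansion in $x$, the series $\sum_c S_{\infty\cb}(n,0;c)c^{-2s}$ with a pole at $s=1$ only for $n=0$, and a contour shift past the exceptional poles, with the vertical-growth bounds deferred to that same reference.

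One small bookkeeping correction: since $-\Id\in\Gamma_\cb$, the pairs $(c,a)$ and $(-c,-a)$ give the same point $x$, so the sum should run over $c>0$ only. The constant $1/(\pi\Vol(\Gamma\backslash\H))$ then comes entirely from dividing the residue $1/\Vol(\Gamma\backslash\H)$ of the scattering term by the normalizing factor $\pi^{1/2}\Gamma(\tfrac12)/\Gamma(1)=\pi$. It does not come from counting both signs of $c$, which would double the count.
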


We fix once and for all a smooth, compactly supported, non-negative and non-identically zero weight function~$\psi : \R \to \R_{\geq0}$. By the previous lemma, for~$Q$ large enough in terms of~$\cb$ and $\psi$ we may endow~$\Omega_Q$ with the probability measure~$\P_Q$ proportional to~$\psi$,
$$ \P_Q = \varpi_{\psi, Q} \sum_{x\in \Omega_Q} \psi(x) \delta_x, $$
where~$\varpi_{\psi, Q} = (\sum_{x\in \Omega_Q} \psi(x))^{-1} > 0$ is a normalizing factor and~$\delta_x$ is the Dirac mass at~$x$. We call~$\E_Q$ the associated expectation. Thus, for any map~$f:C(\Gamma)\to \C$, we have
$$ \E_Q[f(x)] = \frac{\sum_{x\in \Omega_Q} \psi(x) f(x)}{\sum_{x\in \Omega_Q} \psi(x)}. $$
In Section~\ref{sec:proof-limit-thm} we will occasionally write~$\E_{Q; \Gamma, \cb}$ to specify the underlying group and cusp orbit.

\subsection{Distribution of values}

Our main results are the following.
\begin{theorem}[CLT for cuspidal forms]\label{th:main-cusp}
  Assume that~$\phi$ is cuspidal. Let~$\RR\subset \C$ be a Borel set with measure zero boundary. As~$Q\to \infty$, we have
  $$ \P_Q\Big( \frac{L_\phi(x)}{\sigma_\phi \sqrt{\log Q}} \in \RR\Big) \to \P(N_\C(\Id) \in \RR), $$
  where~$N_\C(\Id)$ is a standard complex Gaussian, and
  \begin{equation}
    \sigma_\phi^2 = \frac1{\Gamma(1-s_\phi + \frac k2) \Gamma(s_\phi + \frac k2)} \frac{\|\phi\|^2}{\Vol(\Gamma\backslash \H)}, \label{eq:def-variance-cusp}
  \end{equation}
  with~$\|\phi\|$ the Petersson $L^2$-norm.
\end{theorem}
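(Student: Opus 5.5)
The plan is to follow the dynamical strategy of~\cite{BettinDrappeau2022, DrappeauNordentoft2022}, with the Gauss map replaced by a boundary map attached to~$\Gamma$. There are three ingredients.

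\textbf{Step 1: reciprocity.} The first step is a reciprocity (quantum modularity) formula for~$L_\phi$. For~$\gamma\in\Gamma$ and~$x\in C(\Gamma)$, the difference~$L_\phi(\gamma x) - u(\gamma,x) L_\phi(x)$ should equal~$h_\gamma(x)$. Here~$u(\gamma, x)$ is a unimodular factor coming from the multiplier and the weight~$k$, and~$h_\gamma$ is a period-type integral of~$\phi$ along a geodesic from~$\gamma^{-1}\infty$ to~$\infty$. For cuspidal~$\phi$ this~$h_\gamma$ extends to a function that is H\"older continuous (indeed real-analytic off one point) on~$\R$. The argument is the Hecke-type one: express~$L(\phi,s,x)$ as a Mellin transform of~$\phi(x+iy)$ along the vertical line, shift the contour using~\ref{hyp:modularity}, and evaluate at~$s=1/2$. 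Regularity at~$s=1/2$ is supplied by~\cite[Section~5]{DrappeauNordentoft2022}.

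\textbf{Step 2: reduction to a Birkhoff sum.} Next, following~\cite{BettinEtAl}, I would fix a Markov (Bowen--Series type) boundary expansion for~$\Gamma$. This is a piecewise M\"obius map~$T$ on a finite union of intervals, with branches given by elements of~$\Gamma$. Each~$x\in\Omega_Q$ then has a finite orbit ending at a cusp representative, and its length is comparable to~$\log Q$. Iterating the reciprocity formula writes~$L_\phi(x)$ as a twisted Birkhoff sum~$\sum_{j<n(x)} \omega_j(x)\, h(T^j x)$ plus~$O(1)$, where~$\omega_j$ are products of the unimodular cocycle factors. To handle the twist, I would pass to a skew product over~$T$ with fibre the circle, or the finite group generated by~$\chi$ together with the phase~$j_\gamma^k$.

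\textbf{Step 3: CLT for the Birkhoff sums.} The key input is the result of~\cite{BettinEtAl} on rational (cuspidal) orbits. It gives, uniformly for~$\tau$ near~$0$, an asymptotic for the characteristic function~$\E_Q[\exp(i\Re(\bar\tau L_\phi(x)))]$ of the form~$Q^{2(\lambda(\tau)-\lambda(0))}$ times a bounded factor. Here~$\lambda(\tau)$ is the leading eigenvalue of a perturbed transfer operator; this is the Baladi--Vall\'ee method, which requires a Dolgopyat-type spectral estimate and is the content of that paper. Perturbation theory then gives~$\lambda(\tau)-\lambda(0) = -c|\tau|^2 + O(|\tau|^3)$. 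Isotropy of the quadratic form, i.e.\ a complex rather than merely real Gaussian, follows from the rotation invariance produced by the circle-valued cocycle, once one checks that this cocycle is not cohomologous to a constant. With the normalisation~$\tau = t/\sqrt{\log Q}$, L\'evy's continuity theorem yields convergence to~$N_\C(\Id)$ with variance~$2c$. Nondegeneracy ($c>0$) needs a separate argument: if~$c=0$, the cocycle~$h$ would be a coboundary, which would force~$L_\phi$ to be bounded on~$C(\Gamma)$, and that contradicts~$\phi\ne0$ by a second-moment computation.

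\textbf{Main obstacle: the explicit variance.} I expect the identification of~$\sigma_\phi^2$ in~\eqref{eq:def-variance-cusp} to be the hardest step. The abstract variance~$c$ is a Green--Kubo sum of correlations, and it must be converted into the Petersson norm. I would instead compute the second moment~$\E_Q|L_\phi(x)|^2$ directly. Expanding~$L_\phi$ approximately as~$\sum_n a(n)\e(nx)n^{-1/2}V(n/c^2)$ and averaging over~$x\in\Omega_Q$, the equidistribution lemma above (with its power saving) together with its twisted versions by~$\e(nx)$ (Kloosterman-type sums, controlled via the spectral theory of~$\Gamma$) should leave a diagonal~$\sum_{n\le Q^2}|a(n)|^2/n$. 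Rankin--Selberg for~$\Gamma$ evaluates this as~$\frac{\|\phi\|^2}{\Vol(\Gamma\backslash\H)\Gamma(1-s_\phi+\frac k2)\Gamma(s_\phi+\frac k2)}\cdot 2\log Q$, which is consistent with~$\E|N_\C|^2=2$. The Gamma factors come from~$\int_0^\infty |W_{k/2,it_\phi}(y)|^2 y^{-1}\,dy$. The variance~$c$ from Step~3 must then agree with this second-moment constant, which requires uniform integrability; this is supplied by the fourth-moment bound that follows from the characteristic-function asymptotics.
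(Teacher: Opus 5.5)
Your skeleton matches the paper's: the period functions of~\cite{DrappeauNordentoft2022}, the Bowen--Series coding, and the characteristic-function asymptotic~\eqref{eq:asympt-charfun-j} from~\cite{BettinEtAl}. The gaps are in identifying the parameters of the limit law.

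\textbf{Mean and isotropy.} You derive $\vmu=0$ and $\vSigma\propto\Id$ from rotation invariance of a ``circle-valued'' cocycle. On the boundary, however, $j_\gamma(x)^k=\sign(cx+d)^k$, so the cocycle takes values in the finite group $\Theta=\{\eta^k\chi(\gamma)\}$ of Section~\ref{vtdf}. Averaging over $\Theta$ kills $\vmu$ only when $\Theta\neq\{1\}$; this is the factor $\1_{\Theta=\{1\}}$ in~\eqref{eq:def-mu}. It gives isotropy only when $\abs{\Theta}\ge3$, since $\int_\Theta\vth^2=\1_{\Theta\subset\{\pm1\}}$. For $k$ even and $\chi$ trivial, e.g.\ weight-$0$ Maa\ss{} cusp forms, $\Theta=\{1\}$ and there is no symmetry at all. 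So your expansion $\lambda(\tau)-\lambda(0)=-c\abs{\tau}^2+O(\abs{\tau}^3)$ is unjustified there. A linear term $i\inpd{\tau,\vmu}$ would create a drift of order $\log Q$, and $\vSigma$ could be anisotropic. You would need $\E_Q[L_\phi]$ and $\E_Q[L_\phi^2]$ as well, not only $\E_Q\abs{L_\phi}^2$.

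The hypotheses of~\cite{BettinEtAl} are also not automatic. You need bounds on $h^L_{p^n}$ uniform in $n$ for parabolic $p$ (Section~\ref{sec:estim-period-fun}). You also need topological irreducibility of the skew product. The paper proves this combinatorially in Proposition~\ref{prop:topological-irred}, after passing to a free subgroup of rank $\ge6$; the $\Gamma_1(4)$ example shows it can fail for a small polygon.

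\textbf{Second moment.} Computing $\E_Q\abs{L_\phi(x)}^2$ by averaging $\e((m-n)x)$ over $\Omega_Q$ requires cancellation in $\sum_{x\in\Omega_Q}\psi(x)\e(\ell x)$ uniformly for $\abs{\ell}$ up to about $Q^2$. For general cofinite $\Gamma$ the spectral input (Selberg's Kloosterman zeta function) gives nothing that uniform. The paper flags exactly this as why the $\SL(2,\Z)$ argument via Ramanujan sums does not extend. The same obstruction hits $\E_Q[L_\phi]$ and $\E_Q[L_\phi^2]$.

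Rankin--Selberg also does not by itself evaluate the diagonal over $n\ge1$. Unfolding gives $C_k(s)I^+(s)+C_{-k}(s)I^-(s)$, which mixes positive and negative coefficients, and these are unrelated for general $\Gamma$. The paper isolates $I^+$ with a second equation coming from $R_k\phi$ and the determinant $\Delta_k(1)$. The constant $1/(\Gamma(s_\phi+\frac k2)\Gamma(1-s_\phi+\frac k2))$ comes out of solving that system, not from $\int_0^\infty\abs{W_{k/2,it_\phi}}^2\,\df y/y$ alone.

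\textbf{What the paper does instead.} The missing idea is a perturbation in $s$ that replaces discrete averages by continuous ones.
\begin{itemize}
\item \emph{Mean.} For $\Re(s)>3/2$,
\[
\vmu(s)=(1-\lambda(s))\int_K\vL(y,s)\xi_s(y)\,\dfs(y)=(1-\lambda(s))\sum_n a(n)n^{1-s}\hat\zeta_s(-n).
\]
This continues to $\cU$ because $\hat\zeta_s(n)\ll1/\abs{n}$, and $\lambda(1)=1$ gives $\vmu=0$.
\item \emph{Covariance.} Starting from~\eqref{eq:def-Sigma-K}, replace $\Phi_K$ by $\Phi_{K,s}$ and telescope using~\eqref{cvl} and $\LL_{2s-1}\xi_{2s-1}=\lambda(2s-1)\xi_{2s-1}$. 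This gives
\[
\vSigma=\lim_{s\to1^+}\frac{2(1-\lambda(2s-1))}{\fd}\int\vec\vL_s\vec\vL_s^T\xi_{2s-1}\,\df\nu,
\]
an average of $\abs{L(\phi,s-\frac12,x)}^2$ and $L(\phi,s-\frac12,x)^2$ against the fixed weight $\zeta_{2s-1}$.
\item \emph{Evaluating the average.} Vaaler's approximation leaves only frequencies $\abs{h}\le H$. Fixed nonzero shifts are bounded via Poincar\'e series. The relative moment involves only $m+n=h$ with $m,n\ge1$, so finitely many terms. This, not any symmetry, is what gives isotropy.
\end{itemize}
The explicit value also gives nondegeneracy, so your coboundary argument is not needed.
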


\begin{remark}
  The proof gives a uniform and optimal rate of convergence~$O(1/\sqrt{\log Q})$ when~$\RR$ varies over convex Borel sets, as in the usual multivariate CLT when the third moment is bounded, see~\cite[Theorem~13.2]{BhattacharyaRangaRao2010}.
\end{remark}

\begin{remark}
  Theorem~\ref{th:main-cusp} is derived from an estimate on the characteristic function $\E_Q[\e^{i\inpd{\vt, L_\phi(x)}
  }]$ for~$\vt\in\R^2$. As in \cite[Theorem~2.4]{BettinDrappeau2022}, the present method also gives large deviations estimates, using the cuspidality of~$\phi$. This fact was known for forms which are holomorphic of weight~$2$~\cite{PetridisRisager2018,LeeSun2019} or of level~$1$~\cite{BettinDrappeau2022,DrappeauNordentoft2022}. The work of Nordentoft~\cite{Nordentoft2021}, which considers holomorphic forms in general, relies on the method of moments, which at present cannot give a large deviation estimate.
\end{remark}

\begin{theorem}[CLT for non-cuspidal forms]\label{th:main-eis}
  Let~$\phi$ be non-cuspidal with~$\Re(s_\phi) = \frac12$, and denote
  \begin{align}\label{beta}
    \beta_\phi = \begin{cases} 3, & (s_\phi = \frac12 \text{ and } \exists \cb, B_\cb \neq 0), \\ 1 & (\text{otherwise}), \end{cases}
  \end{align}
  where we recall the notation~\eqref{eq:def-phiinfty}. Then, there exists $\sigma_\phi>0$ such that for any~$\RR\subset \C$ as in Theorem~\ref{th:main-cusp}, as~$Q\to \infty$, we have
  \begin{align}\label{t2mf}\P_Q\Big( \frac{L_\phi(x)}{\sigma_\phi \sqrt{\log Q(\log\log Q)^{\beta_\phi}}} \in \RR\Big) \to \P(N_\C(\Id) \in \RR).
  \end{align}
\end{theorem}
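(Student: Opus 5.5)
The plan follows the dynamical approach of~\cite{BettinDrappeau2022, DrappeauNordentoft2022}, with the Gauss map replaced by a boundary dynamical system attached to~$\Gamma$. That system is presumably the one set up in~\cite{BettinEtAl}, from which the counting lemma above is borrowed.

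\textbf{Step 1: quantum modularity.} First I would establish a reciprocity formula for~$L_\phi$. For~$\gamma\in\Gamma$ with~$\gamma\infty\neq\infty$, the difference~$L_\phi(\gamma x) - u\,L_\phi(x)$ (with a suitable unimodular multiplier~$u$ coming from~$\chi$ and~$j_\gamma$) should equal~$h_\gamma(x)$. Here~$h_\gamma$ is smooth away from~$\gamma^{-1}\infty$, is given by a period integral of~$\phi$ along a geodesic, and has a singularity at~$\gamma^{-1}\infty$. In the non-cuspidal case this singularity is of shape~$|x-\gamma^{-1}\infty|^{-1/2}$ times~$\phi_\cb$-type factors. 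Such a formula follows from Mellin inversion together with the modularity~\ref{hyp:modularity}, exactly as in~\cite[Section~5]{DrappeauNordentoft2022}.

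\textbf{Step 2: Birkhoff sums.} Iterating along a Rosen- or Bowen--Series-type reduction of~$x\in\Gamma\cb$ writes~$L_\phi(x)$ as a Birkhoff-type sum~$\sum_{j\le r(x)} u_j\, h(T^j x)$ over the finite orbit of~$x$ under a Markov boundary map~$T$, up to an error that is~$O(1)$ (or negligible after normalization). The length~$r(x)$ is of order~$\log Q$, by the counting lemma and the growth of~$|c|$.

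\textbf{Step 3: characteristic function.} I would then study
\[
  \E_Q\big[\e^{i\inpd{\vt, L_\phi(x)}}\big]
\]
using a weighted transfer operator~$\cL_{s,\vt}$ on a suitable Banach space attached to~$T$. The normalized count is obtained by Perron's formula in~$s$ from~$\sum_n \cL_{s,\vt}^n$ applied to~$\psi$. The essential input is a Dolgopyat-type spectral estimate, uniform for~$|\Im s|$ large, which replaces Baladi--Vallée. I expect to take this from~\cite{BettinEtAl}, since it is what produces the power saving in the counting lemma. Perturbation theory of the leading eigenvalue~$\lambda(s,\vt)$ near~$(1,0)$ then gives the expansion
\[
  \E_Q\big[\e^{i\inpd{\vt, L_\phi(x)}}\big] \approx Q^{2(s(\vt)-1)},
\]
where~$s(\vt)$ is defined implicitly by~$\lambda(s(\vt),\vt)=1$.

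\textbf{Step 4: the main obstacle.} The hard part is the expansion of~$s(\vt)$ near~$\vt=0$, and it splits by case.
\begin{itemize}
  \item In the cuspidal case,~$h$ is in~$L^2$ (indeed in~$L^{2+\delta}$) with respect to the invariant measure. This gives~$s(\vt) = 1 - c|\vt|^2 + O(|\vt|^3)$. The variance~$c$ equals the Green--Kubo sum, and I would identify it with the Petersson norm in~\eqref{eq:def-variance-cusp} via an unfolding (Rankin--Selberg) computation.
  \item In the non-cuspidal case,~$h$ sits exactly on the boundary of~$L^2$: its tail satisfies~$\P(|h|>t)\asymp t^{-2}$, or~$t^{-2}\log^2 t$ when~$B_\cb\neq0$ at~$s_\phi=1/2$. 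This puts us in the nonstandard domain of attraction of the Gaussian. I would compute~$1-\lambda(s,\vt)$ through the truncated second moment of~$h$ near the cusps. That moment is~$\asymp|\vt|^2\log(1/|\vt|)$ in the first case and~$\asymp|\vt|^2\log^3(1/|\vt|)$ in the second.
\end{itemize}
Inserting~$|\vt|\asymp(\log Q)^{-1/2}$ turns these into the normalization~$\log Q(\log\log Q)^{\beta_\phi}$ with~$\beta_\phi\in\{1,3\}$.

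The delicate points are the following. The first is controlling the non-analytic dependence of the perturbed operator on~$\vt$, which requires a suitable weighted function space. The second is checking that the multipliers~$u_j$ and the choices of scaling matrices cause no cancellation, which I expect to settle by the circular symmetry and independence of the complex phases.

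Finally, Lévy's continuity theorem converts the characteristic function asymptotics into~\eqref{t2mf}.
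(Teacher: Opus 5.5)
Your architecture matches the paper's: period functions for $\Gamma$, a Bowen--Series-type induced map with the spectral input of \cite{BettinEtAl}, and a truncated second moment giving $|\vt|^2\log(1/|\vt|)^{\beta_\phi}$. Your tail heuristics and normalisations are also right. But two things the theorem asserts are not covered by your argument.

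The first is the absence of drift. Your expansions of $s(\vt)$ and $1-\lambda(s,\vt)$ have no linear term. The spectral argument does produce one, namely $i\inpd{\vt,\vmu}\log Q$ in the exponent, with $\vmu\propto\1_{\Theta=\{1\}}\int_K\Phi_K\xi\,\dfs$. Since~\eqref{t2mf} involves no centring, $\vmu\neq0$ would shift $L_\phi(x)$ by $\asymp\log Q$ and falsify the statement. When $\Theta=\{1\}$ this integral involves only the continuous extensions $\vphi_h$, which have no usable formula. The $\SL(2,\Z)$ method, computing $\E_Q[L_\phi(x)]$ via Ramanujan sums, does not extend to general $\Gamma$. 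The paper's Proposition~\ref{prop:average} handles this by deforming to $\Re(s)>3/2$ (normalisation~\eqref{eq:defs-L-phigs-Gmodel}), where $\vL(x,s)$ is an absolutely convergent Fourier series. There, the partition of $\dD$ by the images of the branches, together with $\LL_s\xi_s=\lambda(s)\xi_s$, collapses the sum into $(1-\lambda(s))\sum_n a(n)n^{1-s}\hat\zeta_s(-n)$. Using $\hat\zeta_s(n)\ll|n|^{-1}$, this continues analytically to $s=1$, where $\lambda(1)=1$.

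The second is that the limit is $N_\C(\Id)$, i.e.\ the covariance is isotropic and positive. ``Circular symmetry and independence of the complex phases'' is not an argument here. Averaging over $\Theta$ does give isotropy when $|\Theta|\geq3$, but not when $\Theta\subseteq\{\pm1\}$ (e.g.\ trivial character, weight $0$). In that case the coefficient of $|\vt|^2\log(1/|\vt|)$ is a weighted sum of $|\inpd{\vt/|\vt|,\lambda}|^2$ over one-sided singular profiles:
\begin{itemize}
  \item $\lambda_\infty(\pm)$, for the $|x-1|^{-1/2}$ blow-up of $\vphi_h$ at the cusp $1$ from either side;
  \item $\lambda(w_0)$, for the $n^{1/2}$ growth of $\vphi_h$ along $h=h_{w_0^nw_1}$.
\end{itemize}
This sum is a multiple of $|\vt|^2$ only because the profiles pair up with a relative factor $\pm i$ and equal weights. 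The factor $\pm i$ comes from~\eqref{eq:hparab-MT-flip} together with $\upsilon_{\hat{w_0}}=-\upsilon_{w_0}$. For the parabolic terms, equal weights come from the involution $(w_0,w_1)\mapsto(\hat{w_0},w_1')$, which preserves $\varphi(\omega_{w_0})C(w_0)J(w_0,w_1)$ via the change of variables $y\mapsto g_{w_0}y$. For the cusp $1$, the total weight on each side is $\CC[\varphi](1\pm i0)=\varphi(1)$, by invariance of $\varphi$ and its continuity at the vertex $1$. The same pairings kill the $A$--$B$ cross terms when $s_\phi\neq\frac12$. Positivity then uses that every cusp class is represented by a vertex of $R$, so some nonzero $A_\omega(\phi)$ enters with positive weight.

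Less centrally, your sketch also omits three inputs:
\begin{itemize}
  \item the topological irreducibility of the multiplier skew product over $\cF_K$ required by \cite{BettinEtAl} (Proposition~\ref{prop:topological-irred}), which is why the paper passes to a free subgroup of rank $\geq6$;
  \item asymptotics for $h_{p^n}$ uniform in $n$ (Proposition~\ref{prop:hE-gamman-estim}), which go beyond the fixed-$\gamma$ results of \cite{DrappeauNordentoft2022};
  \item the fact that $h_\gamma$ is only known to be H\"older continuous (exponent $<\frac12$ here), not smooth as you assert, so the uniform H\"older bounds of Lemma~\ref{lem:hE-gamman-holder} are what must feed the function space.
\end{itemize}
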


The value of~$\sigma_\phi$ could be made explicit in terms of a fundamental region for~$\Gamma$ and certain values arising in the course of our proof, see Section~\ref{sec:proof-limitthm-eis}.

When~$\Gamma = \Gamma_0(N)$ is a Hecke congruence subgroup, given~$\chi_1$, $\chi_2$ primitive Dirichlet characters mod~$N_1$, resp.~$N_2$ with~$N = N_1N_2$, we may pick~$\phi = E_{\chi_1, \chi_2}$ as an Eisenstein series for which~$a(n) = \frac1{\sqrt{n}}\sum_{d\mid n} \chi_1(d) \bar{\chi_2}(n/d)$ and~$s_\phi = 1/2$. Then~$\beta_\phi = 3$ if~$\chi_1 = \bar{\chi_2}$ and~$\beta_\phi = 1$ otherwise.

As we mentioned, the proofs of Theorems~\ref{th:main-cusp} and~\ref{th:main-eis} are based on the asymptotic analysis of the characteristic functions, rather than the method of moments. Nonetheless, the method also yields the asymptotic estimate for all convergent moments of $L_\phi(x)$, which is to say, all moments in the cuspidal case and the first moment in the non-cuspidal case. In Section~\ref{sec:comp-moments} we analyze in particular the expectation and the variance (in the cuspidal case) in order to determine the parameters of the central limit theorems in Theorems~\ref{th:main-cusp} and~\ref{th:main-eis}.

\begin{remark}
  Given a finite family $\phi_1,\dots,\phi_n$ of orthogonal cuspidal forms for a group $\Gamma$ and a character $\chi$, with same weight and eigenvalue, the arguments described here can be used to obtain, more generally, the joint convergence of the vector
  \begin{align}\label{casdc}
    \Big(L_{\phi_j}(x)/\sigma_{\phi_j}\sqrt{\log Q}\Big)_{1\leq j \leq n}
  \end{align}
  to the standard $n$-dimensional complex Gaussian distribution, generalizing~\cite[Corollary 7.7]{DrappeauNordentoft2022} which dealt with the case $\Gamma=\SL(2,\Z)$. To prove this, we can follow~\cite{DrappeauNordentoft2022} or, more directly, apply Theorem~\ref{th:main-cusp} to $\sum_{j=1}^na_jL_{\phi_j}(x)=L_{\sum_{j=1}^na_j\phi_j}(x)$ for any $a_1,\dots,a_n\in\C$ and invoke the Cramér--Wold device~\cite{CramerWold1936} (see~\cite[Theorem 29.4]{Billingsley1995}).
  Normalizing suitably the components of the vector~\eqref{casdc} one can also prove a similar statement for non-cuspidal forms.
\end{remark}

\begin{remark}
  The non-central values~$L(\phi, x, s)$, $\Re(s)\neq 1/2$, behave very differently, see~\cite{Nordentoft2021b,BettinDrappeau2025}.
  The case of values~$L(\phi, x, \frac12+it)$, $t\neq 0$ is less clear; on the basis of the present work, we expect that they tend to follow Gaussian laws similar to Theorems~\ref{th:main-cusp} and \ref{th:main-eis}, but introducing an imaginary shift adds new difficulties when dealing with the effect from the multiplier system\footnote{More precisely, with the notations of Section~\ref{vtdf}, the set~$\Theta$ becomes dense in the unit circle.}.
\end{remark}

\subsection{Overview}

In the present paper, we take up the method introduced in~\cite{LeeSun2019,BettinDrappeau2022}.
One key ingredient is the authors' work~\cite{BettinEtAl}, where Baladi--Vallée's methods are transferred to the context of cofinite Fuchsian groups with cusps, and the role of the Gauss map is played by the Bowen--Series map~\cite{BowenSeries1979}.

The ``observables'' in our work are maps of the shape
\begin{equation}
  h_\gamma^L(x) := L_\phi(\gamma x) - u_\gamma(x) L_\phi(x), \label{eq:def-periodfun-informal}
\end{equation}
for which we have no sensible explicit expression. This poses a major issue regarding our two tasks: (i) to check that the hypotheses of~\cite{BettinEtAl} are satisfied in our case, and (ii) to compute the parameters of the limit laws.

One of the main features in~\cite{BettinEtAl} is a choice of induction of the Bowen--Series map whose inverse branches are easily described, as well as flexible hypotheses on the observables (existence of some positive absolute moment).
Section~\ref{sec:estim-period-fun} is devoted to proving that the necessary regularity and growth assumptions are satisfied by the maps~\eqref{eq:def-periodfun-informal} for~$\gamma = \gamma_0^n$, where~$\gamma_0$ is a fixed parabolic element, and~$n$ and~$x$ vary. This extends work of~\cite{DrappeauNordentoft2022}, which corresponds to the case~$n=1$.
The main challenge is to achieve complete uniformity, both in~$n$ and~$x$, and in fact, we will also need to analyze a more general family~$h_\gamma^L(x,s)$ of period functions associated with~$L(\phi, s, x)$, in particular their asymptotic behaviour uniformly in the complex parameter~$s$.

Section~\ref{sec:fourier} is devoted to the study of certain Fourier integrals that arise in the analysis of the main terms, while Sections~\ref{sec:disk-model} and~\ref{tos} introduce the setting required to apply the results of~\cite{BettinEtAl}.
Then, in Section~\ref{sec:proof-limit-thm}, we apply the results of~\cite{BettinEtAl}, proving Theorems~\ref{th:main-cusp} and~\ref{th:main-eis} up to the values of the parameters of the limit law.
In the course of the argument, we must show that the multiplier system~$u_\gamma$ satisfies a certain topological irreducibility condition. This will be carried out by an ad-hoc combinatorial construction with Bowen--Series codings, which we defer in Section~\ref{sec:top-irred}.
At that point, we will not have an explicit expression for the expectation and the covariance matrix of the limiting distributions yet;
computing the expectation and covariance matrix requires an unexpected amount of additional work.

\smallskip{}

First, in Section~\ref{sec:proof-limitthm-eis}, we show that the Gaussian law in Theorem~\ref{th:main-eis} is circularly symmetric.
This requires an analysis of various contributions to the variance, and two symmetries: the presence of the involution~$\gamma \mapsto \gamma^{-1}$ over parabolic elements of~$\Gamma$, and a left-right symmetry for the behaviour at~$\infty$ of the period function~$h_\gamma^L$.

In Section~\ref{sec:comp-moments}, we compute the expectations in both the cuspidal and non-cuspidal cases, and the covariance matrix in the cuspidal case. To carry this out, we introduce a perturbation argument which expresses both quantities in terms of limits, as~$s\to 1/2$ with~$s>1/2$, of continuous averages of~$L(\phi, s, x)$ over~$x$.
These averages are estimated using standard analysis (for the expectation) and Rankin--Selberg theory (for the variance). A more detailed overview is given in Section~\ref{sec:comp-moments-overview}.

Finally, in Section~\ref{sec:theta} we consider the case of half-integral weights and prove Theorem~\ref{thethe}.

\subsection*{Notations}

The Landau symbol~$f = O(g)$ means that there exists a constant~$C\geq 0$ such that~$\abs{f} \leq C g$ whenever~$f$ and~$g$ are defined. The notation~$f \ll g$ is equivalent to~$f = O(g)$. If the constant depends on a parameter, say~$\alpha$, this is typically indicated by a subscript, \emph{e.g.}~$f = O_\alpha(g)$ or~$f \ll_\alpha g$. However, we will not explicitly indicate dependencies on $\phi$, $\eps$, $\cb$, $\psi$ and on the group $\Gamma$, nor on the subgroup $G$ and the fundamental domain $R$ defined in Sections~\ref{sec:disk-model} and~\ref{tos}.

Given a matrix~$\gamma\in \SL(2,\C)$, we indicate by~$c_\gamma$ the lower-left entry of~$\gamma$.
Throughout the paper, the letter~$\eps$ indicates a positive number which can be taken arbitrarily small, and whose value might vary from line to line.
The indicator function of a condition $C$ is denoted by $\1_C$, with $\1$ denoting the constant function $1$.

\section{Estimates on the period function}\label{sec:estim-period-fun}

\subsection{Analytic continuation of the Eichler integral}

As in~\cite{DrappeauNordentoft2022}, we start with the Eichler integral $\cE_x(\phi, s)$.
For~$s\in \C$ with~$\Re(s)>1$ and all~$x\in \R$, we define
\begin{equation}
  \cE_x(\phi, s) := \int_0^\infty \big(\phi(x+iy) - \phi_\infty(y)\big) y^{s-1/2} \frac{\df y}y.\label{eq:def-Eichlerint}
\end{equation}
We will see later on, in~\eqref{EtoL}, how to transfer information about $\cE_x$ to $L$.
The maps~$\cE_x(\phi, s)$ are related to~$L(\phi, s, x)$ in the same way as automorphic periods relate to $L$ series, see~\cite[Sections~5.2--5.3]{DrappeauNordentoft2022}.

Recall our assumption that~$\Re(s_\phi) = 1/2$ whenever~$\phi$ is non-cuspidal.
For convenience, we denote
\begin{equation}
  \rho_\phi := \begin{cases} 0, & (\phi \text{ cuspidal}), \\ 1/2 & (\text{otherwise}), \end{cases}\label{eq:def-rho_phi}
\end{equation}
and we recall the notation~$\beta_\phi$ from~\eqref{beta}.
\begin{lemma}
  We have
  \begin{equation}\label{eq:decay-tail-phi}
    \abs{\phi(x + iy) - \phi_\infty(y)} \ll \begin{cases} y^{-\rho_\phi} \e^{-y/3}, & (\beta_\phi \neq 3) \\ y^{-\rho_\phi} (1 + \abs{\log y}) \e^{-y/3}, & (\beta_\phi = 3) \end{cases} \qquad (x\in \R, y>0).
  \end{equation}
\end{lemma}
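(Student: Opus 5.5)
We split the range of $y$ at $y=1$, using the Fourier expansion~\eqref{eq:phi-Fourier} at $\infty$ for large $y$ and modularity to move to other cusps for small $y$.

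\textbf{Large $y$, $y\geq 1$.} Here
$$\phi(x+iy)-\phi_\infty(y)=\sum_{n\neq0}a(n)\e(nx)W_{\frac k2\sign(n),it_\phi}(4\pi\abs n y),$$
since $\sigma_\infty=\Id$ and $j=1$. The Whittaker function satisfies
$$W_{\kappa,\mu}(u)\ll_{\kappa,\mu} u^{\abs{\kappa}}\e^{-u/2}\qquad (u\geq 1).$$
Combined with $a(n)\ll\abs n^\eps$ from~\eqref{eq:bound-meanabs-an}, this gives a sum bounded by $\sum_{n} n^{O(1)}\e^{-2\pi n y}\ll \e^{-2\pi y}$. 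This is much smaller than $y^{-\rho_\phi}\e^{-y/3}$ (times the logarithm if needed).

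\textbf{Small $y$, $0<y\leq1$.} We need $\abs{\phi(x+iy)}+\abs{\phi_\infty(y)}\ll y^{-\rho_\phi}$, times $1+\abs{\log y}$ when $\beta_\phi=3$. For $\phi_\infty$ this is immediate from~\eqref{eq:def-phiinfty}. If $\phi$ is non-cuspidal then $\Re(s_\phi)=1/2$, so $\abs{y^{s_\phi}},\abs{y^{1-s_\phi}}=y^{1/2}$ and $\abs{y^{1/2}\log y}\leq y^{1/2}\abs{\log y}$. The log appears only when $s_\phi=1/2$ and $B_\infty\neq0$, which falls under $\beta_\phi=3$. Both are bounded by $y^{-1/2}$ in the relevant form.

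For $\phi(z)$ itself we use the standard uniform bound on a cofinite group. Write $y_\Gamma(z):=\max_{\cb}\max_{\gamma\in\Gamma}\Im(\sigma_\cb^{-1}\gamma z)$, where $\cb$ runs over a set of representatives of inequivalent cusps. By reduction theory this satisfies $y_\Gamma(z)\ll 1+1/\Im(z)$; more precisely, $\Im(\sigma_\cb^{-1}\gamma z)\cdot\Im(z)\ll1$ whenever $\Im(\sigma_\cb^{-1}\gamma z)$ is large, because the scaling-matrix images have bounded-below $\abs c$ (discreteness of the double cosets $\Gamma_\infty\backslash\Gamma/\Gamma_\cb$).

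Next, $\abs{\phi}$ is $\Gamma$-invariant since $\abs{u_\gamma}=1$. Let $w$ be the point $\sigma_\cb^{-1}\gamma z$ with the largest imaginary part, and write $Y=\Im w$.
\begin{itemize}
\item If $Y$ is bounded, $w$ lies in a compact part of the fundamental domain, so $\abs\phi\ll1$.
\item If $Y$ is large, the expansion at $\cb$ gives $\abs{\phi}\ll\abs{\phi_\cb(Y)}+\e^{-cY}$. For cuspidal $\phi$ this is $\ll1$. For non-cuspidal $\phi$ it is $\ll Y^{1/2}(1+\log Y)$, with the logarithm only if some $B_\cb\neq0$, i.e.\ $\beta_\phi=3$. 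Non-singular cusps contribute only exponential decay.
\end{itemize}
Substituting $Y\ll1/y$ gives $\abs{\phi(x+iy)}\ll y^{-\rho_\phi}$ in general, and $\ll y^{-\rho_\phi}(1+\abs{\log y})$ when $\beta_\phi=3$. Since $\e^{-y/3}\asymp1$ for $y\leq1$, this proves~\eqref{eq:decay-tail-phi}.

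The main obstacle is the uniform reduction-theory inequality $Y\ll 1/y$. It should follow from the fact that, for $\gamma\notin\Gamma_\infty$, the bottom-left entries $c$ of $\sigma_\cb^{-1}\gamma$ satisfy $\abs c\gg1$ (cf.\ \cite[Section~2]{Iwaniec2002}), because $\Im(\sigma_\cb^{-1}\gamma z)=y/\abs{cz+d}^2\leq 1/(c^2y)$. The remaining case $\gamma\sigma_\cb\in\Gamma_\infty$-type, where $\cb\sim\infty$ and the imaginary part is $y$ itself, is trivial.
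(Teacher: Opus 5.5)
Your proposal is correct and follows essentially the paper's argument. For large $y$ you use the Fourier expansion at $\infty$ with exponential decay of $W$. For small $y$ you move $z$ by $\Gamma$ into a cusp zone of height $\ll 1/y$, using the lower bound on $\abs{c}$, and then use the growth of $\phi_\cb$. The paper organizes this reduction via the Ford fundamental domain, and you via the invariant height $\max_{\cb,\gamma}\Im(\sigma_\cb^{-1}\gamma z)$; the difference is only cosmetic.

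One harmless slip: for $y\geq 1$ the Whittaker bound leaves a factor $y^{\abs{k}/2}$, so the tail is $\ll y^{\abs{k}/2}\e^{-2\pi y}$ rather than $\ll \e^{-2\pi y}$. This is still far below $y^{-\rho_\phi}\e^{-y/3}$.
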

\begin{proof}
  For~$y\gg 1$, this follows from the Fourier expansion~\eqref{eq:phi-Fourier}, the bound~\eqref{eq:bound-meanabs-an} and the exponential decay of Whittaker function~\cite[9.227]{GradshteynRyzhik2007}.

  Assume then that~$y$ is small. If~$\phi$ is cuspidal, the right-hand side is~$\asymp 1$ and thus the bound is obvious by boundedness of~$\phi$. Assume that~$\phi$ is not cuspidal, let~$z = x + iy$ and let~$F$ be the Ford fundamental domain~\cite[Chapter~2.2]{Iwaniec2002}. For all~$Y\geq 1$, denote also~$F(Y)$ the central part of~$F$, in the sense of~\cite[Chapter~2.2]{Iwaniec2002}, and let~$Y_0 \geq 1$ be such that the cuspidal parts~$F_\cb(Y_0)$ are disjoint. Let~$\gamma \in \Gamma$ be such that~$z' := \gamma z \in F$. If~$\gamma \in \Gamma_\infty$, it follows that~$\Im \gamma z = \Im z$. Otherwise we have on the one hand~$\Im \gamma z \geq \Im z$ since~$\gamma z \in F$, and on the other hand~$\gamma = \smat{\ast&\ast \\ c&d}$ with~$c\neq 0$ (since~$\gamma\not\in\Gamma_\infty$) and thus~$\Im \gamma z \leq \frac{1}{c^2 \Im z} \ll 1/y$. We deduce that~$y\leq \Im z' \ll 1/y$.

  Let~$\cb$ be such that~$\Im(\sigma_\cb^{-1} z') \gg 1$ (we pick~$\cb = \infty$ if~$z' \in F(Y_0)$, and otherwise~$\cb$ the unique cusp for which~$z' \in F_\cb(Y_0)$). Then~$\abs{\phi(z)} = \abs{\phi(z')} = \abs{\phi(\sigma_\cb w)}$ where~$w$ satisfies~$\Im w \gg 1$. Arguing as above, we have~$\Im(w) =\Im(z') \ll 1/y$ if~$\cb = \infty$, and otherwise~$\Im(w) \ll 1/\Im(z') \leq 1/y$ (since there are finitely many possibilities for~$\cb$). In any case~$\Im(w) \ll 1/y$.

  We conclude that
  \begin{align*}
    \abs{\phi(x + iy) - \phi_\infty(y)} \ll{}& \abs{\phi(z)} + 1 \\
    = {}& \abs{\phi(\sigma_\cb w)} + 1 \\
    \ll {}& \abs{\phi_\cb(\Im w)} + 1
  \end{align*}
  again by the exponential decay of the Whittaker function and the bound~\eqref{eq:bound-meanabs-an}. Finally by the assumption~$\Re(s_\phi) = 1/2$ and the definition of~$\beta_\phi$, we get~$\abs{\phi_\cb(\Im w)} \ll y^{-1/2}$ if~$\beta_\phi = 1$, and otherwise~$\abs{\phi_\cb(\Im w)} \ll y^{-1/2} \log(1+1/y)$ as claimed.
\end{proof}
Proceeding as in~\cite[Lemma~2.2]{DrappeauNordentoft2022}, we also have
\begin{equation}\label{eq:decay-tail-phi-diff}
  \abs{\phi(x + iy) - \phi(x'+iy)} \ll y^{-\rho_\phi - \eps} \e^{-y/3}\min(1,|x-x'|/y) \qquad (x,x'\in \R, y>0).
\end{equation}

\begin{lemma}\label{lem:properties-cE}\leavevmode
  \begin{enumerate}[label=(\roman*)]
    \item For each~$x\in \R$, the map~$s\mapsto \cE_x(\phi, s)$ is holomorphic on~$\Re(s)>\frac12+\rho_\phi$.
    \item For~$\Re(s)>\frac12+\rho_\phi$, the map~$x\mapsto \cE_x(\phi, s)$ is continuous.
    \item For each~$\eps>0$, we have~$\cE_x(\phi, s) \ll 1$ uniformly for~$\frac12+\rho_\phi+\eps \leq \Re(s) \ll 1$.
    \item For~$x\in C(\Gamma)\setminus\{\infty\}$, the map~$s\mapsto \cE_x(\phi, s)$ extends to an analytic function on
    $$ \cD_\phi := \begin{cases} \C, & (\phi \text{ cuspidal}), \\ \C\setminus\{\pm it_\phi, 1 \pm it_\phi\}, & (\phi \text{ non-cuspidal}). \end{cases} $$
    \item For each fixed~$x\in C(\Gamma)\setminus\{\infty\}$, and~$s\in\C$ satisfying
    $$ \abs{\Re(s)} \ll 1, \qquad \dist(s, \C\setminus\cD_\phi) \gg 1, $$
    we have
    \begin{equation}
      \cE_x(\phi, s) \ll_{x} 1.\label{eq:bound-Ex-anacont}
    \end{equation}
  \end{enumerate}
\end{lemma}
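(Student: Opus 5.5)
Items (i)--(iii) all come from the pointwise bound \eqref{eq:decay-tail-phi}. The integrand of \eqref{eq:def-Eichlerint} is bounded by $y^{\Re(s)-3/2-\rho_\phi}(1+\abs{\log y})^{O(1)}\e^{-y/3}$, uniformly in $x$.

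This majorant is integrable on $(0,\infty)$ exactly when $\Re(s)>\frac12+\rho_\phi$, and uniformly so on $\frac12+\rho_\phi+\eps\leq\Re(s)\ll 1$. That gives (iii) directly. For (i), I would combine holomorphy of the integrand in $s$ with this locally uniform domination, via Morera or differentiation under the integral. For (ii), continuity of $\phi$ and dominated convergence give continuity in $x$; alternatively one can use \eqref{eq:decay-tail-phi-diff}, whose bound $y^{-\rho_\phi-\eps}\e^{-y/3}\min(1,\abs{x-x'}/y)$ is still integrable against $y^{\Re(s)-3/2}$ after a harmless $\eps$-loss.

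For (iv) and (v) I would fix $x=\gamma\infty$ with $\gamma\in\Gamma$, $c_\gamma\neq0$, or more generally $x=\sigma_\cb\infty$ for a cusp $\cb$. The plan is to split the integral at $y=1$ and move the part near $x$ to a neighbourhood of the cusp $\cb$. The piece $\int_1^\infty$ is entire in $s$ by exponential decay. For $\int_0^1$, write $x+iy=\sigma_\cb w$, so that $\Im w\asymp 1/y$ as $y\to0$ with $\Re w$ bounded. By \ref{hyp:modularity}, the Fourier expansion \eqref{eq:phi-Fourier} at $\cb$ (or exponential decay if $\cb$ is non-singular) gives
\[
\phi(x+iy)=j_{\sigma_\cb}(w)^k\phi_\cb(\Im w)+O(\e^{-c/y}).
\]
Two terms then need care. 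The first is $\phi_\infty(y)$ itself on $(0,1)$. This is an explicit combination of $y^{s_\phi}$ and $y^{1-s_\phi}$ (or $y^{1/2}\log y$), and its Mellin transform over $(0,1)$ contributes poles only at $s=\frac12-s_\phi$ and $s=s_\phi-\frac12$. The second is the main term $j^k\phi_\cb(\Im w)$, which after computing $\sigma_\cb w$ is asymptotically a combination of $y^{-s_\phi}$, $y^{s_\phi-1}$ and their log variants, times $j$-factors that are smooth in $y$ at $0$. Its Mellin transform gives poles at $s=\frac12+s_\phi$ and $s=\frac32-s_\phi$, that is, in terms of $t_\phi$, at $\pm it_\phi$ and $1\pm it_\phi$.

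The step I expect to be the main obstacle is handling the $j$-factor and the remainder so that no further poles appear. Expanding $j_{\sigma_\cb}(w)^k$ and $\Im w$ in powers of $y$ would produce poles at shifted points, and these must cancel. To avoid this I would not expand; instead I would subtract only the exact leading behaviour, which corresponds to $\Im w = 1/(c^2y)$ exactly with constant phase, and bound the difference. I expect the difference to be $O(y^{1-\Re s_\phi})$-better, and $y\,\Im w$ is explicitly $y^2/(c^2y^2+\dots)$-type. This should give analyticity on $\Re(s)>-A$ for any $A$ in the cuspidal case, by iterating via the smoothness of the exact expression. As an alternative I would follow \cite[Section~5]{DrappeauNordentoft2022}: apply the functional relation $\cE_{\gamma\infty}$ to $\cE$ at $\infty$ via $z\mapsto\gamma z$, which gives the continuation directly as an entire integral plus explicit Mellin terms.

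The bound \eqref{eq:bound-Ex-anacont} then follows because every piece is an absolutely convergent integral, with dependence on $x$ only through $\gamma$, plus explicit rational functions of $s$ bounded away from their poles.
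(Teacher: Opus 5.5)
Your treatment of (i)--(iii) is the paper's: domination of the integrand by \eqref{eq:decay-tail-phi}, uniformly in $x$ and in $\Im(s)$. For (iv)--(v) the paper only cites \cite[Proposition~2.3]{DrappeauNordentoft2022} and the uniform boundedness of eq.~(2.13) there. Your direct argument, moving the part of the integral near $x$ to the cusp $\cb=x$ via $\sigma_\cb$, is in substance what that citation provides, so the route is the same.

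The step you flag as the main obstacle, however, rests on a miscomputation, and the remedy you sketch would not close the argument. If the remainder after subtracting the leading term were only $O(y^{1-\Re s_\phi})$ smaller, you would get continuation only to a shifted half-plane. Iterating an expansion in powers of $y$ would then generically create exactly the shifted poles you want to avoid, so you would not reach all of $\cD_\phi$.

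The missing observation is that along the vertical geodesic ending at $x=\sigma_\cb\infty$ everything is exact. With $c=c_{\sigma_\cb}\neq 0$, one has for every $y>0$
\[
\sigma_\cb^{-1}(x+iy)=\sigma_\cb^{-1}\infty+\frac{i}{c^2y},\qquad j_{\sigma_\cb}\big(\sigma_\cb^{-1}(x+iy)\big)=i\sign(c).
\]
Hence, by \eqref{eq:phi-Fourier}, the quantity $\phi(x+iy)-(i\sign c)^k\phi_\cb(1/(c^2y))$ is exactly the non-constant part of the Fourier expansion at $\cb$. It is the whole of $\phi(x+iy)$ if $\phi$ is cuspidal or $\cb$ is non-singular. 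By \eqref{eq:bound-meanabs-an} and the decay of the Whittaker function, it is $O(\e^{-c'/y})$ on $(0,1]$.

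From this, (iv) and (v) follow at once:
\begin{itemize}
\item The integral of this remainder against $y^{s-3/2}$ over $(0,1)$ is entire. The same holds for the $\int_1^\infty$ piece. Both are bounded for $\abs{\Re(s)}\ll1$ uniformly in $\Im(s)$, which is the uniformity (v) requires.
\item The two constant-term pieces, $\phi_\infty(y)$ and $(i\sign c)^k\phi_\cb(1/(c^2y))$, integrate over $(0,1)$ to explicit combinations of $(s-\rho)^{-1}$ and $(s-\rho)^{-2}$. The double poles occur only in the $\log$ case. Here $\rho\in\{\pm it_\phi,1\pm it_\phi\}$.
\item There is nothing to iterate. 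In the cuspidal case $\cE_x(\phi,s)$ is entire immediately.
\end{itemize}
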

\begin{proof}
  The first three properties are consequences of the bound~\eqref{eq:decay-tail-phi}. The fourth property is~\cite[Proposition~2.3]{DrappeauNordentoft2022}, while the last property is a direct consequence of the uniform boundedness of~\cite[eq.~(2.13)]{DrappeauNordentoft2022}.
\end{proof}

\subsection{Hölder regularity of the period function~$\hE$ at~$s=1/2$}

In accordance with~\cite[eq.~(3.13)]{DrappeauNordentoft2022}, for all~$x\in C(\Gamma) \setminus\{\infty, \gamma^{-1}\infty\}$ and~$s\in \cD_\phi$, we let
$$ \hE_\gamma(x, s) := \abs{j(\gamma, x)}^{2s-1}\cE_{\gamma x}(\phi, s) - u_\gamma(x) \cE_x(\phi, s), $$
where we recall the definition~\eqref{multp} of~$u_\gamma(x)$.
Note that~$\hE_\gamma(x, s) = 0$ for~$\gamma \in \Gamma_\infty$; in the remainder of this section we assume
$$ \gamma \not\in \Gamma_\infty.$$

We also notice that for $\gamma_1,\gamma_2\in\Gamma$, $\hE$ satisfies the period relation
\begin{align}\label{ple}
  \hE_{\gamma_1\gamma_2}(x,s)=|j(\gamma_2, x)|^{2s-1}\hE_{\gamma_1}(\gamma_2x,s)+u_{\gamma_1}(\gamma_2x)\hE_{\gamma_2}(x,s)
\end{align}
whenever the involved quantities are defined.

\begin{lemma}\label{lem:hE-at-half}
  The map~$x\mapsto \hE_\gamma(x, \frac12)$ extends to a function on~$\R\setminus\{\gamma^{-1}\infty\}$ which is locally~$\kappa$-Hölder continuous for any~$0<\kappa < 1 - \rho_\phi$.
  More precisely, for any~$\eps>0$ we have
  \begin{equation}
    \abs{\hE_\gamma(x, \tfrac12) - \hE_\gamma(x', \tfrac12)} \ll_{ \gamma} (1+\abs{x} + \abs{x'})^{2\eps} \abs{x - x'}^{1 - \rho_\phi - \eps}\label{eq:holder-hE}
  \end{equation}
  uniformly in~$x, x'\in \R\setminus[\gamma^{-1}\infty-\eps, \gamma^{-1}\infty + \eps]$.
\end{lemma}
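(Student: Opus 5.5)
We first obtain an integral representation of $\hE_\gamma(x,\frac12)$ that makes sense for every real $x\neq\gamma^{-1}\infty$, not only for cusps. At $s=\frac12$ the factor $\abs{j(\gamma,x)}^{2s-1}$ equals $1$, so for cusps $x$ we have
\[ \hE_\gamma(x,\tfrac12) = \cE_{\gamma x}(\phi,\tfrac12) - u_\gamma(x)\cE_x(\phi,\tfrac12). \]
Since $\cE_x(\phi,\frac12)$ is only defined by analytic continuation, we work instead with the integrals along the vertical geodesics from $x$ and from $\gamma x$. Use the modularity $\phi(\gamma z)=u_\gamma(z)\phi(z)$ and the fact that $\gamma$ maps the geodesic $(x,\infty)$ to the geodesic $(\gamma x,\gamma\infty)$. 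Then, formally,
\[ \hE_\gamma(x,\tfrac12) = \int_{\gamma\infty}^{\infty} \phi(w)\,\frac{\df w}{\Im w} + \text{(corrections from } \phi_\infty \text{ and from } u_\gamma(z) \text{ versus } u_\gamma(x)). \]
Here the integral runs along the geodesic from $\gamma x$ through $\gamma\infty$ to $\infty$, which is homotopic to the concatenation of the vertical ray at $\gamma x$ and $\gamma$ applied to the vertical ray at $x$. More concretely, the plan is the following.

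\begin{enumerate}[label=(\arabic*)]
\item Fix a basepoint height and split each $\cE$ integral into the parts $y\geq 1$ and $y\leq 1$. The part $y\geq 1$ is smooth in $x$, since it converges absolutely by \eqref{eq:decay-tail-phi}.
\item On the part $y\leq 1$ of $\cE_{\gamma x}$, substitute $z=\gamma w$, with $w$ lying on the geodesic from $x$ to $\gamma^{-1}(\gamma x+i)$. This converts it into $\int u_\gamma(w)\phi(w)\,\df(\cdot)$, and the divergent parts near $x$ cancel against $u_\gamma(x)\cE_x$ up to the factor $u_\gamma(w)-u_\gamma(x)$, which is $\ll \abs{w-x}$ on compacts away from $\gamma^{-1}\infty$.
\item Justify the identity for cusps $x$ by analytic continuation. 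Carry out the computation at $\Re(s)>\frac12+\rho_\phi$, where everything converges, and then continue to $s=\frac12$. For $s$ near $\frac12$ the resulting expression is holomorphic, because the bad pieces now involve $\phi(w)(u_\gamma(w)-u_\gamma(x))$ and the factor $\abs{j(\gamma,w)}^{2s-1}-\abs{j(\gamma,x)}^{2s-1}$, both of which gain a factor $\min(1,y)$.
\item The resulting formula defines the extension to all real $x\neq\gamma^{-1}\infty$.
\end{enumerate}

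The Hölder estimate then follows from comparing the integrands at $x$ and $x'$. Write $\delta=\abs{x-x'}$ and split the $y$-integral at $y=\delta$.
\begin{itemize}
\item For $y\geq\delta$, use \eqref{eq:decay-tail-phi-diff}. It gives $\abs{\phi(x+iy)-\phi(x'+iy)}\ll y^{-\rho_\phi-\eps}\delta/y$, and integrating against $\df y/y$ over $[\delta,1]$ with the extra factor $y$ (from the cancelled multiplier) yields $\delta^{1-\rho_\phi-\eps}$.
\item For $y\leq\delta$, bound each term separately by \eqref{eq:decay-tail-phi}. The integrand is $\ll y^{-\rho_\phi-\eps}\cdot y\,\df y/y$, giving again $\delta^{1-\rho_\phi-\eps}$.
\item The contributions from the multipliers and from the Jacobian factors are Lipschitz in $x$ on the region $\abs{x-\gamma^{-1}\infty}\geq\eps$.
\end{itemize}

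The factor $(1+\abs{x}+\abs{x'})^{2\eps}$ accounts for large $x$. There $\gamma x$ is close to $\gamma\infty$, and the lengths and heights of the transformed geodesic pieces depend polynomially on $x$; the derivative of $u_\gamma$ decays like $1/\abs{x}$, and the $\eps$-losses in \eqref{eq:decay-tail-phi-diff} become $\abs{x}^{O(\eps)}$. In the $\beta_\phi=3$ case, the logarithm is absorbed into the $\eps$ exponent.

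The main obstacle is step (2)/(3). One must set up the change of variables so that the divergences at $y\to0$ cancel exactly, including the term $\phi_\infty$ subtracted in \eqref{eq:def-Eichlerint}, which is not $\Gamma$-invariant. Its contribution must be computed explicitly; it is a regularised integral producing terms like $\abs{x}^{\pm it_\phi}$, smooth away from $\gamma^{-1}\infty$. One must also verify that the continuation is uniform enough in $x$ to pass to non-cusp $x$.
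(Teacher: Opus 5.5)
Your proposal has a genuine gap in the H\"older step, and it also does not justify the $x$-uniformity, which is the one thing this lemma adds to the literature.

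\textbf{The missing path-discrepancy term.} In step (2) you say that, after substituting $z=\gamma w$, the singular parts cancel against $u_\gamma(x)\cE_x$ ``up to the factor $u_\gamma(w)-u_\gamma(x)$'' and the Jacobian. That is not true. The map $\gamma^{-1}$ sends the vertical ray at $\gamma x$ to the semicircle from $x$ to $\gamma^{-1}\infty$, not to the vertical ray at $x$. Since $\phi$ is not holomorphic, these integrals genuinely depend on the path, which is also why your ``homotopic'' heuristic is unavailable. There is therefore an extra term $u_\gamma(x)\int\bigl(\phi(z_x(y))\mu_x(y)-\phi(x+iy)\bigr)\frac{\df y}{y}$, with $z_x(y)=x-\nu_x(y)+iy$ and $\nu_x(y)\ll y^2$, exactly as in Lemma~\ref{lem:Eab-Ex}.

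By \eqref{eq:decay-tail-phi-diff} this integrand is $O(y^{1-\rho_\phi-\eps})$, so convergence is fine. The problem is that here the gain of $y$ is itself produced by a first difference of $\phi$. Your argument on $y\ge\delta$ (``extra factor $y$ from the cancelled multiplier'' times $\delta/y$) only treats the term $(u_\gamma(w)-u_\gamma(x))\phi(w)$. For $F(x,y)=\phi(z_x(y))-\phi(x+iy)$ you would need the mixed second-order bound $\abs{F(x,y)-F(x',y)}\ll\delta\,y^{-\rho_\phi-\eps}$. First differences alone give only $\min(y^{1-\rho_\phi-\eps},\delta y^{-1-\rho_\phi-\eps})$, which integrates to $\delta^{(1-\rho_\phi-\eps)/2}$, half the claimed exponent.

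The missing input is something like $\partial_x^2\phi(x+iy)\ll y^{-2-\rho_\phi-\eps}$ uniformly in $x$. One source is the Fourier expansion combined with Wilton's bound \eqref{eq:wilton} when $\phi$ is cuspidal. In general one can apply the growth bounds to the Maa\ss{} raised and lowered forms of $\phi$.

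\textbf{The $x$-uniformity.} The factor $(1+\abs{x}+\abs{x'})^{2\eps}$ is the only new content compared with \cite[Theorem~3.1]{DrappeauNordentoft2022}, where this power is an unspecified constant. Your picture of the geometry for large $\abs{x}$ is off. The set $\gamma^{-1}\{\gamma x+iy:0<y\le1\}$ is then nearly the whole semicircle of radius $\asymp\abs{x}$. It rises to height $\asymp\abs{x}$ and ends near $\gamma^{-1}\infty$ at height $\asymp c_\gamma^{-2}$. On its high part $\phi\approx\phi_\infty$, and this produces the main term $\cM_{x-\gamma^{-1}\infty}(\phi,\frac12)$, of size $\abs{x}^{1/2}$ when $\phi$ is non-cuspidal.

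Saying that these pieces ``depend polynomially on $x$'' is exactly the reasoning that yields a polynomial loss. To reach $\abs{x}^{O(\eps)}$ you must show that every piece has $x$-derivative $\ll\abs{x}^{O(\eps)}$, for instance by evaluating the $\phi_\infty$-contribution exactly in angular coordinates on the semicircle.

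The paper does not reprove the H\"older property at all; it quotes \cite[Theorem~3.1]{DrappeauNordentoft2022} and notes that the $x$-uniformity is implicit there for cuspidal $\phi$ via Wilton's bound. The remaining case is $\phi$ non-cuspidal with $\abs{x}\to\infty$ and $\abs{x-x'}\le\abs{x}/2$. There it writes $\hE_\gamma(x,\tfrac12)=M(x)-\Delta_0(x,\tfrac12)+R(x)$ and interpolates between two bounds. The representation $R=E+\Delta_0$ gives $R\ll\abs{x}^{-1+\eps}$, and the representation $R=\Delta_c-\Delta_\infty-M$ gives $R'\ll\abs{x}$.
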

\begin{proof}
  This is~\cite[Theorem~3.1]{DrappeauNordentoft2022}, except that the exponent of $(1+\abs{x} + \abs{x'})$ appearing there is raised by an unspecified constant. This improved bound is already implicit in the proof in the cuspidal case, using Wilton's bound in~\cite[eq.~(2.8)]{DrappeauNordentoft2022} which we recall here for convenience:
  \begin{equation}
    \label{eq:wilton}
    \sum_{1\leq n \leq X} a(n) \e(nt) \ll_{\phi, \eps} X^\eps, \qquad (t\in\R, \phi\text{ cuspidal}).
  \end{equation}
  For the non-cuspidal case it immediately follows
  from~\cite[Theorems~3.1 and~4.1]{DrappeauNordentoft2022} unless $|x|\to\infty$ and $|x-x'|\leq |x|/2$. To handle this remaining case, one could simply import the techniques of Theorem~3.1 into the proof of Theorem~4.1 therein. An alternative method, suggested by an LLM and which is less direct but easier to implement here, is to proceed as follows.
  
  With the notation of the proof of \cite[Theorem 3.1]{DrappeauNordentoft2022}, we have
  \begin{align}\label{casdx}
    \hE_\gamma(x, \tfrac12) = -\Delta_0(x,\tfrac12)+\Delta_c(x,\tfrac12)-\Delta_\infty(x,\tfrac12),
  \end{align}
  with derivatives $\Delta'_c(x,\frac12),\Delta'_\infty(x,\frac12)\ll (1+|x|)$, by a trivial bound and~(3.17) \textit{ibid.}\ respectively. Also, $\Delta_0(x,\tfrac12)=H(x)$ satisfies $\Delta_0(x,\tfrac12)=O(|x|^{-1})$, by inserting the bound following~(4.12) in~(3.9) \textit{ibid.}\, and $\Delta_0(x,\tfrac12)-\Delta_0(x',\tfrac12)\ll \abs{x - x'}^{1/2 - \eps}$, by~(3.10) \textit{ibid.}\ if $|x-x'|\ll1$ and trivially otherwise.

  Furthermore,~\cite[Theorem~4.1]{DrappeauNordentoft2022} gives 
  \begin{align}\label{casdx2}
    \hE_\gamma(x, \tfrac12) = M(x)+E(x)
  \end{align}
  for an explicit main term $M$, clearly satisfying $M'(x)\ll |x|^{-1/2+\eps}$ and $M(x)-M(x')\ll |x-x'|^{1/2-\eps} |x|^{2\eps}$, and an error term $E(x)=O(|x|^{-1+\eps})$. 

  Combining~\eqref{casdx}-\eqref{casdx2}, we have that 
  $
  \hE_\gamma(x, \tfrac12) =M(x)-\Delta_0(x,\tfrac12)+R(x),
  $
  with 
  \begin{align*}
    R(x)&=E(x)+\Delta_0(x,\tfrac12)=\Delta_c(x,\tfrac12)-\Delta_\infty(x,\tfrac12)-M(x).
  \end{align*}
  In particular, it suffices to show that $R(x)$ satisfies the claimed bound, since $M$ and $\Delta_0$ do. This then immediately follows since the two expressions for $R$ give $R(x)\ll |x|^{-1+\eps}$ and $R'(x)\ll |x|$ respectively, whence 
  \begin{align*}
    R(x)-R(x')&\ll (|R(x)|+|R(x')|)^{1/2} |R(x)-R(x')|^{1/2}\ll |x|^{-1/2+\eps}\big(|x-x'|\cdot |x|\big)^{1/2},
  \end{align*}
  which readily gives the claim.
\end{proof}

\subsection{Upper-bound on the period function~$\hE$}

We recall from~\cite[Lemma~4.5]{DrappeauNordentoft2022} the definition
\begin{equation}
  \Upsilon_{k,\eta}(\tau) := \e^{\pi i (2-\eta) k/4} \frac{2\pi \Gamma(\tau)}{4^\tau \Gamma(\frac{\tau+1+k/2}2) \Gamma(\frac{\tau+1-k/2}2)}, \qquad (k\in\Z, \eta=\pm 1, \tau\in\C\setminus\Z_{\leq 0}),\label{eq:def-Upsilon}
\end{equation}
which will be relevant for non-cuspidal forms.
For~$s\in \C$, $s\not\in \pm(s_\phi - \frac12) + \Z_{\leq 0}$, and~$\beta \in \R_{\neq0}$, we define
\begin{equation}
  \cM_\beta(\phi, s) :=
  \begin{cases}
    A_\infty \abs{\beta}^{s_\phi - \frac12 + s} \Upsilon_{k,\eta}(s_\phi - \tfrac12 + s) + B_\infty \abs{\beta}^{\frac12-s_\phi+s} \Upsilon_{k,\eta}(\tfrac12-s_\phi+s), & (s_\phi \neq \frac12), \\
    (A_\infty + B_\infty \log \abs{\beta}) \abs{\beta}^s \Upsilon_{k,\eta}(s) + B_\infty \abs{\beta}^s \Upsilon_{k,\eta}'(s), & (s_\phi = \frac12),
  \end{cases}\label{eq:def-cM}
\end{equation}
where~$A_\infty, B_\infty$ are the coefficients in the expansion~\eqref{eq:def-phiinfty} at the cusp at~$\infty$, and~$\eta = \sign(\beta)$. We will denote them by~$A_\infty(\phi), B_\infty(\phi)$ whenever the dependence on~$\phi$ is relevant.

\begin{proposition}\label{prop:hE-upperbound-alls}
  The map~$\hE_\gamma$ extends to a function on~$(\R\setminus\{\gamma^{-1}\infty\}) \times \{\Re(s)>0\}$ with the following properties:
  \begin{itemize}
    \item for each~$x\in \R \setminus\{\gamma^{-1}\infty\}$, the map~$s\mapsto \hE_\gamma(x, s)$ is analytic in~$\{\Re(s)>0\}$;
    \item for~$\Re(s)>0$, the map~$x\mapsto \hE_\gamma(x, s)$ is continuous on~$\R\setminus\{\gamma^{-1}\infty\}$;
    \item given~$\eps>0$, uniformly for~$\eps\leq \Re(s) \ll 1$ and~$\abs{x - \gamma^{-1}\infty} \geq \eps$, we have
    \begin{equation}
      \begin{aligned}
        \hE_\gamma(x, s) \ll_{ \gamma} (1+\abs{s}) (1 + \abs{x - \gamma^{-1}\infty}^{\max(\Re(s) - \frac12 + \rho_\phi + \eps, 2\Re(s) - 1)} ).
      \end{aligned}
      \label{eq:bound-hEgamma}
    \end{equation}
    If~$s=1/2$, we have the more precise estimate
    \begin{equation}
      \label{eq:estim-hEgamma-half}
      \begin{aligned}
        \hE_\gamma(x, \tfrac12) ={}& \chi(\gamma) \sign(c_\gamma)^{-k} \big( \cM_{x - \gamma^{-1}\infty}(\phi, \tfrac12) + i^k \cE_{\gamma^{-1}\infty}(\phi, \tfrac12)\big) \\
        &{} + O_{\phi, \gamma, \eps} ( \abs{x - \gamma^{-1}\infty}^{-1}).
      \end{aligned}
    \end{equation}
  \end{itemize}
\end{proposition}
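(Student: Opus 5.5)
We split the Eichler integral $\cE_x(\phi,s)$ at a height adapted to $\gamma$ and use automorphy on the lower piece, following the proof of \cite[Theorem~3.1]{DrappeauNordentoft2022} but keeping track of $s$ and $x$. Write $\gamma=\smat{a&b\\c&d}$ with $c=c_\gamma\neq0$. For $\Re(s)>\frac12+\rho_\phi$ and $x\neq\gamma^{-1}\infty$, we consider the vertical line $\{\gamma x+iy\}$ and its image under $\gamma^{-1}$. This image is a semicircle from $x$ to $\gamma^{-1}\infty=-d/c$, of radius $\asymp|x+d/c|$. Modularity \ref{hyp:modularity} gives $\phi(\gamma z)=u_\gamma(z)\phi(z)$. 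After substituting $z=\gamma^{-1}(\gamma x+iy)$ and using $|j(\gamma,x)|^{2s-1}$, the difference $\hE_\gamma(x,s)$ becomes an integral of $\phi$ (minus the constant terms $\phi_\infty$) along the closed contour formed by the vertical half-line above $x$ and the semicircle. We then deform, as in \cite{DrappeauNordentoft2022}, into three pieces: $\Delta_0$ near $x$, $\Delta_c$ a compact arc, and $\Delta_\infty$ near $\gamma^{-1}\infty$.

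Each piece is visibly analytic in $s$ on $\Re(s)>0$ and continuous in $x$ away from $\gamma^{-1}\infty$. This gives the extension claimed in the first two items.

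\begin{itemize}
\item \emph{Cancellation near $x$.} The pieces near $x$ cancel between the two terms up to integrands bounded via \eqref{eq:decay-tail-phi-diff}. That bound decays like $y^{-\rho_\phi-\eps}\min(1,|x-x'|/y)$ as $y\to0$, and it supplies convergence down to $\Re(s)>0$.
\item \emph{The compact arc.} The arc piece is bounded trivially by \eqref{eq:decay-tail-phi}. The factor $(1+|s|)$ comes from one integration by parts or from the explicit constant-term integrals.
\item \emph{Near $\gamma^{-1}\infty$.} Let $\beta=x-\gamma^{-1}\infty$. Near $\gamma^{-1}\infty$ the map $\gamma$ sends a neighbourhood of $\gamma^{-1}\infty$ to a neighbourhood of $\infty$. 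There $\phi$ is its constant term $\phi_\infty(\Im)$ plus exponentially small terms. The constant-term contribution is computed exactly as a Mellin-type integral of $(A_\infty y^{s_\phi}+B_\infty y^{1-s_\phi})$ against the arc parametrisation, and produces $|\beta|^{\pm(s_\phi-\frac12)+s}$ times Gamma factors. This is the computation in \cite[Lemma~4.5]{DrappeauNordentoft2022}. It yields $\cM_\beta(\phi,s)$ multiplied by $\chi(\gamma)\sign(c)^{-k}$, including the $\log$ and $\Upsilon'$ terms when $s_\phi=\frac12$.
\item \emph{Shape of the bound.} Since $\Re(s_\phi)=\frac12$ in the non-cuspidal case, $\cM_\beta$ is $\ll|\beta|^{\Re s+\eps}$. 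Together with the Jacobian factor $|j(\gamma,x)|^{2s-1}\asymp|\beta|^{2\Re s-1}$ on the bounded remaining pieces, this gives the exponent $\max(\Re s-\frac12+\rho_\phi+\eps,\,2\Re s-1)$ in \eqref{eq:bound-hEgamma}.
\end{itemize}

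For the precise estimate \eqref{eq:estim-hEgamma-half} at $s=\frac12$, we take $|\beta|\to\infty$. The non-constant part of the piece near $\gamma^{-1}\infty$ tends to the full Eichler integral at the cusp $\gamma^{-1}\infty$. This gives $i^k\chi(\gamma)\sign(c)^{-k}\cE_{\gamma^{-1}\infty}(\phi,\frac12)$, the $i^k$ coming from $j_\gamma$ evaluated along the vertical direction. The truncation error is $O(|\beta|^{-1})$: we Taylor expand the arc near its endpoint and use Lemma~\ref{lem:properties-cE}(v). All remaining pieces are $O(|\beta|^{-1})$ by the decay in \eqref{eq:decay-tail-phi}. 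This refines \cite[Theorem~4.1]{DrappeauNordentoft2022} from an error $|\beta|^{-1+\eps}$ to $|\beta|^{-1}$.

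The main obstacle is uniformity in $s$ down to $\Re(s)=\eps$ in the non-cuspidal case. There the constant terms $y^{s_\phi}$ make individual pieces diverge, and the subtraction of $\phi_\infty$ must be matched exactly with the explicit $\cM_\beta$ integrals. If that matching is awkward, I would first separate $\phi=\phi^{\mathrm{cusp\,part}}+$ (constant-term model near each cusp) and treat the model explicitly via Gamma-integrals.
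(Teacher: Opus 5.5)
There is a genuine gap. Your reduction silently uses that $(\Im z)^{s-\frac12}\df s(z)$ is $\Gamma$-invariant, and that is true only at $s=\frac12$. On the line $z=\gamma x+iy$ one has
\[
\abs{j(\gamma,x)}^{2s-1}y^{s-\frac12}-(\Im\gamma^{-1}z)^{s-\frac12}=\abs{j(\gamma,x)}^{2s-1}y^{s-\frac12}\big(1-(1+c^2y^2j(\gamma,x)^2)^{\frac12-s}\big).
\]
This has two consequences after the substitution $z=\gamma w$:
\begin{enumerate}
\item[(a)] The integrand on the semicircle carries the extra factor $\abs{j(\gamma,w)/j(\gamma,x)}^{1-2s}=(1+c^2y^2j(\gamma,x)^2)^{s-\frac12}$. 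It equals $1$ only at $w=x$ and degenerates as $w\to\gamma^{-1}\infty$.
\item[(b)] The subtracted term $\phi_\infty(\Im z)$ pulls back to $\phi_\infty(\Im\gamma w)$, not to the constant term you need on the semicircle.
\end{enumerate}
So $\hE_\gamma(x,s)$ is not a comparison of integrals of $\phi$ against a single measure along the sides of a triangle. The paper isolates exactly these defects in Lemma~\ref{lem:Ex-Eab}. The first is $\cI_1$, the integral of $\phi-\phi_\infty$ over the line above $\gamma x$ against the displayed difference. The second is $\cI_3=\int_{\gamma x}^\infty\phi_\infty(\Im z)(\Im\gamma^{-1}z)^{s-\frac12}\df s(z)$, evaluated through $g_\tau(s)=\Gamma(\frac{s+\tau-1/2}{2})\Gamma(\frac{s-\tau-1/2}{2})/(2\Gamma(s-\frac12))$. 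Their difference $\cR_{x,\gamma}(\phi,s)$ vanishes identically at $s=\frac12$, which is why it never appears in \cite{DrappeauNordentoft2022}. It does not vanish for other $s$, and it is the new content of the proposition.

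\textbf{Where the missing term matters.}
\begin{enumerate}
\item[(i)] \emph{The bound.} Split $\cI_1$ at $y\asymp 1/\abs{cj(\gamma,x)}$ and use $1-(1+u)^{\frac12-s}\ll(1+\abs{s})u$ for small $u$. This gives $(1+\abs{s})\abs{c}^{\frac12-\Re(s)+\rho_\phi+\eps}\abs{j(\gamma,x)}^{\Re(s)-\frac12+\rho_\phi+\eps}+\abs{j(\gamma,x)}^{2\Re(s)-1}+\abs{c}^{1-2\Re(s)}$, with $\abs{j(\gamma,x)}=\abs{c}\abs{x-\gamma^{-1}\infty}$. This is the source of both exponents in \eqref{eq:bound-hEgamma} and of the factor $1+\abs{s}$. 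In the cuspidal case $\cM\equiv0$ and the other pieces $\cE_{\gamma^{-1}\infty}$ and $\cT$ are bounded. Yet growth like $\abs{x}^{2\Re(s)-1}$ is in general present for $\Re(s)>\frac12$, because $\cE_{\gamma x}(\phi,s)\to\cE_{\gamma\infty}(\phi,s)$ as $x\to\infty$. That growth can only come from $\cI_1$; your appeal to $\cM_\beta$ and to ``the Jacobian factor on bounded pieces'' does not derive it.
\item[(ii)] \emph{Analyticity.} For non-cuspidal $\phi$ the pieces are not all analytic on $\Re(s)>0$. Both $\cE_{\gamma^{-1}\infty}(\phi,s)$ and $\cI_3$ in general have poles at $s=1\pm it_\phi$. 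You must show that the polar parts of $\hE_\gamma(x,s)$ cancel; the paper does this with the values from the proof of \cite[Proposition~2.3]{DrappeauNordentoft2022}. Only then do you get the bound near these points (maximum modulus) and continuity in $x$ there (Cauchy's formula).
\item[(iii)] \emph{The value at $s=\frac12$.} For non-cuspidal $\phi$, $\cE_x(\phi,s)$ is an absolutely convergent integral only for $\Re(s)>1$, so your direct computation at $s=\frac12$ is not available as written. The paper instead proves, for $\Re(s)>1$,
\[
\hE_\gamma(x,s)=\chi(\gamma)\sign(c)^{-k}\big(i^k\cE_{\gamma^{-1}\infty}(\phi,s)+\cM_{x-\gamma^{-1}\infty}(\phi,s)+\cT_{x,\gamma^{-1}\infty}(\phi,s)\big)+\cR_{x,\gamma}(\phi,s).
\]
It then continues this identity and uses $\cR_{x,\gamma}(\phi,\frac12)=0$: the displayed difference vanishes at $s=\frac12$, and $g_\tau$ carries the factor $1/\Gamma(s-\frac12)$. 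The $O(\abs{x-\gamma^{-1}\infty}^{-1})$ error therefore comes solely from $\cT$ in Lemma~\ref{lem:Eab-Ex}.
\end{enumerate}

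\textbf{A smaller point.} Since $\phi$ is not holomorphic, there is no contour deformation. Integrals along different geodesics can only be compared through endpoint estimates as in Lemma~\ref{lem:Eab-Ex}, which is what your ``cancellation near $x$'' step should be.
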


The case~$s=1/2$ of this proposition corresponds to~\cite[Theorem~4.1]{DrappeauNordentoft2022}.
Compared with~\cite{DrappeauNordentoft2022}, the need for uniformity in~$s$ requires a delicate analysis of certain terms denoted~$\cR_{\alpha, \upsilon}(\phi, s)$ below, which happen to vanish identically at~$s=1/2$ (as in~\cite{DrappeauNordentoft2022}), but do not vanish anymore in our context.

For all~$\alpha, \beta \in \R$ with~$\alpha \neq \beta$, let

\begin{minipage}{0.70\textwidth}
  \begin{align*} &\delta_{\alpha, \beta} := \tfrac12(\alpha-\beta), \\ &\xi_{\alpha, \beta} := \beta + \delta_{\alpha,\beta} + i\abs{\delta_{\alpha,\beta}}. \end{align*}
\end{minipage}
\hfill
\begin{minipage}{0.30\textwidth}
  \begin{tikzpicture}[scale=1.3]
    \draw[->] (-1.2,0) -- (1.2,0) node[right] {};

    \draw (0,0) ++(-1,0) arc[start angle=180, end angle=0, radius=1];
    \fill (-1,0) circle (0.02) node[below left] {$\beta$};
    \fill (1,0) circle (0.02) node[below] {$\alpha$};
    \fill (0,1) circle (0.02) node[above] {$\xi_{\alpha, \beta}$};

    \draw[dashed, <->] (-1,-0.1) -- (0,-0.1) node[midway,below] {$\abs{\delta_{\alpha, \beta}}$};
  \end{tikzpicture}
\end{minipage}
Hence~$\xi_{\alpha,\beta}$ is the point of highest ordinate on the geodesic from~$\alpha$ to~$\beta$.

For~$\Re(s)>1$, let
$$ \cE^*_{\alpha,\beta}(\phi, s) := \int_\alpha^{\xi_{\alpha,\beta}} \Big(\frac{z-\beta}{\abs{z-\beta}}\Big)^k (\phi(z) - \phi_\infty(\Im(z))) (\Im z)^{s-1/2} \df s(z), $$
where the integral is taken along the geodesic from~$\alpha$ to~$\xi_{\alpha,\beta}$, which is a quarter-circle, and $\df s(z)$ denotes the $\SL(2, \R)$-invariant metric $\df s(z) = \sqrt{\df x^2 + \df y^2}/y$.
Let also
\begin{equation}
  \cE_{\alpha,\beta}(\phi, s) := \cE^*_{\alpha,\beta}(\phi, s) + (-i\sign(\delta_{\alpha,\beta}))^k \cE^*_{\beta,\alpha}(\phi, s). \label{eq:def-Ealphabeta}
\end{equation}

Also, given~$\upsilon\in \SL(2, \R)$, we let~$\phi^\upsilon(z) = j_\upsilon(z)^{-k} \phi(\upsilon z)$. Thus~$\phi^\upsilon$ is an automorphic form of same weight, character and eigenvalue as~$\phi$ for the group~$\Gamma^\upsilon := \upsilon^{-1}\Gamma \upsilon$, with~$\rho_{\phi^\upsilon} = \rho_\phi$. Also note that~$C(\Gamma^\upsilon) = \upsilon^{-1} C(\Gamma)$.
Finally note that if~$\upsilon\in \Gamma$, then~$\phi^\upsilon = \chi(\upsilon) \phi$.

\begin{lemma}\label{lem:Ex-Eab}
  Let~$\upsilon \in \SL(2, \R)$ with non-zero bottom-left coefficient~$c=c_{\upsilon} \neq 0$, and let~$\alpha \in \R\setminus\{\upsilon^{-1}\infty\}$. Assume that~$\upsilon \alpha, \upsilon \infty \in C(\Gamma)$. Then for~$\Re(s)>1$ we have
  $$ \abs{j(\upsilon, \alpha)}^{2s-1} \cE_{\upsilon \alpha}(\phi, s) = \sign(c)^{-k} \big( \cE_{\alpha, \upsilon^{-1}\infty}(\phi^\upsilon, s) + \cM_{\alpha - \upsilon^{-1}\infty}(\phi^\upsilon, s)\big) + \cR_{\alpha, \upsilon}(\phi, s), $$
  with the definition~\eqref{eq:def-cM}, where~$\cR_{\alpha, \upsilon}(\phi, s)$ is holomorphic in~$s$ on~$\{\Re(s)>0\}\cap \cD_\phi$,
  is continuous with respect to~$\alpha$ on~$\R\setminus\{\upsilon^{-1}\infty\}$ uniformly in compact sets of~$\{\Re(s)>0\}\cap \cD_\phi$, and
  satisfies for~$\eps\leq \Re(s)\ll 1$ and~$\abs{s - (1 \pm it_\phi)} \geq \eps$ the estimate
  \begin{multline}\label{eq:bound-cR}
    \cR_{\alpha, \upsilon}(\phi, s) \ll_{\phi, \eps} (1+\abs{s})\big(|c|^{\frac12-\Re(s)+\rho_\phi+\eps} \abs{j(\upsilon, \alpha)}^{\Re(s)-\frac12+\rho_\phi+\eps} + \abs{j(\upsilon, \alpha)}^{2\Re(s)-1} \\
    + \abs{c}^{1-2\Re(s)} + (1+\abs{\log(\abs{c j(\upsilon, \alpha)})}) \abs{j(\upsilon, \alpha)}^{\Re(s)-1} \abs{c}^{-\Re(s)}\big)
  \end{multline}
  uniformly in~$\upsilon, \alpha$ and~$s$. Finally, we have~$\cR_{\alpha, \upsilon}(\phi, \frac12) = 0$.
\end{lemma}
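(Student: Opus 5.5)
The plan is to change variables in the Eichler integral so that the vertical line above $\upsilon\alpha$ becomes the geodesic from $\alpha$ to $\upsilon^{-1}\infty$, and then to split that geodesic at its top point $\xi_{\alpha,\upsilon^{-1}\infty}$. Throughout, set $\beta := \upsilon^{-1}\infty$ and $\delta := \delta_{\alpha,\beta}$.

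\emph{Change of variables.} In $\cE_{\upsilon\alpha}(\phi,s)$, for $\Re(s)>1$, substitute $z=\upsilon w$. The vertical ray from $\upsilon\alpha$ to $\infty$ is the image under $\upsilon$ of the geodesic $\alpha\to\beta$, which is the semicircle of radius $\abs{\delta}$. Along it the following identities hold:
\begin{itemize}
\item $\Im(\upsilon w) = \Im w/\abs{j(\upsilon,w)}^2$;
\item $\df y/y$ becomes the invariant arclength $\df s(w)$;
\item $\phi(\upsilon w) = j_\upsilon(w)^k \phi^\upsilon(w)$.
\end{itemize}
On this semicircle, $\abs{j(\upsilon,w)} = \abs{c}\abs{w-\beta}$, and elementary geometry gives $\Im w/\abs{w-\beta}^2 = 1/(2\abs{\delta})\cdot(\text{cot-type factor})$. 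One checks that
\[
  \abs{j(\upsilon,\alpha)}^{2s-1}\,(\Im \upsilon w)^{s-1/2} = (\Im w)^{s-1/2}\,\lambda(w)^{s-1/2}
\]
for an explicit weight $\lambda(w)$. Also $j_\upsilon(w)^k = \sign(c)^k\big((w-\beta)/\abs{w-\beta}\big)^k$.

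\emph{Splitting the integrand.} Write
\[
  \phi^\upsilon(w) = \big(\phi^\upsilon(w) - \phi^\upsilon_\infty(\Im w)\big) + \phi^\upsilon_\infty(\Im w).
\]
Split the geodesic at $\xi_{\alpha,\beta}$. On the piece from $\alpha$ to $\xi$, the cuspidal-subtracted part reproduces $\cE^*_{\alpha,\beta}(\phi^\upsilon,s)$. On the piece from $\xi$ to $\beta$, reverse orientation; the factor $(w-\beta)/\abs{w-\beta}$ compared with $(w-\alpha)/\abs{w-\alpha}$ differs by the rotation $-i\sign(\delta)$ on the semicircle, and this yields $(-i\sign\delta)^k\cE^*_{\beta,\alpha}$. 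Together these give $\cE_{\alpha,\beta}(\phi^\upsilon,s)$ as in \eqref{eq:def-Ealphabeta}, up to the defect caused by the weight $\lambda(w)^{s-1/2}\ne 1$.

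We must also account for the term $\phi_\infty(\Im z)$ subtracted in $\cE_{\upsilon\alpha}$, which is not $\phi^\upsilon_\infty$. Consider the integral of the constant-term pieces $\phi^\upsilon_\infty(\Im w)$ over the semicircle against the weight $(\Im\upsilon w)^{s-1/2}$ (the exponent $\Re(s) > 1$ ensures convergence there). With $\Im w = \abs{\delta}\sin\theta$, this is a beta integral in $\theta$. It evaluates to $\abs{\beta'}^{\,s_\phi-1/2+s}\Upsilon_{k,\eta}(\cdot)$ with $\beta' = \alpha-\beta$, which is exactly $\cM_{\alpha-\beta}(\phi^\upsilon,s)$; the case $s_\phi = 1/2$ follows by differentiating in the exponent to produce the $\log$ and $\Upsilon'$ terms. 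This matches \cite[Lemma~4.5]{DrappeauNordentoft2022}. The integral against $\phi_\infty(\Im\upsilon w)$ is not absorbed and goes into the remainder.

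\emph{The remainder.} Define $\cR_{\alpha,\upsilon}(\phi,s)$ as the sum of three pieces:
\begin{enumerate}
\item[(a)] the integral of $(\phi^\upsilon - \phi^\upsilon_\infty)(\Im w)^{s-1/2}\big(\lambda(w)^{s-1/2}-1\big)$ over both halves;
\item[(b)] the term coming from $\phi_\infty(\Im \upsilon w)$;
\item[(c)] corrections arising from the geometry near the endpoints.
\end{enumerate}
Each of these carries a factor that is a power with exponent proportional to $s-\frac12$, so it vanishes identically at $s=\frac12$; this gives $\cR_{\alpha,\upsilon}(\phi,\tfrac12)=0$.

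\emph{Analytic continuation and bounds.} This is the main obstacle. Using \eqref{eq:decay-tail-phi} for $\phi^\upsilon$, the integrand of (a) near $\alpha$ decays once the factor $\lambda^{s-1/2}-1$ is taken into account. Near $\beta$, the relevant growth is that of $\phi$ at $\upsilon\beta = \infty$, controlled by $\phi_\infty$ via the subtraction (b). These estimates make all pieces holomorphic on $\Re(s)>0$ away from $1\pm it_\phi$, and continuous in $\alpha$ by \eqref{eq:decay-tail-phi-diff}.

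For the bound \eqref{eq:bound-cR}, split the arc at the heights where $\Im\upsilon w \asymp 1$, that is, where $\Im w \asymp \abs{c}^{-2}\cdot$ (appropriate scale) in terms of $\abs{c}$ and $\abs{j(\upsilon,\alpha)} = \abs{c}\abs{\delta}\cdot 2$. Bound each portion by \eqref{eq:decay-tail-phi}. The factor $(1+\abs{s})$ comes from $\abs{\lambda^{s-1/2}-1} \ll \abs{s}\abs{\log\lambda}$. The four terms on the right of \eqref{eq:bound-cR} correspond respectively to:
\begin{itemize}
\item the cusp-growth region $y^{-\rho_\phi-\eps}$;
\item the bulk;
\item the constant-term integral (b);
\item the transitional logarithmic region.
\end{itemize}
Tracking the exponents through these regions is where the care lies.
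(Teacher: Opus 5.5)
There is a genuine gap in the non-cuspidal case. When $\phi$ is cuspidal your plan is the paper's: your defect (a) is then exactly the paper's remainder, though it is best estimated in the variable $z=\upsilon w$. When $\phi$ is not cuspidal, however, splitting $\cR_{\alpha,\upsilon}$ into (a), (b), (c) does not produce convergent integrals. Put $z=\upsilon w$, $Y=\Im z$, $\beta=\upsilon^{-1}\infty$, and suppose $\phi_\infty\not\equiv0$.

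As $w\to\beta$ on the semicircle, $Y\to\infty$. There $j_\upsilon(w)^k\phi^\upsilon(w)=\phi(z)=\phi_\infty(Y)+O(\e^{-Y/3})$ and $(\Im w)^{s-1/2}\lambda(w)^{s-1/2}=\abs{j(\upsilon,\alpha)}^{2s-1}Y^{s-1/2}$. So, up to lower-order terms, the integrand of (a) behaves like $\abs{j(\upsilon,\alpha)}^{2s-1}\phi_\infty(Y)Y^{s-1/2}\,\df Y/Y$, which is not integrable at $Y=\infty$ for $\Re(s)>\frac12$. Term (b) equals $-\abs{j(\upsilon,\alpha)}^{2s-1}\int_0^\infty\phi_\infty(Y)Y^{s-1/2}\,\df Y/Y$, which diverges for every $s$. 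By pulling $\phi_\infty(\Im\upsilon w)$ away from $\phi^\upsilon$ you destroy exactly the cancellation that makes $\cE_{\upsilon\alpha}$ converge at $i\infty$.

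The constant term $\phi^\upsilon_\infty(\Im w)$ has a similar problem. It yields exactly $\cM_{\alpha-\beta}(\phi^\upsilon,s)$ only against the unperturbed weight $(\Im w)^{s-1/2}$; against $(\Im\upsilon w)^{s-1/2}$, as you wrote, it diverges for $\Re(s)\geq1$. The resulting defect $\int j_\upsilon^k\phi^\upsilon_\infty(\Im w)(\Im w)^{s-1/2}(\lambda^{s-1/2}-1)\,\df s(w)$ is missing from your list. When $\phi^\upsilon_\infty\not\equiv0$ it converges only for $0<\Re(s)<1$, so not where the identity has to be set up. Only the total remainder converges. The missing idea is how to regroup it: the paper writes it as $\cI_1-\cI_3$, with
\[
\cI_1=\int_{\upsilon\alpha}^{\infty}\big(\phi(z)-\phi_\infty(\Im z)\big)\big(\abs{j(\upsilon,\alpha)}^{2s-1}(\Im z)^{s-\frac12}-(\Im \upsilon^{-1}z)^{s-\frac12}\big)\df s(z),\qquad \cI_3=\int_{\upsilon\alpha}^{\infty}\phi_\infty(\Im z)\,(\Im \upsilon^{-1}z)^{s-\frac12}\df s(z).
\]
The weight discrepancy is attached to $\phi-\phi_\infty$ in the original variable, not to $\phi^\upsilon-\phi^\upsilon_\infty$.

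This regrouping is what makes the remaining claims provable.

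\emph{The term $\cI_1$.} On $z=\upsilon\alpha+iy$ one has $\abs{j(\upsilon^{-1},z)/j(\upsilon^{-1},\upsilon\alpha)}^2=1+c^2y^2j(\upsilon,\alpha)^2$. Hence the weight difference is exactly $\abs{j(\upsilon,\alpha)}^{2s-1}y^{s-1/2}\big(1-(1+c^2y^2j(\upsilon,\alpha)^2)^{1/2-s}\big)$. Combine this with \eqref{eq:decay-tail-phi} for the \emph{fixed} form $\phi$ at $\infty$, and split at $y=1/\abs{cj(\upsilon,\alpha)}$. This gives holomorphy on $\Re(s)>-\frac32+\rho_\phi$ and $\cI_1(\frac12)=0$. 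It also gives the first three terms of \eqref{eq:bound-cR} uniformly in $\upsilon$, as the statement requires. Applying \eqref{eq:decay-tail-phi} to $\phi^\upsilon$, as you propose, gives constants depending on $\upsilon$.

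\emph{The term $\cI_3$.} It converges only for $\Re(s)>1$ and must be computed in closed form. It equals $\abs{\alpha-\beta}^{s-1/2}$ times explicit combinations of $g_\tau(s)=\Gamma(\frac{s+\tau-1/2}{2})\Gamma(\frac{s-\tau-1/2}{2})/(2\Gamma(s-\frac12))$ and $\partial_\tau g_\tau(s)$, with $\tau\in\{s_\phi,1-s_\phi\}$. This formula gives:
\begin{itemize}
\item the continuation to $\{\Re(s)>0\}\cap\cD_\phi$;
\item the logarithmic term in \eqref{eq:bound-cR}, via Stirling;
\item $\cI_3(\frac12)=0$, through the factor $1/\Gamma(s-\frac12)$.
\end{itemize}
Your argument for $\cR_{\alpha,\upsilon}(\phi,\frac12)=0$ (each piece carries a factor vanishing at $s=\frac12$) fails for this term. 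Its integrand has no such factor, and at $s=\frac12$ the integral $\int_0^\infty\phi_\infty(y)\,\df y/y$ itself diverges, so the vanishing is a genuine analytic-continuation fact.

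Finally, the bounds you flag as the main obstacle are only sketched, and your attribution of the terms in \eqref{eq:bound-cR} is off. The terms $\abs{c}^{1-2\Re(s)}$ and $\abs{j(\upsilon,\alpha)}^{2\Re(s)-1}$ come from the large-$y$ range of $\cI_1$, and the logarithmic term comes from $\cI_3$.
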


\begin{proof}
  Denote~$\kappa_\upsilon := \upsilon^{-1}\infty$. For~$\Re(s)>1$ we write
  \begin{align*}
    \abs{j(\upsilon, \alpha)}^{2s-1} \cE_{\upsilon \alpha}(\phi, s) ={}& \int_{\upsilon\alpha}^\infty(\phi(z) - \phi_\infty(\Im z))\Big(\abs{j(\upsilon, \alpha)}^{2s-1} (\Im z)^{s-\frac12} - (\Im \upsilon^{-1} z)^{s-\frac12}\Big)\df s(z) \\
    &{} + \int_{\upsilon\alpha}^\infty \phi(z) (\Im \upsilon^{-1} z)^{s-\frac12} \df s(z) - \int_{\upsilon\alpha}^\infty \phi_\infty(\Im z) (\Im \upsilon^{-1} z)^{s-\frac12} \df s(z) \\
    ={}& \cI_1 + \cI_2 - \cI_3,
  \end{align*}
  say.

  We first focus on~$\cI_2$. Changing variables, we have
  \begin{align*}
    \cI_2 = {}& \int_\alpha^{\kappa_\upsilon} j_\upsilon(z)^k \phi^\upsilon(z) (\Im z)^{s-\frac12} \df s(z) \\
    ={}& \int_\alpha^{\kappa_\upsilon} j_\upsilon(z)^k (\phi^\upsilon(z) - \phi_\infty^\upsilon(\Im z))(\Im z)^{s-\frac12} \df s(z) + \int_\alpha^{\kappa_\upsilon} j_\upsilon(z)^k \phi_\infty^\upsilon(\Im z) (\Im z)^{s - \frac12} \df s(z).
  \end{align*}
  The second integral is evaluated as in~\cite[eq.~(4.17)]{DrappeauNordentoft2022}; it equals~$\sign(c)^{-k} \cM_{\alpha - \kappa_\upsilon}(\phi^\upsilon, s)$. In the first integral, we split the path of integration as~$\alpha \to \xi_{\alpha, \kappa_\upsilon}$ concatenated with~$\xi_{\alpha,\kappa_\upsilon} \to \kappa_\upsilon$. Since~$j_\upsilon(z) = \sign(c) \frac{z-\kappa_\upsilon}{\abs{z-\kappa_\upsilon}}$, the first portion contributes~$\sign(c)^{-k} \cE^*_{\alpha,\kappa_\upsilon}(\phi^\upsilon, s)$. Finally, for~$z$ in the geodesic~$\alpha \to \kappa_\upsilon$, we have~$\frac{z-\alpha}{\abs{z-\alpha}} = \sign(\alpha - \kappa_\upsilon) i \frac{z-\kappa_\upsilon}{\abs{z-\kappa_\upsilon}}$, so that the second portion contributes~$(i \sign(\kappa_\upsilon - \alpha))^k (\sign c)^{-k} \cE^*_{\kappa_\upsilon, \alpha}(\phi^\upsilon, s)$. We thus find
  $$ \cI_2 = \sign(c)^{-k} \big(\cE_{\alpha, \kappa_\upsilon}(\phi^\upsilon, s) + \cM_{\alpha - \kappa_\upsilon}(\phi^\upsilon, s)\big), $$
  which is our claimed main term.

  Let~$\cR_{\alpha, \upsilon}(\phi, s) := \cI_1 - \cI_3$. To finish the proof of Lemma~\ref{lem:Ex-Eab}, it remains to show that each of~$\cI_1$ and~$\cI_3$ satisfies the properties claimed about~$\cR$ in the statement. Consider first~$\cI_3$, which appears only when~$\phi$ is non-cuspidal. Recall that in that case, we assumed~$\Re(s_\phi) = \frac12$. Then, following the estimation of~\cite[eq.~(4.7)]{DrappeauNordentoft2022}, we have for~$\Re(s)>1$ the expression
  \begin{align*}
    \cI_3 = {}& \abs{\alpha - \kappa_\upsilon}^{s-\frac12} \int_0^\infty \phi_\infty\Big(\frac{y}{c^2 \abs{\alpha - \kappa_\upsilon}}\Big) \Big(\frac{y}{1+y^2}\Big)^{s-\frac12} \frac{\df y}y \\
    = {}& \abs{\alpha - \kappa_\upsilon}^{s-\frac12}
    \begin{cases}
      \frac{A_\infty}{(c^2 \abs{\alpha - \kappa_\upsilon})^{s_\phi}} g_{s_\phi}(s) + \frac{B_\infty}{(c^2 \abs{\alpha - \kappa_\upsilon})^{1-s_\phi}} g_{1-s_\phi}(s), & (s_\phi \neq \frac12), \\
      \frac{A_\infty - B_\infty \log(c^2 \abs{\alpha - \kappa_\upsilon})}{(c^2 \abs{\alpha - \kappa_\upsilon})^{1/2}} g_{\frac12}(s) + \frac{B_\infty}{(c^2 \abs{\alpha - \kappa_\upsilon})^{\frac12}} g_{\frac12}^*(s), & (s_\phi = \frac12),
    \end{cases}
  \end{align*}
  with
  $$ g_\tau(s) := \int_0^\infty \frac{y^{\tau + s - 1/2}}{(1+y^2)^{s - 1/2}} \frac{\df y}{y} = \frac{\Gamma(\frac{s+\tau-1/2}2) \Gamma(\frac{s-\tau-1/2}2)}{2\Gamma(s-1/2)}, \qquad g_\tau^*(s) = \frac{\df}{\df \tau} g_\tau(s). $$
  Since~$\Re(s_\phi) = \frac12$, we see from this explicit expression that~$\cI_3 = \cI_3(s, \alpha)$ is holomorphic in~$s$ for~$\Re(s)>0$ and~$s\in\cD_\phi$.
  For each such~$s$, it is continuous with respect to~$\alpha$ in~$\R\setminus\{\kappa_\upsilon\}$.
  By the Stirling formula, it satisfies for~$\abs{s - (1\pm it_\phi)}\gg 1$ the estimate
  \begin{equation}
    \cI_3 \ll \abs{c}^{-\Re(s)} \abs{j(\upsilon, \alpha)}^{\Re(s)-1} (1 + \abs{\log\abs{cj(\upsilon, \alpha)}}) (1+\abs{s})^{-1/2}. \label{eq:bound-phig-changingIm-I3}
  \end{equation}
  We clearly have~$\cI_3(\frac12, \alpha) = 0$, since~$g_\tau(\frac12) = g_\tau^*(\frac12) = 0$.

  Finally we examine~$\cI_1 = \cI_1(s, \alpha)$, which clearly vanishes at~$s=\frac12$. We have
  \[ \abs{j(\upsilon, \alpha)}^{2s-1} (\Im z)^{s-\frac12} - (\Im \upsilon^{-1} z)^{s - \frac12}
    = \abs{j(\upsilon, \alpha)}^{2s-1} (\Im z)^{s-\frac12} \Big( 1 - \abs{\frac{j(\upsilon^{-1}, z)}{j(\upsilon^{-1}, \upsilon \alpha)}}^{1 - 2s}\Big), \]
  and for~$z = \upsilon \alpha + iy$, $y>0$,
  $$ \frac{j(\upsilon^{-1}, z)}{j(\upsilon^{-1}, \upsilon \alpha)} = 1 - \frac{icy}{j(\upsilon^{-1}, \upsilon \alpha)} = 1 - icy j(\upsilon, \alpha). $$
  Thus, $\abs{\frac{j(\upsilon^{-1}, z)}{j(\upsilon^{-1}, \upsilon \alpha)}}^2 = 1 + c^2 y^2 j(\upsilon, \alpha)^2$ so that
  \begin{align*}
    \Delta(y, s) :={}& \abs{j(\upsilon, \alpha)}^{2s-1} (\Im z)^{s-\frac12} - (\Im \upsilon^{-1} z)^{s-\frac12} \\
    ={}& \abs{j(\upsilon, \alpha)}^{2s-1} y^{s-\frac12} \big(1 - (1 + c^2 y^2 j(\upsilon, \alpha)^2)^{\frac12 - s}\big) \\
    \ll{}& \abs{j(\upsilon, \alpha)^2 y}^{\Re(s) - \frac12}
    \begin{cases}
      (1+\abs{s}) c^2 j(\upsilon, \alpha)^2 y^2, & (y < 1/ |c j(\upsilon, \alpha)|), \\
      1 + \abs{c j(\upsilon, \alpha) y}^{1 - 2\Re(s)}, & (\text{otherwise}).
    \end{cases}
  \end{align*}
  Inserting this estimate in~$\cI_1$ and using the bound~\eqref{eq:decay-tail-phi},
  we deduce that~$\cI_1 = \cI_1(s, \alpha)$ is holomorphic in~$\{\Re s >-\frac32+\rho_\phi\}$.
  By uniform convergence, it is a continuous function of~$\alpha$ in~$\R\setminus\{\kappa_\upsilon\}$, and we have
  \begin{align*}
    \cI_1 \ll{}& \int_0^1 y^{-\rho_\phi-\eps} \abs{\Delta(y, s)} \frac{\df y}y + \int_1^\infty \e^{-y/3} \abs{\Delta(y, s)} \frac{\df y}y \\
    \ll{}& (1+\abs{s}) \abs{c}^{\frac12-\Re(s)+\rho_\phi+\eps} \abs{j(\upsilon, \alpha)}^{\Re(s)-\frac12+\rho_\phi+\eps} + \abs{c}^{1-2\Re(s)} + \abs{j(\upsilon, \alpha)}^{2\Re(s)-1}.
  \end{align*}
  Together with~\eqref{eq:bound-phig-changingIm-I3}, this provides the claimed bound on~$\cR_{\alpha, \upsilon}(\phi, s)$.
\end{proof}

\begin{lemma}\label{lem:Eab-Ex}
  Let~$\alpha, \beta \in C(\Gamma) \setminus\{\infty\}$ with~$\alpha\neq \beta$. Then for~$\Re(s)>1$ we have
  $$ \cE_{\alpha, \beta}(\phi, s) = \sign(\delta_{\alpha, \beta})^k \cE_\alpha(\phi, s) + i^k \cE_\beta(\phi, s) + \cT_{\alpha, \beta}(\phi, s), $$
  where~$\cT_{\alpha, \beta}(\phi, s)$ is holomorphic in~$s$ on~$\{\Re(s) > -\frac12+\rho_\phi\}$. For each such~$s$, it is continuous in~$\alpha, \beta\in \R$ with~$\alpha \neq \beta$ uniformly in $s$ in compact sets,
  and for~$-\tfrac12+\rho_\phi + \eps < \Re(s) \ll 1$, it satisfies
  \begin{equation}\label{eq:bound-cT}
    \cT_{\alpha, \beta}(\phi, s) \ll
    \begin{cases}
      \abs{\alpha - \beta}^{-1}, & (\abs{\alpha - \beta} \gg 1), \\
      1 + \abs{\alpha - \beta}^{\Re(s) - \rho_\phi - \frac12 - \eps}, & (\abs{\alpha - \beta}\ll 1).
    \end{cases}
  \end{equation}
\end{lemma}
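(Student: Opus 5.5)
The plan is a contour-deformation argument. For $\Re(s)>1$, both $\cE^*_{\alpha,\beta}$ and $\cE_\alpha$ are absolutely convergent integrals of the same closed $1$-form. This holds up to the weight factor, which is constant along each relevant path up to the displayed signs.

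\textbf{Step 1: rewrite $\cE_\alpha$ as a path integral.} Along the vertical geodesic from $\alpha$ to $\infty$ we have $z-\beta$ essentially of argument fixed only at the endpoint. So I will instead write $\cE_\alpha(\phi,s)$ as the integral of $(\phi(z)-\phi_\infty(\Im z))(\Im z)^{s-1/2}\df s(z)$ along the vertical line from $\alpha$ to $\alpha+i\infty$. Then I compare it with the path $\alpha\to\xi_{\alpha,\beta}$ followed by the horizontal-ish continuation from $\xi_{\alpha,\beta}$ to $\infty$.

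\textbf{Step 2: the weight factor along the quarter-circle.} Along the quarter-circle $\alpha\to\xi_{\alpha,\beta}$, the factor $\big(\frac{z-\beta}{|z-\beta|}\big)^k$ varies. Near $\alpha$ it equals $\sign(\delta_{\alpha,\beta})^k$ up to $O(|z-\alpha|/|\alpha-\beta|)$. Near $\xi_{\alpha,\beta}$, by the geometry of the picture, it is the factor that makes the symmetric piece $\cE^*_{\beta,\alpha}$ come with $(-i\sign\delta)^k$.

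\textbf{Step 3: define $\cT_{\alpha,\beta}$.} I define $\cT_{\alpha,\beta}$ as the sum of three pieces:
\begin{enumerate}[label=(\alph*)]
\item the integral over the region between the vertical ray at $\alpha$ and the arc, using the modular-invariant differential; this can be expressed via $\partial$ of the integrand, or more simply as the difference of two path integrals;
\item the analogous piece at $\beta$;
\item the correction from replacing $\big(\frac{z-\beta}{|z-\beta|}\big)^k$ by its endpoint value.
\end{enumerate}
Concretely, I take the following explicit form. On the arc $\alpha\to\xi$, subtract the vertical integral from $\alpha$ to $\alpha+i|\delta|$. On the arc $\beta\to\xi$, do the same at $\beta$. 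Then add back the tails $\int_{|\delta|}^\infty$ of both verticals with the wrong sign. Each piece is an integral of $\phi-\phi_\infty$ either over a region of height $\geq|\delta|/2$ or with a factor vanishing to first order at the base point.

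\textbf{Step 4: bounds on $\cT_{\alpha,\beta}$.} I use \eqref{eq:decay-tail-phi} and the Lipschitz-type bound \eqref{eq:decay-tail-phi-diff}. Writing the arc as $x(y)+iy$ with $|x(y)-\alpha|\asymp y^2/|\delta|$ for small $y$, the arc-minus-vertical difference is bounded by
\[
\int_0^{|\delta|} y^{-\rho_\phi-\eps}\min\big(1,y/|\delta|\big)\,y^{\Re(s)-1/2}\,\frac{\df y}{y},
\]
plus the weight-factor error of the same shape. Both converge for $\Re(s)>-\tfrac12+\rho_\phi$, which gives holomorphy and continuity by dominated convergence. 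The tails $\int_{|\delta|}^\infty$ give $\ll |\delta|^{\Re(s)-\rho_\phi-1/2-\eps}$ when $|\delta|\ll1$, and exponentially small contributions when $|\delta|\gg1$. In the regime $|\alpha-\beta|\gg1$ the arc part is also $\ll|\delta|^{-1}$, because the factor $\min(1,y/|\delta|)$ and the decay $\e^{-y/3}$ localize to $y\ll1$. The logarithmic case $\beta_\phi=3$ is absorbed into $\eps$.

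\textbf{Main obstacle.} The delicate point is the bookkeeping of the unimodular weight factors. One has to get exactly $\sign(\delta)^k$ in front of $\cE_\alpha$ and $i^k$ in front of $\cE_\beta$, after multiplying the $\cE^*_{\beta,\alpha}$ term by $(-i\sign\delta)^k$. On the $\beta$-side the factor $\big(\frac{z-\alpha}{|z-\alpha|}\big)^k$ is replaced at $\beta$ by $(\sign(\beta-\alpha))^k$, and $(-i\sign\delta)^k(-\sign\delta)^k=i^k$, which matches. The second difficulty is obtaining the clean $|\alpha-\beta|^{-1}$ bound uniformly in $s$ in the large-separation regime. There I would first try integrating the weight-factor error $\ll y/|\delta|$ against $y^{-\rho_\phi}\e^{-y/3}$.
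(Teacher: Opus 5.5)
Your explicit construction in Step~3 is exactly the paper's proof: compare the quarter-circle $\alpha\to\xi_{\alpha,\beta}$ at equal height $y$ with the vertical segment from $\alpha$ to $\alpha+i\abs{\delta}$, bound the horizontal displacement $\nu(y)\ll y^2/\abs{\delta}$ via \eqref{eq:decay-tail-phi-diff} and the weight-factor error by $y/\abs{\delta}$, and estimate the vertical tail $\int_{\abs{\delta}}^\infty$ separately; your sign bookkeeping $(-i\sign\delta)^k(-\sign\delta)^k=i^k$ is also correct. Three small fixes are needed:
\begin{itemize}
\item You must also account for the arc-length density $\mu(y)=(1-y^2/\delta^2)^{-1/2}$. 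Its deviation $\mu(y)-1\ll y^2\abs{\delta}^{-3/2}(\abs{\delta}-y)^{-1/2}$ has an integrable singularity at $y=\abs{\delta}$ and contributes bounds of the same order.
\item The tail for $\abs{\delta}\ll 1$ is $\ll 1+\abs{\delta}^{\Re(s)-\rho_\phi-\frac12-\eps}$, not $\ll\abs{\delta}^{\Re(s)-\rho_\phi-\frac12-\eps}$.
\item Drop the ``closed $1$-form/contour deformation'' framing, since $\df s(z)$ is arc length and not a closed differential. Your concrete argument never uses it.
\end{itemize}
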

\begin{proof}
  From the definition~\eqref{eq:def-Ealphabeta} of~$\cE_{\alpha, \beta}(\phi, s)$, it suffices to prove that the difference
  \begin{equation}
    \cE^*_{\alpha, \beta}(\phi, s) - \sign(\delta_{\alpha, \beta})^k \cE_\alpha(\phi, s)\label{eq:estim-phig-largleg-1}
  \end{equation}
  satisfies the properties claimed on~$\cT_{\alpha, \beta}(\phi, s)$ in the lemma, and the same for the difference
  $$ \cE^*_{\beta, \alpha}(\phi, s) - \sign(\delta_{\beta, \alpha})^k \cE_\beta(\phi, s). $$
  By symmetry we may focus \emph{e.g.} on~\eqref{eq:estim-phig-largleg-1}. We parametrize the arc~$\alpha \to \xi_{\alpha, \beta}$ as
  $$ z_{\alpha, \beta}(y) := \alpha - \nu_{\alpha, \beta}(y) + iy $$
  for~$y\in (0, \abs{\delta_{\alpha, \beta}})$, where
  $$ \nu_{\alpha, \beta}(y) = \delta_{\alpha,\beta} \big( 1 - (1-y^2/\delta_{\alpha,\beta}^2)^{1/2} \big), $$
  see~\cite[eq.~(3.5)]{DrappeauNordentoft2022}. The arc length element is~$\mu_{\alpha,\beta}(y)\df y / y$ with
  $$ \mu_{\alpha,\beta}(y) = (1- y^2 / \delta_{\alpha,\beta}^2)^{-1/2}. $$
  For clarity we drop the indices~$\alpha,\beta$ in the foregoing computation. A quick computation shows that
  \[ \nu(y) \ll y^2 / \abs{\delta},
    \quad \mu(y) - 1 \ll \frac{y^2}{\abs{\delta}^{3/2}\sqrt{\abs{\delta}-y}},
    \quad \Big(\frac{z(y) - \beta}{\abs{z(y) - \beta}}\Big)^k - \sign(\delta)^k \ll y / \abs{\delta}.
  \]
  We find
  \begin{align*}
    \cE^*_{\alpha,\beta}(\phi, s) &{}- \sign(\delta)^k \cE_\alpha(\phi, s) \\
    ={}& \int_0^{\abs{\delta}} \bigg( \Big(\frac{z(y)-\beta}{\abs{z(y)-\beta}}\Big)^k \mu(y) (\phi(z(y)) - \phi_\infty(y)) - \sign(\delta)^k (\phi(\alpha + iy) - \phi_\infty(y)) \bigg) y^{s-\frac12} \frac{\df y}y \\
    & \qquad {} - \sign(\delta)^k \int_{\abs{\delta}}^\infty (\phi(\alpha + iy) - \phi_\infty(y)) y^{s-\frac12} \frac{\df y}y.
  \end{align*}
  By~\eqref{eq:decay-tail-phi} the second integral is clearly $O(1+|\delta|^{\Re(s) - \rho_\phi - \frac12 - \eps})$ if $|\delta|<1$ and is $O(|\delta|^{-1})$ otherwise, whereas if $\Re(s)>-\tfrac12+\rho_\phi+\eps$ using the estimates~\eqref{eq:decay-tail-phi}-\eqref{eq:decay-tail-phi-diff} we find
  \begin{align*}
    & \int_0^{\abs{\delta}} \Big\{ \abs{\mu(y) - 1} y^{-\rho_\phi}\e^{-y/3} + \abs{\Big(\frac{z(y) - \beta}{\abs{z(y)-\beta}}\Big)^k - \sign(\delta)^k} y^{-\rho_\phi}\e^{-y/3} \\
    & \hspace{18em} + \abs{\mu(y)} \abs{\phi(z(y)) - \phi(\alpha + iy)}\Big\} y^{\Re(s)-1/2-\eps}\frac{\df y}{y} \\
    & \hspace{5em} \ll{} \int_0^{\abs{\delta}} \Big\{ \frac{y^{2-\rho_\phi}\e^{-y/3}}{\abs{\delta}^{3/2} \sqrt{\abs{\delta} - y}} + \frac{y^{1-\rho_\phi}(1+y)\e^{-y/3}}{\abs{\delta}\sqrt{1-y/\abs{\delta}}}\Big\} y^{\Re(s) - 1/2 - \eps}\frac{\df y}y \\
    & \hspace{5em} \ll{} \abs{\delta}^{-1}( 1 + \abs{\delta}^{-1})^{-\Re(s)-\frac12+\rho_\phi+\eps}.
  \end{align*}
  Since~$\abs{\delta} \asymp \abs{\alpha - \beta}$, this implies~\eqref{eq:bound-cT}. Finally, the continuity in $\alpha$ and $\beta$ follows by the uniform convergence of the various integrals.
\end{proof}

\begin{proof}[Proof of Proposition~\ref{prop:hE-upperbound-alls}]
  Assume~$\Re(s)>1$. Let~$c=c_\gamma$ be the lower-left entry of~$\gamma$, and note that
  $$ u_\gamma(x) = (\sign c)^{-k} \sign(\delta_{x,\gamma^{-1}\infty})^k \chi(\gamma), \quad\text{ and } \qquad \phi^\gamma = \chi(\gamma) \phi. $$
  Using Lemmas~\ref{lem:Ex-Eab} and \ref{lem:Eab-Ex}, for~$\Re(s)>1$ we have
  \begin{align*}
    \abs{j(\gamma, x)}^{2s-1} \cE_{\gamma x}(\phi, s)
    ={}& \sign(c)^{-k} \chi(\gamma)\big(\cE_{x,\gamma^{-1}\infty}(\phi, s) + \cM_{x - \gamma^{-1}\infty}(\phi, s)\big) + \cR_{x, \gamma}(\phi, s) \\
    ={}& u_\gamma(x) \cE_x(\phi, s) + i^k (\sign c)^{-k} \chi(\gamma) \cE_{\gamma^{-1}\infty}(\phi, s) \\
    &{} + (\sign c)^{-k} \chi(\gamma) \big(\cM_{x-\gamma^{-1}\infty}(\phi, s) + \cT_{x,\gamma^{-1}\infty}(\phi, s)\big) + \cR_{x,\gamma}(\phi, s).
  \end{align*}
  This identity holds for~$\Re(s)>0$, $s\in\cD_\phi$ by the same lemmas. Thus, for~$s$ outside of an~$\eps$-neighbourhood of~$1\pm it_\phi$, the properties claimed on the holomorphicity in~$s$ and continuity in~$x$ follow from the corresponding properties of~$\cR$ and~$\cT$ established in Lemmas~\ref{lem:Ex-Eab} and \ref{lem:Eab-Ex}, as well as the uniform bound~\eqref{eq:bound-hEgamma} for~$\Re(s) \geq \eps$.

  Next, using the values given in~\cite[Proof of Proposition~2.3]{DrappeauNordentoft2022}, we compute the polar part of the Laurent expansion of the map~$s \mapsto \hE_\gamma(x, s)$ at~$s = 1 \pm i t_\phi$ and see that these terms vanish. It follows that~$\hE_\gamma(x, s)$ is in fact holomorphic at these points. The bound~\eqref{eq:bound-hEgamma} persists by the maximum modulus principle.

  By the Cauchy formula and the bound~\eqref{eq:bound-hEgamma}, we deduce the continuity of~$x\mapsto \hE_\gamma(x, s)$ at~$s=1\pm it_\phi$, which finishes the proof of the first two points of Proposition~\ref{prop:hE-upperbound-alls}.

  If~$s=1/2$, then by Lemma~\ref{lem:Ex-Eab} we have~$\cR_{\ast}(\phi, 1/2) = 0$, and so the error term arises from the upper-bound on~$\cT$ given in Lemma~\ref{lem:Eab-Ex}.
\end{proof}

\subsection{The period function~$\hE$ at parabolic elements}

\begin{proposition}\label{prop:hE-gamman-estim}
  Let~$p\in \Gamma$ be a parabolic element with fixed point~$\omega \in C(\Gamma)\setminus\{\infty\}$, $\sigma\in \SL(2, \R)$ a scaling matrix for~$\omega$ and $m_0\in\Z_{\neq0}$ be such that~$\sigma^{-1} p \sigma = \pm \smat{1&m_0 \\ & 1}$.
  Then for any~$\eps>0$, there exists~$n_0\geq 1$ such that for~$n\geq n_0$ and~$\abs{x-\omega}\geq \eps$, the following estimates hold.
  \begin{itemize}
    \item For~$\eps < \Re(s) \ll 1$, we have
    \begin{equation}
      \begin{aligned}
        \hE_{p^n}(x, s) \ll_{p} (1+\abs{s}) \Big(&(1+\abs{x})^{2\Re(s)-1} \big\{1+ n^{2\Re(s)-1} + n^{\Re(s)-\frac12+\rho_\phi+\eps}\big\} \\
        & {}+ 1 + (1+\abs{x})^{\Re(s)-\frac12+\rho_\phi+\eps} \Big).
      \end{aligned}\label{eq:hE-rough}
    \end{equation}
    \item For~$s=1/2$, we have
    \begin{equation}
      \begin{aligned}
        \hE_{p^n}(x, \tfrac12) ={}& (i \sign(c_\sigma))^k \cE_{\sigma^{-1}\infty}(\phi^\sigma, \tfrac12) + u_{p^n}(x) (i\sign(x-\omega))^k \cE_\omega(\phi, \tfrac12) \\
        & {} + \sign(c_\sigma)^{-k} \cM_{nm_0 - 1/(c_\sigma^2(x-\omega))}(\phi^\sigma, \tfrac12) + u_{p^n}(x) \sign(x-\omega)^k \cM_{x-\omega}(\phi, \tfrac12) \\
        & {} + O_{p}\big(n^{-1} + (1+\abs{x})^{-1}\big).
      \end{aligned}\label{eq:hE-half}
    \end{equation}
  \end{itemize}
\end{proposition}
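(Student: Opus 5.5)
Write $p^n = \sigma \tau_n \sigma^{-1}$ with $\tau_n = \pm\smat{1 & nm_0\\ & 1}$. The idea is to avoid applying Proposition~\ref{prop:hE-upperbound-alls} directly to $\gamma = p^n$, since its implied constants depend on $\gamma$. Instead, we use the general decomposition of Lemmas~\ref{lem:Ex-Eab} and~\ref{lem:Eab-Ex}, whose bounds are explicit in $\upsilon$.

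The key step is to write $\abs{j(p^n,x)}^{2s-1}\cE_{p^n x}(\phi,s)$ through the scaling matrix $\sigma$. Set $\upsilon = p^n\sigma = \sigma\tau_n$ and $\alpha = \sigma^{-1}x$. Then $\upsilon\alpha = p^n x$, and $\upsilon^{-1}\infty = \tau_n^{-1}\sigma^{-1}\infty = \sigma^{-1}\infty - nm_0$. Also $\phi^{\upsilon} = \phi^{\sigma\tau_n} = (\phi^\sigma)^{\tau_n}$, up to the sign $\pm$, and $\phi^\sigma$ is automorphic for $\Gamma^\sigma$, which has a cusp at $\infty$ with $\tau_n\in\Gamma^\sigma$. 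Hence $\phi^\upsilon = \chi(p^n)\phi^\sigma$ up to a harmless root of unity.

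By the cocycle relation, $\abs{j(p^n,x)}^{2s-1} = \abs{j(\upsilon,\alpha)}^{2s-1}\abs{j(\sigma^{-1},x)}^{2s-1}$. So I would apply Lemma~\ref{lem:Ex-Eab} twice: once with $(\upsilon,\alpha)$ for $\cE_{p^nx}$, and once with $(\sigma,\alpha)$ for $\cE_x$ (note $x = \sigma\alpha$). Both produce terms $\cE_{\alpha,\cdot}(\phi^\sigma,s)$. Then Lemma~\ref{lem:Eab-Ex}, applied to $\phi^\sigma$ for $\Gamma^\sigma$, converts these into $\cE_\alpha(\phi^\sigma,s)$ plus $\cE_{\sigma^{-1}\infty-nm_0}(\phi^\sigma,s) = \cE_{\sigma^{-1}\infty}(\phi^\sigma,s)$, using $1$-periodicity of $\phi^\sigma$ at $\infty$ up to the character. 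The $\cE_\alpha(\phi^\sigma,s)$ terms cancel against the $u_{p^n}(x)\cE_x$ contribution after tracking the multipliers.

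It is cleaner to run the same argument with $\sigma$ in place of $\upsilon$ for the $\cE_x$ side. There $\sigma^{-1}\infty$ is fixed, $\alpha = \sigma^{-1}x$, and $\sigma\alpha = x$. This expresses $\cE_x$ through $\cE_{\alpha,\sigma^{-1}\infty}(\phi^\sigma)$. The $\cE_\omega$ term arises because $\sigma\infty = \omega$: the cusp $\omega$ corresponds under $\sigma$ to $\infty$. Equivalently, one inverts the relation and reads $\cE_\omega$ and $\cM_{x-\omega}$ from Lemma~\ref{lem:Ex-Eab} applied with the matrix $\sigma^{-1}$ and cusp $x$, whose $\upsilon^{-1}\infty = \omega$. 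This yields the terms $u_{p^n}(x)(i\sign(x-\omega))^k\cE_\omega$ and $\cM_{x-\omega}(\phi,\cdot)$.

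The $\cM$ term on the $p^n$ side has argument $\alpha - \upsilon^{-1}\infty = \sigma^{-1}x - \sigma^{-1}\infty + nm_0$. Since $\sigma^{-1}x - \sigma^{-1}\infty = -1/(c_\sigma^2(x-\omega))$, this matches \eqref{eq:hE-half}.

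The error terms come from $\cR$ and $\cT$. At $s=\frac12$ all $\cR$ vanish. The $\cT$ terms are bounded by \eqref{eq:bound-cT} through the reciprocal of the distance between their two endpoints. These distances are $\asymp n$, namely $\abs{\alpha - \upsilon^{-1}\infty}$ with $\alpha$ bounded since $\abs{x-\omega}\geq\eps$, and $\asymp 1+\abs{x}$ on the $\omega$ side. This gives $O(n^{-1}+(1+\abs{x})^{-1})$ once $n\geq n_0$ ensures the distances are $\gg 1$.

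For general $s$, bound $\cM$ trivially by $\abs{\beta}^{\Re(s)}$ times Stirling factors, $\cT$ by \eqref{eq:bound-cT}, and $\cR$ by \eqref{eq:bound-cR} with $c_\upsilon = c_\sigma$ fixed. The quantities $\abs{j(\upsilon,\alpha)}$ are $\asymp n$ and $\asymp 1+\abs{x}$ respectively. After multiplying by the external factor $\abs{j(\sigma^{-1},x)}^{2s-1}\asymp(1+\abs{x})^{2\Re(s)-1}$, this produces exactly the terms $n^{2\Re(s)-1}$ and $n^{\Re(s)-\frac12+\rho_\phi+\eps}$ in \eqref{eq:hE-rough}. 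The near-pole issue at $1\pm it_\phi$ is handled as in Proposition~\ref{prop:hE-upperbound-alls} by the maximum principle.

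The main obstacle will be the careful bookkeeping of the multiplier phases: signs of $c$, the $(\pm)$ in $\tau_n$, $\chi(p^n)$, and the $i^k$ factors. These must make the $\cE_\alpha(\phi^\sigma)$ terms cancel exactly and give the stated constants $(i\sign c_\sigma)^k$ and $u_{p^n}(x)\sign(x-\omega)^k$. I would verify this first in the case $k=0$ and then track the phases for general $k$.
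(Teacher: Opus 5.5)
Once you settle on the $\sigma^{-1}$ version for the $\cE_x$ side, this is essentially the paper's proof with the translation $\sigma^{-1}p^n\sigma$ placed in the other factor. The paper applies Lemma~\ref{lem:Ex-Eab} to $(\sigma,\sigma^{-1}p^nx)$ and then to $\phi^\sigma$ with $(\sigma^{-1}p^n,x)$; you use $(p^n\sigma,\sigma^{-1}x)$ and $(\sigma^{-1},x)$. This only adds a step identifying $\cE_{\sigma^{-1}\infty-nm_0}(\phi^\sigma,s)$ with $\cE_{\sigma^{-1}\infty}(\phi^\sigma,s)$ by translation-equivariance of $\phi^\sigma$, and it gives the same $\cR,\cT$ bounds (same $c$ and $\abs{j}$). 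The phases close as in the paper via $\sign(\delta_{y,\tau^{-1}\infty})=\sign(c_\tau)j_\tau(y)$ and the cocycle for $j$, so the coefficient of $\cE_x(\phi,s)$ is $\chi(p^n)j_{p^n\sigma}(\sigma^{-1}x)^k j_{\sigma^{-1}}(x)^k=u_{p^n}(x)$.

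Drop the first variant, though. Applying Lemma~\ref{lem:Ex-Eab} with $(\sigma,\sigma^{-1}x)$ to $\cE_x$ is not equivalent to the $\sigma^{-1}$ version. It produces $\cT_{\sigma^{-1}x,\sigma^{-1}\infty}(\phi^\sigma,\tfrac12)$, whose endpoints are at distance $1/(c_\sigma^2\abs{x-\omega})$, so \eqref{eq:bound-cT} only gives $O(1+\abs{x}^{\rho_\phi+\eps})$. That cannot yield the error $O((1+\abs{x})^{-1})$ in \eqref{eq:hE-half}, and in the non-cuspidal case this term even absorbs the main term $\cM_{x-\omega}(\phi,\tfrac12)$.
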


\begin{proof}
First we note that~$\hE_{-\gamma}(x, s) = \hE_\gamma(x, s)$, so that we may assume~$\sigma^{-1} p \sigma = \smat{1&m_0 \\ & 1}$. Suppose~$\Re(s)>1$, and let~$\gamma = p^n$, which is also parabolic with fixed point~$\omega$.
  By picking~$n\geq n_0$ large enough in terms of~$\eps$, we may suppose that~$\gamma^{-1}\infty$ is close enough to~$\omega$ so as to ensure~$\abs{x - \gamma^{-1}\infty} \gg 1$ uniformly in~$x$. We recall that the definition of~$\sigma$ amounts to the conditions~$\sigma \infty = \omega$ and~$\Gamma_\omega = \sigma \Gamma_\infty \sigma^{-1}$.
  We also note that~$\abs{c_\sigma}$ depends only on~$\omega$ and thus only on $p$.

  First we use Lemma~\ref{lem:Ex-Eab} with~$\upsilon \gets \sigma$ and~$\alpha \gets \sigma^{-1}\gamma x$, getting
  \begin{equation}
    \begin{aligned}
      &\abs{j(\sigma, \sigma^{-1}\gamma x)}^{2s-1} \cE_{\gamma x}(\phi, s) \\
      &{} = \sign(c_\sigma)^{-k} \big(\cE_{\sigma^{-1}\gamma x, \sigma^{-1}\infty}(\phi^\sigma, s) + \cM_{\sigma^{-1}\gamma x - \sigma^{-1}\infty}(\phi^\sigma, s)\big) + \cR_{\sigma^{-1}\gamma x, \sigma}(\phi, s).\label{eq:parab-1}
    \end{aligned}
  \end{equation}
  Next we use Lemma~\ref{lem:Eab-Ex} with~$(\alpha, \beta) \gets (\sigma^{-1}\gamma x, \sigma^{-1}\infty)$. A quick computation shows that~$\sign(\delta_{\sigma^{-1}\gamma x, \sigma^{-1}\infty}) = -\sign(\delta_{\gamma x, \omega})$, thus we find
  \begin{equation}
    \begin{aligned}
      & \cE_{\sigma^{-1}\gamma x, \sigma^{-1}\infty}(\phi^\sigma, s) \\
      & {} = (-\sign(\delta_{\gamma x, \omega}))^k \cE_{\sigma^{-1} \gamma x}(\phi^\sigma, s) + i^k \cE_{\sigma^{-1}\infty}(\phi^\sigma, s) + \cT_{\sigma^{-1}\gamma x, \sigma^{-1}\infty}(\phi^\sigma, s).\label{eq:parab-2}
    \end{aligned}
  \end{equation}
  Next, we use Lemma~\ref{lem:Ex-Eab} again with~$\upsilon \gets \sigma^{-1}\gamma$ and~$\alpha \gets x$. Note that~$[\phi^\sigma]^{\sigma^{-1}\gamma}(z) = \chi(\gamma) \phi(z)$, and~$(\sigma^{-1}\gamma)^{-1}\infty = \gamma^{-1} \omega = \omega$.
  We obtain
  \begin{equation}
    \label{eq:parab-3}
    \begin{aligned}
      & \abs{j(\sigma^{-1}\gamma, x)}^{2s-1} \cE_{\sigma^{-1}\gamma x}(\phi^\sigma, s) \\
      {}& = \chi(\gamma) \sign(c_{\sigma^{-1}\gamma})^{-k}\big(\cE_{x,\omega}(\phi, s) + \cM_{x-\omega}(\phi, s)\big) + \cR_{x, \sigma^{-1}\gamma}(\phi^\sigma, s).
    \end{aligned}
  \end{equation}
  Finally, by Lemma~\ref{lem:Eab-Ex} again, with~$(\alpha, \beta) \gets (x, \omega)$, we get
  \begin{equation}
    \cE_{x,\omega}(\phi, s) = \sign(\delta_{x,\omega})^k \cE_x(\phi, s) + i^k \cE_\omega(\phi, s) + \cT_{x, \omega}(\phi, s).\label{eq:parab-4}
  \end{equation}
  We insert the equality~\eqref{eq:parab-4} successively in~\eqref{eq:parab-3}, \eqref{eq:parab-2} and~\eqref{eq:parab-1}, and multiply throughout by~$\abs{j(\sigma^{-1}\gamma, x)}^{2s-1} = \abs{j(\gamma, x)/j(\sigma, \sigma^{-1} \gamma x)}^{2s-1}$. By the identity
  $$ \sign(\delta_{y, \tau^{-1}\infty}) = \sign(c_\tau) j_\tau(y), \qquad (\tau\in \SL(2, \R), y\neq \tau^{-1}\infty), $$
  we get
  $$ \sign(\delta_{x,\omega}) \sign(c_{\sigma^{-1}\gamma}) = j_{\sigma^{-1} \gamma}(x), \quad \sign(\delta_{\gamma x, \omega}) \sign(c_\sigma) = - j_{\sigma}(\sigma^{-1} \gamma x). $$
  Thus, the term~$\cE_x(\phi, s)$ arises with a factor of exactly~$\chi(\gamma) j_\gamma(x)^k = u_\gamma(x)$ by the chain rule, and we get
  \begin{align*}
    \hE_{p^n}(x, s)
    ={}&\abs{j(\gamma, x)}^{2s-1} \cE_{\gamma x}(\phi, s) - u_\gamma(x) \cE_x(\phi, s) \\
    ={}& (i \sign(c_\sigma))^k \abs{j(\sigma^{-1}\gamma, x)}^{2s-1} \cE_{\sigma^{-1}\infty}(\phi^\sigma, s) + u_\gamma(x) (i \sign(\delta_{x,\omega}))^k \cE_\omega(\phi, s) \\
    & {} + \abs{j(\sigma^{-1}\gamma, x)}^{2s-1} \sign(c_\sigma)^{-k} \cM_{\sigma^{-1}\gamma x - \sigma^{-1}\infty}(\phi^\sigma, s) + u_\gamma(x) \sign(\delta_{x,\omega})^k \cM_{x-\omega}(\phi, s) \\
    & {} + \abs{j(\sigma^{-1} \gamma, x)}^{2s-1} \cR_{\sigma^{-1}\gamma x, \sigma}(\phi, s) + j_\sigma(\sigma^{-1} \gamma x)^k \cR_{x, \sigma^{-1}\gamma}(\phi^\sigma, s) \\
    & {} + \sign(c_\sigma)^k \abs{j(\sigma^{-1}\gamma, x)}^{2s-1} \cT_{\sigma^{-1} \gamma x, \sigma^{-1}\infty}(\phi^\sigma, s) + u_\gamma(x) \sign(\delta_{x,\omega})^k \cT_{x,\omega}(\phi, s).
  \end{align*}
  Let~$\tau = \sigma^{-1} p \sigma= \smat{1&m_0 \\ & 1}$. Then~$\gamma = \sigma \tau^n \sigma^{-1}$. Write~$m = m_0 n$, so that~$\sigma^{-1} \gamma \sigma = \smat{1 & m \\ & 1}$. Using the property~$\sigma \infty = \omega$, we deduce
  \begin{align*}
    & j(\sigma^{-1} \gamma, x) = j(\tau^n \sigma^{-1}, x) = j(\sigma^{-1}, x), \\
    & \sigma^{-1} \gamma x - \sigma^{-1}\infty = \tau^n \sigma^{-1} x - \sigma^{-1}\infty = m + \sigma^{-1} x - \sigma^{-1}\infty,
  \end{align*}
  Hence the above simplifies to
  \begin{equation}
    \begin{split} \label{eq:expr-hE-final}
      \hE_{p^n}(x, s) & = (i \sign(c_\sigma))^k \abs{j(\sigma^{-1}, x)}^{2s-1} \cE_{\sigma^{-1}\infty}(\phi^\sigma, s) + u_\gamma(x) (i \sign(\delta_{x,\omega}))^k \cE_\omega(\phi, s) \\
      &\quad + \abs{j(\sigma^{-1}, x)}^{2s-1} \sign(c_\sigma)^{-k} \cM_{m + \sigma^{-1}x - \sigma^{-1}\infty}(\phi^\sigma, s) + u_\gamma(x) \sign(\delta_{x,\omega})^k \cM_{x-\omega}(\phi, s) \\
      &\quad + \abs{j(\sigma^{-1}, x)}^{2s-1} \cR_{m+\sigma^{-1}x , \sigma}(\phi, s) + j_\sigma(\sigma^{-1} \gamma x)^k \cR_{x, \tau^n \sigma^{-1}}(\phi^\sigma, s) \\
      &\quad + \sign(c_\sigma)^k \abs{j(\sigma^{-1}, x)}^{2s-1} \cT_{m+\sigma^{-1}x , \sigma^{-1}\infty}(\phi^\sigma, s) + u_\gamma(x) \sign(\delta_{x,\omega})^k \cT_{x,\omega}(\phi, s).
    \end{split}
  \end{equation}
  By Lemmas~\ref{lem:properties-cE}, \ref{lem:Ex-Eab} and~\ref{lem:Eab-Ex}, we get that this identity holds for
  $$ (x, s) \in (\R\setminus\{\gamma^{-1}\infty, \omega\}) \times (\{\Re(s)>0\}\cap \cD_\phi), $$
  and that $ \hE_{p^n}(x, s)$ is continuous in~$x$ for each~$s$, and holomorphic in~$s$ for each~$x$.

  Assume first~$\Re(s)\geq \eps$ and~$\abs{s - 1 \pm it_\phi}\geq \eps$. Since~$\abs{x-\omega}\geq \eps$, we have
  $$ \abs{j(\sigma^{-1}, x)} \asymp_p \abs{x-\omega}\gg_\eps 1,\qquad \sigma^{-1}x=O(1) $$
  and additionally
  $$ \abs{\sigma^{-1}x + m - \sigma^{-1}\infty} = \abs{\sigma^{-1}\gamma x - \sigma^{-1} \infty} \gg_\sigma \min(1, \abs{x - \gamma^{-1}\infty}) \gg 1. $$
  Thus, since $c_{\tau^n\sigma^{-1}}=c_{\sigma^{-1}}\asymp_p 1$ and recalling $ j(\tau^n \sigma^{-1}, x) = j(\sigma^{-1}, x)$, inserting the bounds~\eqref{eq:bound-Ex-anacont}, \eqref{eq:def-cM}, \eqref{eq:bound-cR} and \eqref{eq:bound-cT} in the expression~\eqref{eq:expr-hE-final} yields~\eqref{eq:hE-rough}. This bound persists for~$\abs{s - 1 \pm it_\phi}\leq \eps$ by the maximum modulus principle.

  In the special case when~$s=1/2$, the~$\cR_\ast$ terms vanish, and the estimate~\eqref{eq:hE-half} follows from~\eqref{eq:expr-hE-final}, after the elementary computation~$\sigma^{-1} x - \sigma^{-1}\infty = -1/(c_\sigma^2(x-\omega))$.
\end{proof}

\begin{remark}
  When~$\phi$ is cuspidal, a change of variables yields
  \begin{equation}
    \cE_{\sigma^{-1}\infty}(\phi^\sigma, \tfrac12) = (i\sign(c_\sigma))^{-k} \cE_\omega(\phi, \tfrac12).\label{eq:cusp-E-sigma-omega}
  \end{equation}
  This identity holds also when~$\phi$ is non-cuspidal, but this requires a more careful treatment of regularized integrals. We will not make use of~\eqref{eq:cusp-E-sigma-omega} in either case here.
\end{remark}

\subsection{Uniform Hölder estimates on~$\hE$}

\begin{lemma}\label{lem:hE-gamman-holder}
  Let~$p, \omega$ be as in Proposition~\ref{prop:hE-gamman-estim}. For any~$\eps>0$, there exists~$n_0\geq 1$ such that for~$x, x'\in \R\setminus[\omega-\eps, \omega+\eps]$ and~$n\geq n_0$, we have
  \begin{equation}
    \abs{\hE_{p^n}(x, \tfrac12) - \hE_{p^n}(x', \tfrac12)} \ll_{\phi, \eps, p} (1 + \abs{x} + \abs{x'})^{2\eps} n^\eps \abs{x-x'}^{1-\rho_\phi-\eps}. \label{eq:diff-hgamman}
  \end{equation}
\end{lemma}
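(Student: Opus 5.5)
We would work from the exact identity \eqref{eq:expr-hE-final} specialised to $s=\tfrac12$, where all the $\cR_\ast$ terms vanish (Lemma~\ref{lem:Ex-Eab}). At $s=\tfrac12$ the factors $\abs{j(\sigma^{-1},x)}^{2s-1}$ equal $1$, so $\hE_{p^n}(x,\tfrac12)$ splits into three groups. The first group is constant terms. The second is terms involving $\cM$, namely $\cM_{m+\sigma^{-1}x-\sigma^{-1}\infty}(\phi^\sigma,\tfrac12)$ and $u_\gamma(x)\sign(x-\omega)^k\cM_{x-\omega}(\phi,\tfrac12)$. The third is the two $\cT$ terms, $\cT_{m+\sigma^{-1}x,\sigma^{-1}\infty}(\phi^\sigma,\tfrac12)$ and $u_\gamma(x)\sign(x-\omega)^k\cT_{x,\omega}(\phi,\tfrac12)$. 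The plan is to check the Hölder bound \eqref{eq:diff-hgamman} separately for each group. We first take $n_0$ large so that $\gamma^{-1}\infty$ lies within $\eps/2$ of $\omega$.

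The factor $u_\gamma(x)=\chi(\gamma)j_\gamma(x)^k$ is locally constant on $\R\setminus[\omega-\eps,\omega+\eps]$: it only changes sign across $\gamma^{-1}\infty$, so the product $u_\gamma(x)\sign(x-\omega)^k$ is constant on each side of the excluded interval. If $x,x'$ lie on opposite sides, then $\abs{x-x'}\ge 2\eps$ and it suffices to bound each value. The bound \eqref{eq:hE-half} gives $\hE_{p^n}(x,\tfrac12)\ll (1+\abs{x})^{\eps}\log n$, since $\cM_\beta(\phi,\tfrac12)\ll\abs\beta^{1/2+\eps}$ for $\beta\gg1$ and $\Re s_\phi\le$ what is allowed; in the non-cuspidal case a factor $\log\abs\beta$ appears. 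This is sharpened by noting that $\abs{\beta}^{1/2}$ terms are harmless only after comparing with $\abs{x-x'}^{1-\rho_\phi-\eps}$; in the opposite-side case we simply accept a crude bound polynomial in $(1+\abs x+\abs{x'})$ and $n$. This is the point where the statement's small exponents $2\eps$ and $n^\eps$ require care. We would handle it by noting that for $\rho_\phi=\tfrac12$ we have $\abs{x-x'}^{1/2-\eps}\gg \abs{x}^{1/2-\eps}$ when $\abs x\ge 2\abs{x'}$, which absorbs the growth $\cM_{x-\omega}\ll\abs{x}^{1/2}\log\abs x$. In the cuspidal case, $\cM\equiv0$ and $\hE_{p^n}$ is bounded by \eqref{eq:hE-half}.

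We then treat $x,x'$ on the same side. For the $\cM$ terms we use the explicit form \eqref{eq:def-cM}. The function $\beta\mapsto \abs\beta^{1/2+it}$ (with its $\log$ variants) has derivative $\ll\abs\beta^{-1/2+\eps}$, so it is $(\tfrac12-\eps)$-Hölder with constant $\ll \abs\beta^{2\eps}$, uniformly for $\abs\beta\gg1$. For the first $\cM$ term, the argument $m-1/(c_\sigma^2(x-\omega))$ has $x$-derivative $\ll 1$ on the allowed region and size $\asymp n$. Its increment is therefore $\ll n^{-1/2+\eps}\min(1,\abs{x-x'})\ll n^{\eps}\abs{x-x'}^{1/2}$. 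The second $\cM$ term gives $(1+\abs x+\abs{x'})^{2\eps}\abs{x-x'}^{1/2-\eps}$. Both vanish when $\rho_\phi=0$.

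The main obstacle is the $\cT$ terms, because Lemma~\ref{lem:Eab-Ex} only gives size bounds for $\cT$ and not Hölder bounds. The plan is to revisit its proof at $s=\tfrac12$. There $\cT_{\alpha,\beta}$ is an explicit integral over $y\in(0,\infty)$ of $\phi$ along the vertical line through $\alpha$, minus $\phi$ along the arc from $\alpha$ to $\xi_{\alpha,\beta}$. Differentiating in $\alpha$ (with $\beta$ fixed), we would use \eqref{eq:decay-tail-phi-diff} to control $\phi(\alpha+iy)-\phi(\alpha'+iy)$ together with the smooth dependence of $\nu,\mu$ on $\delta$. This should give $\cT_{\alpha,\beta}-\cT_{\alpha',\beta}\ll\abs{\alpha-\alpha'}^{1-\rho_\phi-\eps}$ for $\abs{\alpha-\beta}\gg1$, uniformly; the $y$-integral splits at $y=\abs{\alpha-\alpha'}$ as in the usual Hölder argument. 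Since $\alpha=m+\sigma^{-1}x$ and $\sigma^{-1}$ is Lipschitz on the region with constant $\ll_p 1$, and $\abs{\alpha-\sigma^{-1}\infty}\gg1$ uniformly in $n$, the result is uniform in $n$. Alternatively, we could avoid reworking Lemma~\ref{lem:Eab-Ex} by writing $\hE_{p^n}=\hE_{p}$-type cocycle sums via \eqref{ple}, but that loses uniformity in $n$, so we prefer the direct route.
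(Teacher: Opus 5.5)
\textbf{The gap is the H\"older estimate for the $\cT$ terms.} The tools you name do not give it. The $\alpha$-dependent part of $\cT_{\alpha,\beta}(\phi,\tfrac12)$ contains $\int_0^{|\delta|}[\phi(z_\alpha(y))-\phi(\alpha+iy)]\,\frac{\df y}{y}$, with $z_\alpha(y)=\alpha-\nu(y)+iy$. Put $h:=\abs{\alpha-\alpha'}$. In the range $h<y<1$ you have to control the mixed difference
\[ [\phi(z_\alpha(y))-\phi(\alpha+iy)]-[\phi(z_{\alpha'}(y))-\phi(\alpha'+iy)]. \]
The first-order bound \eqref{eq:decay-tail-phi-diff} gives only two estimates for it. One is $\ll y^{1-\rho_\phi-\eps}$, using $\nu(y)\ll y^2$. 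The other is $\ll h\,y^{-1-\rho_\phi-\eps}$, from the shift by $h$. The second integrates over $y>h$ to $h^{-\rho_\phi-\eps}$, not $h^{1-\rho_\phi-\eps}$. The best combination, switching at $y=\sqrt h$, yields only $h^{(1-\rho_\phi)/2-\eps}$, which is half the required exponent.

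Reaching $1-\rho_\phi-\eps$ needs genuinely second-order input. Options are a bound $\partial_x^2\phi(x+iy)\ll y^{-2-\rho_\phi-\eps}$ via Maa\ss{} operators, or the Fourier-side argument with Wilton's bound used for \cite[Theorem~3.1]{DrappeauNordentoft2022}. Either way you would be re-proving the hard part of Lemma~\ref{lem:hE-at-half}. You would also need uniformity as $\abs{x-\omega}\to\infty$ for $\cT_{x,\omega}$.

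\textbf{Your opposite-side case is also wrong for non-cuspidal $\phi$.} There $\hE_{p^n}(x,\tfrac12)$ is in general of size $n^{1/2}$, coming from $\cM_{nm_0-1/(c_\sigma^2(x-\omega))}$. It is not $(1+\abs{x})^{\eps}\log n$, so bounding each value separately cannot produce $n^{\eps}$. You must take the difference in that term as well. This works because its argument keeps the sign of $m_0n$ and moves by only $O_\eps(\min(1,\abs{x-x'}))$.

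\textbf{The route you dismissed is the paper's proof, and it loses only $n^{\eps}$.} By \eqref{ple},
\[ \hE_{p^n}(x,\tfrac12)=u_{p^{n-n_0}}(p^{n_0}x)\,\hE_{p^{n_0}}(x,\tfrac12)+\sum_{\ell=n_0}^{n-1}u_{p^{n-1-\ell}}(p^{\ell+1}x)\,\hE_p(p^\ell x,\tfrac12). \]
For $n_0=n_0(\eps,p)$ large, every multiplier factor takes the same value at $x$ and at $x'$. The points $p^\ell x$ stay bounded and away from $p^{-1}\infty$, and $\abs{p^\ell x-p^\ell x'}\ll \ell^{-2}\abs{x-x'}$. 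Lemma~\ref{lem:hE-at-half}, applied to the fixed elements $p$ and $p^{n_0}$, then bounds the $\ell$-th term by $\ell^{-2+2\rho_\phi+2\eps}\abs{x-x'}^{1-\rho_\phi-\eps}$. The $n_0$-term contributes $(1+\abs{x}+\abs{x'})^{2\eps}\abs{x-x'}^{1-\rho_\phi-\eps}$. Finally, $\sum_{\ell\le n}\ell^{-2+2\rho_\phi+2\eps}\ll n^{2\eps}$ even when $\rho_\phi=\tfrac12$.

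So the contraction of $p^\ell$ is exactly what keeps the loss at $n^{\eps}$, and the analytic difficulty is delegated to the fixed-$\gamma$ H\"older estimate. This route needs no H\"older bound for $\cT$ and no case split according to the side of $\omega$.
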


\begin{proof}
  To simplify the notation in the following computation, we denote temporarily~$\hE_\ast(x) = \hE_\ast(x, \frac12)$. Since~$\omega\neq \infty$ and~$\omega$ is the unique fixed point of~$p$, we have~$\omega \neq p^{-j}\infty$ for all~$j\geq 0$. Since~$\abs{x-\omega}\geq \eps$, there exists~$n_0>0$ such that $\abs{x - p^{-n}\infty} \gg 1$ uniformly in~$n\geq n_0$ and~$x$, and also that $\sign(p^n x - p^{-j}\infty)$ is independent of~$n\geq n_0$,~$j\geq 1$, and~$x$. This implies that~$u_{p^j}(p^n x)$ is constant on~$\R\setminus[\omega-\eps, \omega+\eps]$ for all~$j\geq 0$ and $n\geq n_0$.

  By the period relation~\eqref{ple}, we write
  $$ \hE_{p^n}(x) = u_{p^{n-n_0}}(p^{n_0}x) \hE_{p^{n_0}}(x) + \sum_{\ell=n_0}^{n-1} u_{p^{n-1-\ell}}(p^{\ell+1} x) \hE_p(p^\ell x) $$
  and likewise for~$x'$.
  Taking differences and using the fact that the various~$u_{p^j}$ factors have the same value for~$x$ and~$x'$ by our choice of~$n_0$, we deduce
  \begin{equation}
    \abs{\hE_{p^n}(x) - \hE_{p^n}(x')} \ll \abs{\hE_{p^{n_0}}(x) - \hE_{p^{n_0}}(x')} + \sum_{\ell = n_0}^{n-1} \abs{\hE_p(p^\ell x) - \hE_p(p^\ell x')}.\label{eq:hgamman-sumhgamma}
  \end{equation}
  By our choice of~$n_0$, we have~$\abs{x- p^{-\ell}\infty} \gg 1$ uniformly for~$\ell\geq n_0$, and likewise for~$x'$, therefore
  $$ \abs{p^\ell x - p^\ell x'} \ll \ell^{-2} \abs{x - x'}. $$
  By Lemma~\ref{lem:hE-at-half}, we deduce the bounds
  \begin{align*}
    \abs{\hE_{p^n}(x) - \hE_{p^n}(x')} &{} \ll (1 + \abs{x} + \abs{x'})^{2\eps} \abs{x - x'}^{1-\rho_\phi-\eps}, \qquad (n = n_0) \\
    \abs{\hE_p(p^\ell x) - \hE_p(p^\ell x')} &{} \ll \abs{p^\ell x - p^{\ell} x'}^{1 - \rho_\phi-\eps} \ll \ell^{-2+2\rho_\phi+2\eps} \abs{x-x'}^{1 - \rho_\phi-\eps},
  \end{align*}
  uniformly in~$\ell$, $x$ and $x'$. The desired bound follows by picking~$\eps$ small and inserting in~\eqref{eq:hgamman-sumhgamma}.
\end{proof}

\subsection{Translation into estimates on the period function of~$L(\phi, s,x)$}

We now transfer our estimates from~$\cE$ to the additive twists~\eqref{eq:def-Lphixs}.
For all~$\gamma \in \Gamma$, $x\in C(\Gamma) \setminus\{\infty, \gamma^{-1}\infty\}$ and~$s\in \cD_\phi$, we denote
\begin{equation}
  \hL_\gamma(x, s) := \abs{j(\gamma, x)}^{2s-1} L(\phi, s,\gamma x) - u_\gamma(x) L(\phi,s,x). \label{eq:def-hL}
\end{equation}

\begin{lemma}\label{lem:hL-gamma}
  For each~$\gamma\in \Gamma$, the map~$(x, s) \mapsto \hL_\gamma(x, s)$ extends to a map on~$(\R\setminus\{\gamma^{-1}\infty\})\times \{\Re(s)>0\}$, which is continuous in~$x$ for each~$s$, and holomorphic in~$s$ for each~$x$.

  Given~$\eps>0$, $\gamma \in \Gamma$, $x\in \R$ with~$\abs{x-\gamma^{-1}\infty}\geq \eps$, and~$s$ with~$\eps\leq \Re(s)\ll1$, we have
  \begin{align} \label{eq:hL-rough_o}
    \hL_\gamma(x, s) \ll_{\gamma} \frac{\e^{\frac\pi2\abs{\Im(s)}}}{(1+|s|)^{\Re(s)+\Re(s_\phi)-2}}\cdot \big(1 + \abs{x-\gamma^{-1}\infty}^{\max(\Re(s) - \frac12 + \rho_\phi + \eps, 2\Re(s) - 1)}\big).
  \end{align}
  The map~$x\mapsto \hL_\gamma(x, \frac12)$ satisfies the Hölder estimate~\eqref{eq:holder-hE}, with $\hL$ in place of $\hE$.

  Moreover, letting for all~$\Re\tau>0$,
  \begin{align}\label{dfgk}
    G_k(\tau, x) ={}& \abs{x}^{\tau} \frac{(\frac{1+i\sign(x)}{\sqrt{2}})^k \Gamma(\tau)}{ 2^\tau\Gamma(\frac k2 + \tau)}\e^{-\sign(x)\frac{\pi i}2 (\tau - k/2)}, \qquad (x\neq 0), \\
    G_k(\tau, 0) :={}& 0 \notag,
  \end{align}
  we have the following estimates for~$\abs{x-\gamma^{-1}\infty} \geq \eps$:
  \begin{itemize}
    \item When~$\phi$ is cuspidal or when~$s_\phi \neq 1/2$,
    \begin{equation}
      \label{eq:estim-hL-largex}
      \begin{aligned}
        \hL_\gamma(x, \tfrac12) = u_\gamma(x)\big( & A_\infty(\phi) G_k(s_\phi, x) + B_\infty(\phi) G_k(1-s_\phi, x) \big) + O_{\gamma}(1),
      \end{aligned}
    \end{equation}
    \item When~$s_\phi = 1/2$,
    \begin{equation}
      \label{eq:estim-hL-largex-s1/2}
      \hL_\gamma(x, \tfrac12) =
      \begin{cases}
        u_\gamma(x) G_k(\tfrac12, x) A_\infty(\phi) + O_{\gamma}(1), & (B_\infty(\phi) = 0), \\
        u_\gamma(x) G_k(\tfrac12, x) B_\infty(\phi) \log\abs{x} + O_{\gamma}(1+\abs{x}^{1/2}), & (B_\infty(\phi) \neq 0).
      \end{cases}
    \end{equation}
  \end{itemize}
\end{lemma}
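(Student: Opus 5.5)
The plan is to transfer everything from $\hE_\gamma$ to $\hL_\gamma$ through the Mellin-type relation between $\cE_x(\phi,s)$ and $L(\phi,s,x)$ that the paper refers to as~\eqref{EtoL}. For $\Re(s)$ large, insert the Fourier expansion~\eqref{eq:phi-Fourier} at $\infty$ into~\eqref{eq:def-Eichlerint} and integrate term by term against $y^{s-1/2}\,\df y/y$. Using the Mellin transform of the Whittaker function $W_{\pm k/2,it_\phi}$, this should give
\[ \cE_x(\phi,s) = \gamma_+(s)\, L(\phi,s,x) + \gamma_-(s)\, L^-(\phi,s,x). \]
Here $\gamma_\pm(s)=(4\pi)^{1/2-s}\Gamma(s+it_\phi)\Gamma(s-it_\phi)/\Gamma(s+\frac12\pm\frac k2)$, up to normalisation, and $L^-$ is the analogous series over $n\le -1$.

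To isolate $L$, I will use the parity trick of~\cite[Section~5]{DrappeauNordentoft2022}. Applying the construction both to $\phi$ and to its reflected form $\tilde\phi(z)=\overline{\phi(-\bar z)}$ (or, equivalently, taking the Eichler integral with a twisting factor) gives a second linear relation. The $2\times2$ system is invertible for generic $s$, so
\[ L(\phi,s,x) = c_1(s)\,\cE_x(\phi,s) + c_2(s)\,\cE_x(\tilde\phi,s), \]
with explicit Gamma-ratio coefficients $c_j(s)$. Forming the combination $|j(\gamma,x)|^{2s-1}(\cdot)(\gamma x)-u_\gamma(x)(\cdot)(x)$ and noting that $\tilde\phi$ is automorphic for the reflected group with a compatible multiplier, we obtain
\[ \hL_\gamma(x,s) = c_1(s)\,\hE_\gamma(x,s) + c_2(s)\,\tilde h^{\cE}_\gamma(x,s). \]
The extension, continuity in $x$ and holomorphy in $s$ then follow from Proposition~\ref{prop:hE-upperbound-alls}, provided the poles of $c_j$ in $\Re(s)>0$ cancel. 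I would check this cancellation by evaluating residues; alternatively, I would note that $\hL_\gamma$ is a priori meromorphic and bounded near those points.

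For the bound~\eqref{eq:hL-rough_o}, apply Stirling's formula to $c_j(s)$. The ratio $\Gamma(s+\frac12\pm\frac k2)/(\Gamma(s+it_\phi)\Gamma(s-it_\phi))$, with the $\sin$ or $\cos$ factor coming from the parity inversion, grows like $\e^{\frac\pi2|\Im s|}|s|^{2-\Re(s)-\Re(s_\phi)}$. Combining this with~\eqref{eq:bound-hEgamma}, and absorbing the extra factor $(1+|s|)$ there into the powers, gives the stated estimate. The Hölder estimate at $s=\frac12$ follows immediately from Lemma~\ref{lem:hE-at-half}, since at $s=\frac12$ the relation is a fixed linear combination.

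For the asymptotics~\eqref{eq:estim-hL-largex} and~\eqref{eq:estim-hL-largex-s1/2}, plug~\eqref{eq:estim-hEgamma-half}, applied to both $\phi$ and $\tilde\phi$, into the relation at $s=\frac12$. The $\cE_{\gamma^{-1}\infty}$ terms and the $O(|x|^{-1})$ errors are $O_\gamma(1)$. What remains is the term $c_1\cM_{x-\gamma^{-1}\infty}(\phi,\frac12)+c_2\cM(\tilde\phi,\frac12)$, which must be matched with $u_\gamma(x)(A_\infty G_k(s_\phi,x)+B_\infty G_k(1-s_\phi,x))$. Replacing $x-\gamma^{-1}\infty$ by $x$ costs $O(|x|^{\Re(s_\phi)-1/2-1})$, which is bounded. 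The remaining task is to verify the Gamma-function identity: $\Upsilon_{k,\eta}(\tau)$ combined with $c_j(\frac12)$ must equal $G_k(\tau,x)$ with $\eta=\sign x$. This uses the duplication formula $\Gamma(\tau)=2^{\tau-1}\pi^{-1/2}\Gamma(\tau/2)\Gamma((\tau+1)/2)$ and reflection.

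In the case $s_\phi=\frac12$ with $B_\infty\neq0$, the $\Upsilon'$ term and the $\log$-shift produce contributions of size $|x|^{1/2}$, which is exactly why the error term there is $O(1+|x|^{1/2})$. The \textbf{main obstacle} is this bookkeeping: keeping track of the sign factors $(\frac{1+i\sign x}{\sqrt2})^k$, $\e^{-\sign(x)\frac{\pi i}2(\tau-k/2)}$ and $\sign(c_\gamma)^{-k}\chi(\gamma)$ versus $u_\gamma(x)$ in the $x\to\pm\infty$ regimes, together with confirming that the spurious poles of $c_j(s)$ cancel.
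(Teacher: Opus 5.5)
\textbf{The gap: you cannot isolate $L(\phi,s,x)$ this way.} The step that fails is separating $L(\phi,s,x)=\sum_{n\geq1}a(n)\e(nx)n^{1/2-s}$ from the negative-frequency series $L^-(\phi,s,x)$.

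\begin{itemize}
\item The Eichler integral only sees $y\mapsto\phi(x+iy)$ on the vertical geodesic above $x$. It gives a single linear form $\gamma_+(s)L+\gamma_-(s)L^-$. For $k=0$ this is a multiple of $L+L^-$, because $W_{0,it_\phi}$ does not depend on the sign of $n$. So no kernel in $y$ (no ``twisting factor'') can separate the two series.
\item The reflected form gives no second relation at the same point. Indeed $\tilde\phi(z)=\overline{\phi(-\bar z)}$ has coefficients $\overline{a(n)}$, and substituting into~\eqref{eq:def-Eichlerint} gives $\cE_x(\tilde\phi,s)=\overline{\cE_{-x}(\phi,\bar s)}$. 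This is the same relation at $-x$, for conjugated series. So there is no invertible $2\times 2$ system in $(L,L^-)$ at $x$.
\item Independently, $\tilde\phi$ is automorphic for $\epsilon\Gamma\epsilon$ with $\epsilon=\smat{-1&0\\0&1}$. For a general cofinite $\Gamma$ this is a different group, so Proposition~\ref{prop:hE-upperbound-alls} does not control its period function at $\gamma\in\Gamma$.
\end{itemize}

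\textbf{What is needed.} You need information transverse to the vertical line, i.e.\ the $x$-derivative, and this is what the Maa\ss{} raising operator supplies. The relation the paper imports from \cite[eqs.~(5.11), (5.16)]{DrappeauNordentoft2022} pairs $\phi$ with $\psi=R_k\phi$. This is automorphic of weight $k+2$ for the same $\Gamma$ and $\chi$, with coefficients $-a(n)$ for $n\geq 1$ and $(s_\phi+\frac k2)(1-s_\phi+\frac k2)a(n)$ for $n\leq -1$.
\begin{itemize}
\item For $k\in\{0,1\}$ and $s_\phi\not\equiv k/2\pmod 1$ this yields $L(\phi,s,x)=\Delta_{it_\phi}(\frac k2,s)^{-1}\big(g_\phi^+(s)\cE_x(\phi,s)+g_\psi^+(s)\cE_x(\psi,s)\big)$.
\item In the holomorphic case $a(n)=0$ for $n<0$, so $L=\Xi_\phi(s)\cE_x(\phi,s)$ suffices.
\item Since $j_\gamma(x)^2=1$ for real $x$, the weight-$(k+2)$ multiplier coincides with $u_\gamma(x)$ on $\R$. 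Hence $\hL_\gamma$ is exactly the same linear combination of the period functions $\hE_\gamma$ of $\phi$ and of $\psi$.
\item The extension, the H\"older estimate and~\eqref{eq:hL-rough_o} then follow by linearity, using $\Delta_{it_\phi}^{-1}g^+\ll|\Gamma(s+s_\phi-\frac12)|^{-1}\ll \e^{\frac\pi2|\Im s|}(1+|s|)^{1-\Re(s+s_\phi)}$. The exponent $1-\Re(s+s_\phi)$ matters: your exponent $2-\Re(s)-\Re(s_\phi)$ would leave a surplus power of $|s|$ after multiplying by the factor $(1+|s|)$ in~\eqref{eq:bound-hEgamma}. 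There are no spurious poles to cancel.
\item The main terms come from~\eqref{eq:estim-hEgamma-half} applied to both $\phi$ and $\psi$, together with $A_\infty(\psi)=(\frac k2+s_\phi)A_\infty(\phi)$ and $B_\infty(\psi)=(\frac k2+1-s_\phi)B_\infty(\phi)$.
\item The paper does the Gamma-function identities for $k\in\{0,1\}$. It reaches general $k$ by raising or lowering, which changes $a(n)$, $n\geq1$, by a constant factor and the main terms by the same factor. Your bookkeeping would have to be redone in this framework.
\end{itemize}
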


\begin{lemma}\label{lem:hL-parab}
  Let~$p, \omega, \sigma,m_0, \eps, x$ and~$s$ be as in Proposition~\ref{prop:hE-gamman-estim}.
  Then we have
  \begin{equation}
    \begin{aligned}
      \hL_{p^n}(x, s) \ll_{p} \frac{\e^{\frac{\pi }2|\Im(s)|}}{(1+|s|)^{\Re(s)+\Re(s_\phi)-2}}\cdot
      \Big(&(1+\abs{x})^{2\Re(s)-1} \big\{1+ n^{2\Re(s)-1} + n^{\Re(s)-\frac12+\rho_\phi+\eps}\big\} \\
      & {}+ 1 + (1+\abs{x})^{\Re(s)-\frac12+\rho_\phi+\eps} \Big).
    \end{aligned}\label{eq:hL-rough2}
  \end{equation}
  Furthermore, the Hölder estimate~\eqref{eq:diff-hgamman} holds also for~$\hL$ for~$n\geq n_0(\eps, p)$.

  Moreover we have the following estimates:
  \begin{itemize}
    \item When~$\phi$ is cuspidal or~$s_\phi \neq 1/2$, we have
    \begin{equation}
      \begin{aligned}
        \hL_{p^n}(x, \tfrac12) = {}& \sign(c_\sigma m_0n)^{-k}(A_\omega(\phi) G_k(s_\phi, m_0n) + B_\omega(\phi) G_k(1-s_\phi, m_0n)) \\
        &{} + u_{p^n}(x)\big( A_\infty(\phi) G_k(s_\phi, x) + B_\infty(\phi) G_k(1-s_\phi, x) \big) \\
        &{} + O_{p}(1).
      \end{aligned}
      \label{eq:estim-hL-parab}
    \end{equation}
    \item When~$s_\phi = 1/2$, define for all~$\cb\in C(\Gamma)$ and~$x\in \R_{\neq 0}$
    \begin{equation}
      D_\cb(\phi, x) := \begin{cases} A_\cb(\phi) G_k(\tfrac12, x), & (B_\cb(\phi) = 0), \\
        B_\cb(\phi) G_k(\tfrac12, x) \log\abs{x}, & (B_\cb(\phi)\neq 0). \end{cases}\label{eq:D-mainterm}
    \end{equation}
    Then
    \begin{equation}
      \begin{split}
        \hL_{p^n}(x, \tfrac12) &= \sign(c_\sigma m_0n)^{-k} D_\omega(\phi, m_0 n) + u_{p^n}(x) D_\infty(\phi, x)\\
        &\quad + \begin{cases} O_p(1), & (B_\infty(\phi) = B_\omega(\phi) = 0), \\
          O_p(\abs{x}^{1/2} + \abs{n}^{1/2}) & (\text{otherwise}). \end{cases}
        \label{eq:estim-hL-parab-s1/2}
      \end{split}
    \end{equation}
  \end{itemize}
\end{lemma}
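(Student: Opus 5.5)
The plan is to transfer Proposition~\ref{prop:hE-gamman-estim} and Lemma~\ref{lem:hE-gamman-holder} from $\hE$ to $\hL$, in the same way that Lemma~\ref{lem:hL-gamma} is obtained from Proposition~\ref{prop:hE-upperbound-alls} and Lemma~\ref{lem:hE-at-half}. The bridge is the relation between $\cE_x(\phi,s)$ and $L(\phi,s,x)$ (the identity referred to as \eqref{EtoL}). It comes from integrating the Fourier expansion~\eqref{eq:phi-Fourier} termwise against $y^{s-1/2}\,\df y/y$: the Mellin transform of $W_{\frac k2\sign(n),it_\phi}(4\pi|n|y)$ gives an explicit gamma factor $\gamma_\pm(s)$ times $(4\pi|n|)^{1/2-s}$. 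This yields
\[
\cE_x(\phi,s) = \gamma_+(s) L(\phi,s,x) + \gamma_-(s) L^-(\phi,s,x),
\]
where $L^-$ is the twist over negative~$n$. To isolate $L$, I will follow \cite[Section~5]{DrappeauNordentoft2022} and use the analogous identity for the form $\phi$ reflected under $z\mapsto -\bar z$, or equivalently raise and lower the weight. Solving the resulting $2\times2$ linear system expresses $L(\phi,s,x)$ as a linear combination of $\cE_x(\phi^{(j)},s)$ for finitely many auxiliary forms $\phi^{(j)}$. The coefficients are ratios of gamma functions in $s$, which are independent of $x$ and~$\gamma$.

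Since these coefficients do not depend on $x$, the transformation $x\mapsto\gamma x$ commutes with them. Applying the definition~\eqref{eq:def-hL}, we get $\hL_{p^n}(x,s)=\sum_j c_j(s)\,\hE^{(j)}_{p^n}(x,s)$, where $\hE^{(j)}$ is the $\hE$-period function of $\phi^{(j)}$. The auxiliary forms $\phi^{(j)}$ satisfy the same hypotheses as~$\phi$ (same $\rho_\phi$, same cusp structure) for a suitable group and multiplier. Hence Proposition~\ref{prop:hE-gamman-estim} applies to each of them.

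The bound~\eqref{eq:hL-rough2} will then follow from~\eqref{eq:hE-rough}, with Stirling's formula controlling $c_j(s)$. This is precisely the source of the factor $\e^{\frac\pi2|\Im s|}(1+|s|)^{2-\Re s-\Re s_\phi}$, already present in~\eqref{eq:hL-rough_o}. I would reuse the same computation as in Lemma~\ref{lem:hL-gamma}, noting that the extra factor $(1+|s|)$ in~\eqref{eq:hE-rough} is absorbed into it. At $s=1/2$ the coefficients are constants, so the Hölder estimate~\eqref{eq:diff-hgamman} transfers directly from Lemma~\ref{lem:hE-gamman-holder}.

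For the asymptotics~\eqref{eq:estim-hL-parab} and~\eqref{eq:estim-hL-parab-s1/2}, I insert~\eqref{eq:hE-half} for each $\phi^{(j)}$. The $\cE_{\sigma^{-1}\infty}$ and $\cE_\omega$ terms are $O_p(1)$ by~\eqref{eq:bound-Ex-anacont}, and the error terms are $O(1)$. It remains to evaluate the linear combination of the $\cM$-terms. For $x$-terms, $\sum_j c_j(\tfrac12)\,\cM_{x-\omega}(\phi^{(j)},\tfrac12)$ should collapse to $A_\infty G_k(s_\phi,x)+B_\infty G_k(1-s_\phi,x)$, up to replacing $|x-\omega|^\tau$ by $|x|^\tau$, which costs $O(|x|^{\Re\tau-1})=O(1)$. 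This is the same gamma-function identity already used to derive~\eqref{eq:estim-hL-largex} from~\eqref{eq:estim-hEgamma-half}. For the $n$-term, $\cM_{nm_0-1/(c_\sigma^2(x-\omega))}(\phi^\sigma,\tfrac12)$ similarly produces $G_k(\cdot,m_0n)$ with the constants $A_{\sigma^{-1}\infty\to}$ of $\phi^\sigma$ at infinity. These are exactly $A_\omega(\phi),B_\omega(\phi)$ by the definition~\eqref{eq:def-scaling-matrix}. The shift by $O(1)$ in the argument costs $O(n^{-1/2})$. The sign bookkeeping $\sign(c_\sigma)^{-k}$ combined with $\sign(m_0n)$ yields the stated prefactor.

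In the case $s_\phi=1/2$ with $B\ne0$, derivative terms of the form $\Upsilon'$ and the replacement $\log|x-\omega|\to\log|x|$ both contribute only $O(|x|^{1/2}+n^{1/2})$, which matches the stated error term. I expect the main obstacle to be this bookkeeping of phases and gamma ratios, that is, verifying that $\cM$ combines into $G_k$ with the correct factor $u_{p^n}(x)$ versus $\sign(c_\sigma m_0 n)^{-k}$. However, it is identical in form to the single-$\gamma$ case of Lemma~\ref{lem:hL-gamma}, so I would reduce to that computation rather than redo it.
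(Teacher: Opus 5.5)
Your proposal is correct and follows essentially the paper's route: write $L(\phi,s,x)$ via \eqref{EtoL} as a gamma-weighted combination of $\cE_x(\phi,s)$ and $\cE_x(R_k\phi,s)$, then transfer Proposition~\ref{prop:hE-gamman-estim} and Lemma~\ref{lem:hE-gamman-holder} by linearity, using Stirling bounds on the coefficients. The $\cM$-terms are then evaluated by the same gamma identities as for Lemma~\ref{lem:hL-gamma}; the paper proves both lemmas together, first for $k\in\{0,1\}$ and then for general $k$ by replacing $\phi$ with a weight-raised or weight-lowered form, rather than inverting the system for each $k$ as you do.

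One small correction: the reflection $z\mapsto-\bar z$ is not an alternative. It only gives $\cE_x(\tilde\phi,s)=\cE_{-x}(\phi,s)$, which relates twists at $x$ and $-x$ and yields no second equation at the same $x$. It is the weight-raising operator that isolates $L$.
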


We note in passing that for all~$x\neq 0$,
\begin{equation}
  \Big(\frac{G_k(\tfrac12, x)}{G_k(\tfrac12, -x)}\Big)^2 = -1.\label{eq:hparab-MT-flip}
\end{equation}

\begin{proof}[Proofs of Lemmas~\ref{lem:hL-gamma} and \ref{lem:hL-parab}]
  Assume first that if~$s_\phi\not\equiv k/2\pmod{1}$ then~$k\in \{0, 1\}$.
  By~\cite[eqs.~(5.11) \&~(5.16)]{DrappeauNordentoft2022}, we have
  \begin{align}\label{EtoL}
    L(\phi, s, x) = \begin{cases}
      \Delta_{it_\phi}(\frac k2, s)^{-1} ( g_\phi^+(s) \cE_x(\phi, s) + g_\psi^+(s) \cE_x(\psi, s) ) & (s_\phi \not\equiv k/2\pmod{1}), \\
      \Xi_\phi(s) \cE_x(\phi, s) & (y^{-k/2} \phi\text{ holomorphic}),
    \end{cases}
  \end{align}
  where~$\psi$ is the weight~$k+2$ automorphic form obtained from~$\phi$ by the weight-raising operator~$R_k$, and~$\Xi_\phi(s)$, $g_\phi^+(s)$, and $g_\psi^+(s)$ are described in the proof of~\cite[Lemma~5.4]{DrappeauNordentoft2022} (there~$\Xi_\phi(s)$ is denoted~$c_f(s)$), see in particular eqs.~(5.18) and~(5.21) \emph{ibid.}\ for the values at~$s=1/2$ and Proposition~5.1 and eq.~(5.6) for the definitions of $\Delta_\beta$ and $\Omega_\beta$.
  Recall that either~$\phi$ is cuspidal or~$\Re(s_\phi) = 1/2$, so that by~\cite[Corollary~4.4]{DukeEtAl2002}, we have~$\Re(s_\phi) < 1$ whenever~$s_\phi\not\equiv k/2\pmod{1}$\footnote{In~\cite{DukeEtAl2002} the statement restricts to~$\Gamma$ being a congruence subgroup, but the implication we need here follows in general by the same argument.}.
  Using~\cite[(9.111)]{GradshteynRyzhik2007} and Stirling's formula, we deduce that for $\eps<\Re(s)\ll1$
  $$\Delta_{it_\phi}(\tfrac k2, s)^{-1} g_\phi^+(s)\ll |\Gamma(s+s_\phi-1/2)|^{-1}\ll \e^{\frac\pi2 \abs{\Im(s)}} (1+|s|)^{1-\Re(s + s_\phi)}$$
  and the same bounds hold for $g_\psi^+$ in place of $g_\phi^{+}$ and, by~\cite[eq.~(5.20)]{DrappeauNordentoft2022}, for $\Xi_\phi(s)$.

  Thus, the first parts of Lemma~\ref{lem:hL-gamma} and Lemma~\ref{lem:hL-parab} follow immediately by linearity from Propositions~\ref{prop:hE-upperbound-alls} and~\ref{prop:hE-gamman-estim}, and from Lemmas~\ref{lem:hE-at-half} and~\ref{lem:hE-gamman-holder}.

  It remains to prove~\eqref{eq:estim-hL-largex}--\eqref{eq:estim-hL-largex-s1/2} and~\eqref{eq:estim-hL-parab}--\eqref{eq:estim-hL-parab-s1/2}. When~$\phi$ is cuspidal, the main terms on the right-hand side vanish, and the result follows directly from~\eqref{eq:estim-hEgamma-half} and~\eqref{eq:hE-half} by linearity.
  Suppose then that~$\phi$ is non-cuspidal, in particular~$\Re(s_\phi)=1/2$ by hypothesis, and assume first~$s_\phi \neq 1/2$.
  This implies~$s_\phi\not\equiv k/2\pmod{1}$ and thus, as we temporarily assumed, $k\in\{0, 1\}$.
  From the values in~\cite[Lemma~5.4]{DrappeauNordentoft2022}, we get
  \begin{align*}
    g_\phi^+(\tfrac12) ={}& \frac{\pi^{3/2}2^{1-k/2}}{\sin(\pi s_\phi)} \frac1{\Gamma(\frac k4 + \frac{s_\phi}2) \Gamma(\frac k4 + \frac{1-s_\phi}2)}, \\
    g_\psi^+(\tfrac12) ={}& \frac{-\pi^{3/2} 2^{-k/2}}{\sin(\pi s_\phi)} \frac1{\Gamma(\frac12 + \frac k4 + \frac{s_\phi}2) \Gamma(\frac12 + \frac k4 + \frac{1-s_\phi}2) }.
  \end{align*}
  Furthermore, by~\cite[eq.~(4.29)]{DukeEtAl2002},
  \begin{align*}
    A_\infty(\psi) = (\tfrac k2 + s_\phi) A_\infty(\phi), \quad
    B_\infty(\psi) = (\tfrac k2 + 1 - s_\phi) B_\infty(\phi).
  \end{align*}
  Thus, by~\eqref{eq:estim-hEgamma-half}, inserting the definition~\eqref{eq:def-Upsilon} and using the reflection and duplication formula for the Gamma function, we obtain~\eqref{eq:estim-hL-largex}
  with the claimed value of~$G_k(\tau, x)$.

  Assume next that~$s_\phi = 1/2$ and~$k=0$. Then by similar computations, using the values~\cite[eq.~(4.18)]{DrappeauNordentoft2022}, we obtain
  \begin{align*}
    \hL_\gamma(x, \tfrac12) = u_\gamma(x) \tfrac{1-i\eta}2 \big( &\{ A_\infty(\phi) - (\log 2 + i \eta \tfrac \pi2) B_\infty(\phi)\} \abs{\tilde x}^{1/2} + B_\infty(\phi) \abs{\tilde x}^{1/2} \log\abs{\tilde x}\big) + O(1),
  \end{align*}
  with $\tilde x:=x-\gamma^{-1}\infty$ and~$\eta = \sign(\tilde x)$.
  Assume, finally, that~$s_\phi = 1/2$ and~$k=1$. Then~$s_\phi = k/2$, so that by our assumptions~$y^{-k/2}\phi(z)$ is a holomorphic form; only its constant term contributes to~$\phi_\cb(y)$, in particular~$B_\cb = 0$ for every~$\cb$.
  Thus from~$\Xi_\phi(\frac12) = (2\pi)^{-1/2}$ we get
  \begin{align*}
    \hL_\gamma(x, \tfrac12) = u_\gamma(x) \sqrt{\pi}\tfrac{ 1+ i \eta}2 A_\infty(\phi) \abs{\tilde x}^{1/2} + O(1).
  \end{align*}
  This proves the asymptotic estimate~\eqref{eq:estim-hL-largex-s1/2}.

  The estimate~\eqref{eq:estim-hL-parab} is proved along a similar argument, using the expansion~\eqref{eq:hE-half} instead of~\eqref{eq:estim-hEgamma-half}. We check that for~$s_\phi \neq 1/2$ and~$\phi$ non-cuspidal, the corresponding contribution of~$\cM_{m_0n - 1/(c_\sigma^2(x-\omega))}(\dotsc)$ is
  $$ (\eta \sign c_{\sigma})^k (A_\omega(\phi) G_k(s_\phi, m_0 n) + B_\omega(\phi) G_k(1-s_\phi, m_0 n) ) + O(1), $$
  where now~$\eta = \sign(m_0 n)$, since~$\chi(\gamma) = 1$ unless~$\omega$ is non-singular in which case~$A_\omega(\phi) = B_\omega(\phi) = 0$.
  Thus,~\eqref{eq:estim-hL-parab} follows. The same computation at the terms involving~$\omega$ gives~\eqref{eq:estim-hL-parab-s1/2} when~$s_\phi = 1/2$.

  It remains to treat the case~$s_\phi \not\equiv k/2\pmod{1}$ and~$k\not\in\{0, 1\}$. If~$k<0$, we replace~$\phi$ by its weight-raised form~$\psi = R_k \phi$: by~\cite[eqs.~(4.25)--(4.29)]{DukeEtAl2002} this changes~$a(n)$ into its opposite, and we check that it also turns~$A_\cb(\phi) G_k(s_\phi, \ast)$, $B_\cb(\phi)G_k(1-s_\phi, \ast)$ (if $s_\phi \neq 1/2$) and~$D_\cb(\phi, x)$ (if~$s_\phi = 1/2$) into their opposites as well. Thus proving Lemmas~\ref{lem:hL-gamma} and~\ref{lem:hL-parab} for~$\psi$ implies the same statement for~$\phi$. Thus, since they were proved for~$k\in \{0, 1\}$, they hold for all~$k\leq 0$. When~$k>1$, we carry out the same argument using the weight-lowered form~$\psi = \Lambda_k\phi$, with the factor~$-1$ replaced by~$-(s_\phi - \frac k2)(1-s_\phi-\frac k2)$ (see~\cite[eq.~(4.27)]{DukeEtAl2002}).
\end{proof}

\section{Estimation of a Fourier integral}\label{sec:fourier}

The case of non-cuspidal forms (Theorem~\ref{th:main-eis}) will require the second order estimation of a Fourier integral, which is the content of the following lemma.

\begin{lemma}\label{lem:fourier}
  Let $x_0\in\R$. Let~$(f_n)_n$ and~$(W_n)_n$ be sequences of real-valued measurable maps on~$[0, 1]$ indexed by~$n\geq 1$, satisfying the following estimates:
  \begin{itemize}
    \item We have~$0\leq W_{ n}(x) \ll n^{-2}$, with a uniform implied constant.
    \item For some constant~$\alpha \geq 0$, we have~$\int_0^1 W_{n}(x)\df x = \alpha n^{-2} + O(n^{-3})$.
    \item For some constants~$\beta_{\pm} \geq 0$, we have~$\sum_n W_{n}(x) = \beta_\pm+ O(|x-x_0|)$ for $\sign(x-x_0)=\pm1$.
  \end{itemize}
  Assume additionally one of the following hypotheses:
  \begin{enumerate}[label=(\roman*)]
    \item\label{it:fourier-hyp-half-simple} There exist real numbers~$a, d_{\pm} \ll 1$ such that for $\sign(x-x_0)=\pm1$ one has
    $$ f_{n}(x) = a n^{1/2} + d_{\pm} |x-x_0|^{-1/2} + O(1); $$
    \item\label{it:fourier-hyp-nothalf} There exist a finite set $J$ and real numbers~$a_{ j}, d_{j,\pm} \ll 1$, $\theta_{ j}, \theta'_{j}\in \R$ and constants~$\tau_{ j}\in\R_{\neq 0}$ such that for $\sign(x-x_0)=\pm1$ one has
    $$ f_{n}(x) = \sum_{j\in J} a_{ j} n^{1/2}\cos(\tau_{ j}\log n+\theta_{ j}) + d_{ j,\pm} |x-x_0|^{-1/2}\cos(\tau_{j} \log |x-x_0| + \theta'_{ j}) + O(1); $$
    \item\label{it:fourier-hyp-half-double} There exist real numbers~$a, d_{\pm}\ll 1$ such that for $\sign(x-x_0)=\pm1$ one has
    $$ f_{n}(x) = a n^{1/2}\log n + d_{\pm} |x-x_0|^{-1/2} \log(|x-x_0|^{-1}) + O(n^{1/2}+|x-x_0|^{-1/2}). $$
  \end{enumerate}
  Then there exist~$\mu\in \R$ and~$\sigma\geq 0$ such that uniformly for~$t$ in a neighbourhood of~$0$, we have
  $$ \sum_{n\geq 1} \int_0^1 W_{ n}(x) (\e^{i t f_{ n}(x)} - 1) \df x = i t \mu - \frac\sigma2 t^2 \log(1/\abs{t})^\xi + O(t^2 (\log 1/\abs{t})^{\xi - \rho}), $$
  where, according to each case, we have
  \begin{equation*}\def\arraystretch{1.3}
    \begin{array}{|r|c|c|c|}
      \hline
      & \xi & \rho & \sigma \\
      \hline
      \text{(i)} & 1 & 1 & 2 \alpha \abs{a}^2 + 2\sum_{x_0\pm0\in[0,1]} \beta_\pm \abs{d_\pm}^2 \\
      \text{(ii)} & 1 & 1 & \sum_{v\in T} \big( \alpha \abs{A_v}^2 + \sum_{x_0\pm0\in[0,1]} \beta_\pm \abs{D_{v,\pm}}^2 \big)\\
      \text{(iii)} & 3 & 1-\eps & \frac83 \alpha \abs{a}^2 + \frac83 \sum_{x_0\pm0\in[0,1]} \beta_\pm \abs{d_\pm}^2 \\
      \hline
    \end{array}
  \end{equation*}
  where the sum over $\pm$ selects the sign(s) for which $[0,1]$ contains a right (if $\pm=+$) or left (if $\pm=-$) neighbourhood of $x_0$.
    Here, in case~(ii), we denoted~$T := \{\abs{\tau_j}, j\in J\}$, and for~$v\in T$,
    $$ A_v := \sum_{\tau_j = v} a_j \e^{i\theta_j} + \sum_{\tau_j = -v} a_j \e^{-i\theta_j}, \qquad
    D_{v, \pm} := \sum_{\tau_j = v} d_{j, \pm} \e^{i \theta'_j} + \sum_{\tau_j = -v} d_{j, \pm} \e^{-i \theta'_j}. $$
  The implied constant depends at most on~$\alpha$, $\beta_\pm$,~$\tau_{ j}$,~$\eps$ and on the implied constants in the corresponding hypothesis.
\end{lemma}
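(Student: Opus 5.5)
We split $\e^{itf}-1 = itf + (\e^{itf}-1-itf)$, treating the linear part and the quadratic-type remainder separately. We write $\cS(t)$ for the left-hand side of the claimed expansion.

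\emph{Step 1: the linear term.} The function $f_n$ is not integrable against $\sum_n W_n$ in a uniform way: the $n^{1/2}$ terms are summable against $W_n\ll n^{-2}$, and the $|x-x_0|^{-1/2}$ singularities are integrable. So we set
$$\mu := \sum_n \int_0^1 W_n f_n \df x,$$
which converges absolutely in all three cases. Since $|\e^{iu}-1-iu|\leq \min(2|u|,u^2/2)$, it then suffices to show
$$\sum_n\int_0^1 W_n(x)\big(\e^{itf_n(x)}-1-itf_n(x)\big)\df x = -\tfrac{\sigma}{2}t^2\log(1/|t|)^\xi + O\big(t^2\log(1/|t|)^{\xi-\rho}\big).$$

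\emph{Step 2: decoupling the two singular sources.} Write $f_n = F_n + G(x) + r_n(x)$, where $F_n$ is the $n$-part (for instance $an^{1/2}$), $G$ is the $x$-part (for instance $d_\pm|x-x_0|^{-1/2}$), and $r_n$ is the error term.

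\begin{itemize}
\item The error $r_n=O(1)$ contributes $O(t^2)$ via $|\e^{iu}-\e^{iv}-i(u-v)|\ll |u-v|(|u|+|v|)|t|$ together with Cauchy--Schwarz. In case (iii), where $r_n=O(n^{1/2}+|x-x_0|^{-1/2})$, it contributes $O(t^2\log(1/|t|)^{2})$, which is where $\rho=1-\eps$ (or even $\rho=1$) comes from.
\item For the cross terms we use $\e^{i(u+v)}-1-i(u+v) = (\e^{iu}-1-iu)+(\e^{iv}-1-iv)+(\e^{iu}-1)(\e^{iv}-1)$. The product term is bounded by $\min(1,|tF_n|)\min(1,|tG|)$. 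Summed with weight $W_n$, this gives $O(t^2)$ times at most one logarithm from each factor. Since $\sum_n n^{-2}\min(1,|t|n^{1/2})\ll |t|$ and $\int\min(1,|t||x|^{-1/2})\df x\ll |t|$, the product is in fact $O(t^2)$ up to logarithms of lower order.
\end{itemize}

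\emph{Step 3: the $n$-part.} Replace $W_n(x)$ by its integral $\alpha n^{-2}+O(n^{-3})$; the $O(n^{-3})$ term gives $O(t^2)$. We are then left with evaluating
$$\alpha\sum_n n^{-2}\big(\e^{itan^{1/2}}-1-itan^{1/2}\big).$$
Split at $n\asymp t^{-2}$. Below the cutoff, Taylor expansion gives $-\tfrac{t^2a^2}{2}\sum_{n\leq t^{-2}}n^{-1} = -t^2a^2\log(1/|t|)+O(t^2)$; above it the sum is $O(t^2)$. This yields the constant $2\alpha a^2$.

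In case (iii), $\sum_{n\le t^{-2}}\log^2 n/n\sim\tfrac13(2\log(1/|t|))^3=\tfrac83\log^3$, which gives the factor $\tfrac83$ (halved from the $\tfrac12$ in Taylor, then the $-\sigma/2$ normalization).

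In case (ii), expand $\cos^2$ and $\cos(\cdot)\cos(\cdot)$. The oscillating terms $n^{\pm 2i v}$ summed against $1/n$ are $O(1)$ since $\tau_j\neq0$ (partial summation). Only the diagonal pairs with $|\tau_j|=|\tau_{j'}|$ survive, producing $\tfrac12|A_v|^2\log(t^{-2})$.

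\emph{Step 4: the $x$-part.} Use $\sum_n W_n(x)=\beta_\pm+O(|x-x_0|)$. The error term integrates against $\min(t^2G^2,|tG|)$ to give $O(t^2)$. On each side of $x_0$ lying in $[0,1]$, the same split at $|x-x_0|\asymp t^2$ gives $\beta_\pm d_\pm^2 t^2\int_{t^2}^1\frac{\df y}{y}/2$, that is $\beta_\pm d_\pm^2t^2\log(1/|t|)$. The analogous log-power and oscillation computations apply in cases (ii) and (iii), with the substitution $y=\e^{-u}$ turning the oscillatory integrals into $\int \e^{2ivu}\df u=O(1)$.

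Assembling Steps 1--4 gives the table. The main obstacle is bookkeeping the error terms in case (iii), where $r_n$ is not bounded. There the cross term $tF_n\cdot tr_n$ yields $t^2\sum_{n\le t^{-2}}\log n/n\asymp t^2\log^2$, which fixes $\rho$; it must be checked that no cross term produces $\log^3$.
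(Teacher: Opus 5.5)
Your proof is correct, and its skeleton matches the paper's. The paper also sets $\mu=\sum_n\int_0^1 W_nf_n$ and splits $f_n$ into an $n$-part, an $x$-part and a remainder. It kills all mixed terms via $\e^{itf_n}-1=\delta_1\delta_2\delta_3+\sum_{i<j}\delta_i\delta_j+\sum_i\delta_i$, with each product $O(t^2n^{1/2}|x-x_0|^{-1/2})$; your identity for $\e^{i(u+v)}-1-i(u+v)$ does the same job.

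The difference is in how the two one-variable sums are evaluated. In cases (i) and (iii) the paper does not compute $\sum_n n^{-2}(\e^{itan^{1/2}}-1-itan^{1/2})$ and its $x$-analogue by hand. Instead it applies a general result on characteristic functions of measures composed with a power-type singular function, \cite[Proposition~2.2, Corollary~3.3]{BettinDrappeau2022a}, taking $\phi(x)=x^{-1/2}$, resp.\ $x^{-1/2}\log(1/x)$. That result delivers the constants and the exponent $\rho$ directly. Only in case (ii), where several frequencies $\tau_j$ make the citation awkward, does the paper do what you do in all three cases. There it cuts at $n\asymp t^{-2}$, Taylor-expands below, bounds trivially above, and uses $\sum_{n\le N}n^{-1-i\tau}=O(1)$ to kill off-diagonal frequencies. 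Your route is self-contained and uniform, at the price of doing the case (iii) bookkeeping yourself.

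Two points in that bookkeeping need fixing; write $E_2(u):=\e^{iu}-1-iu$.

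First, the remainder comparison must use the truncated bound $|E_2(u)-E_2(v)|\le|u-v|\min(2,\max(|u|,|v|))$. With $|u-v|(|u|+|v|)$ you would face the divergent sum $\sum_n n^{-2}\cdot n\log n$. With the truncation, $r_n$ contributes only $O(t^2\log^2(1/|t|))$, which is compatible with $\rho=1$.

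Second, the loss $\rho=1-\eps$ comes from the main term, not from $r_n$. At $n=t^{-2}$ one has $|t F_n|\asymp\log(1/|t|)$, so cutting at $t^{-2}$ with Taylor error $O(|u|^3)$ fails. You need either the cutoff $N\log^2N\asymp t^{-2}$ or the remainder bound $|E_2(u)+u^2/2|\le\min(|u|^3/6,u^2)$. Then $\log^3N=(2\log(1/|t|)-2\log\log(1/|t|)+O(1))^3$ produces a genuine term of order $t^2\log^2(1/|t|)\log\log(1/|t|)$; the same happens at $|x-x_0|\asymp t^2$ in the $x$-part. So in case (iii) $\rho=1$ is false in general, and $\rho=1-\eps$ is exactly what this computation gives.
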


\begin{proof}
  For simplicity of notation we consider the case $x_0=0$ only; the general case follows in the same way. We denote~$\beta := \beta_+$ and~$d := d_+$.
  Consider first the case~\ref{it:fourier-hyp-half-simple}. By conjugation, we may assume that~$t>0$. The hypotheses imply the convergence of~$\mu := \sum_n \int_0^1 W_n(x) f_n(x) \df x$, and we are left to prove
  $$ \sum_{n\geq 1} \int_0^1 W_{ n}(x) (\e^{i t f_n(x)} - 1) \df x = i t \mu - \sigma \frac{t^2 \abs{\log t}}{2} + O(t^2) $$
  where~$\sigma = 2(\alpha \abs{a}^2 + \beta \abs{d}^2)$. Write~$f_n(x) = a n^{1/2} + d x^{-1/2} + g_n(x)$, where by hypothesis~$g_n(x)$ is uniformly bounded. Let~$\delta_j = \delta_j(n, x)$ be defined as
  $$ \delta_1 := \e^{i a t n^{1/2}}-1, \qquad \delta_2 := \e^{i d t x^{-1/2}}-1, \qquad \delta_3 := \e^{i t g_n(x)} - 1. $$
  Then we write
  $$ \e^{i t f_n(x)} - 1 = \delta_1 \delta_2 \delta_3 + \sum_{i<j} \delta_i \delta_j + \sum_i \delta_i. $$
  Since~$\delta_i \ll 1$ for each~$i$, and moreover~$\delta_i \delta_j \ll t^2 n^{1/2} x^{-1/2}$ whenever~$i\neq j$ (here we use~$a, d\ll 1$), we deduce that
  $$ \e^{i t f_n(x)} - 1 = O(t^2 n^{1/2} x^{-1/2}) + \sum_i \delta_i. $$
  Upon integrating over~$x$ and summing over~$n$, the first term contributes~$O(t^2)$, and it suffices to prove
  \begin{align}
    \sum_n \int_0^1 W_n(x) \big(\e^{i a t n^{1/2}} - 1 - i a t n^{1/2}\big) \df x ={}& -\alpha \abs{a}^2 t^2\abs{\log t} + O(t^2), \label{eq:fourier-int-n} \\
    \sum_n \int_0^1 W_n(x) \big(\e^{i d t x^{-1/2}} - 1 - i d t x^{-1/2}\big) \df x ={}& -\beta \abs{d}^2 t^2\abs{\log t} + O(t^2), \label{eq:fourier-int-x} \\
    \sum_n \int_0^1 W_n(x) \big(\e^{i t g_n(x)} - 1 - i t g_n(x)\big) \df x ={}& O(t^2). \label{eq:fourier-int-bdd}
  \end{align}
  The estimate~\eqref{eq:fourier-int-bdd} is clear by Taylor expansion at order~$2$ of~$\e^{i t g_n(x)}$.
  Regarding~\eqref{eq:fourier-int-n}-\eqref{eq:fourier-int-x}, by conjugation and rescaling~$t$ we reduce to the case~$a=d=1$ (here again we use the fact that~$a, d \ll 1$).

  We apply~\cite[Proposition~2.2]{BettinDrappeau2022a} (using the variant described in~\cite[Corollary~3.3]{BettinDrappeau2022a}) with~$\phi(x) = x^{-1/2}$ and~$\df\mu(x)=\sum_{n} \delta_{x=1/n} \int_0^1W_n(y)\df y$, with $\delta_{x=1/n}$ being the Dirac mass at $1/n$, and $\df\mu(x)=\sum_nW_n(x) \df x$ respectively, and parameters given by~$(\alpha, \xi, \rho) \gets (2, 1, 1)$.

  In case~\ref{it:fourier-hyp-half-double}, the same argument goes through by applying~\cite[Proposition~2.2]{BettinDrappeau2022a} with~$\phi(x) = x^{-1/2}\log(1/x)$, with parameters~$(\alpha, \xi, \rho) \gets (2, 3, 1-\eps)$.

  Finally, in case~\ref{it:fourier-hyp-nothalf}, a similar argument reduces the problem to the analysis of
    \begin{equation}
      \sum_n \int_0^1 W_n(x) (\e^{i t n^{1/2} \sum_j a_j \cos(\tau_j \log n + \theta_j)} - 1 - i t n^{1/2} \sum_j a_j \cos(\tau_j \log n + \theta_j))\df x,\label{eq:fourier-int-n-cos}
    \end{equation}
  and corresponding analogues of~\eqref{eq:fourier-int-x} and \eqref{eq:fourier-int-bdd}. The sum over~$j$ makes it less obvious to apply~\cite[Proposition~2.2]{BettinDrappeau2022a}, but since the underlying poles are simple in this case, an easier argument is as follows. Set~$N = t^{-2}$. For those~$n>N$ we bound the summand as~$O(n^{-2} + t n^{-3/2})$, which gives an error~$O(t^2)$. For those~$n\leq N$, expanding the exponential to degree~$2$ yields that~\eqref{eq:fourier-int-n-cos} equals
    \begin{align*}
      & O(t^2) - \frac{\alpha t^2}2 \sum_{n\leq N} \frac1n \Big(\sum_j a_j \cos(\tau_j \log n + \theta_j)\Big)^2 \\
      ={}& O(t^2) - \frac{\alpha t^2}2 \sum_{j_1, j_2} a_{j_1} a_{j_2} \sum_{n\leq N} \frac1n \cos(\tau_{j_1} \log n + \theta_{j_1}) \cos(\tau_{j_2} \log n + \theta_{j_2}).
    \end{align*}
    The contribution of those~$j_1, j_2$ for which~$\abs{\tau_{j_1}} \neq \abs{\tau_{j_2}}$ is again~$O(t^2)$ since~$\sum_{n\leq N} n^{-1-i\tau} = O(1)$ for fixed~$\tau\neq 0$. The rest contributes
    $$ O(t^2) - \frac{\alpha t^2 \abs{\log t}}2 \Re\Big( \sum_{\tau_{j_1} = \tau_{j_2}} a_{j_1} a_{j_2} \e^{i(\theta_{j_1} - \theta_{j_2})}
    + \sum_{\tau_{j_1} = -\tau_{j_2}} a_{j_1} a_{j_2} \e^{i(\theta_{j_1} + \theta_{j_2})}\Big), $$
    and the real part is seen to coincide with~$\sum_{v\in T} \abs{A_v}^2$.

\end{proof}

\begin{remark}\label{rmk:fourier-curves}
  Taking $W_{n}=0$ for all sufficiently large $n$, we see that the lemma holds, with~$\alpha:=0$, when~$(W_{n})_n$, $(f_n)_n$ are finite sequences. Moreover, with obvious modifications, the lemma remains valid when the integration is taken over any $C^2$ curve with bounded arc-length rather than over $[0,1]$. This follows by parametrizing the curve and applying the lemma to the resulting integral.
\end{remark}

\section{Translation to the disk model}\label{sec:disk-model}

To use the results in~\cite{BettinEtAl}, it will be more convenient to use the disk model for the hyperbolic plane. Define
\begin{equation}
  p_\D : \overline{\H} \to \overline{\D}, \qquad p_\D(z) = \frac{z-i}{z+i}\label{eq:def-pD}
\end{equation}
which sends~$\dH$ to~$\dD$. Its inverse is given by
\begin{equation}
  p_\H(z) = i \frac{1+z}{1-z}.\label{eq:def-pH}
\end{equation}
With a slight abuse of notation, we indicate by $p_\D$ and $p_{\H}$ the matrices in $\SL(2,\C)$ corresponding to these Möbius transformations. For~$z\in \C\cup\{\infty\}$, we also denote
$$ [z]_\D = p_\D z, \quad [z]_\H = p_\H z, $$
while for~$\gamma\in \SL(2, \R)$ and~$g\in \SU(1, 1)$, we denote
$$ [\gamma]_\D = p_\D \gamma p_\H, \qquad [g]_\H = p_\H g p_\D. $$
We use the same notation for sets of matrices.
Let~$\bar{G} := [\Gamma]_\D = p_\D \Gamma p_\H$ be the image of~$\Gamma$ in the disk model. The group~$\bar{G}\subset \SU(1,1)$ is discrete, finitely generated, and has finite covolume, since the same is true of~$\Gamma$.
Since~$\Gamma$ has a cusp at~$\infty$ of width $1$, we deduce that~$\bar{G}$ has a cusp at~$1$ of width $1$.

\subsection{Construction of the subgroup}

In order to apply the results of Section~\ref{sec:top-irred}, it will be convenient to work with a subgroup of sufficiently large rank.

\begin{lemma}\label{lem:free-subgroup}
  The group~$\bar{G}$ admits a subgroup~$G$ of finite index which is free and of rank at least~$6$.
\end{lemma}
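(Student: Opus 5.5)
We would rely on standard structure theory of cofinite Fuchsian groups with cusps. The group $\bar{G}$ is conjugate in $\mathrm{PSL}(2,\C)$ to $\Gamma/\{\pm\Id\}$ (together with $\pm$), so it suffices to work with $\bar\Gamma := \Gamma/\{\pm \Id\}\subset \mathrm{PSL}(2,\R)$. We then take the corresponding subgroup of $\bar G$, or its preimage intersected with a suitable lift; since $-\Id\in\Gamma$, we argue in the projective group and lift at the end.

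\emph{Step 1: a torsion-free finite-index subgroup.} $\bar\Gamma$ is finitely generated, hence, by Selberg's lemma (a finitely generated linear group over a field of characteristic $0$ is virtually torsion-free), it has a torsion-free normal subgroup $\Gamma_1$ of finite index. Its image in $\SU(1,1)$ acts freely on $\D$. If one wants to avoid the $\pm\Id$ ambiguity, apply Selberg's lemma directly to $\bar G\subset \SL(2,\C)$: a torsion-free subgroup cannot contain $-\Id$, so it maps isomorphically to its image in $\mathrm{PSU}(1,1)$.

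\emph{Step 2: freeness.} $\Gamma_1$ is a torsion-free Fuchsian group of finite covolume with at least one cusp, since finite index preserves the existence of cusps. The quotient $\Gamma_1\backslash\D$ is therefore a non-compact surface of finite type, of genus $g$ with $r\geq 1$ punctures. Its fundamental group $\Gamma_1$ is free of rank $2g+r-1$, because a non-compact surface deformation retracts onto a graph. Equivalently, the standard presentation $\langle a_i,b_i,p_j \mid \prod[a_i,b_i]\prod p_j=1\rangle$ lets us eliminate one parabolic generator.

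\emph{Step 3: rank at least $6$.} Since $2g+r-1 = 1-\chi(\Gamma_1\backslash\D)$ and $-\chi = \Vol/(2\pi)$ by Gauss--Bonnet, the rank grows linearly with the index. Concretely, if $G'\subset\Gamma_1$ has index $m$, then $G'$ is again free, with rank $1+m(\mathrm{rank}\,\Gamma_1-1)$ by the Nielsen--Schreier formula. We must ensure $\mathrm{rank}\,\Gamma_1\geq2$, or else pass to a further subgroup. Rank $1$ would mean $\Gamma_1$ is cyclic, which is impossible for a lattice because the covolume would be infinite. Hence $\mathrm{rank}\,\Gamma_1\geq 2$. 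Finite-index subgroups of every index exist, since free groups surject onto $\Z$; so taking index $m\geq5$ gives rank $\geq 6$.

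The main subtlety is the bookkeeping between $\SU(1,1)$ and its projectivization, together with checking that a finite-index subgroup of $\bar G$ is what is required; all the other steps are classical.
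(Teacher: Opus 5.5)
Your proposal is correct and follows essentially the same route as the paper. Selberg's lemma gives a torsion-free finite-index subgroup, which is free because the group is non-cocompact (the paper cites Newman and Edmonds et al.\ where you use the retraction of a punctured surface onto a graph); Nielsen--Schreier applied to the kernel of a surjection onto a finite cyclic group ($\Z/6\Z$ in the paper, $\Z/m\Z$ with $m\geq 5$ for you) then raises the rank. Your explicit check that the rank is at least $2$ supplies a step the paper leaves implicit: the index-$6$ kernel has rank $6s-5$, which is at least $6$ only when $s\geq 2$.
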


\begin{proof}
  By \emph{e.g.}~\cite[Lemma~8]{Selberg1960}, $\bar{G}$ admits a torsion-free subgroup $G$ of finite index\footnote{The constructions in~\cite{Selberg1960,EdmondsEtAl1982} yield a normal subgroup, but we will not use this property.}, which is therefore free~\cite[p.~683]{Newman1985} of some rank~$s$ (see~\cite[p.~335]{EdmondsEtAl1982} for a short algebraic proof in our non-cocompact case).

  If~$s\geq 6$ then~$G$ meets our requirements. Otherwise, if~$G$ is freely generated by~$g_1, \dotsc, g_s$, say, we let~$G' = \ker(\pi)$ where~$\pi:G\to \Z/6\Z$ is the homomorphism given by~$\pi(g_i)=1$. Then~$G'$ is a subgroup of~$G$ of index~$6$ in~$G$. Therefore by~\cite[Corollary~2.9, Theorem~2.10]{MagnusEtAl2004}, it is free of rank~$\geq 6$.
\end{proof}

In the following we shall fix a $G$ satisfying the properties of Lemma~\ref{lem:free-subgroup}. Note that the group $G$ is discrete, of covolume~$\frac12[\bar{G}:G]\Vol(\bar{G}\backslash\D)<\infty$~\cite[Theorem~3.1.2]{Katok1992}. It inherits a cusp at~$1$ from~$\bar{G}$, and thus satisfies all the hypotheses from~\cite[Section~\refmain{sec:disk-model}]{BettinEtAl}.

Finally, for $G'\in\{G,\overline G\}$,~$\cb \in C(G')$ and~$Q>0$, we define an analogue of~\eqref{cqdfn} as
$$ C_\cb(G'; Q) := \{ x\in G' \cb, 0 < q_\cb^{G'}(x) \leq Q\} $$
where, following~\cite[eq.~\eqrefmain{eq:def-qx}]{BettinEtAl}, $q_\cb(x)$ is defined, for~$x=g' \cb$ with $g'\in G'$, as
$$ q_\cb^{G'}(x) := \abs{\Im(\alpha+\beta)}, \qquad g's'_\cb =\smat{ \alpha & \beta \\ \ast&\ast } $$
where $s'_\cb\in \SU(1,1)$ is a scaling matrix for $\cb$ in $G'$, which in this context means $[s'_\cb]_\H$ is a scaling matrix for $[\cb]_\H$ in~$[G']_\H$, in other words
$$ s'_\cb1 = \cb, \qquad G'_\cb = 
\begin{cases}
  s'_\cb\, \big\{ \eta^k \smat{1+ki/2&-ki/2\\ki/2&1-ki/2}\!\Big|\, k\in \Z\big\}\, {s'}_\cb^{-1}, & (G'=G), \\
  s'_\cb\, \big\{ \pm \smat{1+ki/2&-ki/2\\ki/2&1-ki/2}\!\Big|\, k\in \Z\big\}\, {s'}_\cb^{-1}, & (G'=\bar{G}).
\end{cases} $$
where~$\eta$ is the sign of the trace of a generator of~$G_\cb$.
  Note that~$q_\cb^{G'}(x)$ depends only on~$x$ (not on~$g'$), so that~$C_\cb(G'; Q)$ is well-defined.

A quick computation shows $q^G_\cb(g\cb) = \abs{c_{[g]_\H[s_\cb]_\H}}$; in particular, for any $\cb\in C(\Gamma)$ one has
\begin{align}\label{relationc}
  C_{\cb}(\Gamma; Q)=[C_{[\cb]_\D}(\bar{G}; Q)]_\H.
\end{align}

\begin{lemma}\label{lem:reduction-gamma}
  Let~$\cb\in C(G)$ and let $(\bar{g}_j)_j$ be a set of representatives of~$G\backslash \bar{G} / \bar{G}_\cb$. Then:
  \begin{itemize}
    \item For all~$j$, we have~$\bar{g}_j\cb \in C(G)$.
    \item Let~$n_j = \frac12[\bar{G}_{\cb}:\bar{g}_j^{\,-1}\,G_{\bar{g}_j\cb}\,\bar{g}_j]$.
    Then for all~$Q>0$, we have
    $$ C_\cb(\bar{G}; Q) = \bigcup_j C_{\bar{g}_j\cb}(G; Q\sqrt{n_j}) $$
    where the union is disjoint.
  \end{itemize}
\end{lemma}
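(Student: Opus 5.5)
The plan is to decompose the orbit $\bar{G}\cb$ into $G$-orbits via the double cosets, and then to compare the two denominator functions $q^{\bar G}$ and $q^{G}$ on each piece by computing with scaling matrices.

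\emph{First point.} Since $\bar g_j\in\bar G$ and $\cb$ is a parabolic fixed point of $\bar G$, the point $\bar g_j\cb$ is fixed by the parabolic subgroup $\bar g_j\bar G_\cb\bar g_j^{-1}$. Because $G$ has finite index in $\bar G$, some nontrivial power of each parabolic element of $\bar G$ lies in $G$. Hence $G_{\bar g_j\cb}$ contains a parabolic element, and $\bar g_j\cb\in C(G)$.

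\emph{Disjoint union of sets.} The map $g\bar g_j\bar G_\cb\mapsto g\bar g_j\cb$ identifies $\bar G/\bar G_\cb$ with $\bar G\cb$. Consequently the double coset decomposition $\bar G=\bigsqcup_j G\bar g_j\bar G_\cb$ gives the disjoint decomposition
$$\bar G\cb=\bigsqcup_j G\bar g_j\cb.$$
Disjointness holds because $g\bar g_j\cb=g'\bar g_{j'}\cb$ forces $g'^{-1}g\bar g_j\in\bar g_{j'}\bar G_\cb$, and hence $j=j'$.

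\emph{Comparing denominators.} On the piece $G\bar g_j\cb$ it remains to show that
$$q^{G}_{\bar g_j\cb}(x)=\sqrt{n_j}\,q^{\bar G}_\cb(x).$$
Then $q^{\bar G}_\cb(x)\le Q$ is equivalent to $q^{G}_{\bar g_j\cb}(x)\le Q\sqrt{n_j}$, which is the claim.

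Let $s'_\cb$ be a $\bar G$-scaling matrix for $\cb$. Then $\bar g_js'_\cb$ sends $1$ to $\bar g_j\cb$. Moreover, conjugating by it carries $\bar G_{\bar g_j\cb}$ onto the standard $\pm$ unipotent group. The subgroup $G_{\bar g_j\cb}$ corresponds, after conjugation by $\bar g_j$, to $\bar g_j^{-1}G_{\bar g_j\cb}\bar g_j$, which has index $2n_j$ in $\bar G_\cb$. Since $G$ is torsion free, $-\Id\notin G$, which accounts for the factor $2$. Under the scaling this subgroup is generated by $\eta$ times the unipotent of translation length $n_j$.

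A $G$-scaling matrix for $\bar g_j\cb$ is therefore $\bar g_js'_\cb\,[\smat{\sqrt{n_j}&0\\0&1/\sqrt{n_j}}]_\D$, up to a sign and a unipotent factor from the stabilizer, neither of which affects $q$. The diagonal matrix conjugates translation by $1$ into translation by $n_j$.

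Now write $x=g\bar g_j\cb$ with $g\in G$. Work in the upper half-plane, where $q$ is the absolute value of the lower-left entry, as noted above $q^G_\cb(g\cb)=|c_{[g]_\H[s_\cb]_\H}|$, and where the same remark holds for $\bar G$. Right multiplication by $\smat{\sqrt{n_j}&0\\0&1/\sqrt{n_j}}$ multiplies the lower-left entry $c$ by $\sqrt{n_j}$. Since $g\bar g_j\in\bar G$ also represents $x$ in the $\bar G$-orbit of $\cb$, this yields the claimed scaling relation.

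\emph{Main obstacle.} The step needing care is the bookkeeping of signs and of the index normalization. One must check that the index of the unipotent parts is exactly $n_j$, with the $\tfrac12$ coming from $\pm\Id\in\bar G_\cb$ versus $-\Id\notin G$. One must also check that the factor $\eta$ in the definition for $G$ does not alter the translation length.
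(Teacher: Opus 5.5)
Your proof is correct and follows essentially the same route as the paper: the first point uses a power of a conjugated parabolic element that lies in the finite-index subgroup $G$, the disjoint union comes from the double coset decomposition $\bar G=\bigcup_j G\bar g_j\bar G_\cb$, and the denominator relation uses the $G$-scaling matrix $\bar g_j\bar s_\cb t$, where $t$ is the disk image of $\mathrm{diag}(\sqrt{n_j},1/\sqrt{n_j})$. The only cosmetic difference is that you read off the factor $\sqrt{n_j}$ from the lower-left entry in the upper half-plane, whereas the paper checks $\alpha+\beta=(\alpha'+\beta')\sqrt{n_j}$ directly in the disk model.
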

\begin{proof}
  Let~$\cb_j := \bar{g}_j\cb$. Note that~$\cb_j \in C(\bar{G})$. To prove the first point, let~$\sigma \in G$ be parabolic and such that~$\sigma \cb = \cb$, and~$\tau = \bar{g}_j \sigma \bar{g}_j^{\,-1}$, so that~$\tau \cb_j = \cb_j$. Since~$[\bar{G}:G]<\infty$, we may find an integer~$n>0$ such that~$\tau^n \in G$. The equality~$\tau^n \cb_j = \cb_j$ and the fact that~$\tau^n$ is a parabolic element of~$G$ imply that~$\cb_j \in C(G)$.
  By definition of~$n_j$, we have
  $$ G_{\cb_j} = \bar{g}_j \bar{s}_\cb\{ (\pm 1)^{k/n_j} \smat{1+ki/2&-ki/2\\\ast&\ast}\!,\, k\in n_j\Z\} (\bar{g}_j \bar{s}_\cb)^{-1}, $$
  where~$\bar{s}_\cb$ is a scaling matrix for~$\cb$ as a cusp of~$\bar{G}$.
  We conjugate by~$t := \smat{(1+n_j)/2\sqrt{n_j} & (-1+n_j)/2\sqrt{n_j} \\ \ast & \ast}$, which gives the following scaling matrix for~$\cb_j$ as a cusp of~$G$,
  $$ s_{\cb_j} = \bar{g}_j \bar{s}_{\cb} t. $$
  Let now~$x = g \cb_j = g \bar{g}_j \cb$ for some~$g\in G$ and let~$\smat{\alpha' & \beta' \\ \ast & \ast}:=g \bar{g}_j \bar{s}_\cb$ and~$\smat{\alpha & \beta \\ \ast & \ast} := g \bar{g}_j \bar{s}_\cb t$. Clearly, $\smat{\alpha & \beta \\ \ast & \ast} = \smat{\alpha' & \beta' \\ \ast&\ast} t$, which implies $\alpha+\beta=(\alpha'+\beta')\sqrt{n_j}$. Thus,
  $$ q_{\cb_j}^G(x) = \sqrt{n_j} q^{\bar{G}}_\cb(x). $$
  Since we have a disjoint union~$\bar{G} = \cup_j G \bar{g}_j \bar{G}_\cb$, the conclusion of the lemma follows.
\end{proof}

\subsection{The multiplier system}\label{vtdf}

We also need to transfer the multiplier $u_\gamma(z)$ defined in~\eqref{multp} to the disk setting.
For all~$g\in G$ and~$x\in\overline{\D}\setminus\{1,g^{-1}1\}$, let
$$ \vth_g(x) := \chi'(g) j_\gamma(y)^{-k}, $$
where~$y = [x]_\H$,~$\gamma = [g]_\H$ and $\chi'(g):=\bar{\chi(\gamma)}$.
With this correspondence, we have
$$ \vth_g(x) = \bar{u_\gamma(y)} $$
and the period relation
\begin{align}\label{pere}
  \vth_{g_1g_2}(x) = \vth_{g_1}(g_2 x) \vth_{g_2}(x), \qquad (x\not\in\{1, g_2^{-1} 1, (g_1g_2)^{-1} 1\}).
\end{align}
which we will use repeatedly. Note that the set of values taken by~$\vth_g(x)$ forms a subgroup of the roots of unity, namely
$$ \Theta = \{\vth_g(x), g\in G, x\in C(G)\setminus\{1,g^{-1}1\}\} = \{\eta^k \chi(\gamma), (\gamma, \eta) \in [G]_\H \times\{\pm 1\}\}. $$

\section{Transfer operators}\label{tos}

\subsection{The inverse branches of the cuspidal acceleration}\label{sec:operator-cc}

We now summarize the set-up of~\cite{BettinEtAl}, which builds on~\cite{BowenSeries1979,Marchese2018}. We let~$R$ be a fundamental domain for~$G\backslash \D$ whose sides are geodesics. By~\cite[Proposition p.~15]{Tukia1973} we may pick~$R$ having all its vertices on~$\dD$ and, by translating the domain if necessary, we can assume $1$ is a vertex. Each side $s$ is paired with another side of~$R$, say~$\hat s$, by an element~$g_s\in G\setminus\{\Id\}$, in such a way that $ s = g_s \hat s $ and $g_{\hat s}=g_s^{-1}$; finally the group~$G$ is a free product of the elements~$g_s$.

We label the sides of~$R$ according to elements belonging to a finite set~$\Sigma$ of letters -- which we identify with $\Z/2m\Z$ for~$m = \rank(G)$ -- proceeding counterclockwise from $1$. Note that~$m\geq 6$ by Lemma~\ref{lem:free-subgroup}. For convenience of notation we import the additive structure of $\Z/2m\Z$ and in particular, we have that $a,b\in\Sigma$ are contiguous sides if and only if $a-b=\pm1$.

Given a word~$w = a_1 \dotsb a_\ell$ with~$a_j \in \Sigma$, we say that~$w$ is \emph{admissible} if~$a_j \neq \hat{a_{j+1}}$ for all~$j$. We denote by $\Sigma^*$ the set of admissible (finite) words in $\Sigma$. Given $w = a_1 \dotsb a_\ell\in\Sigma^*$, we let~$g_w := g_{a_1} \dotsb g_{a_\ell}$. We also let~$\hat w := {\hat{a_\ell}} \dotsb {\hat{a_1}}$, so that~$g_{\hat w} = g_w^{-1}$, and we denote by $h_w$ the Möbius transformation associated to $g_w$, that is $h_w(z):=g_w\cdot z$.

Given $\ell\geq1$, we say that $w= a_1 \dotsb a_\ell\in\Sigma^*$ is a \emph{cuspidal cycle} if for some \emph{direction} $\delta\in\{\pm1\}$ one has
\begin{equation}\label{ccy}
  a_{j+1}=\hat{a_j}+\delta\quad \forall j=1,\dots,\ell-1,\qquad a_{1}=\hat{a_\ell}+\delta,
\end{equation}
see the corresponding definition in~\cite[Section~3.2]{Marchese2018}. We say that~$w_1$ is a non-empty prefix of $w$ if $w_1=a_1\dotsb a_j$ with $1\leq j\leq \ell$, and say the cuspidal cycle $w$ is minimal if there is no proper non-empty prefix of $w$ which is cuspidal with the same direction~$\delta$.
Given $a\in\Sigma$, there are exactly two minimal cuspidal cycles with first letter $a$, one for each choice of the direction $\delta$.

Given~$a\in \Sigma$, let~$[a]$ and~$[a]^\circ$ be respectively the closed and open arcs in~$\dD$ delimited by the two endpoints of~$a$ and containing no other vertex of~$R$.
We have that $g_a(\dD\setminus [\hat a]^\circ)=[a]$. More generally, if $w = a_1 \dotsb a_\ell\in\Sigma^*$, then $g_w(\dD\setminus [\hat{a_\ell}]^\circ)$ is a sub-arc of $[a_1]$ which we denote by $[a_1,\dots,a_\ell]$ or~$[w]$. As the length of $w$ goes to infinity, the arc
$[a_1,\dots,a_\ell]$ shrinks and determines a unique limiting point indicated by $[a_1,a_2,\dots]$. Every element of $\dD$ can be written in this way, although not necessarily in a unique way (see~\cite{BowenSeries1979}). The Bowen--Series map $\cB$ is the map that sends $[a_1,a_2,\dots]$ to $[a_2,a_3,\dots]$. This is well-defined on~$\dD$ except on the set~$V$ of vertices of~$R$. Because of the presence of cusps, the map~$\cB$ is not uniformly expanding.
As in~\cite{BettinEtAl}, we consider two ``accelerations'' of $\cB$: the induced map $\cF_K$ introduced in~\cite{BowenSeries1979}, and the cuspidal acceleration $\cF_c:\dD\setminus V\to\dD$ defined in~\cite{BettinEtAl}, which is related to the cusp acceleration introduced in~\cite{ArtigianiEtAl2016}. We refer to~\cite[Section~\refmain{sec:bs:accel}]{BettinEtAl} for the precise definition, and recall first the classification of the inverse branches $\cH_c$ of $\cF_c$. These are Möbius transformations defined on suitable subsets of $\dD$ and can be divided into the following types:
\begin{enumerate}[label=(\roman*)]
  \item
  The maps~$h = h_w$ for~$w=a\in \Sigma$, with domain~$\Dom(h)$ consisting of the union of all arcs
  of the shape~$[b_1, b_2]$, $b_1 \neq \hat a$, which are not contiguous to~$[\hat a]$. This last condition means that either~$b_1 \neq \hat a \pm 1$, or else~$b_2 \neq \hat{b_1} + (b_1 - \hat a)$.
  \begin{center}
    \begin{tikzpicture}[scale=0.65]
      \draw (0, 0) circle (3);
      \interval{3}{0}{60} node at (3, 2) {$[\hat a]$}; %main
      \interval{3}{60}{120}; %next lv1
      \interval{3}{-90}{0}; %prev lv1
      \interval{3}{60}{80}; %next lv2
      \interval{3}{-30}{0}; %prev lv2
      \draw[line width=1.5px, blue] ({3*cos(80)}, {3*sin(80)}) arc (80:330:3) node at (-3.2, -3) {$\Dom(h_a)$};
      \draw[line width=1.2px] ({3*cos(0)}, {3*sin(0)}) arc (0:60:3);
    \end{tikzpicture}
  \end{center}

  \item
  For each triple $(w_0,w_1,n)$ with $w_0$ a minimal cuspidal cycle, $w_1$ a non-empty prefix of~$w_0$, and $n\in\Z_{\geq0}$ such that $n\geq1$ if $w_1$ is of length $1$, the map~$h=h_w$ where~$w=w_0^nw_1$. Its domain~$\Dom(h)$ consists of the union of all arcs~$[\hat{a_r} + \delta, b_2]$ which are not contiguous to~$[\hat{a_r}]$, where $a_r$ is the last letter of $w_1$ and~$\delta$ is the direction of~$w_0$.
  \begin{center}
    \begin{tikzpicture}[scale=0.65]
      \draw (0, 0) circle (3);
      \interval{3}{0}{60} node at (3, 2) {$[\hat{a_r}]$}; %main
      \interval{3}{60}{120}; %next lv1
      \interval{3}{-90}{0}; %prev lv1
      \interval{3}{60}{80}; %next lv2
      \interval{3}{-30}{0}; %prev lv2
      \draw[line width=1.5px, blue] ({3*cos(80)}, {3*sin(80)}) arc (80:120:3) node at (-.2, 3.6) {$\Dom(h_w) \quad (\text{if }\delta=+1)$};
      \draw[line width=1.5px, red] ({3*cos(-90)}, {3*sin(-90)}) arc (270:330:3) node at (2, -3.8) {$\Dom(h_w) \quad (\text{if }\delta=-1)$};
      \draw[line width=1.2px] ({3*cos(0)}, {3*sin(0)}) arc (0:60:3);
    \end{tikzpicture}
  \end{center}
\end{enumerate}

For simplicity of exposition, we shall often also write the index of the inverse branches of the first kind as $w=w_0^nw_1$ with $n=0$, $w_1=a$ and $w_0$ either of the two minimal cuspidal cycles starting with the letter~$a$.
By~\cite[Corollary~\refmain{cor:parabolic}]{BettinEtAl}, for~$h\in \cH_c$ and~$x\in\Dom(h)$, we have~$|h'(x)| \asymp 1/(n+1)^2$ where~$n$ is determined above.
In particular this is independent of~$x$, and for brevity, we denote~$\abs{h'}$ any quantity with
$$ \abs{h'} \asymp \abs{h'(x)}, \qquad (x\in \Dom(h)) $$
in all estimates where an order of magnitude is sought.

Notice that the set of possible $w_0$ and $w_1$ that appear in these decompositions is finite. We also remark that by~\cite[Lemma~\refmain{lem:parabolic1}]{BettinEtAl} the matrix $g_{w_0}$ is parabolic and generates the stabilizer of its fixed point $\omega=\omega_{w_0}= [w_0,w_0,w_0,\dots]$. The fixed points $\omega_{w_0}$ are vertices of $R$ and, as $w_0$ varies, every vertex of~$R$ arises in this way.
For each vertex $\omega$ of $R$, we let $s_\omega = [\sigma_{[\omega]_\H}]_\D$ be the scaling matrix for~$\omega$ as a cusp of~$\bar{G}$ which corresponds to~\eqref{eq:def-scaling-matrix}, and let $\upsilon_{w_0}\in\Z_{\neq 0}$, $\iota_{w_0}\in\{0,(\pm1)^k\}$ be defined as
\begin{align}\label{dfiu}
  [s_{\omega_{w_0}}^{-1}g_{w_0}s_{\omega_{w_0}}]_{\H}= \pm\smat{1 &\upsilon_{w_0}\\&1} \qquad\qquad
  \iota_{w_0}&:=\begin{cases}
    \sign(\upsilon_{w_0} c_{[s_{\omega}]_\H})^k &\omega\neq1,\\
    0&\omega=1.
  \end{cases}
\end{align}
We remark that $\omega_{w_0}=\omega_{\hat{w_0}}$ and
\begin{equation}
  \label{eq:upsilon-flip-hat}
  \upsilon_{w_0}=-\upsilon_{\hat{w_0}}.
\end{equation}

Furthermore, for any $h\in\cH_c$, we let $\im(h):=h(\Dom(h))$. For any~$h = h_w \in \cH_c$ with~$w = a_1 \dotsb a_\ell\in\Sigma^*$, we have that $\im(h_w)$ is contained in the arc~$[a_1]^\circ$, and is bounded away from its endpoints by a constant (depending on $w$). In particular, for any $w=w_0^nw_1$ as above we have
\begin{align}\label{di0}
  |g_w x-1|\gg_n 1,\ |g_{w_1}x-1|\gg1, \ |g_{w_1}x-\omega_{w_0}|\gg 1 \quad \forall x\in\Dom(h_w),
\end{align}
where the second and third bounds hold since $\Dom(h_w)\subseteq\Dom(h_{w_1})$. This also implies
\begin{align}\label{di1}
  |x-g_w^{-1}1|\gg_n 1,\ |x-g_{w_1}^{-1}1|\gg 1,\ |g_{w_1} x-g_{w_0}^{-n}1|\gg 1\quad \forall x\in\Dom(h_w).
\end{align}

\subsection{The operator~$\LL$}\label{opll}

Next we recall from~\cite{BettinEtAl} the definitions of the map~$\cF_K$, the set~$\cH$ of its inverse branches, and its transfer operator~$\LL$. In the present paper we will only need a few facts on the sets~$\cH$ and the operators~$\LL_s$.

Let
$$ K = \bigcup_{\substack{a, b \in \Sigma \\ |b-\hat a|>1}} [a, b]. $$
The set~$\cH$ is a set of smooth maps from subarcs of~$K$ to $\dD$, satisfying the following:
\begin{itemize}
  \item We have~$\min \abs{h'} \asymp \max\abs{h'}$, which justifies the notation~$\abs{h'}$ defined up to a constant depending on~$R$.
  \item For some~$\delta>0$, we have
  $$ \sum_{h\in \cH} \abs{h'}^{1-\delta} < \infty. $$
  \item Every~$h\in \cH$ decomposes as compositions of maps from~$\cH_c$, in the way described in~\cite[Lemma~\refmain{lem:decomp-Hc-HK}]{BettinEtAl}.
  \item The following unions are partitions up to measure zero sets:
  $$ \dD = \bigcup_{\substack{h\in \cH}} \im(h) = \bigcup_{h\in \cH_c} \im(h). $$
\end{itemize}

We can now quote from~\cite[eq.~\refmain{eq:def-twistedop-s}]{BettinEtAl} the following definition.
For all~$s\in \C$ with~$\Re(s) > 1 - \delta$, the operator~$\LL_s$ is defined on maps on~$\dD\times \Theta$ by
\begin{equation}
  \LL_s[f](y, \vth_1) = \ssum{h\in \cH \\ y \in \Dom(h)} \abs{h'(y)}^s f(h(y), \vth_1 \vth_h(y)), \qquad (y\in K, \vth_1\in \Theta), \label{eq:def-LLs}
\end{equation}
where for $h=h_w\in\cH$
\begin{align*}
  \vth_{h}(y):=\vth_{g_w}(y).
\end{align*}
We also denote
$$ \LL := \LL_1. $$
For all~$(x, \vth) \in K\times \Theta$, let
\begin{equation}
  \cF_K(x, \vth) := (\cF_K(x), \vth \vth_g(x)), \qquad (\cF_K(x) = gx, g\in G). \label{eq:def-FK-extended}
\end{equation}
By definition we have $\cF_K(h(x))=x$, and thus for any~$f_1, f_2 \in L^2(K\times \Theta)$,
\begin{equation}\label{cvl}
  \int_{K\times \Theta} \LL_s[f_1] f_2 = \int_{K\times \Theta} \LL[f_1 \cdot |\cF_K'|^{1-s}\cdot f_2\circ \cF_K ] = \int_{K\times \Theta} f_1 \cdot (f_2\circ \cF_K)\cdot \abs{\cF_K'}^{1-s} ,
\end{equation}
by~\cite[eq.~\eqrefmain{eq:transferop-invar}]{BettinEtAl}, where all integrals are with respect to~$\df s(x) \df \vth$, the product of the arc-length measure on~$\dD$ and the uniform counting probability measure on~$\Theta$.

Similarly we quote from~\cite[Section~\refmain{sec:transferop-stwist}]{BettinEtAl} the definition of the operator acting on maps~$f:\dD \to \C$ by
  \begin{equation}
    \label{eq:def-CC}
    \CC[f](y) := \ssum{h\in \cH_c \\ y \in \Dom(h)} \abs{h'(y)} f(h(y)),
  \end{equation}
  and recall the definition~\cite[eq.~\eqrefmain{eq:def-invar-Fc}]{BettinEtAl} of its invariant density~$\varphi:\dD \to \R_{>0}$, and the fact that it is piecewise smooth and continuous at vertices of~$R$.

The properties of~$\LL_s$ play a key role in our work. For the most part we will be able to directly import the required information from~\cite[Theorem~\refmain{th:limitlaw:quasi}]{BettinEtAl}.
However, we shall still need some key facts on~$\LL_s$ when computing the first and second moments in Section~\ref{sec:comp-moments}.

\begin{lemma}\label{lem:approx-eigen}
  There exists~$\delta\in(0,\frac1{10})$ and a compact connected neighbourhood~$\cU$ of~$[1-\delta/2, 2]$ contained in~$\{\Re(s)>1-\delta\}$ for which the following holds:
  \begin{enumerate}[label=(\roman*)]
    \item For each~$s\in \cU$, the operator~$\LL_s$ acting on
    $$ C^1(K\times \Theta, \|\cdot\|_1), \qquad \|f\|_1 = \|f\|_\infty + \|f'\|_\infty $$
    is quasicompact, where the derivative is taken with respect to the first variable and the norms are taken over~$K\times \Theta$.
    It has a simple eigenvalue~$\lambda(s)$ of largest modulus.
    The same holds in the space of Hölder continuous functions~$H^{1/6}(K\times\Theta, \|\cdot\|_{(1/6)})$ defined in~\cite[Section~\refmain{fsp0}]{BettinEtAl}.
    \item There exists a holomorphic family of maps~$\xi_s:K\to \C$, $s\in \cU$ such that~$(x, \vth) \mapsto \xi_s(x)$ is an eigenfunction associated to~$\lambda(s)$.\label{it:xis-anacont}
    \item We have~$\xi_s(x) = O(1)$ uniformly for~$x\in K$ and~$s\in \cU$. The same holds for~$\xi_s'(x)$\label{it:xis-bounded}.
  \end{enumerate}
\end{lemma}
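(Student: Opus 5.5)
The plan is to reduce everything to the spectral theory of~$\LL_s$ established in~\cite{BettinEtAl}, and then use analytic perturbation theory in~$s$. First, quasicompactness on~$C^1(K\times\Theta)$ and on~$H^{1/6}$ for~$s$ in a neighbourhood of the real segment~$[1-\delta/2,2]$ is imported from~\cite[Theorem~\refmain{th:limitlaw:quasi}]{BettinEtAl} and the Lasota--Yorke type inequalities proved there. These rely on three facts: the uniform expansion of the induced map~$\cF_K$, the bounded distortion ($\min\abs{h'}\asymp\max\abs{h'}$), and the summability~$\sum_h\abs{h'}^{1-\delta}<\infty$. The twist by~$\vth_h$ is harmless for these inequalities, since~$\vth_h$ is locally constant on~$\Dom(h)$ (it only depends on a sign which is constant on the domain, by~\eqref{di1}). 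Hence the essential spectral radius of~$\LL_s$ is at most~$\theta\,\rho(\LL_{\Re s})$ for some~$\theta<1$, locally uniformly in~$s$.

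Next, I identify the leading eigenvalue using the~$\Theta$-structure. Decompose~$C^1(K\times\Theta)$ according to characters of the finite abelian group~$\Theta$; $\LL_s$ preserves each isotypic piece. On the trivial-character piece, functions do not depend on~$\vth$, and~$\LL_s$ acts as the untwisted transfer operator of~$\cF_K$. For real~$s$, this operator is positive, and by a Perron--Frobenius argument (Ruelle--Perron--Frobenius with aperiodicity from the mixing of~$\cF_K$) it has a simple dominant eigenvalue~$\lambda(s)>0$ with positive eigenfunction; for~$s=1$ we have~$\lambda(1)=1$. On the other pieces I need spectral radius strictly less than~$\lambda(\Re s)$. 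This is exactly the topological irreducibility of the multiplier, to be established in Section~\ref{sec:top-irred} and taken as the hypothesis feeding~\cite[Theorem~\refmain{th:limitlaw:quasi}]{BettinEtAl}. The standard argument is that an eigenvalue of modulus~$\lambda(1)$ on a nontrivial character piece would force a measurable (hence, by regularity, continuous) solution of a cohomological equation~$\vth_h(y)=c\,u(h y)/u(y)$, and this is excluded by the irreducibility. For non-real~$s$ near the segment, simplicity and isolation of~$\lambda(s)$ follow by upper semicontinuity of the spectrum under holomorphic perturbation. The compact connected neighbourhood~$\cU$ is then chosen inside the resulting open set, shrinking~$\delta$ so that~$\delta<1/10$ and~$\cU\subset\{\Re s>1-\delta\}$.

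For~\ref{it:xis-anacont}, I use Kato's analytic perturbation theory. The family~$s\mapsto\LL_s$ is holomorphic in operator norm on~$\{\Re s>1-\delta\}$, because~$\abs{h'(y)}^s$ and its~$y$-derivative are summable there. The Riesz projection~$P_s=\frac1{2\pi i}\oint(z-\LL_s)^{-1}\df z$ around~$\lambda(s)$ is therefore holomorphic, locally on~$\cU$, and I set~$\xi_s:=P_s[\1]$. This function lies in the trivial-character piece, so it is independent of~$\vth$. It is nonzero, since~$P_1[\1]$ is the positive invariant density of~$\cF_K$ paired with~$\1$, and it remains nonzero on a connected neighbourhood after shrinking. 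Compactness of~$\cU$ then lets a finite cover patch these local definitions into a global one; the formula~$P_s[\1]$ is canonical, so no gluing issue arises. Item~\ref{it:xis-bounded} follows from continuity of~$s\mapsto\xi_s\in C^1$ on the compact set~$\cU$, which bounds~$\|\xi_s\|_1$ uniformly.

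The main obstacle is the second step: ruling out peripheral spectrum on the nontrivial~$\Theta$-isotypic components uniformly near the segment. My approach is to reduce it to the irreducibility statement of Section~\ref{sec:top-irred}, combined with the aperiodicity argument of~\cite{BettinEtAl}.
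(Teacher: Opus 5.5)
Your proposal is correct and follows essentially the same route as the paper. The spectral gap for real $s$ comes from \cite{BettinEtAl}; you unpack the Perron--Frobenius and irreducibility content that the paper imports as a black box. Kato's perturbation theory then extends the gap to a complex neighbourhood, and $\xi_s$ is the eigenprojection applied to $\1$, which is independent of $\vth$ because $\LL_s$ commutes with averaging over $\Theta$ (your trivial isotypic component), and is uniformly $C^1$-bounded since the projection is.

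One point to tighten: you argue non-vanishing of this projection of $\1$ only at $s=1$, but it must hold along all of $[1-\delta/2,2]$. This follows for real $s$ because the eigenprojection of the positive operator $\LL_s$ sends $\1$ to a positive multiple of its positive eigenfunction.
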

\begin{proof}
  \begin{enumerate}[label=(\roman*)]
    \item For $s$ in a neighbourhood of $1$, this is~\cite[Proposition~\refmain{prop:spectrum-Ls0}]{BettinEtAl} in the Hölder space~$H^{1/6}$. The same proof gives the statement in~$C^1$ (\emph{cf.}\ also~\cite[Proposition~7.5]{LeeSun2019}). If $s\in\R_{>1}$, then the result follows from~\cite[Prop.~\refmain{prop:spectrum-Ls0}]{BettinEtAl}. Finally, by perturbation theory~\cite[Theorem~VII.1.7]{Kato1995}, we obtain the conclusion for~$s\in\cU$ if~$\cU$ is a small enough neighbourhood of~$[1-\delta/2, 2]$.
    \item First note that by~\cite[Theorem~VII.1.7]{Kato1995}, the essential spectral gap of~$\LL_s$ is bounded below by a positive constant for~$s\in\cU$. Let~$\PP_s$ be the eigenprojection associated to~$\lambda(s)$. Since~$s\mapsto \LL_s$ is bounded-holomorphic in the sense of \emph{ibid.}, we deduce that~$s\mapsto \PP_s$ also is, reducing the neighbourhood~$\cU$ if needed. Note also that the averaging operator~$A[f]:(x, \vth_1) \mapsto \abs{\Theta}^{-1} \sum_{\vth\in \Theta} f(x, \vth)$ commutes with~$\LL_s$ for all~$s$, and thus also with~$\PP_s$ for~$s\in \cU$. The claim then follows with~$\xi_s = c \PP_s[\1] \neq 0$ for any constant~$c\neq 0$, since then~$\|\xi_s\|_1 \ll \|\PP_s\| \ll 1$, and~$\xi_s = c\PP_s A[\1] = A[\xi_s]$ which only depends on the first variable. We choose~$c>0$ such that~$\xi_1 = \xi$, the map defined in~\cite[\eqrefmain{eq:def-inv-meas}]{BettinEtAl}.
    \item The claim follows since the eigenfunction~$\xi_s$ just defined satisfies~$\|\xi_s\|_1 \ll \|\PP_s\| \ll 1$.
  \end{enumerate}
\end{proof}

\section{Proofs of the main statements: limiting theorems}\label{sec:proof-limit-thm}

In this section we begin the proof of Theorems~\ref{th:main-cusp} and~\ref{th:main-eis}.

\subsection{Estimates on the observable}\label{sec:estimates-observable}

We translate the estimates of Lemmas~\ref{lem:hL-gamma} and~\ref{lem:hL-parab} in terms of the disk model.
For~$x\in C(G)$ and~$g\in G$, recalling the notation~\eqref{eq:def-pH}, we let
\begin{equation}
  \begin{aligned}
    \vL(x, s) &:= \abs{p_\H'(x)}^{1-s} L(\phi, s-\tfrac12, [x]_\H) \in \C, && \qquad (x\neq 1) \\
    \vphi_g(x,s) &:= \vth_g(x) \vL(gx, s) - \abs{g'(x)}^{s-1} \vL(x, s),&& \qquad (x\notin\{1,g^{-1}1\}).
  \end{aligned}\label{eq:defs-L-phigs-Gmodel}
\end{equation}
We warn the reader about the shift in the parameter~$s$, so that the additive twists~$L_\phi(x)$ now correspond to~$s=1$.
This choice is more natural with respect to the normalization in~\cite{BaladiVallee2005,BettinEtAl}.
We stress the difference in notation between the automorphic form~$\phi$, and the observable~$\vphi_g$ (in bold, indexed by elements of~$G$), which will also be clear from the context.

The quantities~\eqref{eq:defs-L-phigs-Gmodel} are initially defined for~$\Re(s)>3/2$, in which domain we have
\begin{equation}
  \vphi_g(x,s) = \abs{p_\H'(x)}^{1-s} \abs{g'(x)}^{s-1} \vth_g(x) h_\gamma^L([x]_\H,s-\tfrac12), \qquad (\gamma = [g]_\H),\label{eq:rel-phigs-hL}
\end{equation}
with $h_\gamma^L$ defined in~\eqref{eq:def-hL}.
Thus, by Lemma~\ref{lem:hL-gamma}, the map~$\vphi_g$ extends to a function on~$(\dD\setminus\{1, g^{-1}1\}) \times \{\Re(s)>1/2\}$
which is continuous in~$x$ for every~$s$, and holomorphic in~$s$ for every~$x$.

For~$s=1$ we drop the~$s$-dependence on the notation and let
\begin{equation}
  \begin{aligned}
    & \vL(x) := \vL(x, 1) = L_\phi([x]_\H), \\
    & \vphi_g(x) := \vphi_g(x, 1) = \vth_g(x) \vL(gx) - \vL(x).
  \end{aligned}\label{eq:defs-L-phig1-Gmodel}
\end{equation}
With a slight abuse of notation, for an inverse branch $h=h_w\in\cH_c$ (or $h\in\cH$) associated to a matrix $g=g_w\in G$, we write $\vphi_h:=\vphi_g$.

The following lemma translates the conclusion of Lemmas~\ref{lem:hL-gamma} and \ref{lem:hL-parab} to the maps~$\vphi_h$.
\begin{lemma}\label{lem:estimates-vphig}
  For all~$h\in \cH_c$, we have the following bounds:
  \begin{enumerate}
    \item For all~$\eps>0$, we have
    \begin{equation}
      \vphi_h(x) - \vphi_h(x') \ll (\abs{h'}^{-\eps} +\max_{y\in \{x,x'\}}\abs{y-1}^{-2(1-\rho_\phi)}) \abs{x-x'}^{1-\rho_\phi-\eps} \label{eq:vphi-holder}
    \end{equation}
    uniformly for~$h\in\cH_c$ and~$x, x' \in \Dom(h)\setminus\{1\}$.
    \item If~$\phi$ is cuspidal, then
    \begin{equation}
      \vphi_h(x) \ll 1 \qquad (h\in \cH_c, x\in \Dom(h)\setminus\{1\}).\label{eq:vphi-cuspidal-bound}
    \end{equation}
    \item If~$\phi$ is not cuspidal, then writing~$h$ in the form~$h = h_{w_0^nw_1}$ and letting $\omega=\omega_{w_0}$ and $\iota_{w_0},\upsilon_{w_0}$ be as defined in~\eqref{dfiu},
    we have for~$s_\phi \neq 1/2$
    \begin{equation}
      \begin{aligned}
        \vphi_h(x) ={}& A_\infty(\phi) G_k(s_\phi, [x]_\H) + B_\infty(\phi) G_k(1-s_\phi, [x]_\H) \\
        &{} + \iota_{w_0} \vth_h(x) \big(A_\omega(\phi) G_k(s_\phi, \upsilon_{w_0}n) + B_\omega(\phi) G_k(1-s_\phi, \upsilon_{w_0}n)\big) + O(1),
      \end{aligned}
      \label{eq:vphi-eisenstein-parab-estim}
    \end{equation}
    uniformly in~$x\in \Dom(h)\setminus\{1\}$ and~$n\geq 0$.
    For~$s_\phi = 1/2$ we have
    \begin{align*}
      \vphi_h(x) ={}& D_\infty(\phi, [x]_\H) + \iota_{w_0} \vth_h(x) D_\omega(\phi, \upsilon_{w_0} n) \\
      &{} + \begin{cases} O(1), & (B_\infty(\phi) = B_\omega(\phi) = 0), \\ O(\abs{x-1}^{-1/2} + n^{1/2}) & \text{(otherwise)} \end{cases}
    \end{align*}
    with the same uniformity.
    \item For $1/2+\eps<\Re(s)\ll1$ and $x\in \Dom(h)\setminus\{1\}$ we have
    \begin{equation}
      \begin{aligned}
        \vphi_h(x, s) \ll \frac{\e^{\frac{\pi }2|\Im(s)|}}{(1+|s|)^{\Re(s + s_\phi)-\frac52}}\cdot \Big(|h'|^{\Re(s)-1}\big(\abs{x-1}^{2\Re(s)-2} +\abs{x-1}^{\Re(s)-1-\rho_\phi-\eps}\big)+{}& \\
        {}+ 1+|h'|^{\Re(s)-1} + |h'|^{(\Re(s)-1-\rho_\phi-\eps)/2}&\Big)
      \end{aligned}\label{eq:ph-rough}
    \end{equation}
  \end{enumerate}
\end{lemma}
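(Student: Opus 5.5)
The plan is to transport Lemmas~\ref{lem:hL-gamma} and~\ref{lem:hL-parab} to the disk through the identity~\eqref{eq:rel-phigs-hL}, extended by analytic continuation. At $s=1$ it gives $\vphi_h(x) = \vth_g(x) h^L_\gamma([x]_\H,\tfrac12)$, since the factors $\abs{p_\H'}^{1-s}\abs{g'}^{s-1}$ equal $1$ there. We write $h = h_{w_0^n w_1}$ with $g = g_{w_0}^n g_{w_1}$ and $\gamma=[g]_\H$.

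The period relation~\eqref{ple}, transported via~\eqref{pere}, splits
\[
\vphi_h(x) = \vth_{g_{w_1}}(x)\,\vphi_{g_{w_0}^n}(g_{w_1}x) + \vphi_{g_{w_1}}(x).
\]
There are finitely many $w_1$, so $\vphi_{g_{w_1}}$ is controlled by Lemma~\ref{lem:hL-gamma}. The first term is controlled by Lemma~\ref{lem:hL-parab} applied to $p=[g_{w_0}]_\H$, with $\omega=\omega_{w_0}$, at the point $g_{w_1}x$. The separations in~\eqref{di0}--\eqref{di1} give exactly the hypotheses needed: $\abs{g_{w_1}x-\omega}\gg1$, $\abs{x-g_{w_1}^{-1}1}\gg1$, and $\abs{g_{w_1}x - g_{w_0}^{-n}1}\gg1$. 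For the finitely many $n<n_0$ we use Lemma~\ref{lem:hL-gamma} directly.

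Under $p_\H$, the condition $\abs{[x]_\H}\to\infty$ corresponds to $x\to 1$, with $1+\abs{[x]_\H}\asymp \abs{x-1}^{-1}$. Since $\abs{h'}\asymp (n+1)^{-2}$, the terms in $n$ become powers of $\abs{h'}$.

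\begin{itemize}
\item \textbf{Part (1).} Use the Hölder estimates~\eqref{eq:holder-hE} and~\eqref{eq:diff-hgamman} for $\hL$. The factor $n^\eps$ becomes $\abs{h'}^{-\eps}$. The weight $(1+\abs{[x]_\H})^{2\eps}$, combined with the distortion of $p_\H$ near $1$, where $\abs{[x]_\H-[x']_\H}\ll \abs{x-x'}/(\abs{x-1}\abs{x'-1})$, yields the factor $\max\abs{y-1}^{-2(1-\rho_\phi)}$. The Lipschitz maps $g_{w_1}$ and the piecewise-constant multipliers $\vth$ (constant on $\Dom(h)$ for $n\ge n_0$, as in the proof of Lemma~\ref{lem:hE-gamman-holder}) cause no loss.
\item \textbf{Part (2).} In the cuspidal case all main terms vanish, so the bound $O(1)$ follows from the $O(1)$ remainders in~\eqref{eq:estim-hL-largex} and~\eqref{eq:estim-hL-parab}.
\item \textbf{Part (3).} Collect the main terms. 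The contribution $u_{p^n}(\cdot)A_\infty G_k(\cdot,[g_{w_1}x]_\H)$ from the parabolic piece is $O(1)$ because $g_{w_1}x$ stays away from $1$. The $\infty$-term therefore comes only from $\vphi_{g_{w_1}}(x)$. The $\omega$-term arises with the sign $\sign(c_\sigma\upsilon_{w_0} n)^{-k}$, multiplied by $\vth_{g_{w_1}}(x)$. We must check that, combined with the conjugation convention $\vth=\bar u$, this equals $\iota_{w_0}\vth_h(x)$. When $\omega=1$ this contribution is absorbed, so $\iota=0$. In the logarithmic case the errors $O(\abs{x}^{1/2}+n^{1/2})$ become $O(\abs{x-1}^{-1/2}+n^{1/2})$.
\item \textbf{Part (4).} Use the same decomposition at general $s$, now with the factors $\abs{g'}^{s-1}$ and the bounds~\eqref{eq:hL-rough_o} and~\eqref{eq:hL-rough2}. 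By the chain rule, $\abs{g'(x)}^{\Re s-1}\asymp\abs{h'}^{\Re s-1}$, which converts $n^{2\Re s-1}$ and $n^{\Re s-1/2+\rho_\phi+\eps}$ into the stated powers of $\abs{h'}$. The $\Gamma$-factor exponents shift by $1/2$ because of the shift $s\mapsto s-\tfrac12$.
\end{itemize}

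The main obstacle is the bookkeeping of multipliers and signs in part (3). We must verify that the combination of $\sign(c_\sigma)$, $\sign(m_0n)$, $u_{p^n}$, and the conjugation matches $\iota_{w_0}\vth_h(x)$ exactly. We also need the scaling matrix and the value $m_0=\upsilon_{w_0}$ to be those of $\bar G$ rather than of $G$, and this is what~\eqref{dfiu} arranges.
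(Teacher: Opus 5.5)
Your proposal is correct and follows essentially the same route as the paper. The paper uses the same splitting $\vphi_h(x,s)=\vth_{g_{w_1}}(x)\vphi_{g_{w_0}^n}(g_{w_1}x,s)+|g_{w_0^n}'(g_{w_1}x)|^{s-1}\vphi_{g_{w_1}}(x,s)$, applies Lemma~\ref{lem:hL-gamma} to the finitely many $w_1$ (and to $n<n_0$), applies Lemma~\ref{lem:hL-parab} with $p=[g_{w_0}]_\H$ at $g_{w_1}x$ via~\eqref{di0}--\eqref{di1}, and converts using $1+|[x]_\H|\asymp|x-1|^{-1}$ and $|h'|\asymp(1+n)^{-2}$. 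The sign bookkeeping you flag in part~(3) is immediate: the factor $\vth_{g_{w_0}^n}(g_{w_1}x)$ from $\vphi_g=\vth_g\,\hL_\gamma$ combines with $\vth_{g_{w_1}}(x)$ via~\eqref{pere} into $\vth_h(x)$, one has $\sign(c_\sigma\upsilon_{w_0}n)^{-k}=\iota_{w_0}$ for $n\geq 1$ and integer $k$, and for $\omega_{w_0}=1$ the parabolic piece vanishes identically.
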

\begin{proof}
  Write~$h = h_{w}$, with $w=w_0^nw_1$ as in the statement. By~\eqref{di1}, the estimate~\eqref{eq:vphi-holder} follows from Lemma~\ref{lem:hL-gamma} for each fixed~$n$.
  Thus we may assume that~$n\geq n_0$ for any fixed~$n_0$. As a consequence, we have that $h$ is an inverse branch of kind (ii) and thus~$\Dom(h)$ is contained in some arc~$[a]$, $a\in \Sigma$. By the analogue of~\eqref{ple} for $\vphi$, we have
  \begin{equation}
    \vphi_h(x,s) = \vth_{g_{w_1}}(x) \vphi_{g_{w_0}^n}(g_{w_1} x,s) + |g_{w_0^n}'( g_{w_1} x)|^{s-1}\vphi_{g_{w_1}}(x,s)\label{eq:h-w0w1}
  \end{equation}
  for all~$x\in \Dom(h)$. Also, recall $\vphi_{g_{w_0}^n}\equiv 0$ if $\omega_{w_0}=1$. By Lemma~\ref{lem:hL-gamma} and~\eqref{di1}, we have
  \begin{align*}
    \vphi_{g_{w_1}}(x) - \vphi_{g_{w_1}}(x')& \ll \min(\abs{x-1}, \abs{x'-1})^{-2\eps} \big|[x]_\H-[x']_\H\big|^{1-\rho_\phi-\eps}\\
    & \ll \min(\abs{x-1}, \abs{x'-1})^{-(2-2\rho_\phi)} \abs{x-x'}^{1-\rho_\phi-\eps}
  \end{align*}
  whereas by Lemma~\ref{lem:hL-parab} and~\eqref{di0}-\eqref{di1}, we have
  $$ \abs{\vphi_{g_{w_0}^n}(g_{w_1} x) - \vphi_{g_{w_0}^n}(g_{w_1} x')} \ll n^\eps \abs{x - x'}^{1 - \rho_\phi - \eps} $$
  since $\Dom(h)\subseteq\Dom(h_{w_1})$ and so $\abs{g_{w_1}y-1}\gg 1$ for $y\in \Dom(h)$. Also, as implied by Lemma~\ref{lem:multiplier} below, the map~$x\mapsto \vth_{g_{w_1}}(x)$ is constant in $\Dom(h)\setminus\{1\}$. Thus, using the bound~$\abs{\vth_{g_{w_1}}(x)} \leq 1$ and taking finite differences, we get
  $$ \vphi_h(x) - \vphi_h(x') \ll (n^\eps + \min(\abs{x-1}, \abs{x'-1})^{-(2-2\rho_\phi)}) \abs{x-x'}^{1-\rho_\phi-\eps}, $$
  which implies the bound~\eqref{eq:vphi-holder} since~$\abs{h'} \asymp (1+n)^{-2}$.

  Next assume that~$\phi$ is cuspidal. Then, the bound~\eqref{eq:vphi-cuspidal-bound} follows as above by the bounds~\eqref{eq:estim-hL-largex}-\eqref{eq:estim-hL-parab} and~\eqref{di0}-\eqref{di1}.

  We focus next on~\eqref{eq:vphi-eisenstein-parab-estim}.
  By Lemma~\ref{lem:hL-parab}, with~$p = [g_{w_0}]_\H$, and~\eqref{di0}, we get
  $$ \vphi_{g_{w_0^n}}(g_{w_1} x) = \iota_{w_0} \vth_{g_{w_0^n}}(g_{w_1} x) \big( A_\omega(\phi) G_k(s_\phi, \upsilon_{w_0}n) + B_\omega(\phi) G_k(1-s_\phi, \upsilon_{w_0} n)\big) + O(1). $$
  On the other hand, by Lemma~\ref{lem:hL-gamma} with~$\gamma = [g_{w_1}]_\H$ and~\eqref{di0}-\eqref{di1}, we obtain
  $$ \vphi_{g_{w_1}}(x) = A_\infty(\phi) G_k(s_\phi, [x]_\H) + B_\infty(\phi) G_k(1-s_\phi, [x]_\H) + O(1). $$
  Inserting these estimates in~\eqref{eq:h-w0w1}, the claimed estimate~\eqref{eq:vphi-eisenstein-parab-estim} follows.

  Finally,~\eqref{eq:ph-rough} follows from~\eqref{eq:hL-rough_o}-\eqref{eq:hL-rough2} as above. We remark that we also have to account for the extra factors $\abs{p_\H'(x)}^{1-s} \abs{g'(x)}^{s-1}$ in~\eqref{eq:rel-phigs-hL}, which can be dealt with by using that by~\eqref{di0}-\eqref{di1} for $x\in\Dom(h)$ we have
  $$|p_\H'(x)|\asymp|x-1|^{-2},\quad |p_\H'(g_{w_1}x)|\asymp1,\quad |g_{w_1}'(x)|\asymp1,\quad |g_{w_0^n}'(g_{w_1}(x))|\asymp (1+n)^{-2}.\qedhere$$
\end{proof}
\begin{remark}\label{remrfn}
  In the non-cuspidal case, the bound~\eqref{eq:ph-rough} could be slightly refined, by using the~$\log$-factor in~\eqref{eq:decay-tail-phi} instead of bounding it by arbitrarily small powers of~$y$. Assuming for simplicity $|\Re(s-1)|<1/2-\eps$ and recalling~\eqref{beta}, this allows to replace the factors $|h'|^{-\eps/2}$ and $|x-1|^{-\eps}$ in the bound by $\log(2+|h'|^{-1})^{(\beta_\phi-1)/2}$ and $\log(2+|x-1|^{-1})^{(\beta_\phi-1)/2}$ respectively.
\end{remark}

We are now able to verify the main conditions in~\cite{BettinEtAl} on the norm of the period functions at parabolic elements.
As in~\cite{BettinEtAl}, we let
$$ \|f\|_{\infty,X} := \sup_{x\in X}|f(x)|, \qquad \| f\|_{(\kappa),X} := \sup_{\substack{x, x'\in X \\ x\neq x'}} \frac{\abs{f(x)-f(x')}}{\abs{x-x'}^\kappa}, \quad (\kappa>0). $$

\begin{corollary}\label{corolhyps}
  We have
  \begin{align}
    & \sum_{h_1, h_2 \in \cH_c} \abs{h_1'} \abs{h_2'} \| \vphi_{h_1} \|_{\infty, \im(h_2)}^\alpha < \infty &&
    \begin{cases}
      \text{for all } \alpha\geq 0 & (\phi \text{ cuspidal}), \\
      \text{for } 0\leq \alpha < 2 & (\text{otherwise}),
    \end{cases} \label{eq:vphi-hyp-supnorm} \\
    & \sum_{h_1, h_2,h_3 \in \cH_c} \abs{h_1'} \abs{h_2'}\abs{h_3'} \| \vphi_{h_1} \|_{\infty, \im(h_2)} \| \vphi_{h_2} \|_{\infty, \im(h_3)} < \infty \hspace{-6em} &&
    \label{eq:vphi-hyp-supnorm2} \\
    & \sum_{h_1, h_2 \in \cH_c} \abs{h_1'} \abs{h_2'} \| \vphi_{h_1} \|_{(1/3), \im(h_2)}^\lambda<\infty &&
    \text{for } 0\leq \lambda < 3/2. \label{eq:vphi-hyp-holder}
  \end{align}
\end{corollary}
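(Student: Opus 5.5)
The plan is to feed the pointwise bounds of Lemma~\ref{lem:estimates-vphig} into the sums, using the parametrization $h=h_{w_0^nw_1}$ of $\cH_c$ with $\abs{h'}\asymp(1+n)^{-2}$. Since only finitely many pairs $(w_0,w_1)$ occur, each sum over $h\in\cH_c$ becomes, up to a finite sum, a sum over $n\geq0$ with weight $(1+n)^{-2}$. Everything then reduces to controlling how $\vphi_{h_1}$ behaves on $\im(h_2)$. The only source of growth in $x$ is the factor $\abs{x-1}$, i.e.\ proximity to the cusp $1$, and $\im(h_2)$ comes within distance $\asymp\abs{h_2'}\asymp(1+n_2)^{-2}$ of $1$ only when $h_2$ is built from the minimal cuspidal cycle with fixed point $\omega=1$. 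For all other $h_2$, the set $\im(h_2)$ stays a bounded distance from $1$ by~\eqref{di0}.

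\textbf{Cuspidal case.} Here~\eqref{eq:vphi-cuspidal-bound} gives $\|\vphi_{h_1}\|_\infty\ll1$, so~\eqref{eq:vphi-hyp-supnorm} and~\eqref{eq:vphi-hyp-supnorm2} reduce to $(\sum_h\abs{h'})^2<\infty$ and $(\sum_h\abs{h'})^3<\infty$, both of which hold since $\sum_n(1+n)^{-2}<\infty$.

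\textbf{Non-cuspidal case.} Part (3) of Lemma~\ref{lem:estimates-vphig} gives
\[\abs{\vphi_{h_1}(x)}\ll (1+n_1)^{1/2}\log(2+n_1)+\abs{x-1}^{-1/2}\log(2+\abs{x-1}^{-1}),\]
using $\abs{G_k(\tau,y)}\asymp\abs{y}^{1/2}$ for $\Re\tau=\tfrac12$ and $\abs{[x]_\H}\asymp\abs{x-1}^{-1}$. On $\im(h_2)$ we have $\abs{x-1}\gg(1+n_2)^{-2}$. Hence
\[\|\vphi_{h_1}\|_{\infty,\im(h_2)}\ll (n_1^{1/2}+n_2)^{1+\eps}.\]
This is too lossy for $\alpha$ close to $2$: the bound $n_2^{\alpha}$ against the weight $n_2^{-2}$ diverges. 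I therefore expect the main obstacle to be the sharper geometry of $\im(h_2)$ when $\omega_{w_0}=1$. Writing $\im(h_2)=g_{w_0}^{n_2}\im(h_{w_1})$ and using the explicit parabolic action, points $x\in\im(h_2)$ satisfy $\abs{x-1}\asymp n_2^{-1}$, not $n_2^{-2}$. Indeed, the image of a set bounded away from $1$ under the $n$-th power of a parabolic fixing $1$ lies at distance $\asymp1/n$, while the derivative is of order $1/n^2$. This gives
\[\|\vphi_{h_1}\|_{\infty,\im(h_2)}\ll (n_1^{1/2}+n_2^{1/2})\log(2+n_1+n_2),\]
and then $\sum_{n_1,n_2}(n_1n_2)^{-2}(n_1+n_2)^{\alpha/2+\eps}<\infty$ holds exactly when $\alpha<2$. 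For~\eqref{eq:vphi-hyp-supnorm2} the same bound gives the summand $(n_1n_2n_3)^{-2}(n_1n_2)^{1/2}(n_2n_3)^{1/2}\log^{2}$, which is summable.

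\textbf{Hölder bound~\eqref{eq:vphi-hyp-holder}.} Apply~\eqref{eq:vphi-holder} with $\rho_\phi\le\tfrac12$ and exponent $1-\rho_\phi-\eps\ge1/3$; this is legitimate because $\im(h_2)$ has bounded diameter, so $\abs{x-x'}^{1-\rho_\phi-\eps}\ll\abs{x-x'}^{1/3}$. We get
\[\|\vphi_{h_1}\|_{(1/3),\im(h_2)}\ll n_1^{2\eps}+\sup_{x\in\im(h_2)}\abs{x-1}^{-2(1-\rho_\phi)}\ll n_1^{2\eps}+n_2^{2(1-\rho_\phi)}\le n_1^{2\eps}+n_2,\]
where the last step uses the improved distance $\abs{x-1}\asymp1/n_2$. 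The sum then becomes $\sum_{n_1,n_2}(n_1n_2)^{-2}(n_1^{2\eps}+n_2)^{\lambda}$, which converges for $\lambda<1$, not $\lambda<3/2$. To reach $\lambda<3/2$, I would instead take the difference over an interval of length $\ll\abs{h_2'}$ and interpolate the Hölder exponent $1-\rho_\phi-\eps$ down to $1/3$. This gains a factor $\abs{x-x'}^{1-\rho_\phi-\eps-1/3}\le\operatorname{diam}(\im(h_2))^{1/6-\eps}$, valid when $\rho_\phi\le1/2$. The diameter of $\im(h_2)$ is not small in general, though; only the part of it near $1$ matters. So I would split $\im(h_2)$ dyadically according to $\abs{x-1}$ and apply the bound on each piece. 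If this bookkeeping proves delicate, it is the place where the exponent condition $\lambda<3/2$ must be checked carefully.
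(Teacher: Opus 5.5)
\paragraph{The bound \eqref{eq:vphi-hyp-supnorm2} fails as written.} Your treatment of \eqref{eq:vphi-hyp-supnorm} is essentially the paper's. The distance estimate $\dist(1,\im(h_2))\gg\abs{h_2'}^{1/2}\asymp(1+n_2)^{-1}$ that you derive from the parabolic action is exactly the input used there.

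Your argument for \eqref{eq:vphi-hyp-supnorm2}, however, is wrong. In your summand $(n_1n_2n_3)^{-2}(n_1n_2)^{1/2}(n_2n_3)^{1/2}$ the exponent of $n_2$ is $-2+\tfrac12+\tfrac12=-1$, so the sum over $n_2$ diverges. This cannot be repaired with a bound of the shape $\|\vphi_{h_1}\|_{\infty,\im(h_2)}\ll(n_1^{1/2}+n_2^{1/2})\log(\cdot)$ used in both factors, because the product of two such bounds always contains a term of size $n_2$. A direct application of Lemma~\ref{lem:estimates-vphig} is therefore not enough.

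The missing idea is that the two sources of $n_2^{1/2}$-growth are mutually exclusive. Write $\omega_h:=\omega_{w_0}$ and $\iota_h:=\iota_{w_0}$ for $h=h_{w_0^nw_1}$, so that $\iota_h=0$ exactly when $\omega_h=1$.
\begin{itemize}
\item In $\|\vphi_{h_2}\|_{\infty,\im(h_3)}$, growth in $n_2$ comes only from the parabolic term $\iota_{w_0}\vth_h(x)(\dotsb)$. If $\omega_{h_2}=1$, then $\vphi_{g_{w_0}^n}\equiv0$, so by \eqref{eq:h-w0w1} $\vphi_{h_2}=\vphi_{g_{w_1}}$ has no $n_2$-growth at all.
\item In $\|\vphi_{h_1}\|_{\infty,\im(h_2)}$, growth in $n_2$ comes only from $\abs{x-1}^{-1/2}$ on $\im(h_2)$. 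This is $O(1)$ unless $\omega_{h_2}=1$, since otherwise $\dist(1,\im(h_2))\gg1$.
\end{itemize}
This dichotomy is encoded in the bound
\[
\|\vphi_{h_j}\|_{\infty,\im(h_{j+1})}\ll 1+\abs{\iota_{h_j}}\,\abs{h_j'}^{-1/4-\eps}+\big(1-\abs{\iota_{h_{j+1}}}\big)\abs{h_{j+1}'}^{-1/4-\eps}, \qquad j=1,2.
\]
Since $\abs{\iota_{h_2}}(1-\abs{\iota_{h_2}})=0$, the cross term in $h_2$ disappears, and the product is
\[
\ll(\abs{h_1'}\abs{h_2'})^{-1/4-\eps}+(\abs{h_1'}\abs{h_3'})^{-1/4-\eps}+(\abs{h_2'}\abs{h_3'})^{-1/4-\eps}.
\]
This is summable against $\abs{h_1'}\abs{h_2'}\abs{h_3'}$, because $\sum_{h\in\cH_c}\abs{h'}^\sigma<\infty$ for every $\sigma>1/2$.

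\paragraph{The H\"older bound \eqref{eq:vphi-hyp-holder} is unfinished.} The obstruction you name is not real. Since $\Dom(h_2)$ has bounded length and $\abs{h_2'}$ is essentially constant on it, $\diam(\im(h_2))\ll\abs{h_2'}$. This is consistent with your own remark that the derivative is of order $n_2^{-2}$. Divide \eqref{eq:vphi-holder} by $\abs{x-x'}^{1/3}$, keep the full leftover exponent, and use $\dist(1,\im(h_2))\gg\abs{h_2'}^{1/2}$. For $x,x'\in\im(h_2)$ this gives
\[
\frac{\abs{\vphi_{h_1}(x)-\vphi_{h_1}(x')}}{\abs{x-x'}^{1/3}}\ll\big(\abs{h_1'}^{-\eps}+\abs{h_2'}^{-(1-\rho_\phi)}\big)\abs{h_2'}^{2/3-\rho_\phi-\eps}\ll\abs{h_1'}^{-\eps}+\abs{h_2'}^{-1/3-\eps}.
\]
No dyadic splitting is needed, and the sum converges exactly when $1-\lambda/3>1/2$, that is, $\lambda<3/2$.

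Retaining only a gain of $\diam(\im(h_2))^{1/6-\eps}$, as you propose, is not enough in the cuspidal case $\rho_\phi=0$. There $\abs{x-1}^{-2}$ has size $\abs{h_2'}^{-1}$, so a gain of $\abs{h_2'}^{1/6}$ only yields $\lambda<3/5$. For the same reason, your inequality $n_2^{2(1-\rho_\phi)}\le n_2$ fails when $\rho_\phi=0$.
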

\begin{proof}
  By~\cite[eqs.~\eqrefmain{eq:Hc-dist-V} and~\eqrefmain{eq:Hc-bounded-dist}]{BettinEtAl} we have $\dist(1, \im(h_2))\gg |h_2'|^{1/2}$ and~$\diam(\im(h_2)) \ll |h_2'|$. Then, using~\eqref{eq:vphi-holder}-\eqref{eq:vphi-eisenstein-parab-estim} we deduce that for each~$h_1, h_2 \in \cH_c$ with~$\im(h_2) \subset \Dom(h_1)$ we have
  \begin{align*}
    \|\vphi_{h_1}\|_{(1/3), \im(h_2)}
    \ll{}& \abs{h_1'}^{-\eps} + \abs{h_2'}^{-1/3-\eps}. 
  \end{align*}
  for any $\eps>0$. 
  Thus, for $\eps$ small enough and $\lambda<3/2$
  $$ \sum_{h_1,h_2 \in \cH_c} \abs{h_1'}\abs{h_2'} \|\vphi_{h_1}\|_{(1/3), \im(h_2)}^{\lambda} \ll
  \sum_{h_1, h_2 \in \cH_c} \abs{h_1'}^{1-\lambda\eps} \abs{h_2'}^{1-\lambda/3-\lambda\eps}\ll1 $$
  since, by~\cite[Lemma~\refmain{lem:concn}]{BettinEtAl} we have~$\sum_{h \in \cH_c} \abs{h'}^{\sigma} < \infty$ for all~$\sigma>\frac12$. This proves~\eqref{eq:vphi-hyp-holder}.

  Similarly, when~$\phi$ is non-cuspidal, eq.~\eqref{eq:vphi-eisenstein-parab-estim} yields 
  \begin{align*}
    \|\vphi_{h_1}\|_{\infty, \im(h_{2})} \ll \sup_{x\in \im(h_{2})} (1 + \abs{x - 1}^{-1/2-\eps} + \abs{h_1'}^{-1/4-\eps}) \ll \abs{h_1'}^{-1/4-\eps} + \abs{h_{2}'}^{-1/4-\eps}.
  \end{align*}
  Thus, we deduce as above that the left-hand side of~\eqref{eq:vphi-hyp-supnorm} is finite when~$\alpha < 2$, by taking~$\eps$ small enough.

  When~$\phi$ is cuspidal, by~\eqref{eq:vphi-cuspidal-bound} we have~$\|\vphi_{h_1}\|_{\infty, \im(h_2)} \ll 1$ independently of~$h_1,h_2$, and thus the left-hand side of~\eqref{eq:vphi-hyp-supnorm} is finite for all~$\alpha\geq 0$.
  
  Finally, we move to~\eqref{eq:vphi-hyp-supnorm2} for which a trivial application of~\eqref{eq:vphi-eisenstein-parab-estim} does not suffice, and we need to be more precise. Given $h=h_{w_0^nw_1}\in\cH_c$, denote $\omega_h=\omega_{w_0}$ and $\iota_{h}=\iota_{w_0}$, so that $\iota_h=0$ if $\omega_h=1$ and $|\iota_h|=1$ otherwise. Since $\dist(1, \im(h))\gg1$ for $\omega_h\neq1$, we have, in any case, $\dist(1, \im(h))\gg |\iota_{h}|+|h'|^{1/2}$. Recalling also the factor $\iota_{w_0}$ in the second line of~\eqref{eq:vphi-eisenstein-parab-estim}, we then have 
  \begin{align*}
    \|\vphi_{h_j}\|_{\infty, \im(h_{j+1})} \ll 1+\abs{\iota_{h_j}}\abs{h_j'}^{-1/4-\eps} + (1-|\iota_{h_{j+1}}|)\abs{h_{j+1}'}^{-1/4-\eps},\qquad j=1,2.
  \end{align*}
  Thus, since $(1-|\iota_{h_2}|)|\iota_{h_2}|=0$, we deduce
  \begin{align*}
    \|\vphi_{h_1}\|_{\infty, \im(h_{2})} \|\vphi_{h_2}\|_{\infty, \im(h_{3})} \ll (|h_1'||h_2'|)^{-1/4-\eps}+(|h_1'||h_3'|)^{-1/4-\eps}+(|h_2'||h_3'|)^{-1/4-\eps}
  \end{align*}
  and the result follows by summing over $h_1$, $h_2$ and $h_3$.
\end{proof}

In the rest of the proof, given $z\in\C$ we let $\vec z=(\smat{\Re(z)\\\Im(z)})\in\R^2$. Also, we let
\begin{align}\label{rcsc}
  \inpd{ z, w} := \Re(z \bar{w}) \qquad (z, w \in \C)
\end{align}
denote the usual scalar product of $\vec z$ and $\vec w$.

\subsection{Cuspidal case}

\begin{proof}[First part of the proof of Theorem~\ref{th:main-cusp}]
  Let~$\vt \in \C$. We wish to estimate the characteristic function
  $$ \chi_{Q, \cb}(\vt) := \E_{Q; \Gamma, \cb}(\e^{i \langle\vt, L_\phi(x)\rangle}). $$
  By Lemma~\ref{lem:reduction-gamma} and~\eqref{relationc}, we have
  $$ \chi_{Q, \cb}(\vt) = \E_j \E_{Q\sqrt{n_j}; G, \bar{ g}_j[\cb]_\D}(\e^{i \inpd{\vt, \vL(x)}}), $$
  where now~$x\in C(G)$, $\vL(x)$ is defined in~\eqref{eq:defs-L-phig1-Gmodel},
  and~$\E_j$ denotes a sum over~$j$ weighted by
  $$ \frac{\sum_{x\in C_{\bar{g}_j[\cb]_\D}(G; Q\sqrt{n_j})} \psi([x]_\H)}{\sum_{x\in C_{[\cb]_\D}(\bar{G}; Q)} \psi([x]_\H)} $$

  By Proposition~\ref{prop:topological-irred} below, and since the fundamental domain~$R$ has at least~$8$ sides, the multiplier system~$\vth_g(x)$ satisfies the stability and mixing hypothesis from~\cite[Section~\refmain{sec:skew-products}]{BettinEtAl}. We also have the bounds~\eqref{eq:vphi-hyp-supnorm} and \eqref{eq:vphi-hyp-holder} from Corollary~\ref{corolhyps}. By~\cite[Theorem~\refmain{th:limitlaw:quasi}]{BettinEtAl}, we deduce the existence of~$t_0>0$ such that for~$\abs{\vt}<t_0$ and any~$j$,
  \begin{equation}
    \E_{Q\sqrt{n_j}; G, \bar{ g}_j[\cb]_\D}(\e^{i \inpd{\vt, \vL(x)}}) = \exp\big\{ (\log Q) (i \inpd{\vt, \vmu} - \tfrac12 \vec\vt^{\,\,T} \vSigma \vec\vt + O(\abs{\vt}^3)) + O(Q^{-\delta}+\abs{\vt}) \big\},\label{eq:asympt-charfun-j}
  \end{equation}
  where
  \begin{align}
    \fm := {}& \int_K \xi(y) \dfs(y), \qquad \fd := \int_{K} \log\abs{\cF_K'(y)} \frac{\xi(y) \dfs(y)}{\fm} = -\lambda'(1), \label{eq:def-fd} \\
    \vmu := {}& 2 \frac{ \1_{\Theta=\{1\}}} {\fd} \int_{K} \Phi_K(y) \frac{\xi(y) \dfs(y)}{\fm}, \label{eq:def-mu}
  \end{align}
  and~$\vSigma$ is a~$2\times 2$ symmetric positive semi-definite matrix which is defined in~\cite[Lemma~\refmain{lem:estim-s0-clt}]{BettinEtAl}; in particular, it is independent of~$j$. Here we used the fact that~$\sum_{\vth\in \Theta}\vth = \1_{\Theta=\{1\}}$ in the present case.
  The map~$\Phi_K$ is defined from $\vphi_h$ as in~\cite[eq.~\eqrefmain{eq:def-PhiK}]{BettinEtAl}, and~$\xi:K \to \R_+$ is the~$\cF_K$-invariant density~\cite[Proposition~\refmain{prop:xi-invar}]{BettinEtAl}, which is smooth and positive on~$K$. We recall also that~$\lambda(s)$ was defined in Lemma~\ref{lem:approx-eigen} and that~$\dfs(y)$ denotes the arc-length measure on~$\dD$.

  Since~$\vmu$ and~$\vSigma$ are independent of~$j$,~$\chi_{Q, \cb}$ also satisfies the asymptotic relation~\eqref{eq:asympt-charfun-j}. By the Berry--Esseen theorem (see \emph{e.g.} \cite[Section~9]{BettinDrappeau2022}) along with the Portmanteau theorem, the convergence to a Gaussian law (possibly degenerate) follows, and to complete the proof of Theorem~\ref{th:main-cusp} it remains only to show that~$\vmu = 0$ and~$\vSigma$ is a positive multiple of the identity, which will be done in Section~\ref{sec:comp-moments}.
\end{proof}

\subsection{Non-cuspidal case}\label{sec:proof-limitthm-eis}

\begin{proof}[Proof of Theorem~\ref{th:main-eis}]
  For the proof, we will focus on the case when~$s_\phi = 1/2$ and~$B_\cb(\phi) = 0$ for all~$\cb\in C(\Gamma)$, and will explain afterwards the modifications required to treat the complementary cases.
  The proof of Theorem~\ref{th:main-eis} is similar to that of the cuspidal case, except that now~\eqref{eq:vphi-hyp-supnorm} holds for~$\alpha < 2$ only. By \cite[Theorem~\refmain{th:limitlaw:quasi}]{BettinEtAl}, we deduce
  \begin{equation}
    \chi_{Q, \cb}(\vt) = \exp\Big\{ (\log Q) \big( I(\vt) + O(\abs{\vt}^2) \big) + O(Q^{-\delta}+\abs{\vt}) \Big\},\label{eq:rel-chi-I}
  \end{equation}
  where, in view of~\cite[Lemma~\refmain{lem:estim-s0-subclt}]{BettinEtAl} and~\eqref{eq:vphi-hyp-supnorm2},
  \begin{align}
    I(\vt) &:= \frac{2}{\fd} \int_{\dD \times \Theta} (\e^{i \inpd{ \vt, \vth\Phi_c(x)}} - 1) \varphi(x) \frac{\dfs(x) \df \vth}{\fm} \notag\\
    &=i \inpd{\vt, \vmu} + o(|\vt|), \label{casx}
  \end{align}
  with $\vmu$ defined in~\eqref{eq:def-mu}.
  Here $\varphi$ is a positive piecewise smooth function on $\dD$ which is smooth at vertices of $R$, see~\cite[\eqrefmain{eq:def-invar-Fc}]{BettinEtAl}, and the map~$\Phi_c$ is defined for almost every~$x\in\dD$ by
  \begin{equation}
    \Phi_c(x) := \vth_h(y)^{-1}\vphi_h(y), \qquad (x = h(y), h \in \cH_c). \label{eq:def-Phic}
  \end{equation}
  The sets~$\im(h)$, for~$h\in \cH_c$, partition the circle~$\dD$ into intervals, and by changing variables (note that~$\df s[h(y)] = \abs{h'(y)}\dfs(y)$), we get
  \begin{align} \label{fii}
    I(\vt) = \frac{2}{\fd\fm}\sum_{h\in \cH_c} I_h(\vt), \quad I_h(\vt) := \int_{\Dom(h) \times \Theta} (\e^{i \inpd{\vt, \vth \vphi_h(y)}} - 1) \varphi(h(y)) \abs{h'(y)} \dfs(y) \df\vth.
  \end{align}

  Recall the classification of elements of~$\cH_c$ in Section~\ref{sec:operator-cc}. Accordingly, we divide the sum over inverse branches as
  \begin{align}\label{dcit}
    I(\vt) = \frac{2}{\fd\fm}\Big(\sum_{b\in\Sigma} I_{h_b}(\vt)+\sum_{w_0,w_1}\sum_{n}I_{h_{w_0^nw_1}}(\vt)\Big)
  \end{align}
  where $n$ is summed over $\Z_{\geq1}$ if $w_1$ has length $1$ and over $\Z_{\geq0}$ otherwise. Also, we recall that for inverse branches of the second kind we have that $\Dom(h_{w_0^nw_1})=\Dom(h_{w_0w_1})$ is independent of $n$.

  We wish to apply Lemma~\ref{lem:fourier} to estimate these integrals.
  By Lemma~\ref{lem:estimates-vphig}, the map~$\vphi_h$ is continuous on~$\Dom(h)$, with the exception of a possible blow-up at~$1$ if~$1 \in \Dom(h)$. This has the shape~$x \mapsto A_\infty(\phi) G_k(\frac12, [x]_\H) = A_\infty(\phi) G_k(\frac12, \mp 2) \abs{x-1}^{-1/2}+o(1)$ as~$\dD\ni x\to 1$ with~$\sign(\Im x) = \pm 1$ and with $G_k$ defined in~\eqref{dfgk}. Note that the coefficients here are independent of~$h$.

  Let us first assume that $\Theta\subseteq\{1, -1\}$. We abbreviate
  \begin{equation}
    \lambda_\infty(\pm) := A_\infty(\phi) G_k(\tfrac12, \mp 2), \qquad \lambda(w_0) := \1_{\omega_{w_0}\neq1} A_{\omega_{w_0}}(\phi) G_k(\tfrac12, \upsilon_{w_0}). \label{eq:def-coeffs-lambda}
  \end{equation}
  Combining the facts above, we apply Lemma~\ref{lem:fourier} to $\sum_{n}I_{h_{w_0^nw_1}}(\vt)$, where the integration is taken over the arc $\Dom(h_{w_0w_1})$ (cf.\ Remark~\ref{rmk:fourier-curves}). We let~$t := |\vt|\neq0$ and $x_0=1$. By Lemma~\ref{lem:estimates-vphig} and~\cite[Lemma~\refmain{lem:parabolic}]{BettinEtAl}, the hypotheses~\ref{it:fourier-hyp-half-simple} of Lemma~\ref{lem:fourier} are satisfied with the quantities~$\alpha$ and~$\beta$ given by
  \begin{align*}
    \alpha_{w_0,w_1} ={}& \lim_{n\to\infty} n^2 \int_{ \Dom(h_{w_0w_1})} \varphi(h_{w_0^nw_1}(y)) \abs{h_{w_0^nw_1}'(y)} \dfs(y) \\
    ={}& \varphi(\omega_{w_0}) \lim_{n\to\infty} n^2 \int_{ \Dom(h_{w_0w_1})} \abs{h_{w_0^nw_1}'(y)} \dfs(y),
  \end{align*}
  where $\omega_{w_0}$ is the fixed point of $g_{w_0}$,
  \begin{align*}
    \beta_{w_0,w_1,\pm} = \1_{1\in \overline{\Dom(h_{w_0w_1})}} \sum_{n} \varphi(h_{w_0^nw_1}(1\pm i0)) |h_{w_0^nw_1}'(1)| ,
    \numberthis\label{eq:def-betal}
  \end{align*}
  with $f(1\pm i0):=\lim_{\dD\ni x\to 1,\sign(\Im(x))=\pm1}f(x)$,
  and $a_{w_0,w_1},d_{w_0,w_1,\pm}$ satisfying
  \begin{equation}
    \begin{aligned}
      \abs{a_{w_0,w_1}}^2 ={}& \abs{\inpd{\vt/|\vt|, \lambda(w_0)}}^2, \\
      \abs{d_{w_0,w_1,\pm}}^2 ={}& \abs{\inpd{\vt/|\vt|, \lambda_\infty(\pm)}}^2,
    \end{aligned}\label{eq:coeffs-cl-dl-halfsimple}
  \end{equation}
  where we used the assumption~$\Theta\subseteq\{1, -1\}$. We deduce that
  \begin{align}\label{dcit2}
    \sum_{n}I_{h_{w_0^nw_1}}(\vt)&=i\inpd{ \vt,\vmu_{w_0,w_1}}- (K_{w_0,w_1}^{(1)}(\vt)+K_{w_0,w_1}^{(2)}(\vt)) |\vt|^2\abs{\log|\vt|}+O(|\vt|^2),
  \end{align}
  for some $\vmu_{w_0,w_1}\in\C$ and
  \begin{align*}
    K_{w_0,w_1}^{(1)}(\vt) &= \alpha_{w_0,w_1}\babs{\binpd{\vt/|\vt|, \lambda(w_0)}}^2,\\
    K_{w_0,w_1}^{(2)}(\vt) &= \sum_{n} \sum_{1\pm i0\in\Dom(h_{w_0w_1})}\babs{\binpd{\vt/|\vt|, \lambda_\infty(\pm)}}^2\varphi(h_{w_0^nw_1}(1\pm i0)) |h_{w_0^nw_1}'(1)|.
  \end{align*}
Regarding the first summand in~\eqref{dcit}, by applying Lemma~\ref{lem:fourier} as above to $ I_{h_b}(\vt)$ (with $\alpha_b:=0$, cf. Remark~\ref{rmk:fourier-curves}), we obtain
    \begin{align}\label{dcit4}
      I_{h_{b}}(\vt) &= i\inpd{ \vt, \vmu_{b}}- K_{b}(\vt) |\vt|^2\abs{\log|\vt|}+O(|\vt|^2),
    \end{align}
    for some $\vmu_b$ and
    \begin{align*}
      K_{b}(\vt) &:= \sum_{1\pm i0\in\Dom(h_{b})}\varphi(h_{b}(1\pm i0)) \abs{h_{b}'(1)}\abs{\inpd{\vt/|\vt|, \lambda_\infty(\pm)}}^2.
    \end{align*}

  We will first show that $\sum_{w_0,w_1} K_{w_0,w_1}^{(1)}(\vt)$ is a non-negative constant independent of $\vt$. We start by rewriting $\alpha_{w_0,w_1}$ in a more suitable shape.
  By~\cite[Lemma~\refmain{lem:parabolic}]{BettinEtAl}, we have
  $$ |h_{w_0^n w_1}'(y)| \sim \frac{C(w_0) |h_{w_1}'(y)|}{n^2 \dist(h_{w_1}(y), \omega_{w_0})^2}, \qquad (n\to \infty) $$
  where~$C(w_0)>0$ satisfies~$C(\hat{w_0}) = C(w_0)$ and~$\dist$ is the Euclidean distance. It follows that
  \begin{align*}
    \alpha_{w_0,w_1} &=\varphi(\omega_{w_0}) C(w_0)J(w_0,w_1),
  \end{align*}
  with
  \begin{align}\label{faf}
    J(w_0,w_1) &:= \int_{\Dom(h_{w_0w_1})}\frac{\abs{h_{w_1}'(y)}\dfs(y)}{\dist(h_{w_1}(y), \omega_{w_0})^2} = \int_{h_{w_1}(\Dom(h_{w_0w_1}))}\frac{\dfs(y)}{\dist(y, \omega_{w_0})^2} ,
  \end{align}
  after the change of variables~$y\gets h_{w_1}(y)$.
  Since~$g'_{w_0}(\omega_{w_0}) = 1$, a quick computation reveals that for all~$y\in \dD\setminus\{\omega_{w_0}\}$,
  $$ \frac{\abs{g_{w_0}'(y)}}{\dist(g_{w_0}(y), \omega_{w_0})^2} = \frac{1}{\dist(y, \omega_{w_0})^2}, $$
  thus, after a further change of variables, $y\mapsto g_{w_0} y$, we have
  \begin{align}\label{faf2}
    J(w_0,w_1) &= \int_{g_{w_0}^{-1}(h_{w_1}(\Dom(h_{w_0w_1})))}\frac{\dfs(y)}{\dist(y, \omega_{w_0})^2}.
  \end{align}
  Write $w_0=b_1\dotsb b_\ell$ and $w_1=b_1\dotsb b_r$ with $1\leq r\leq \ell$. If~$r\leq \ell - 3$, we check that
  \begin{align*}
    g_{w_0}^{-1}(h_{w_1}(\Dom(h_{w_0w_1})))= h_{w_1'}(\Dom(h_{\hat{w_0} w_1'}))
  \end{align*}
  where~$w_1'=\hat{b_{\ell}} \dotsb \hat{b_{r + 3}}$ is a prefix of $\hat{w_0}$.
  If~$\ell - 2 \leq r \leq \ell$ and~$(r, \ell)\not\in\{(1, 1), (2, 2)\}$, by iterating the change of variables we get instead
  \begin{align}\label{faf3}
    J(w_0,w_1) &= \int_{g_{w_0}^{-2}(h_{w_1}(\Dom(h_{w_0w_1})))}\frac{\dfs(y)}{\dist(y, \omega_{w_0})^2}
  \end{align}
  and we check that
  \begin{align*}
    g_{w_0}^{-2}(h_{w_1}(\Dom(h_{w_0w_1})))= h_{w_1'}(\Dom(h_{\hat{w_0} w_1'}))
  \end{align*}
  now with~$w_1' = \hat{b_\ell} \dotsb \hat{b_{\ell + 1 - r'}}$ and~$r' = 2\ell - r - 2$.
  A similar argument takes care of the remaining cases~$r=\ell \in \{1, 2\}$.

  % [The domain of integration in~$J(w_0, w_1)$ is the segment linking the two points~$g_{w_0}^{-1} x_i$, where
  % \begin{align*}
  %   x_1 ={}& [a_1, \dotsc, a_r, a_{r+1}, \hat{\ast} - \delta, \hat{\ast} - \delta, \dotsb], \\
  %   x_2 ={}& [a_1, \dotsc, a_r, a_{r+1}, \hat{\ast} + 2\delta, \hat{\ast} + \delta, \dotsb]
  % \end{align*}
  % where~$\ast$ is the previous letter in the sequence. Thus the two points are
  % \begin{align*}
  %   g_{w_0}^{-1}x_1 ={}& [\hat{a_\ell}, \dotsc, \hat{a_{r+2}}, \hat{a_{r+1}} - \delta, \hat{\ast} - \delta, \dotsb], \\
  %   g_{w_0}^{-1}x_2 ={}& [\hat{a_\ell}, \dotsc, \hat{a_{r+2}}, \hat{a_{r+1}} + 2\delta, \hat{\ast} + \delta, \dotsb]
  % \end{align*}
  % This indicates indeed that the new word~$w_1'$ should be of length~$\ell - r - 2$.
  % This is of the right shape: letting~$b_i = \hat{a_{\ell+1-i}}$,
  % \begin{align*}
  %   g_{w_0}^{-1}x_1 ={}& [b_1, \dotsc, b_{\ell-r-1}, b_{\ell-r} - \delta, \hat{\ast} - \delta, \dotsb], \\
  %   g_{w_0}^{-1}x_2 ={}& [b_1, \dotsc, b_{\ell-r-1}, b_{\ell-r} + 2\delta, \hat{\ast} + \delta, \dotsb]
  % \end{align*}
  % and we have indeed~$b_{\ell-r} - \delta = \hat{b_{\ell-r-1}} - 2\delta$ and~$b_{\ell+r} + 2\delta = \hat{\ast} + \delta$.]
  Since $\omega_{w_0}=\omega_{\hat{w_0}}$,~\eqref{faf}-\eqref{faf3} imply $J(w_0,w_1)=J(\hat{w_0},w_1')$ with the values of~$w_1'$ just defined. The map $(w_0,w_1)\mapsto (\hat{w_0},w_1')$ is a permutation among the admissible indices, thus we obtain
  \begin{equation}
    \begin{aligned}
      \sum_{w_0,w_1}K_{w_0,w_1}^{(1)}(\vt)&=\frac12\sum_{w_0,w_1}(K_{w_0,w_1}^{(1)}(\vt)+K_{\hat{w_0},w_1'}^{(1)}(\vt))\\
      &= \frac12 \sum_{w_0,w_1}\varphi(\omega_{w_0}) C(w_0)J(w_0,w_1)\big(\abs{\inpd{\vt/|\vt|, \lambda(w_0)}}^2+\abs{\inpd{\vt/|\vt|, \lambda(\hat{w_0})}}^2\big).
    \end{aligned}\label{eq:K1-invol-w0}
  \end{equation}
  As observed after its definition~\eqref{dfiu}, we have $\upsilon_{\hat{w_0}}=-\upsilon_{w_0}$. By~\eqref{eq:hparab-MT-flip} and~\eqref{eq:def-coeffs-lambda}, this implies that $\lambda(\hat{w_0})=\pm i \lambda( w_0)$ and so, recalling~\eqref{rcsc},
  $$ \abs{\inpd{\vt/|\vt|, \lambda(w_0)}}^2+\abs{\inpd{\vt/|\vt|, \lambda(\hat{w_0})}}^2=\abs{\vt/|\vt|\cdot\overline{ \lambda(w_0)}}^2=\abs{ \lambda(w_0)}^2. $$
  Thus,
  \begin{align}\label{pra1}
    \sum_{w_0,w_1}K_{w_0,w_1}^{(1)}(\vt) &= \frac12 \ssum{w_0,w_1 \\ \omega_{w_0}\neq1}\varphi(\omega_{w_0}) C(w_0)J(w_0,w_1)\abs{A_{\omega_{w_0}}(\phi) G_k(\tfrac12, \upsilon_{w_0})}^2=:c^{(1)},
  \end{align}
  say. Notice that all factors in the sum other than $\abs{A_{\omega_{w_0}}(\phi)}^2$ are strictly positive; also, as $w_0$ varies, $\omega_{w_0}$ spans the set of vertices of the fundamental domain. Thus $c^{(1)}>0$, unless $A_{\omega}(\phi)=0$ for every vertexs~$\omega\neq 1$, in which case $c^{(1)}=0$.

We now explain a similar property for the combined contributions of~$K_{w_0,w_1}^{(2)}$ and~$K_b$. By~\eqref{dcit} and the parametrization of inverse branches in Section~\ref{sec:operator-cc}, their contribution to~$I(\vt)$ is proportional to
    \begin{equation}
      \label{eq:I-infty-symmetry}
      \begin{aligned}
        & \sum_b K_b(\vt) + \sum_{w_0,w_1} K_{w_0,w_1}^{(2)}(\vt) \\
        = {}& \sum_{h\in\cH_c} \sum_{1 \pm i 0 \in \Dom(h)} \varphi(h(1 \pm i 0)) \abs{h'(1 \pm i 0)} \abs{\inpd{\vt/|\vt|, \lambda_\infty(\pm)}}^2 \\
        = {}& \sum_\pm \CC[\varphi](1 \pm i 0) \abs{\inpd{\vt/|\vt|, \lambda_\infty(\pm)}}^2 \\
        = {}& \sum_{\pm} \varphi(1 \pm i 0) \abs{\inpd{\vt/|\vt|, \lambda_\infty(\pm)}}^2,
      \end{aligned}
    \end{equation}
    by invariance of~$\varphi$~\cite[\eqrefmain{eq:def-invar-Fc}]{BettinEtAl}. Moreover, $\varphi$ is continuous at~$1$, since $1$ is a vertex of~$R$, so that by~\eqref{eq:hparab-MT-flip} and~\eqref{eq:def-coeffs-lambda} again the above is simply
    $$ \varphi(1) \big(\abs{\inpd{\vt/|\vt|, \lambda_\infty(+)}}^2 + \abs{\inpd{\vt/|\vt|, \lambda_\infty(-)}}^2\big) = \varphi(1) \abs{A_\infty(\phi) G_k(\tfrac12, 2)}^2 =: c^{(2)}, $$
    say.\footnote{The manipulation in~\eqref{eq:I-infty-symmetry} was suggested by an LLM; in an earlier version of the manuscript, we had shown this by an alternate argument which relied on certain symmetries of~$\Gamma$, which are true for Hecke congruence subgroups but false generically.}
  Also, $c^{(2)}>0$ if $A_\infty(\phi)\neq0$ and $c^{(2)}=0$ otherwise.
  Recalling~\eqref{pra1}, we deduce
  \begin{align}\label{dcit3}
    \sum_b K_b(\vt) + \sum_{w_0,w_1}(K_{w_0,w_1}^{(1)}(\vt)+K_{w_0,w_1}^{(2)}(\vt))=c
  \end{align}
  where $c:=c^{(1)}+c^{(2)}>0$ since, by assumption, $\phi$ is non-cuspidal with $B_{\omega}(\phi)=0$ for all $\omega$, and thus we must have $A_{\omega'}(\phi)\neq0$ for some cusp $\omega'$.

  Gathering~\eqref{dcit},~\eqref{dcit2},~\eqref{dcit3} and~\eqref{dcit4} and by~\eqref{casx}, we then have
  \begin{align}\label{ifff}
    I(\vt)&=i\inpd{ \vt, \vmu}- \frac{\sigma^2}2 |\vt|^2\abs{\log|\vt|}+O(|\vt|^2),
  \end{align}
  with $\sigma \asymp \sqrt{c} \neq 0$.  Inserting this estimate in~\eqref{eq:rel-chi-I}, we conclude using the Berry--Esseen theorem (see~\cite[Section~9]{BettinDrappeau2022}) that~\eqref{t2mf} holds with~$\sigma_\phi = \sigma/\sqrt{2}$, modulo the fact that~$\vmu = 0$.

  The case~$|\Theta|\geq3$ is similar and in fact simpler. Indeed, we can apply Lemma~\ref{lem:fourier} as above with the difference that now~\eqref{eq:coeffs-cl-dl-halfsimple} is to be replaced by
  \begin{equation*}
    \begin{aligned}
      \abs{a_{w_0,w_1}}^2 ={}& \abs{\inpd{\vt/|\vt|, \vth \vth_{w_0} \iota_{w_0} \lambda(w_0)}}^2, \\
      \abs{d_{w_0,w_1,\pm}}^2 ={}& \abs{\inpd{\vt/|\vt|, \vth\lambda_\infty(\pm)}}^2,
    \end{aligned}
  \end{equation*}
  where $\vth$ is the variable of integration over $\Theta$ in~\eqref{fii} and $\vth_{w_0}$ is the value taken by~$\vth_{g_{w_0^n w_1}}(x)$ in the interior of $\Dom(h_{w_0w_1})$ (see Lemma~\ref{lem:multiplier} below). Since $\Theta$ has order at least $3$, then a quick computation shows that for any~$\vt, \kappa \in \C$,
  $$ \int_\Theta \abs{\inpd{\vt, \vth \cdot \kappa}}^2\df \vth = \tfrac12 \abs{\vt}^2 \abs{\kappa}^2.$$
  Thus, performing the integration over $\vth$, we obtain~\eqref{ifff} with a positive $\sigma$ without using the symmetry above.

  This concludes the proof of Theorem~\ref{th:main-eis}, save for the equality~$\vmu = 0$, in the case when~$s_\phi = \frac12$ and~$B_\cb = 0$ for all~$\cb$.

  In the case when~$s_\phi = \frac12$ but~$B_\cb \neq 0$ for some~$\cb$, the previous arguments are adapted with the only changes that Lemma~\ref{lem:fourier} is applied with the hypotheses~\ref{it:fourier-hyp-half-double}, the logarithmic terms being due to the shape of~\eqref{eq:D-mainterm}. This setting is identical to~\cite{BettinDrappeau2022}, and leads to an asymptotic estimate of the shape
  \begin{equation}
    I(\vt) = i \inpd{\vt, \vmu} - \frac{\sigma^2}{2} |\vt|^2 \abs{\log|\vt|}^3 + O(|\vt|^2\abs{\log |\vt|}^{2+\eps})\label{eq:asympt-I-estermann}
  \end{equation}
  instead of~\eqref{ifff}. The rest of the proof is identical to the previous case with standard deviation~$\sigma_\phi = \sigma/(2\sqrt{2})$.

  Finally, let us focus on the case when~$s_\phi = \frac12 + it_\phi \neq \frac12$. We split $I(\vt)$ as in~\eqref{dcit}.
  For~$h=h_{w_0^nw_1}\in \cH_c$ and $\vth\in\Theta$, Lemma~\ref{lem:estimates-vphig} yields for~$\inpd{\vt/|\vt|, \vth\vphi_h(x)}$ an expansion of type
  $$ \sum_{j=1}^2 \big(a_j n^{1/2} \cos(\tau_j \log n + \theta_j) + d_{j,\pm} \abs{\Im x}^{-1/2} \cos(-\tau_j \log\abs{\Im x} + \theta'_j)\big) + O(1) $$
  as~$x\to 1 \pm i 0$ and~$n\to \infty$, where~$\tau_j = (-1)^{j+1} t_\phi$ and~$\theta_j, \theta'_j$ could depend on~$\vt,\vth$. The coefficients~$a_j, d_j\geq0$ and the phases~$\theta_j, \theta'_j$ are determined by the relations
  \begin{align*}
    a_1\e^{i\theta_1} ={}& \tfrac{\bar{\vt}}{\abs{\vt}} \iota_{w_0}\vth \vth_{w_0} A_{\omega_{w_0}}(\phi) G_k(s_\phi, \upsilon_{w_0}) & a_2\e^{i\theta_2} ={}& \tfrac{\bar{\vt}}{\abs{\vt}} \iota_{w_0} \vth \vth_{w_0} B_{\omega_{w_0}}(\phi) G_k(1-s_\phi, \upsilon_{w_0}), \\
    d_{1,\pm} \e^{i\theta'_1} ={}& \tfrac{\bar{\vt}}{\abs{\vt}} \vth A_\infty(\phi) G_k(s_\phi, \mp2), & d_{2,\pm} \e^{i\theta'_2} ={}& \tfrac{\bar{\vt}}{\abs{\vt}} \vth B_\infty(\phi) G_k(1-s_\phi, \mp2).
  \end{align*}
  Note that the moduli~$a_j, d_j$ do not depend on~$\vt$ (contrary to the earlier case~\eqref{eq:coeffs-cl-dl-halfsimple}).
  We apply Lemma~\ref{lem:fourier} with hypotheses~\ref{it:fourier-hyp-nothalf} and obtain
  $$ \sum_{w_0,w_1}\sum_{n}I_{h_{w_0^nw_1}}(\vt)= i\inpd{\vt, \vmu'} - c |\vt|^2 | \log(|\vt|)| + O(|\vt|^2), $$
for some $\vmu'$ and for~$c$ given by
    \begin{equation}
      c = \frac12 \sum_{w_0, w_1} \Big(\varphi(\omega_{w_0}) C(w_0) J(w_0, w_1) \big|a_1\e^{i\theta_1} + a_2 \e^{-i\theta_2}\big|^2 + [\dotsb] \Big)\label{eq:K1-caseii-after-fourier}
    \end{equation}
    where~$[\dotsb]$ represents the corresponding contribution from the possible pole at~$x\to 1\pm i0$. We focus on the~$n$ contribution written above.
    We expand the modulus squared and first consider the cross-product contribution. It is the real part of
    \begin{equation}
      \label{eq:K1-caseii-cross}
      \begin{aligned}
        & \big( \sum_{w_0, w_1} \varphi(\omega_{w_0}) C(w_0) J(w_0, w_1) a_1\e^{i\theta_1} a_2 \e^{i\theta_2} \\
        ={}& \big(\tfrac{\bar{\vt}}{\abs{\vt}}\big)^2 \vth^2 \sum_{w_0, w_1} \varphi(\omega_{w_0}) \iota_{w_0}^2\vth_{w_0}^2 C(w_0) J(w_0, w_1) A_{\omega_{w_0}}(\phi) B_{\omega_{w_0}}(\phi) G_k(s_\phi, \upsilon_{w_0}) G_k(1-s_\phi, \upsilon_{w_0}).
      \end{aligned}
    \end{equation}
    We proceed with the change of variables~$(w_0, w_1) \gets (\hat{w_0}, w_1')$ which we used above in~\eqref{eq:K1-invol-w0}.
    This change of variables preserves the fixed point~$\omega_{w_0} = \omega_{\hat{w_0}}$ and the values~$C(w_0)$ and~$J(w_0, w_1)$.
    On the other hand, we have~$\upsilon_{\hat{w_0}} = -\upsilon_{w_0}$ by~\eqref{eq:upsilon-flip-hat}, and we check that
    $$ \frac{G_k(s_\phi, -1) G_k(1-s_\phi, -1)}{G_k(s_\phi, 1) G_k(1-s_\phi, 1)} = (-1)^k \e^{\pi i(1-k)} = -1. $$
    Thus the cross-term~\eqref{eq:K1-caseii-cross} is zero. The remaining contribution from the squares in~\eqref{eq:K1-caseii-after-fourier} can then be handled in the exact same way as~\eqref{pra1} above.
    The contribution from the singularity at~$x\to 1\pm i0$ in~\eqref{eq:K1-caseii-after-fourier} is analyzed by the same method:
    the corresponding cross-term is shown to vanish, after the manipulation in~\eqref{eq:I-infty-symmetry}, by matching opposite~$\pm$ signs.
  This leads to an expansion of type~\eqref{ifff}, hence Theorem~\ref{th:main-eis}, also in this case.
\end{proof}

\section{Computation of the first and second moments}
\label{sec:comp-moments}

\subsection{Overview}
\label{sec:comp-moments-overview}

In order to complete the proof of Theorems~\ref{th:main-cusp} and \ref{th:main-eis}, we are left to prove:
\begin{itemize}
  \item that in both the cuspidal and non-cuspidal cases, the mean-value~$\vmu$ is zero,
  \item that in the cuspidal case, the covariance matrix~$\vSigma$ is equal to~$\sigma_\phi^2 \Id$ where~$\Id$ is the identity and~$\sigma_\phi$ is given by~\eqref{eq:def-variance-cusp}.
\end{itemize}

In the modular group case~$\Gamma = \SL(2,\Z)$ treated in~\cite{BettinDrappeau2022}, this was done by computing the empirical expectations~$\E_Q[\vL(x)]$, $\E_Q[\vL(x)^2]$ and~$\E_Q[\abs{\vL(x)}^2]$ directly.
The computations made crucial use of the existence, for~$\Gamma = \SL(2,\Z)$, of an explicit expression for sums
\begin{equation}
  \sum_{x\in \Omega_Q}\e(\ell x), \qquad (\ell \in \Z)\label{eq:def-weyl-Omega}
\end{equation}
in terms of Ramanujan sums. The Ramanujan sums enjoy a large amount of cancellation and a multiplicative behaviour, related to the existence of Hecke operators, which was crucial in the arguments in~\cite{BettinDrappeau2022}. In contrast, for arbitrary groups~$\Gamma$, existing results on the estimation of~\eqref{eq:def-weyl-Omega} rely on the analytic properties of Selberg's Kloosterman zeta function (see~\cite[eq.~(3.9)]{PetridisRisager2018}), and the available uniformity in~$\ell$ is insufficient.

The dynamical systems approach naturally yields the expressions~\eqref{eq:def-mu} for the mean-value, and an analogous expression for the variance.
However they involve the period functions~$\vphi_g$ whose definition in~\eqref{eq:defs-L-phig1-Gmodel}, as the continuous extension of a map initially defined on the countable set~$C(\Gamma)$, is of little use by itself.

We solve this issue by approximating the function~$\vphi_g$ by the non-central $L$-values~\eqref{eq:defs-L-phigs-Gmodel} with~$s=1+\eps$ for~$\eps>0$. This is inspired by the construction in~\cite{DrappeauNordentoft2022}, see equation~(3.13) there.
The main point regarding our argument here is that the maps~$\vL(x, s)$ are now continuous in~$x$ by uniform convergence.
We insert the expression~\eqref{eq:defs-L-phigs-Gmodel} into the definition~\eqref{eq:def-mu} for~$\vmu$, separate the contributions of~$\vL(g x, s)$ and~$\vL(x, s)$, and recombine them using the invariance of~$\xi$. We essentially arrive at an equality of the shape
\begin{equation}
  \vmu = \frac2\fm \lim_{s\to 1^+} (s-1) \int_K \vL(y, s) \xi(y) \dfs(y), \label{eq:mu-heuristic-integral}
\end{equation}
where the factor~$s-1$ arises from the effect of the factor~$\abs{g'(x)}^{s-1}$ in~\eqref{eq:defs-L-phigs-Gmodel}.
The equality~\eqref{eq:mu-heuristic-integral} can be thought of as a continuous analogue of the empirical expectation $\lim_{Q\to\infty} \frac1{\log Q} \E_Q[\vL(x)]$.
In a way, the dynamical systems argument took care of replacing the discrete average over~$C(\Gamma)$ by a continuous average over~$\dD$.
The continuous average~\eqref{eq:mu-heuristic-integral} is easily estimated by inserting the Fourier expansion stemming from~\eqref{eq:def-Lphixs}, using a rough bound on~$a(n)$ and on the Fourier coefficients of~$\xi$, and this leads to the value~$\vmu = 0$.

Regarding the covariance~$\vSigma$, although the computations are more involved, we eventually find that the same idea leads to the expression
$$ \vSigma = \frac4\fm\lim_{s\to 1^+} (s-1) \int_K \vec\vL(y, s) \vec\vL(y, s)^T \xi(y) \dfs(y). $$
Expanding the product leads to sums of the shape~$\sum_n a(n) \bar{a(n+h)}$, which are handled by means of Rankin--Selberg theory.
Contrary to the mean-value, the diagonal terms $h=0$ survive the integration over~$y$ and give the main contribution.

\subsection{Approximation lemmas}

\subsubsection{Approximating~$\vphi_h(y)$ by non-central $L$-values}

Recall the definition of the non-central~$L$-values~$\vL(x, s)$ and period functions~$\vphi_g(x, s)$ in~\eqref{eq:defs-L-phigs-Gmodel}.
Recall also the definition of~$\xi_s$ and~$\cU$ in Lemma~\ref{lem:approx-eigen}, of the maps~$p_\H, p_\D$ in~\eqref{eq:def-pD}, and that~$\cH$ denotes the set of inverse branches of~$\cF_K$ (cf. Section~\ref{opll}).
From the proof of Lemma~\ref{lem:approx-eigen}, we have~$\xi_1 = \xi$.

\begin{lemma} \begin{enumerate}[label=(\roman*)]
    \item \label{it:phigs-ancont} For all~$g \in G$ and~$x\in \dD \setminus\{1,g^{-1}1\}$, the map~$s\mapsto \vphi_g(x, s)$ extends to an analytic function on $\Re(s)>1/2$.
    \item For~$\Re(s)> 1 + \rho_\phi$, the map~$x\mapsto \vL(x, s)$ is continuous and bounded on~$\dD$.
    \item \label{it:bound-sumK-phih} Uniformly for~$s\in\cU$, we have
    \begin{align}\label{bsa}
      \sum_{h\in \cH} \|h' \abs{\vphi_h(\cdot, s)}^\alpha \|_{\infty, \Dom(h)} \ll 1, \qquad \big(\alpha < \min(\tfrac{1}{\delta+\rho_\phi},\tfrac1{2\delta})\big)
    \end{align}
  \end{enumerate}\label{lem:approx-phi}
\end{lemma}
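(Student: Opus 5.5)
Items (i) and (ii) are direct consequences of what precedes, and item (iii) reduces, by the decomposition of branches of $\cH$ into branches of $\cH_c$, to the rough bound \eqref{eq:ph-rough} together with the summability $\sum_{h\in\cH_c}\abs{h'}^\sigma<\infty$ for $\sigma>\frac12$.

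For (i), I use \eqref{eq:rel-phigs-hL}, which expresses $\vphi_g(x,s)$ for $\Re(s)>3/2$ as $\vth_g(x)\abs{p_\H'(x)}^{1-s}\abs{g'(x)}^{s-1}h^L_\gamma([x]_\H,s-\frac12)$. The prefactors are entire in $s$ for fixed $x\neq 1,g^{-1}1$. By Lemma~\ref{lem:hL-gamma}, $s\mapsto h^L_\gamma(y,s-\frac12)$ extends holomorphically to $\Re(s-\frac12)>0$, and this gives the continuation to $\Re(s)>\frac12$.

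For (ii), $\vL(x,s)=\abs{p_\H'(x)}^{1-s}\sum_{n\ge1}a(n)\e(n[x]_\H)n^{1-s}$. For $\Re(s)>1+\rho_\phi$, I expect the series to converge absolutely and uniformly in $x$: in the non-cuspidal case ($\rho_\phi=\frac12$) this follows from \eqref{eq:bound-meanabs-an}, which gives summability of $\abs{a(n)}n^{-1/2-\eps}$. In the cuspidal case ($\rho_\phi=0$), absolute convergence would require $\Re(s)>3/2$, so for $1<\Re(s)\le 3/2$ I instead use partial summation with Wilton's bound \eqref{eq:wilton}, which gives convergence uniform in $x$ with sum $O(1)$. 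Continuity of $\vL(\cdot,s)$ on $\dD\setminus\{1\}$ then follows. Near $x=1$, the factor $\abs{p_\H'(x)}^{1-s}\asymp\abs{x-1}^{2\Re(s)-2}\to0$ multiplies a bounded function, so $\vL(\cdot,s)$ extends continuously by $0$ at $1$ and is bounded on $\dD$.

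For (iii), the main work is combining the branch decomposition with the rough bound. First I treat $h\in\cH_c$. By \eqref{eq:ph-rough} with $s\in\cU$ (so $\Re(s)>1-\delta$ and $\Im(s)\ll1$), and since $\abs{x-1}\gg\abs{h'}^{1/2}$ on $\Dom(h)$ by \eqref{di0}--\eqref{di1}, I get
$$\abs{\vphi_h(x,s)}\ll 1+\abs{h'}^{-2\delta}+\abs{h'}^{-(\delta+\rho_\phi+\eps)},$$
where the leading terms come from $\abs{h'}^{-\delta}\abs{x-1}^{-2\delta}$ and $\abs{h'}^{-\delta}\abs{x-1}^{-\delta-\rho_\phi-\eps}$. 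One caveat: that lower bound on $\abs{x-1}$ holds for $x\in\im(h)$ rather than $\Dom(h)$. The unbounded powers of $\abs{x-1}^{-1}$ on $\Dom(h)$ must therefore be absorbed when passing to compositions. I plan to do this through the cocycle relation for $\vphi$ (the analogue of \eqref{ple}): for $h\in\cH$ written as $h=h_1\circ\dots\circ h_r$ with $h_i\in\cH_c$, as in \cite[Lemma~\refmain{lem:decomp-Hc-HK}]{BettinEtAl},
$$\vphi_h(y,s)=\sum_i(\text{unimodular})\cdot\abs{(h_{i+1}\circ\dots\circ h_r)'(y)}^{s-1}\vphi_{h_i}(h_{i+1}\circ\dots\circ h_r(y),s).$$
For $i<r$ the argument of $\vphi_{h_i}$ lies in $\im(h_{i+1})$, where the distance to $1$ is $\gg\abs{h_{i+1}'}^{1/2}$. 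For $i=r$ the argument is $y\in\Dom(h)\subset K$, which stays away from $1$.

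Next, raising to the power $\alpha$ and multiplying by $\abs{h'}\asymp\prod\abs{h_i'}$, I expect each summand to be bounded by a product of factors $\abs{h_i'}^{1-\alpha(\delta+\rho_\phi+\eps)}$ or $\abs{h_i'}^{1-2\alpha\delta}$. The factors $\abs{(\cdot)'}^{(\Re(s)-1)\alpha}\le\abs{(\cdot)'}^{-\delta\alpha}$ are absorbed the same way. The condition $\alpha<\min(\frac1{\delta+\rho_\phi},\frac1{2\delta})$ is what makes these exponents exceed $\frac12$, once $\delta$ and $\eps$ are small. The sum over $h\in\cH$ then reduces, as in the proof of $\sum_{h\in\cH}\abs{h'}^{1-\delta}<\infty$, to a geometric-type series over the decomposition.

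This last step is the main obstacle. I need the number of pieces $r$ and the combinatorics of the decomposition to be controlled, with the exponent uniformly above $\frac12$, so that the resulting series converges. I would rely on the structure in \cite{BettinEtAl}, presumably the same argument that gives $\sum_{h\in\cH}\abs{h'}^{1-\delta}<\infty$, possibly shrinking $\delta$.
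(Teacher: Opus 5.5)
You follow the paper's route. Items (i) and (ii) are exactly what the paper does. For (iii), the paper simply combines \eqref{eq:ph-rough} with \cite[Lemma~\refmain{lem:moments-c-implies-K}]{BettinEtAl}, and that lemma performs the passage from $\cH_c$ to $\cH$ that you flagged as the main obstacle. The genuine problem is your exponent bookkeeping in (iii), which is off by a factor of $2$. The condition $\alpha<\min(\frac1{\delta+\rho_\phi},\frac1{2\delta})$ only makes $1-\alpha(\delta+\rho_\phi+\eps)$ and $1-2\alpha\delta$ positive; it does not make them exceed $\frac12$. The threshold $\frac12$ is the one that matters, since $\abs{h'}\asymp n^{-2}$ along each parabolic family. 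With your per-branch bound $1+\abs{h'}^{-2\delta}+\abs{h'}^{-(\delta+\rho_\phi+\eps)}$, you would only obtain \eqref{bsa} for $\alpha<\min(\frac1{2(\delta+\rho_\phi+\eps)},\frac1{4\delta})$. For non-cuspidal $\phi$ this range excludes $\alpha=1$, which is exactly the case used in Proposition~\ref{prop:average}. Your bound is also simply too crude: at $s=1$ with $\rho_\phi=\frac12$, the true size is $n^{1/2}\asymp\abs{h'}^{-1/4}$ by Lemma~\ref{lem:estimates-vphig}, not $\abs{h'}^{-1/2}$.

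The fix has two parts. First, keep the halved exponent $\abs{h'}^{(\Re(s)-1-\rho_\phi-\eps)/2}$ that is already present in \eqref{eq:ph-rough}. Second, charge the $\abs{x-1}$-factors to the next branch rather than to the same $h$. For $x\in\im(h_2)\subset\Dom(h_1)$ one has $\abs{x-1}\gg\abs{h_2'}^{1/2}$, as in the proof of Corollary~\ref{corolhyps}. Hence, for $s\in\cU$,
\[ \|\vphi_{h_1}(\cdot,s)\|_{\infty,\im(h_2)}\ll 1+\abs{h_1'}^{-\delta}+\abs{h_1'}^{-\frac{\delta+\rho_\phi+\eps}2}+\abs{h_1'}^{-\delta}\big(\abs{h_2'}^{-\delta}+\abs{h_2'}^{-\frac{\delta+\rho_\phi+\eps}2}\big). \]
In $\sum_{h_1,h_2\in\cH_c}\abs{h_1'}\abs{h_2'}\|\vphi_{h_1}(\cdot,s)\|_{\infty,\im(h_2)}^\alpha$ the exponents are then $1-\alpha\delta$ and $1-\alpha\frac{\delta+\rho_\phi+\eps}2$. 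These exceed $\frac12$ precisely when $\alpha<\min(\frac1{\delta+\rho_\phi},\frac1{2\delta})$ and $\eps$ is small. This two-branch moment condition has the same shape as \eqref{eq:vphi-hyp-supnorm}, and it is what the cited lemma converts into \eqref{bsa}, so you do not need to redo the combinatorics of the decomposition yourself. There is also a minor slip in your cocycle: the derivative weight falls on the outer map, $\vphi_{g_1g_2}(x,s)=\vth_{g_2}(x)\vphi_{g_1}(g_2x,s)+\abs{g_1'(g_2x)}^{s-1}\vphi_{g_2}(x,s)$ (cf.\ \eqref{eq:h-w0w1}), not on the inner maps as in your formula.
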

\begin{proof}
  The first point is proven in Section~\ref{sec:estimates-observable}. The second point follows from the bound~\eqref{eq:bound-meanabs-an} in the non-cuspidal case, and from Wilton's bound~\eqref{eq:wilton} in the cuspidal case. The third point follows from~\cite[Lemma~\refmain{lem:moments-c-implies-K}]{BettinEtAl} and~\eqref{eq:ph-rough}.
\end{proof}

\subsubsection{Leading eigenparameters}

For~$s\in \cU$, $x\in \R$, we let
\begin{align}\label{dfnzt}
  \zeta_s(x) = \xi_s\Big(\frac{x-i}{x+i}\Big) \frac{2^s}{(x^2+1)^s}\1_{x\in [K]_\H},
\end{align}
where we recall~$[K]_\H = p_\H(K)$. Note that, by a change of variable, for all $n\in\Z$ we have
\begin{equation}
  \hat\zeta_s(n) := \int_\R \zeta_s(x) \e(-nx)\df x = \int_K \abs{p'_\H(y)}^{1-s} \xi_s(y) \e(-n p_\H(y)) \dfs(y).\label{eq:def-fouriercoeff-phi}
\end{equation}

\begin{lemma}\label{lem:fourier-eigen-approx}
  We have~$ \hat\zeta_s(n) \ll \abs{n}^{-1}$ uniformly for~$n\in\Z_{\neq 0}$ and for~$s\in\cU$.
\end{lemma}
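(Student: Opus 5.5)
We integrate by parts once in the defining integral~\eqref{dfnzt}, which gives the factor $\abs{n}^{-1}$. For $n\neq 0$ we have $\e(-nx) = \frac{-1}{2\pi i n}\frac{\df}{\df x}\e(-nx)$. The function $\zeta_s$ is supported on $[K]_\H$, and $[K]_\H$ is a finite union of compact intervals of $\R$. Indeed, $K$ is a finite union of closed arcs $[a,b]$ with $\abs{b-\hat a}>1$, and these arcs stay away from the vertex $1$: the arcs contiguous to $1$ are excluded by the non-contiguity condition, or at least $K$ is a compact subset of $\dD\setminus\{1\}$, which I would check from the description of $K$ in Section~\ref{opll}. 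Granting this, $p_\H$ maps $K$ onto a compact subset of $\R$ made of finitely many intervals $[u_j,v_j]$ with $\abs{u_j},\abs{v_j}\ll 1$.

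On each interval $[u_j,v_j]$ we integrate by parts:
$$ \int_{u_j}^{v_j} \zeta_s(x)\e(-nx)\df x = \frac{-1}{2\pi i n}\Big( \big[\zeta_s(x)\e(-nx)\big]_{u_j}^{v_j} - \int_{u_j}^{v_j} \zeta_s'(x)\e(-nx)\df x\Big). $$
The boundary terms are bounded by $\sup\abs{\zeta_s}$. The integral is bounded by $(v_j-u_j)\sup\abs{\zeta_s'}$ over the interior.

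It remains to bound $\zeta_s$ and $\zeta_s'$ uniformly for $x$ in the bounded set $[K]_\H$ and $s\in\cU$. By Lemma~\ref{lem:approx-eigen}\ref{it:xis-bounded}, $\xi_s$ and $\xi_s'$ are $O(1)$ uniformly on $K$ and $s\in\cU$. The map $x\mapsto\frac{x-i}{x+i}=p_\D(x)$ is smooth with bounded derivative on bounded sets. The factor $2^s(x^2+1)^{-s}$ and its $x$-derivative $-2s\cdot x\cdot 2^s(x^2+1)^{-s-1}$ are $O(1)$ because $\cU$ is compact, so $\abs{s}\ll 1$ and $\Re s$ is bounded. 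The chain rule then gives $\zeta_s,\zeta_s'\ll 1$. Summing over the finitely many intervals yields $\hat\zeta_s(n)\ll\abs{n}^{-1}$.

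The main point to verify is that $K$ avoids a neighbourhood of $1$, so that $[K]_\H$ is bounded. If instead $[K]_\H$ were unbounded, I would use~\eqref{eq:def-fouriercoeff-phi} and integrate by parts in the variable $y\in K$, writing $\e(-np_\H(y))$ as a derivative divided by $2\pi i n\, p_\H'(y)$. The resulting weight $\abs{p_\H'}^{-s}$ vanishes at $1$, and $\frac{\df}{\df y}\abs{p_\H'(y)}^{-s}\ll \abs{y-1}^{2\Re s -1}$ is integrable for $s\in\cU$. The boundary terms at $y=1$ then vanish and the same bound follows. A secondary point is that $\xi_s$ is only piecewise $C^1$ across the finitely many arcs, but the interval-by-interval integration above already handles this.
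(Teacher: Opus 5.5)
Your proposal is correct and is essentially the paper's argument: $[K]_\H$ is a finite union of bounded intervals, Lemma~\ref{lem:approx-eigen} gives $\zeta_s,\zeta_s'\ll 1$ uniformly there for $s\in\cU$, and one integration by parts yields the factor $\abs{n}^{-1}$. The point you flagged holds, so your fallback is unnecessary: when $\abs{b-\hat a}>1$ the arc $[b]$ is disjoint from $[\hat a]$, hence $[a,b]=g_a([b])$ lies in $[a]^\circ$, and $K$ is a compact subset of $\dD$ avoiding the vertex $1$.
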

\begin{proof}
  Note that~$[K]_\H$ is a finite union of bounded intervals.
  By Lemma~\ref{lem:approx-eigen} we have~$\abs{\zeta_s(x)}, \abs{\zeta_s'(x)} \ll 1$ uniformly for~$s\in \cU$ and~$x\in [K]_\H$, whence by partial summation we deduce~$ \hat\zeta_s(n)\ll 1/\abs{n}$ with the same uniformity.
\end{proof}

\subsection{First moment}

We recall the expression
$$ \vmu := 2\frac{\1_{\Theta=\{1\}}}{\fd} \int_K \Phi_K(y) \frac{\xi(y)\dfs(y)}{\fm} $$
for the mean-value, where~$\fd$ and~$\fm$ are defined in~\eqref{eq:def-fd}.

\begin{proposition}\label{prop:average}
  We have~$\vmu = 0$.
\end{proposition}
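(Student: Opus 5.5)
If $\Theta\neq\{1\}$ the indicator in the definition of $\vmu$ vanishes and there is nothing to prove, so assume $\Theta=\{1\}$. The plan follows the overview in Section~\ref{sec:comp-moments-overview}. We replace the central observable $\vphi_h=\vphi_h(\cdot,1)$ by $\vphi_h(\cdot,s)$ with real $s>1+\rho_\phi$, and set
$$\vmu(s):=\frac{2}{\fd\fm}\int_K \Phi_K(y,s)\,\xi(y)\dfs(y),$$
where $\Phi_K(\cdot,s)$ is built from the $\vphi_h(\cdot,s)$, $h\in\cH$, exactly as $\Phi_K$ is built from the $\vphi_h$. By Lemma~\ref{lem:approx-phi}\ref{it:phigs-ancont},\ref{it:bound-sumK-phih} and dominated convergence (the bound~\eqref{bsa} with $\alpha=1$ holds uniformly on $\cU$), $\vmu(s)\to\vmu$ as $s\to1^+$. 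It therefore suffices to compute $\vmu(s)$ for $s$ slightly larger than $1+\rho_\phi$ and show that it tends to~$0$. Here analyticity in $s$ lets us pass through the region $1<s\le 1+\rho_\phi$ when $\phi$ is non-cuspidal.

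For $s>1+\rho_\phi$, $\vL(\cdot,s)$ is continuous and bounded by Lemma~\ref{lem:approx-phi}(ii). We may therefore split $\vphi_h(y,s)=\vL(h y,s)-|h'(y)|^{1-s}\vL(y,s)$, using $\vth_h=1$ and $g=h^{-1}$, so that $|g'(hy)|^{s-1}=|h'(y)|^{1-s}$. Each part is then integrable separately. Summing over $h\in\cH$ and changing variables as in~\eqref{cvl}, the invariance of $\xi$ under $\LL$ turns the first part into $\int_K\vL(y,s)\xi(y)\dfs(y)$. The second part becomes $\int_K \LL_s[\1](y)\ldots$, more precisely $\int_K \vL(y,s)\,\LL_{s}^{*}$-type weight. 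To make this clean, I would replace $\xi$ by the eigenfunction $\xi_s$ of Lemma~\ref{lem:approx-eigen}, at a cost of $O(|s-1|)$ times a bounded integral. The two pieces then combine to $(1-\lambda(s))\int_K \vL(y,s)\xi_s(y)\dfs(y)+O((s-1)\sup|\vL(\cdot,s)|)$. Since $\lambda(s)=1-\fd(s-1)+O((s-1)^2)$, this gives heuristically~\eqref{eq:mu-heuristic-integral}.

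The main obstacle is that $\vL(\cdot,s)$ is not uniformly bounded as $s\to1^+$, so the error terms and the main term must be handled simultaneously. To control them, I would evaluate the integral via the Fourier expansion. Using~\eqref{eq:def-fouriercoeff-phi},
$$\int_K|p_\H'(y)|^{1-s}\vL(y,s)\,\xi_s(y)\dfs(y)=\sum_{n\ge1}a(n)n^{1-s}\,\overline{\hat\zeta_s}(n)\ \text{(up to conjugation conventions)}.$$
The interchange of sum and integral is justified by absolute convergence for $s>1+\rho_\phi$ via~\eqref{eq:bound-meanabs-an}. By Lemma~\ref{lem:fourier-eigen-approx}, $\hat\zeta_s(n)\ll 1/n$, and $\sum_n|a(n)|n^{-s}\ll 1$ uniformly near $s=1$ by~\eqref{eq:bound-meanabs-an}. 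Hence this integral stays bounded as $s\to1^+$; the bound holds in the cuspidal case, and in the non-cuspidal case after analytic continuation of the identity in $s$. Multiplying by $(s-1)$ then yields $\vmu=\lim_{s\to1^+}O(s-1)=0$.

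The delicate points are two. First, the error from replacing $\xi$ by $\xi_s$ must involve the same bounded integral rather than $\sup|\vL|$; I would arrange this by working with $\xi_s$ from the start and using the eigen-relation $\int \LL_s[f]\,\cdot=\ldots$, so that no sup-norm appears. Second, the uniformity must extend down to $s=1$ in the non-cuspidal case, which I would handle by analytic continuation, invoking Lemma~\ref{lem:approx-phi}\ref{it:phigs-ancont}.
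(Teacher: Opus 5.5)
Once you put the eigenfunction $\xi_s$ into the definition of $\vmu(s)$ from the outset, as you end up proposing, this is exactly the paper's proof. For $\Re(s)>3/2$ the two pieces combine \emph{exactly} into $(1-\lambda(s))\int_K \vL(y,s)\xi_s(y)\dfs(y)$. The resulting Fourier series $\sum_{n} a(n)n^{1-s}\hat\zeta_s(-n)$ converges uniformly on $\cU$ by Lemma~\ref{lem:fourier-eigen-approx}. Analytic continuation together with $\lambda(1)=1$ then gives $\vmu=0$ directly at $s=1$, with no limit $s\to1^+$ and no error terms.

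Three slips to correct:
\begin{itemize}
  \item The splitting is $\vphi_h(y,s)=\vL(h(y),s)-\abs{h'(y)}^{s-1}\vL(y,s)$, not with $\abs{h'(y)}^{1-s}$. After the Jacobian, the second piece is therefore literally $\int_K\vL(y,s)\,\LL_s[\xi_s](y)\dfs(y)=\lambda(s)\int_K\vL(y,s)\xi_s(y)\dfs(y)$. No adjoint operator is involved.
  \item In the cuspidal case, the bound~\eqref{eq:bound-meanabs-an} only gives absolute convergence of the Fourier series for $\Re(s)>3/2$, not for all $s>1$. There too you should interchange sum and integral in that range and then continue analytically.
  \item Your initial version with the fixed density $\xi$ and an $O((s-1)\sup\abs{\vL(\cdot,s)})$ error does not close, since $\sup\abs{\vL(\cdot,s)}$ blows up as $s\to1^+$. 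Drop it.
\end{itemize}
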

\begin{proof}
  We may assume~$\Theta=\{1\}$. For all~$s\in\cU$, define
  \begin{equation}
    \label{eq:def-muphis}
    \vmu(s) := \sum_{h\in \cH} \int_{\Dom(h)} \vphi_h(y, s) \xi_s(h(y)) \abs{h'(y)} \dfs(y)
  \end{equation}
  and note that by Lemma~\ref{lem:approx-eigen}.\ref{it:xis-bounded} and~\eqref{bsa} we have
  \begin{align*}
    \sum_{h\in \cH} \int_{\Dom(h)} \abs{\vphi_h(y, s) \xi_s(h(y)) h'(y)} \dfs(y) \ll 1.
  \end{align*}
  Note also that the maps~$\xi_s$ are defined on~$K$.
  By Lemmas~\ref{lem:approx-phi}.\ref{it:phigs-ancont} and~\ref{lem:approx-eigen}.\ref{it:xis-anacont}, we deduce that the map~$s\mapsto \vmu(s)$ is analytic in~$\cU$, and
  $$ \vmu = \frac{2}{\fd\fm}\vmu(1). $$
  Assume next~$\Re(s)>3/2$. We insert the explicit expression from~\eqref{eq:defs-L-phigs-Gmodel}, and separate both contributions to the integral. Note that~$\vth_h(y) = 1$ since we assumed~$\Theta=\{1\}$. We find for all~$\Re(s)>3/2$
  $$ \vmu(s) = M_1(s) - M_2(s), $$
  where
  \begin{align*}
    M_1(s) ={}& \sum_{h\in \cH} \int_{\Dom(h)} \vL(h(y), s) \xi_s(h(y)) \abs{h'(y)} \dfs(y), \\
    M_2(s) ={}& \sum_{h\in \cH} \int_{\Dom(h)} \vL(y, s) \xi_s(h(y)) \abs{h'(y)}^s \dfs(y).
  \end{align*}
  Since $\bigcup _{h\in \cH}\im(h)$ is a partition of~$\dD$ up to a countable set, after a change of variables~$M_1(s)$ becomes
  $$ M_1(s) = \sum_{h\in \cH} \int_{\im(h)} \vL(y, s) \xi_s(y) \dfs(y) = \int_K \vL(y, s) \xi_s(y) \dfs(y). $$
  Concerning~$M_2(s)$, by definition we have for all~$y\in K$
  $$ \sum_{h\in \cH, y\in \Dom(h)} \abs{h'(y)}^s \xi_s(h(y)) = \LL_s[\xi_s](y) = \lambda(s) \xi_s(y)$$
  with~$\lambda(s)$ as in Lemma~\ref{lem:approx-eigen}. We deduce
  $$ M_2(s) = \lambda(s)\int_K \vL(y, s) \xi_s(y) \dfs(y). $$
  We conclude that
  \begin{equation}
    \vmu(s) = (1-\lambda(s)) \int_K \vL(y, s) \xi_s(y) \dfs(y).\label{eq:muphi-intqf}
  \end{equation}
  We insert the definition
  \begin{equation}
    \vL(y, s) = \abs{p_\H'(y)}^{1-s} \sum_{n\geq 1} a(n) \e(n p_\H(y)) n^{1-s},\label{eq:qf-xs-cusp}
  \end{equation}
  which is allowed by the bound~\eqref{eq:bound-meanabs-an}, since~$\Re(s)>3/2$. We obtain
  \begin{equation}
    \vmu(s) = (1 - \lambda(s)) \sum_{n\geq 1} a(n) n^{1-s} \hat\zeta_s(-n), \qquad (\Re(s)>3/2),\label{eq:identity-mus-anacont}
  \end{equation}
  where~$ \hat\zeta_s(\cdot)$ is defined in~\eqref{eq:def-fouriercoeff-phi}.
  By Lemma~\ref{lem:fourier-eigen-approx}, the sum over~$n$ on the right-hand side converges uniformly for~$s\in\cU$. Therefore, we conclude by analytic continuation that the identity~\eqref{eq:identity-mus-anacont} holds for all~$s\in \cU$. Since~$\lambda(1) = 1$, the proof is concluded by setting~$s=1$.
\end{proof}

\subsection{Second moment in the cuspidal case}

In the remainder of this section, we restrict~$s$ to a real neighbourhood of~$1$. Let~$\delta>0$ be fixed and
$$ \cV := [1, 1+\delta]. $$
For~$s\in \cV$,~$x \in K$ and~$\vth\in\Theta$, we define~$\Phi_{K,s}$ by
\begin{equation}
  \Phi_{K,s}(x, \vth) = \vth \vth_h(y)^{-1} \vphi_h(y, s), \qquad (h\in \cH, x = h(y)),\label{eq:def-Phics}
\end{equation}
in analogy with~\eqref{eq:def-Phic}. We let
  \begin{equation}
    \vchi = \frac1{\xi} ({\Id} - \NN)^{-1}\LL[\Phi_K \xi(\cdot)], \label{eq:def-chi}
  \end{equation}
  where~$\NN = \LL - \PP$, with~$\PP$ the spectral projection onto the eigenspace spanned by~$\xi$.

  \begin{lemma}\label{lem:approx-chi}
    \begin{enumerate}[label=(\roman*)]
      \item The map~$\vchi$ belongs to~$H^{1/6}(K\times \Theta)$.
      \item We have
      \begin{equation}
        \vchi \xi - \LL[\vchi \xi] = \LL[\Phi_K \xi]. \label{eq:chis-implicit-eq}
      \end{equation}
      \item For~$s\in \cV$, $s\neq 1$, we have
      \begin{align}
        & \int_K \abs{\vec\vL(x, s)}^2 \dfs(x) \ll 1/(s-1), \label{eq:L2bound-Lxs} \\
        &\norm{(\LL - \LL_s)[\vchi \xi]}_{\infty, K\times\Theta} \ll s-1, \label{eq:LLchixi-1s} \\
        & \int_{K\times\Theta} \abs{\Phi_K \xi - \Phi_{K,s}\xi_{2s-1} \abs{\cF_K'}^{1-s}}^2 \df \nu \ll (s-1)^2. \label{eq:LLPhixi-1s}
      \end{align}
    \end{enumerate}
  \end{lemma}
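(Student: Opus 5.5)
Parts (i) and (ii) are essentially functional-analytic consequences of the spectral picture in Lemma~\ref{lem:approx-eigen}, while (iii) needs the cuspidal bounds on $\vphi_h(\cdot,s)$ together with a Rankin--Selberg/large-sieve type estimate; we handle them in that order.

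\emph{Parts (i) and (ii).} First show that $\LL[\Phi_K \xi] \in H^{1/6}(K\times\Theta)$. Write $\LL[\Phi_K\xi](y,\vth_1)=\sum_{h\in\cH}\abs{h'(y)}\,\xi(h(y))\,\vth_1\vth_h(y)\Phi_K(h(y),\cdot)$. By \eqref{eq:def-Phics} at $s=1$, $\Phi_K(h(y))$ is $\vth_h(y)^{-1}\vphi_h(y)$, so the summands reduce to $\abs{h'(y)}\xi(h(y))\vth_1\vphi_h(y)$. The multiplier $\vth_h$ is locally constant on $\Dom(h)$ (Lemma~\ref{lem:multiplier}). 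Hence the Hölder seminorm of each summand is controlled by $\abs{h'}\big(\|\vphi_h\|_{\infty,\Dom(h)}+\|\vphi_h\|_{(1/6),\Dom(h)}\big)$. The decomposition of $h\in\cH$ into $\cH_c$-branches, combined with \eqref{eq:vphi-cuspidal-bound} and \eqref{eq:vphi-holder}, makes this summable, as in \cite[Lemma~\refmain{lem:moments-c-implies-K}]{BettinEtAl}; this is where we use that $K$ stays away from the vertex $1$ in the relevant sense. Next, $\PP$ projects onto the simple leading eigenvalue $1$ of the quasicompact operator $\LL$ on $H^{1/6}$. So $\NN=\LL-\PP$ has spectral radius $<1$ there (aperiodicity being part of \cite[Proposition~\refmain{prop:spectrum-Ls0}]{BettinEtAl}), and $(\Id-\NN)^{-1}$ is bounded on $H^{1/6}$. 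Since $\xi$ is smooth and bounded below on $K$, dividing by it preserves $H^{1/6}$, which proves (i). For (ii), set $F=\LL[\Phi_K\xi]$. Then $\vchi\xi=(\Id-\NN)^{-1}F$ gives $\vchi\xi-\LL[\vchi\xi]=F-\PP[\vchi\xi]$. It remains to show $\PP[\vchi\xi]=0$. Since $\PP$ commutes with $\NN$ and $\PP\NN=0$, we have $\PP(\Id-\NN)^{-1}=\PP$, so $\PP[\vchi\xi]=\PP[F]$. The functional defining $\PP$ is integration against Lebesgue measure, by \eqref{cvl} with $f_2=\1$. Therefore $\PP[F]$ is proportional to $\int F=\int \Phi_K\xi$, which is proportional to $\vmu$, and $\vmu=0$ by Proposition~\ref{prop:average}.

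\emph{Part (iii).} For \eqref{eq:L2bound-Lxs}, map $K$ to $[K]_\H$, a finite union of bounded intervals, using $p_\H$ with bounded derivative on $K$. Bound the integral by integrals over unit intervals of $\abs{\sum_n a(n)n^{1-s}\e(nx)}^2$, and apply Parseval to get $\sum_n\abs{a(n)}^2 n^{2-2s}$. By \eqref{eq:rankinselberg-upper} and partial summation this is $\ll \sum_n \abs{a(n)}^2 n\cdot n^{1-2s} \ll 1/(s-1)$. For \eqref{eq:LLchixi-1s}, write $(\LL-\LL_s)[f](y)=\sum_h(\abs{h'}-\abs{h'}^s)f(h(y),\dots)$ and use $\abs{\abs{h'}-\abs{h'}^s}\ll (s-1)\abs{h'}^{1-\delta'}$. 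With $\|\vchi\xi\|_\infty\ll1$ and $\sum_h\abs{h'}^{1-\delta'}<\infty$, this gives the bound.

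The main obstacle is \eqref{eq:LLPhixi-1s}. Pointwise, the difference splits into three pieces: $(\xi-\xi_{2s-1})\Phi_K$; $(1-\abs{\cF_K'}^{1-s})\Phi_K\xi$; and $\vphi_h(y,1)-\vphi_h(y,s)$. The first is $O(s-1)$ times $\Phi_K$ by holomorphy of $\xi_s$ (Lemma~\ref{lem:approx-eigen}). The second is $O((s-1)\log\abs{h'}^{-1})$ on $\im(h)$. Both are square-integrable against $\abs{h'}$ by \eqref{bsa}. For the third, do not use Cauchy's formula in $s$ naively: the bound \eqref{eq:ph-rough} holds only for $\Re(s)>1/2+\eps$, so its polynomial losses in $\abs{h'}^{-1}$ must be kept within the range permitted by \eqref{bsa}. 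Instead, use Cauchy on a disc of radius about $1/4$ around $s$, which yields $\abs{\vphi_h(y,1)-\vphi_h(y,s)}\ll (s-1)\sup_{\abs{w-1}\le 1/3}\abs{\vphi_h(y,w)}$. In the cuspidal case \eqref{eq:ph-rough} gives $\abs{h'}^{-1/3-\eps}$ growth and a power of $\abs{x-1}^{-1}$. Squaring and summing $\sum_h\abs{h'}\cdot\abs{h'}^{-2/3-2\eps}$ converges because $\sum\abs{h'}^\sigma<\infty$ for $\sigma>1/2$. After the change of variables $x=h(y)$, integrating $\abs{\cdot}^2$ over $K\times\Theta$ then gives $(s-1)^2$. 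If the powers of $\abs{x-1}^{-1}$ are problematic, shrink the Cauchy radius so that the exponents stay under the thresholds of \eqref{bsa}.
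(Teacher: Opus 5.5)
Parts (i), (ii) and the first two bounds of (iii) are fine and follow the paper. For \eqref{eq:LLchixi-1s}, your direct estimate $\abs{\abs{h'}-\abs{h'}^s}\ll(s-1)\abs{h'}^{1-\delta'}$ is an acceptable substitute for the citation of the perturbation lemma.

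The gap is in \eqref{eq:LLPhixi-1s}, in your treatment of $\vphi_h(y,1)-\vphi_h(y,s)$.
\begin{itemize}
\item \emph{The disc leaves $\cU$.} A Cauchy disc of radius $1/4$--$1/3$ around $1$ requires control of $\vphi_h(\cdot,w)$ for $\Re w$ down to about $2/3$. That is far outside $\cU\subset\{\Re s>1-\delta\}$, $\delta<1/10$, which is the only range where \eqref{bsa} is available.
\item \emph{The resulting sum diverges.} At $\Re w=2/3$, \eqref{eq:ph-rough} only gives growth $\abs{h'}^{\Re w-1}=\abs{h'}^{-1/3}$. This is not just a lossy bound; it reflects the factor $\abs{g'(x)}^{w-1}$ in \eqref{eq:defs-L-phigs-Gmodel}. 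After squaring and weighting by $\abs{h'}$ you need $\sum\abs{h'}^{1/3-2\eps}<\infty$. This already diverges over $\cH_c$, where $\abs{h'}\asymp(n+1)^{-2}$. Your appeal to ``$\sigma>1/2$'' is an arithmetic slip, since $1-2/3=1/3$.
\item \emph{The sum is over $\cH$, not $\cH_c$.} Branches of $\cF_K$ are compositions of unboundedly many $\cH_c$-branches. Through the cocycle relation (as in \eqref{eq:h-w0w1}), the factors $\abs{g'}^{w-1}$ accumulate, so one should expect $\norm{\vphi_h(\cdot,w)}_{\infty}$ of size $\abs{h'}^{\Re w-1}$ for the full branch $h\in\cH$. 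For $\cH$ one only has $\sum_{h\in\cH}\abs{h'}^{1-\delta}<\infty$, not the $\sigma>1/2$ threshold valid for $\cH_c$. So you essentially need $2\Re w-1>1-\delta$.
\end{itemize}
Your stated rationale, that enlarging the radius keeps the losses within the range of \eqref{bsa}, is therefore backwards.

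The fix is the one your last sentence hints at, taken seriously: the contour must lie inside $\cU$, within distance $\ll\delta$ of $\cV$. What is then needed is the uniform bound $\sum_{h\in\cH}\abs{h'}^{2\Re u-1}\norm{\vphi_h(\cdot,u)}_{\infty,\Dom(h)}^2\ll1$ for $u$ on the contour. This follows as for \eqref{bsa}, from \cite[Lemma~\refmain{lem:moments-c-implies-K}]{BettinEtAl} and \eqref{eq:ph-rough}.

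This is the paper's route. The paper also applies the Cauchy formula on a rectangle $\cR\subset\cU$ to the whole holomorphic family $u\mapsto\Phi_{K,u}\xi_{2u-1}\abs{\cF_K'}^{1-u}$ at once. It then uses Cauchy--Schwarz in $u$ and converts the weight $\abs{\cF_K'}^{2-2\Re u}$ into $\LL_{2\Re u-1}[\abs{\Phi_{K,u}}^2]$ via \eqref{cvl}. This makes your three-way split unnecessary. It also absorbs your second piece, whose $\log^2(1/\abs{h'})$ loss is not literally covered by \eqref{bsa}, since that bound carries exactly one power of $\abs{h'}$. That loss is harmless given $\sum_{h\in\cH}\abs{h'}^{1-\delta}<\infty$.
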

\begin{proof}
  From the definition of~$\LL$ in~\eqref{eq:def-LLs}, we have
  \begin{equation}
    \LL[\Phi_K \xi](y, \vth_1) = \vth_1 \ssum{h\in \cH \\ y \in \Dom(h)} \abs{h'(y)} \vphi_h(y) \xi(h(y)).\label{eq:LL-PhiXi}
  \end{equation}
  By Lemma~\ref{lem:approx-eigen} and~\eqref{bsa}, uniformly in $y\in K$ we have
  $$ \ssum{h\in \cH \\ y \in \Dom(h)} \big| \abs{h'(y)} \vphi_h(y, 1) \xi(h(y)) \big| \ll \sum_{h\in\cH} \abs{h'} \|\vphi_h\|_{\infty,\Dom(h)} \ll 1. $$
  Let~$h\in\cH$ and~$\vth_1$ be fixed. For~$y_1, y_2 \in \Dom(h)$, we have
  $$ \abs{h'(y_1)} - \abs{h'(y_2)} \ll \abs{h'} \times \abs{y_1 - y_2} $$
  by the bounded distortion property~\cite[eq.~\eqrefmain{eq:bounded-dist}]{BettinEtAl}. We also have
  $$ \abs{\xi(h(y_1)) - \xi(h(y_2))} \ll \abs{h'} \times \abs{y_1 - y_2} $$
  by Lemma~\ref{lem:approx-eigen}.(iii). Therefore, we get that the map
  $$ y \mapsto \abs{h'(y)} \vphi_h(y, 1) \xi(h(y)) $$
  is~$\frac16$-Hölder continuous on~$\Dom(h)$, with Hölder constant bounded by
  $$ \abs{h'} (\|\vphi_h\|_{\infty, \Dom(h)} + \|\vphi_h\|_{(1/6), \Dom(h)})\ll \abs{h'} (\|\vphi_h\|_{\infty, \Dom(h)} + \|\vphi_h\|_{(1/3), \Dom(h)}), $$
  by~\cite[eq.~\eqrefmain{hld4}]{BettinEtAl}.
  By~\eqref{eq:vphi-hyp-supnorm} and~\eqref{eq:vphi-hyp-holder} in conjunction with~\cite[Lemma~\refmain{lem:moments-c-implies-K}]{BettinEtAl} we have that $\sum_{h\in\cH}\abs{h'} (\|\vphi_h\|_{\infty} + \|\vphi_h\|_{(1/3)})\ll1$ and thus 
  the map~\eqref{eq:LL-PhiXi} belongs to~$H^{1/6}(K\times \Theta)$ as claimed, and therefore so does~$\vchi$.
  The identity~\eqref{eq:chis-implicit-eq} follows from the definition~\eqref{eq:def-chi}
  since~$\PP\NN=0$, $\PP\LL=\PP$ and~$\PP[\Phi_K\xi] =  c \vmu \xi = 0$ for some constant~$c>0$.

  We turn to the bounds~\eqref{eq:L2bound-Lxs}--\eqref{eq:LLPhixi-1s}. First, by partial summation and the hypothesised upper-bound~\eqref{eq:rankinselberg-upper}, for all~$s>1$ we have
  $$ \int_K \abs{\vL(x, s)}^2\dfs(x) \ll \int_K \abs{p_\H'(x)}^{2(1-s)} \abs{L(\phi, s-1/2, [x]_\H)}^2\dfs(x) \ll \int_0^1 \abs{L(\phi, s-1/2, y)}^2\df y $$
  since~$[K]_\H$ is bounded. Let~$M(s)$ denote the right-hand side. By the Wilton bound~\eqref{eq:wilton}, the map~$s\mapsto L(\phi, s-1/2, y)$ is holomorphic and uniformly bounded in compacta of~$\{\Re(s)>1\}$. By Parseval's identity, we get
  $$ M(s) = \sum_{n\geq 1} \abs{a(n)}^2 n^{2(1-s)}. $$
  We conclude using~\eqref{eq:rankinselberg-upper} and partial summation that~$ M(s) \ll 1/(s-1) $, which gives~\eqref{eq:L2bound-Lxs}.

  The bound~\eqref{eq:LLchixi-1s} follows immediately from~\cite[Lemma~\refmain{lem:perturbation-LL1t}]{BettinEtAl} since~$\norm{\vchi\xi}_{\infty, K}<\infty$.

  To prove the bound~\eqref{eq:LLPhixi-1s}, we use the holomorphicity in~$s\in \cU$ of the maps~\eqref{eq:rel-phigs-hL} along the Cauchy formula on a rectangle~$\cR\subset\cU$ around~$\cV$. We obtain for the left-hand side of~\eqref{eq:LLPhixi-1s} an upper-bound of
  \begin{align*}
    &\abs{s-1}^2 \int_{K\times\Theta} \Big| \int_\cR \Phi_{K,u} \xi_{2u-1} \abs{\cF_K'}^{1-u}\frac{\df u}{(u-1)(u-s)}\Big|^2 \df\nu \\
    \ll{}& \abs{s-1}^2 \int_{K\times\Theta} \int_\cR \abs{\Phi_{K,u} \xi_{2u-1} \abs{\cF_K'}^{1-u}}^2 \df u \df\nu 
  \end{align*}
  after Cauchy-Schwarz's inequality. Inverting the integrals, we are left to bound
  $$ \int_{K\times\Theta} \abs{\Phi_{K, u} \xi_{2u-1} \abs{\cF_K'}^{1-u}}^2 \df\nu $$
  uniformly in~$u\in \cU$. By Lemma~\ref{lem:approx-eigen}.(iii) and since~$\Re(u)\geq 1-\delta$, using~\eqref{cvl} we upper-bound this by
  $$ \int_{K\times\Theta} \LL_{2\Re(u)-1}[ \abs{\Phi_{K, u}}^2] \df\nu \ll \sum_{\substack{h\in \cH}} \abs{h'}^{2\Re(u)-1} \|\vphi_h(\cdot, u)\|^2_{\infty, \Dom(h)}, $$
  and this is seen to be uniformly bounded whenever~$\delta$ is small enough by the same argument as for~\eqref{bsa}, by using~\cite[Lemma~\refmain{lem:moments-c-implies-K}]{BettinEtAl} and~\eqref{eq:ph-rough}.
\end{proof}

% \begin{lemma}\label{lem:L-Pchi-vanish}
%   For all~$\vth\in\Theta$, as~$s\to 1$ with~$s>1$, we have
%   \begin{equation}
%     \int_K \vec\vL(x, s) \vec\PP_s[\vchi_s \xi_s]^T(x, \vth) \dfs(x) \to 0.\label{eq:limit-residue-chi}
%   \end{equation}
% \end{lemma}

% \begin{proof}
%   Since~$\PP \NN = 0$, so that $\PP({\Id} - \NN)^{-1}=\PP$, and~$\PP\LL = \PP$, we have
%   $$ \PP[\vchi \xi] = \PP[\Phi_K \xi] = \Big(\int_{K\times \Theta} \Phi_K \xi \Big) \xi = 0 $$
%   by Proposition~\ref{prop:average}. Thus we have $\PP_s[\vchi_s \xi_s] = (s-1) g_s$ for some bounded, holomorphic family of maps~$g_s \in H^{1/6}(K\times \Theta)$.
%   Using the complex number representation of~$\vL$ and~$g_s$, the limit~\eqref{eq:limit-residue-chi} is equivalent to
%   \begin{equation}
%     (s-1) \sum_{n\geq 1} \frac{a_n}{n^{s-1}} \int_K \abs{p_\H'(y)}^{1-s} \e(n p_\H(y)) g_s(y, \vth) \dfs(y) \to 0 \qquad (s\to 1) \label{eq:residue-chi-fourier}
%   \end{equation}
%   and the same with~$g_s$ replaced by~$\bar{g_s}$. We focus on~\eqref{eq:residue-chi-fourier}.
%   Integrating by parts (see \emph{e.g.}~\cite[Lemma~4.1]{BettinDrappeau2022}), we deduce
%   $$ \int_K \abs{p_\H'(y)}^{1-s} \e(n p_\H(y)) g_s(y, \vth) \dfs(y) \ll n^{-1/6} $$
%   uniformly for~$s$ in a neighbourhood of~$1$. Thus, as in the proof of Proposition~\ref{prop:average}, the sum over~$n$ in~\eqref{eq:residue-chi-fourier} is uniformly bounded, and we conclude that~\eqref{eq:residue-chi-fourier} holds as claimed.
% \end{proof}

The covariance matrix in~\eqref{eq:asympt-charfun-j}, which we seek to evaluate, is given explicitly in~\cite[Lemma~\refmain{lem:estim-s0-clt}]{BettinEtAl}. An alternative expression, more useful for our purpose, is given in~\cite[eq.~\eqrefmain{eq:expr-sigma-variant}]{BettinEtAl} and states
\begin{equation}
  \vSigma = \frac{2}{\fd} \int_{K\times \Theta} \big(\vec\Phi_{K} \vec\Phi_{K}^T + \vec\vchi \vec\Phi_{K}^T + \vec\Phi_{K} \vec\vchi^T \big)\xi\df\nu,
  \label{eq:def-Sigma-K}
\end{equation}
where~$\df \nu = \fm^{-1} \df s(x) \df \vth$.

\begin{proposition}\label{prop:variance}
  We have~$\vSigma = \sigma_\phi^2 \Id$, with~$\sigma_\phi$ given in~\eqref{eq:def-variance-cusp}.
\end{proposition}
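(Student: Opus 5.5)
We will follow the strategy sketched in Section~\ref{sec:comp-moments-overview}: replace $\Phi_K$ and $\vchi$ in \eqref{eq:def-Sigma-K} by perturbations at $s>1$, and reduce $\vSigma$ to a limit of a continuous second moment of $\vL(\cdot,s)$, which Rankin--Selberg theory evaluates. The first step is to rewrite \eqref{eq:def-Sigma-K} using \eqref{eq:chis-implicit-eq}. Since $\vchi\xi=\LL[\vchi\xi]+\LL[\Phi_K\xi]$, the invariance \eqref{cvl} gives
\[
\int \vec\Phi_K\vec\vchi^T\xi\,\df\nu=\int \vec\Phi_K(\vec\vchi\circ\cF_K)^T\xi\,\df\nu .
\]
Hence the integrand becomes the ``coboundary'' expression $(\vec\Phi_K+\vec\vchi-\vec\vchi\circ\cF_K)(\cdots)^T$ up to terms $\vec\vchi\vec\vchi^T$, $(\vec\vchi\circ\cF_K)(\vec\vchi\circ\cF_K)^T$, which cancel by invariance of $\xi\,\df\nu$.

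Next we insert, for real $s\in\cV$, $s\neq1$, the approximation $\Phi_{K,s}\xi_{2s-1}\abs{\cF_K'}^{1-s}$ for $\Phi_K\xi$; by \eqref{eq:LLPhixi-1s} this costs $O(s-1)$ in $L^2$. Similarly we replace $\LL$ by $\LL_s$ in the $\vchi$-identity, which by \eqref{eq:LLchixi-1s} costs $O(s-1)$. Using \eqref{eq:defs-L-phigs-Gmodel} with $\Theta$-twist, $\Phi_{K,s}(x,\vth)$ at $x=h(y)$ equals $\vth\vL(x,s)-\vth\vth_h(y)^{-1}\abs{h'(y)}^{s-1}\vL(y,s)$. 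This is a coboundary of $\vth\vL(\cdot,s)$ with weights $\abs{\cF_K'}^{1-s}$. Combining with the coboundary rewriting above, the $\vchi$-contributions become cross terms between $\vec\vL(\cdot,s)$ and a bounded $H^{1/6}$ function. By \eqref{eq:L2bound-Lxs} and Cauchy--Schwarz these are $O((s-1)^{1/2})$ after multiplication by the normalizing factor. We expect the main term to come out as
\[
\vSigma=\frac{4}{\fd\fm}\lim_{s\to1^+}(1-\lambda(2s-1))\int_{K\times\Theta}\vec{(\vth\vL(x,s))}\,\vec{(\vth\vL(x,s))}^T\xi_{2s-1}(x)\,\dfs(x)\,\df\vth .
\]
This mirrors the computation \eqref{eq:muphi-intqf}, with $\LL_{2s-1}[\xi_{2s-1}]=\lambda(2s-1)\xi_{2s-1}$ arising from $\abs{h'}^{2(s-1)}\abs{h'}$. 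Since $1-\lambda(2s-1)\sim 2\fd(s-1)$, the prefactor becomes $(s-1)$ times a constant.

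Then we evaluate the continuous average. Write $\vec z\vec z^T=\tfrac12\bigl(\abs{z}^2\Id+\bigl(\begin{smallmatrix}\Re z^2&\Im z^2\\ \Im z^2&-\Re z^2\end{smallmatrix}\bigr)\bigr)$. The $z^2$ part involves $\sum_{n,m}a(n)a(m)(nm)^{1-s}\hat\zeta_{2s-1}(-(n+m))$ with no diagonal. Using Lemma~\ref{lem:fourier-eigen-approx} and \eqref{eq:rankinselberg-upper}, we aim to show it is $o(1/(s-1))$. The $\abs{z}^2$ part gives a diagonal $h=0$ term $\hat\zeta_{2s-1}(0)\sum_n\abs{a(n)}^2n^{2(1-s)}$ and off-diagonal terms $\sum_{h\neq0}\hat\zeta(h)\sum_n a(n)\bar{a(n+h)}(\cdots)$. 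Rankin--Selberg theory gives $\sum_n\abs{a(n)}^2n^{-w}$ a simple pole at $w=1$ (here $w=2s-1$) with residue proportional to $\|\phi\|^2/(\Vol(\Gamma\backslash\H)\Gamma(1-s_\phi+\frac k2)\Gamma(s_\phi+\frac k2))$, the Gamma factors coming from the Whittaker normalization in \eqref{eq:phi-Fourier}. Since $\hat\zeta_1(0)=\fm$, the diagonal produces exactly $\sigma_\phi^2\Id$ after tracking constants.

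The main obstacle is the off-diagonal shifted sums. Lemma~\ref{lem:fourier-eigen-approx} only gives $\hat\zeta(h)\ll\abs h^{-1}$, which is not absolutely summable against trivial bounds. We would try first to avoid shifted convolution estimates altogether. Taking a smooth dyadic decomposition in $n$, the off-diagonal for $n\sim N$ is $\int(\text{smooth part of }\zeta)\abs{\sum_{n\sim N}a(n)\e(nx)}^2$ minus the diagonal. Because $\zeta$ is piecewise $C^1$ with finitely many jumps, its contribution is controlled by the $L^2$ mass of $\sum_{n\sim N}a(n)\e(nx)$ on short intervals near the jumps. Large-sieve type bounds, or Wilton's bound \eqref{eq:wilton}, give $O(N^{\eps})$ per dyadic block; weighted by $N^{2(1-s)}$ this is $o(1/(s-1))$. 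Failing that, we would use the spectral bound for shifted convolution sums $\sum_{n\le X}a(n)\bar{a(n+h)}\ll X^{1-\theta}\abs h^{O(1)}$, truncating $\abs h\le (s-1)^{-\eta}$ and treating the tail via the smoothness of $\zeta$ away from the endpoints of $[K]_\H$.
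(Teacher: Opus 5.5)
Your dynamical reduction is essentially the paper's. You perturb to real $s>1$, pay for replacing $\LL$ by $\LL_s$ and $\Phi_K\xi$ by $\Phi_{K,s}\xi_{2s-1}\abs{\cF_K'}^{1-s}$ via \eqref{eq:LLchixi-1s}, \eqref{eq:LLPhixi-1s} and Cauchy--Schwarz against \eqref{eq:L2bound-Lxs}, and use $\LL_{2s-1}\xi_{2s-1}=\lambda(2s-1)\xi_{2s-1}$ with \eqref{cvl} to reach $(1-\lambda(2s-1))\int\vec\vL_s\vec\vL_s^T\xi_{2s-1}$. There are two slips along the way. First, your first displayed identity does not follow: \eqref{cvl} and \eqref{eq:chis-implicit-eq} give instead $\int\vec\Phi_K(\vec\vchi\circ\cF_K)^T\xi\,\df\nu=\int(\vec\vchi\vec\vchi^T-\vec\vchi(\vec\vchi\circ\cF_K)^T)\xi\,\df\nu$. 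Second, the prefactor is $2/(\fd\fm)$, not $4/(\fd\fm)$; otherwise you end at $2\sigma_\phi^2$.

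The genuine gap is the arithmetic step, namely the off-diagonal terms, and your route (a) cannot close it. Wilton or large-sieve bounds give at best $O(N^{\eps})$, resp.\ $O(1)$, per dyadic block. But $\sum_{N=2^j}N^{\eps-2(s-1)}$ diverges as $s\to1^+$, and even $\sum_{N=2^j}N^{-2(s-1)}\asymp 1/(s-1)$ is of the size of the main term. You need $o(1)$ per block, i.e.\ genuine cancellation in $\sum_n a(n)\bar{a(n+h)}$ for each fixed $h\neq0$. The smooth part of $\zeta$ is not harmless either: its off-diagonal contribution consists exactly of these shifted sums at small $h$. The $\vL_s^2$ part has the same problem, since from $\hat\zeta_{2s-1}(h)\ll\abs{h}^{-1}$ alone Hilbert's inequality only gives $O(I_s(0))=O(1/(s-1))$. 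In route (b), the tail $\abs{h}>H$ cannot be handled by smoothness of $\zeta$ away from the endpoints of $[K]_\H$, because the jumps there are exactly what make $\sum_h\abs{\hat\zeta(h)}$ diverge.

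The paper closes this as follows.
\begin{itemize}
\item \emph{Finitely many shifts.} Vaaler's approximation gives trigonometric polynomials $\zeta^*,\zeta^\sharp$ of degree $H$ with $\abs{\zeta_{2s-1}-\zeta^*}\le\zeta^\sharp$ and $\hat\zeta^\sharp(0)\ll 1/H$. By positivity of $\abs{L}^2$ this yields \eqref{eq:fourier-aftervaaler}, so for fixed $H$ only the shifts $0<\abs{h}\le H$ matter. For $\vL_s^2$ the main piece involves only the finite sums $J_s(h)$.
\item \emph{Bounded shifted sums.} It then suffices that $I_s(h)\ll_h1$ as $s\to1^+$ for each fixed $h\neq0$, with no power saving and no uniformity in $h$. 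This follows by unfolding $\int_{\Gamma\backslash\H}P_h(z,2s-1)\abs{\phi(z)}^2\frac{\df x\,\df y}{y^2}$ and using that the Poincar\'e series is analytic and $L^2$-bounded near $\Re(s)\geq1$.
\end{itemize}

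Your diagonal step also skips a real issue for non-holomorphic $\phi$. Unfolding, against $E(z,2s-1)$ or against $P_h$, only produces the combination $C_k(s)I^+(s)+C_{-k}(s)I^-(s)$ of positive and negative frequencies. For general $\Gamma$ nothing ties $a(-n)$ to $a(n)$, while $L(\phi,s,x)$ only sees $n\geq1$.
\begin{itemize}
\item The paper gets a second equation from $\psi=R_k\phi$, whose coefficients are $-a(n)$ for $n\geq1$ and $\mu_k a(n)$ for $n\le-1$, with $\|\psi\|^2=\mu_k\|\phi\|^2$.
\item It checks that $\Delta_k(1)=\pi(2s_\phi-1)/\sin(2\pi s_\phi)$ is non-zero.
\item Only after this elimination does the factor $1/(\Gamma(s_\phi+\frac k2)\Gamma(1-s_\phi+\frac k2))$ emerge; a single unfolding gives the digamma expressions $C_{\pm k}(1)$ instead.
\item The same Cramer's rule is needed to pass from $\tilde I_s(h)$ to $I^+_s(h)$.
\end{itemize}
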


\begin{remark}
  It is instructive to compare with the value obtained by Nordentoft in~\cite{Nordentoft2021} in the case when~$k$ is even and~$\phi$ is associated to a cuspidal holomorphic form~$f(z) = y^{-k/2} \phi(z)$. We then have~$s_\phi = k/2$ and~$\|\phi\| = \|f\|_k$ the Petersson $k$-norm, therefore
  $$ \sigma_\phi^2 = \frac{\|f\|_k^2}{(k-1)! \Vol(\Gamma\backslash \H)} $$
  which recovers the value obtained in~\cite[eq.~(1.4)]{Nordentoft2021}.
  Note that the factor~$(4\pi)^k$ in~\cite{Nordentoft2021} is due to a difference in normalization originating from the Whittaker function~$W_{k/2, (k-1)/2}(4\pi y)$.
\end{remark}

\begin{proof}[Proof of Proposition~\ref{prop:variance}]
  % First we reduce to the case~$k\in \{0, 1\}$. To this end, let~$\psi = L_k\phi$ be the weight-lowered form, of weight~$k-2$ and eigenvalue~$s_\psi = s_\phi$. Then~$a_\psi(n) = -a_\phi(n)$ for all~$n\geq 0$, so that~$L_\psi(x) = - L_\phi(x)$. On the other hand,
  % $$ \frac{\sigma_\psi^2}{\sigma_\phi^2} = \frac{\|\psi\|^2}{\|\phi\|^2} (-s_\phi + \tfrac k2)(-1+ s_\phi + \tfrac k2) = 1, $$
  % where the last equality is~\cite[eq.~(4.38)]{DukeEtAl2002}. When~$k\geq 2$, repeating this operation shows that we may replace~$\phi$ in Proposition~\ref{prop:variance} by its ``ancestor'' of weight~$0$ or~$1$. A similar argument holds when~$k\leq -1$, using instead the weight-raising operator. Thus we may assume~$k\in \{0, 1\}$.

  We start from the expression~\eqref{eq:def-Sigma-K}.
  By Lemmas~\ref{lem:approx-eigen}, \ref{lem:approx-phi}, \ref{lem:approx-chi}, and dominated convergence, we get
  $$ \vSigma = \lim_{s\to 1^+} \vSigma(s), $$
  where
  \begin{equation}
    \vSigma(s) := \frac2{\fd} \int_{K\times \Theta} \Big(\vec\Phi_{K,s} \vec\Phi_{K,s}^T \xi_{2s-1} + (\vec\vchi \vec\Phi_{K,s}^T + \vec\Phi_{K,s} \vec\vchi^T)\xi \Big)\df\nu.\label{eq:def-Sigmas}
  \end{equation}
  Recall the definition~\eqref{eq:def-FK-extended}, and for $s>1$, let
  \begin{equation}
    \vL_s(x, \vth) := \vth \vL(x, s) \label{eq:def-vL-subs}
  \end{equation}
  which is continuous in~$x$ by Lemma~\ref{lem:approx-phi}.
  Thus, by~\eqref{eq:def-Phics} and~\eqref{eq:defs-L-phigs-Gmodel}, we may write
  \begin{equation}
    \Phi_{K, s}(x, \vth) = \vL_s(x, \vth) - \abs{\cF_K'(x)}^{1-s} (\vL_s \circ \cF_K)(x, \vth),\label{eq:expr-PhiKs-difference}
  \end{equation}
whence
    \begin{align*}
      \int_{K\times \Theta} \vec\Phi_{K,s} \vec\vchi^T \xi \df\nu
      ={}& \int_{K\times \Theta} (\vec\vL_s - \abs{\cF_K'}^{1-s} (\vec\vL_s \circ \cF_K)) \vec\vchi^T \xi \df \nu \\
      ={}& \int_{K\times \Theta} \vec\vL_s (\vec\vchi \xi -\vec \LL_s[\vchi\xi])^T \df \nu,
    \end{align*}
    by~\eqref{cvl}. Using~\eqref{eq:chis-implicit-eq} and~\eqref{cvl}, this becomes
    \begin{align*}
      \int_{K\times \Theta} \vec\Phi_{K,s} \vec\vchi^T \xi \df\nu={}& \int_{K\times \Theta} \vec\vL_s \vec\LL[\Phi_K \xi]^T\df\nu + E_1 \\
      ={}& \int_{K\times \Theta} (\vec\vL_s\circ\cF_K) \vec\Phi_K^T \xi\df\nu + E_1 \\
      ={}& \int_{K\times \Theta} (\vec\vL_s\circ\cF_K) \vec\Phi_{K,s}^T \xi_{2s-1} \abs{\cF_K'}^{1-s}\df\nu + E_1 + E_2, \\
    \end{align*}
    where
    \begin{equation}
      \label{eq:bounds-diff-sigma-chis}
      \begin{aligned}
        \abs{E_1} \leq{}& \int_{K\times\Theta} \abs{\vL_s} \cdot \abs{(\LL - \LL_s)[\vchi\xi]} \df \nu, \\
        \abs{E_2} \leq{}& \int_{K\times\Theta} \abs{\vL_s \circ \cF_K} \cdot \abs{\Phi_{K,s}\xi_{2s-1}\abs{\cF_K'}^{1-s} - \Phi_K \xi} \df \nu.
      \end{aligned}
    \end{equation}
    By Cauchy-Schwarz and~\eqref{eq:L2bound-Lxs}--\eqref{eq:LLchixi-1s}, we get~$E_1 \ll \abs{s-1} (\int_{K\times\Theta} \abs{\vL_s}^2\df\nu)^{1/2} \ll \abs{s-1}^{1/2}$.
    Similarly, using~\eqref{eq:LLPhixi-1s} we get~$E_2 \ll \abs{s-1} (\int_{K\times\Theta} \abs{\vL_s\circ \cF_K}^2\df\nu)^{1/2} \ll \abs{s-1}^{1/2}$.
    We have therefore obtained as~$s\to 1$
    $$ \int_{K\times \Theta} \vec\Phi_{K,s} \vec\vchi^T \xi \df \nu =
    \int_{K\times \Theta} (\vec\vL_s\circ\cF_K) \vec\Phi_{K,s}^T \xi_{2s-1} \abs{\cF_K'}^{1-s}\df\nu + o(1). $$
  A similar identity holds for~$\int \vec\vchi \vec\Phi_{K,s}^T \xi$. We deduce
  $$ \vSigma(s) = \frac2{\fd} \int_{K\times \Theta} \big(\vec\Phi_{K,s} \vec\Phi_{K,s}^T + (\vec\vL_s\circ\cF_K) \vec\Phi_{K,s}^T \abs{\cF_K'}^{1-s} +\vec\Phi_{K,s} (\vec\vL_s\circ\cF_K)^T \abs{\cF_K'}^{1-s}\big) \xi_{2s-1} \df\nu+o(1). $$
  Inserting the expression~\eqref{eq:expr-PhiKs-difference} and simplifying, we get
  $$ \vSigma(s) = \frac2{\fd} \int_{K\times \Theta} (\vec\vL_s \vec\vL_s^T - (\vec\vL_s\circ\cF_K) (\vec\vL_s \circ \cF_K)^T \abs{\cF_K'}^{2-2s}) \xi_{2s-1}\df\nu+o(1) , $$
  and finally, since~$\LL_{2s-1}\xi_{2s-1} = \lambda(2s-1) \xi_{2s-1}$ and using~\eqref{cvl} once again, we arrive at
  $$ \vSigma(s) = o(1) + 2\frac{1 - \lambda(2s-1)}{\fd} \int_{K\times\Theta} \vec\vL_s \vec\vL_s^T \xi_{2s-1}\df\nu, \qquad (s\to 1^{+}). $$
  By definition of~$\fd$~\cite[eq.~\eqrefmain{eq:def-fd}]{BettinEtAl}, we have~$1-\lambda(2s-1) \sim 2(s-1)\fd$ as~$s\to 1^+$.
  Thus, we are left to show that
  \begin{equation}
    \int_{K\times\Theta} \vec\vL_s \vec\vL_s^T \xi_{2s-1}\df\nu \sim \frac{\sigma_\phi^2}{4(s-1)} \Id\label{eq:asympt-vL-vLT}
  \end{equation}
  as $s\to1^+$.
  The asymptotic behaviour~\eqref{eq:asympt-vL-vLT} is equivalent to the limits
  \begin{align*}
    & \int_{K\times\Theta} \abs{\vL_s}^2 \xi_{2s-1}\df\nu \sim \frac{\sigma_\phi^2}{2(s-1)},
    & \int_{K\times\Theta} \vL_s^2 \xi_{2s-1}\df\nu = o\big(\frac1{s-1}\big),
  \end{align*}
  where here~$\vL_s(x)$ is interpreted as a complex number. Recall the definition~\eqref{eq:def-vL-subs}. Note that in the first integral, the~$\Theta$ integral trivially evaluates to~$1$ (the integrand is independent of~$\vth$), while in the second integral, we have~$\int_\Theta \vth^2 = \1_{\Theta\subset\{\pm 1\}}$. Changing variables and using the definition~\eqref{eq:defs-L-phigs-Gmodel}, we obtain that it suffices to show
  \begin{align}
    & \int_\R \abs{L(\phi, s-1/2, x)}^2 \zeta_{2s-1}(x) \df x \sim \frac{\sigma_\phi^2}{2(s-1)} \fm, \label{eq:estim-int-fs2-abs}\\
    & \int_\R L(\phi, s-1/2, x)^2 \zeta_{2s-1}(x) \df x = o\big(\frac1{s-1}\big), \label{eq:estim-int-fs2-rel}
  \end{align}
  where we recall $\zeta_{2s-1}(x)$ is as defined in~\eqref{dfnzt}.

  Let~$H\geq 1$ be a parameter.
  By Vaaler approximation~\cite[Theorem~11]{Vaaler1985} (choosing~$\delta \gets H$), there exist two maps~$\zeta_{2s-1}^*$ and~$\zeta_{2s-1}^\sharp$ whose Fourier transforms satisfy
  $$
  \begin{cases}
    \hat\zeta_{2s-1}^*(0) = \int \zeta_{2s-1}, \\
    \hat\zeta_{2s-1}^\sharp(0) \ll 1/H, \\
    \hat\zeta_{2s-1}^*(h), \hat\zeta_{2s-1}^\sharp(h) \ll 1 & (h\in \Z, s\in [1, 2]), \\
    \hat\zeta_{2s-1}^*(h) = \hat\zeta_{2s-1}^\sharp(h)=0 & (\abs{h}>H),
  \end{cases}
  $$
  and
  $$ \abs{\zeta_{2s-1}(x) - \zeta_{2s-1}^*(x)} \leq \zeta_{2s-1}^\sharp(x) $$
  for all~$x\in \R$ except at the finitely many discontinuities of~$\zeta_{2s-1}$.
  We insert this inequality in the integral~$\int_\R \abs{L(\phi, s-1/2, x)}^2 \zeta_{2s-1}(x)\df x$,
  replace~$L(\phi, s-1/2, x)$ by its Fourier expansion~\eqref{eq:def-Lphixs} and interchange sums and integral,
  which is allowed for~$s>1$ by the Wilton bound~\eqref{eq:wilton}.
  We thus obtain
  \begin{equation}
    \int_\R \abs{L(\phi, s-1/2, x)}^2 \zeta_{2s-1}(x) \df x = \Big(\int_\R \zeta_{2s-1} + O(H^{-1})\Big) I_s(0) + O\Big(\sum_{0<\abs{h}\leq H} \abs{I_s(h)} \Big),\label{eq:fourier-aftervaaler}
  \end{equation}
  where
  \begin{equation}
    I_s(h) = \ssum{m, n \geq 1 \\ n-m =h} a(m) \bar{a(n)} (mn)^{1-s}. \label{eq:def-Ish}
  \end{equation}
  Note that~$\int \zeta_{2s-1} = \int_K \abs{p_\H'}^{2(1-s)} \xi_{2s-1} \to \int \xi = \fm$ as~$s\to 1$.
  Letting~$H\to \infty$ sufficiently slowly as~$s\to 1^+$, we see that to prove the asymptotic behaviour~\eqref{eq:estim-int-fs2-abs} it suffices to show that
  \begin{align}
    I_s(0) \sim{}& \frac{\sigma_\phi^2}{2(s-1)}, \label{eq:aim-Ish-diag} \\
    I_s(h) \ll_h{}& 1, \label{eq:aim-Ish-offdiag}
  \end{align}
  where the second bound is uniform in~$s\in(1, 1+\delta]$. We first focus on the case when~$s_\phi \not\equiv k/2\pmod{1}$, which implies (since~$\phi$ is cuspidal) that~$s_\phi \in [1/2,1) \cup (\frac12+i\R)$.

  For~$h=0$ we have
  $$ I_s(0) = \sum_{n\geq 1} \frac{\abs{a(n)}^2}{n^{2(s-1)}}. $$
  We apply the Rankin--Selberg method~\cite[eq.~(8.10)]{Iwaniec1995}. Define
  $$ I^\pm(s) := \sum_{\sign(n)=\pm} \frac{\abs{a(n)}^2}{\abs{n}^{2(s-1)}}, $$
  so that~$I_s(0) = I^+(s)$. Define further, for all~$\ell\in \Z$,
  \begin{align}\label{dfcl}
    C_\ell(s) := \int_0^\infty W_{\ell/2, it_\phi}(y)^2 y^{2(s-1)} \frac{\df y}{y}.
  \end{align}
  By the Rankin--Selberg method, we have
  \begin{equation}
    C_k(s) I^+(s) + C_{-k}(s) I^-(s) = (4\pi)^{2(s-1)} J_\phi(s) \label{eq:combin-Ipm-phi}
  \end{equation}
  where
  $$ J_\phi(s) := \int_{\Gamma\backslash\H} \abs{\phi(z)}^2 E(2s-1, z) \frac{\df x \df y}{y^2} $$
  and~$E(\cdot,z)$ is the standard Eisenstein series at~$\infty$. On the other hand, let~$\psi = R_k \phi$ be the weight-raised form obtained from~$\phi$. It is of weight~$k+2$ and has a Fourier expansion
  $$ \psi(z) = \sum_{n\neq 0} a_\psi(n) \e(n\Re(z)) W_{\sign(n) \frac{k+2}2, it_\phi}(4\pi \abs{n} \Im(z)), $$
  where
  $$ a_\psi(n) = \begin{cases} -a(n) & (n\geq 1), \\ (s_\phi + \frac k2)(1-s_\phi + \frac k2) a(n) & (n\leq -1). \end{cases} $$
  By the Rankin--Selberg method again, we find
  \begin{equation}
    C_{k+2}(s) I^+(s) + \abs{(s_\phi + \tfrac k2)(1-s_\phi + \tfrac k2)}^2 C_{-k-2}(s) I^-(s) = (4\pi)^{2(s-1)} J_\psi(s).\label{eq:combin-Ipm-psi}
  \end{equation}
  The maps~$C_\ell$ are regular at~$s=1$, since~$W_{\ell/2, it_\phi}(y) \ll y^{1 - \Re(s_\phi) - \eps}$ in the neighbourhood of~$0$.
  The value~$C_\ell(1)$ is computed using the formula in~\cite[(7.611.4)]{GradshteynRyzhik2007}\footnote{The related formula~\cite[eq.~(19.6)]{DukeEtAl2002} should be corrected by a non-constant factor when~$k\neq 0$. This is due to the fact that~\cite[(3.384.9)]{GradshteynRyzhik2007}, which is used there, is not immediately applicable when~$\Re(s_\phi)=1/2$ due to lack of absolute convergence. We use instead~\cite[(7.611.4)]{GradshteynRyzhik2007} which is proven with the arguments described in~\cite[pp.~112--116]{Buchholz1953}.}. We obtain
    \begin{equation}
      C_\ell(1) = \frac{-\pi}{\sin(2\pi s_\phi)} \frac{\Psi(\ell)}{G(\ell)}, \qquad \begin{cases} \Psi(\ell) = \psi_E(s_\phi - \ell/2) - \psi_E(1-s_\phi - \ell/2) \\ G(\ell) = \Gamma(s_\phi - \ell/2) \Gamma(1-s_\phi-\ell/2) \end{cases}
    \end{equation}
    where~$\psi_E$ is the Euler psi function~\cite[Section~8.36]{GradshteynRyzhik2007}; this is valid as written for~$s_\phi \neq \tfrac12$, and is extended by continuity at~$s_\phi = \tfrac12$.

    Let~$\mu_k = (s_\phi + \frac k2)(1 - s_\phi + \frac k2)$.
    We solve the equations~\eqref{eq:combin-Ipm-phi} and \eqref{eq:combin-Ipm-psi} for~$I^+(s)$. This is possible if we prove that the determinant
    $$ \Delta_k(s) = \mu_k^2 C_k(s) C_{-k-2}(s) - C_{-k}(s) C_{k+2}(s) $$
    is non-zero. By continuity it suffices to prove this at~$s=1$. Using~$\mu_k = G(-k-2)/G(-k) = G(k) / G(k+2)$, we get
    $$ \Delta_k(1) = \frac{\pi^2}{\sin(2\pi s_\phi)^2} \frac{\mu_k}{G(k)G(-k)} \big( \Psi(k) \Psi(-k-2) - \Psi(-k) \Psi(k+2) \big), $$
    and since~$\Psi(k+2) - \Psi(k) = (2s_\phi-1)/\mu_k = -\Psi(-k-2) + \Psi(-k)$, we deduce
    $$ \Delta_k(1) = \frac{\pi^2(1-2s_\phi)}{\sin(2\pi s_\phi)^2} \frac{\Psi(k) + \Psi(-k)}{G(k)G(-k)}, $$
    and finally, since
    \begin{align*}
      \frac{\Psi(k) + \Psi(-k)}{G(k)G(-k)} {}&= -\tfrac1\pi \sin(\pi(s_\phi + \tfrac k2)) \sin(\pi(s_\phi - \tfrac k2))\big(\cot(\pi(s_\phi + \tfrac k2)) + \cot(\pi(s_\phi - \tfrac k2))\big) \\
      {}&= -\tfrac1\pi \sin(2\pi s_\phi), 
    \end{align*}
    we conclude
    \begin{equation}
      \label{eq:delta1-closedform}
      \Delta_k(1) = \frac{\pi(2s_\phi - 1)}{\sin(2\pi s_\phi)}
    \end{equation}
    independently of~$k$. We deduce
  \begin{equation}
    \label{eq:rel-Iplus-J}
    I^+(s) = \frac{(4\pi)^{2(s-1)}}{\Delta_k(s)}(\mu_k^2 C_{-k-2}(s) J_\phi(s) - C_{-k}(s) J_\psi(s)).
  \end{equation}
  By~\cite[Proposition~6.13]{Iwaniec2002}, we have
  \begin{equation}
    \label{eq:rankinselberg-Jphi}
    J_\phi(s) \sim \frac{1}{2(s-1)} \frac{\|\phi\|^2}{\Vol(\Gamma\backslash \H)},
  \end{equation}
  and similarly for~$\psi$. Therefore
  $$ I^+(s) \sim \frac1{2(s-1)} \frac{\mu_k^2 C_{-k-2}(1)\|\phi\|^2 - C_{-k}(1)\|\psi\|^2}{\Delta_k(1)\Vol(\Gamma\backslash \H)}. $$
  By~\cite[eq.~(4.3)]{DukeEtAl2002}, we have~$\|\psi\|^2 = \mu_k \|\phi\|^2$, and thus
  $$ I_s(0) = I^+(s) \sim \frac1{2(s-1)} \frac{\mu_k(\mu_k C_{-k-2}(1) - C_{-k}(1))}{\Delta_k(1)} \frac{\|\phi\|^2}{\Vol(\Gamma\backslash \H)}. $$
  Finally, we compute, by a method similar to~\eqref{eq:delta1-closedform},
  $$ \frac{\mu_k(\mu_k C_{-k-2}(1) - C_{-k}(1))}{\Delta_k(1)} = \frac{\mu_k}{(2s_\phi-1)} \frac{\Psi(-k) - \Psi(-k-2)}{G(-k)} = \frac1{G(-k)} $$
  which is~$(\Gamma(s_\phi + \frac k2) \Gamma(1 - s_\phi + \frac k2))^{-1}$ as required.
  This shows~\eqref{eq:aim-Ish-diag} with the claimed value of~$\sigma_\phi^2$.

  We next upper-bound~$I_s(h)$ for~$h\neq 0$. By conjugation we may assume~$h>0$.
  We start by considering
  \begin{align*}
    \tilde I_s(h):=\ssum{m, n \in \Z_{\neq 0} \\ n - m = h} a(m) \bar{a(n)} b_h(m, n),
  \end{align*}
  where for~$m, n\in\Z_{\neq 0}$, we let
  $$ b_h(m, n) = \int_0^\infty W_{\sign(m) k/2, it_\phi}(4\pi \abs{m} y) \bar{W_{\sign(n)k/2, it_\phi}(4\pi \abs{n} y)} \e^{-2\pi h y} y^{2(s-1)} \frac{\df y}y. $$
  For~$n - m = h$, we have
  \begin{align}
    b_h(m, n) ={}& \frac1{\abs{n}^{2(s-1)}} \int_0^\infty W_{\sign(m) k/2, it_\phi}(4\pi \abs{\tfrac mn} y) \bar{W_{\sign(n)k/2, it_\phi}(4\pi y)} y^{2(s-1)} \e^{-2\pi h y/\abs{n}} \frac{\df y}y \notag \\
    ={}& \Big( \int_0^\infty W_{\sign(m) k/2, it_\phi}(4\pi y)\bar{W_{\sign(n) k/2, it_\phi}(4\pi y)} y^{2(s-1)} \frac{\df y}y + O_h\Big(\frac1{\abs{n}}\Big)\Big) \frac1{\abs{n}^{2(s-1)}} \notag\\
    ={}& \Big(\int_0^\infty W_{\sign(m) k/2, it_\phi}(4\pi y)\bar{W_{\sign(n) k/2, it_\phi}(4\pi y)} y^{2(s-1)} \frac{\df y}y + O_h\Big(\frac1{\abs{n}}\Big)\Big) \frac1{\abs{mn}^{s-1}}.\label{lloftb}
  \end{align}
  Here in the second line we used the expression~\cite[eq.~(4.24)]{DukeEtAl2002} to bound the derivative of the Whittaker function. We deduce that~$b_h(m, n) \ll_h 1$.
  Also, if~$\sign(m)=\sign(n)=\pm 1$ by the definition~\eqref{dfcl} the integral in~\eqref{lloftb} can be expressed in terms of $C_{\pm k}$ giving
  \begin{align}\label{csac}
    b_h(m, n) =\Big(C_{\pm k}(s) + O_h\Big(\frac1{\abs{n}}\Big)\Big) \frac1{\abs{mn}^{s-1}}(4\pi)^{2(1-s)}.
  \end{align}
  Since there are finitely many indices~$m, n$ with~$\sign(m) \neq \sign(n)$ (depending on~$h$), the contribution to $\tilde I_s(h)$ of these terms is $O_h(1)$. Thus, using~\eqref{csac} for the remaining terms, we obtain
  \begin{align}\label{oeis}
    \tilde I_s(h) &= (4\pi)^{2(1-s)}(C_{k}(s) I_s^+(h) + C_{-k}(s) I_s^-(h)) +O_h(1),
  \end{align}
  with
  $$ I_s^\pm(h) := \ssum{m, n \in \Z \\ \sign(m)=\sign(n)=\pm1 \\ n-m=h} a(m) \bar{a(n)} \abs{mn}^{1-s} $$
  since the contribution of the term $O_h(1/\abs{n})$ is bounded by a constant times
  $$ \sum_{n\neq 0} \frac{\abs{a(n)}}{\abs{n}} \sum_{n - m = h} \abs{a(m)} \ll_h \sum_{n\neq 0} \frac{\abs{a(n)}}{\abs{n}^{1-\eps}} \ll 1 $$
  by the trivial bound~$\abs{a(m)} \ll m^\eps$ and~\eqref{eq:bound-meanabs-an}.

  We can also give a different expression for $\tilde I_s(h)$ by inserting the definition of $b_h$ and exchanging the order of summation and integration. We obtain
  \begin{align*}
    \tilde I_s(h)
    ={}& \int_0^\infty \ssum{m, n \\ n - m = h} a(m) \bar{a(n)} W_{\sign(m)k/2, it_\phi}(4\pi \abs{m} y) \bar{W_{\sign(n)k/2, it_\phi}(4\pi \abs{n} y)} \e^{-2\pi h y} y^{2(s-1)} \frac{\df y}y \\
    ={}& \int_{\Gamma \backslash \H} P_h(z, 2s-1) \abs{\phi(z)}^2 \frac{\df x\df y}{y^2},
  \end{align*}
  where~$P_h(z, 2s-1)$ is the Poincaré series
  $$ P_h(z, 2s-1) = \sum_{\gamma \in \Gamma_\infty \backslash \Gamma} (\Im \gamma z)^{2s-1} \e(h \gamma z). $$
  It is known~\cite{Selberg1965,GoldfeldSarnak1983} that~$s\mapsto P_h(z, 2s-1)$ for~$h\neq 0$ extends to an analytic function in a neighbourhood of~$\{\Re(s)\geq 1\}$ and satisfies~$\int_{\Gamma\backslash \H} \abs{P_h(z, 2s-1)}^2 \df x \df y / y^2 \ll_h 1$ uniformly for~$s\in\cV$. Since~$\phi(z) \ll 1$, we deduce by Cauchy--Schwarz that
  $$ \tilde I_s(h) = \int_{\Gamma \backslash \H} P_h(z, 2s-1) \abs{\phi(z)}^2 \frac{\df x\df y}{y^2} \ll_h 1, $$
  so that, by~\eqref{oeis},
  $$ C_{k}(s) I_s^+(h) + C_{-k}(s) I_s^-(h) = O_h(1) $$
  as~$s\to 1^+$. We carry out a similar argument for the weight-raised form~$\psi = R_k\phi$, and deduce by Cramer's rule as above that
  $$ I_s^+(h) = O_h(1), \qquad (s\to 1^+). $$
  This proves~\eqref{eq:aim-Ish-offdiag}, and completes the proof of~\eqref{eq:estim-int-fs2-abs}.

  A similar argument allows us to estimate~$\int_{\R} L(\phi, s-1/2, x)^2 \zeta_{2s-1}(x) \df x$. The analogue of~\eqref{eq:fourier-aftervaaler} is now
  $$ \int_{\R} L(\phi, s-1/2, x)^2 \zeta_{2s-1}(x) \df x \ll H^{-1} I_s(0) + \sum_{0<\abs{h}\leq H} (\abs{I_s(h)} + \abs{J_s(h)}), $$
  where
  $$ J_s(h) := \ssum{m, n\geq 1 \\ m+n = h} \frac{a(m) {a(n)}}{(mn)^{s-1}}. $$
  In this sum, since~$m, n$ have the same sign, they are obviously bounded in terms of~$h$, so that~$J_s(h) \ll_h 1$ for all~$h$. Letting~$H\to \infty$ slowly enough as~$s\to 1^+$ gives~\eqref{eq:estim-int-fs2-rel} as claimed, and concludes the proof of Proposition~\ref{prop:variance}.

Finally we discuss the case when~$y^{-k/2} \phi(z)$ is holomorphic, which implies~$s_\phi = k/2$ and~$a(n) = 0$ for~$n<0$. We then have~$W_{k/2, (k-1)/2}(y) = y^{k/2}\e^{-y/2}$ and thus~$C_k(s) = \Gamma(k + 2(s-1))$. The analogue of~\eqref{eq:rel-Iplus-J} is simply
    $$ I^+(s) = \frac{(4\pi)^{2(s-1)}}{\Gamma(k+2(s-1))} J_\phi(s). $$
    Inserting the estimate~\eqref{eq:rankinselberg-Jphi} we obtain~\eqref{eq:aim-Ish-diag} also with the required value. A similar argument shows that~$I_s(h) = O_h(1)$ for~$h\neq 0$ and~$J_s(h) = O_h(1)$ for all~$h$, concluding the proof.

\end{proof}

\section{Topological irreducibility with multiplier systems}\label{sec:top-irred}

We recall our assumptions on~$\Gamma$ and~$G$, which follow from our setting in Section~\ref{sec:automorphic-forms} as well as Lemma~\ref{lem:free-subgroup}: $G$ is a free subgroup of~$[\Gamma]_\D$, it is discrete, finitely generated with finitely many cusps; it has a cusp at~$1$, and has rank at least~$6$.
Recall that $\vth_g$ was defined in Section~\ref{vtdf}, and recall also the choice of fundamental domain made in Section~\ref{tos}.

We quote from~\cite[eq.~\eqrefmain{eq:def-ellK}]{BettinEtAl} the notion of \emph{$K$-length} $\ell_K$.
If $w=a_1\dotsb a_\ell\in\Sigma^*$ with $\ell\geq1$, then
\begin{align*}
  \ell_K(w) &:=1+ \abs{\{1\leq i \leq \ell-1,\ a_{i+1}-\hat{a_i}\neq\pm1\} }.
\end{align*}

The goal of this section is to prove the following proposition, which shows that the multiplier system~$\vth_g(x)$ satisfies the stability and mixing hypothesis from~\cite[Section~\refmain{sec:skew-products}]{BettinEtAl}, or in other words, that $\vth_g(x)$ is irreducible with respect to the induced map $\cF_K$ (see~\cite[Section~\refmain{ssec:bs:accel}]{BettinEtAl}). We shall use the fact that the fundamental domain~$R$ has at least~$8$ sides, as implied by Lemma~\ref{lem:free-subgroup}.

\begin{proposition}\label{prop:topological-irred}
  Suppose~$\abs{\Sigma} \geq 8$. For any admissible non-empty words~$w_1, w_3\in \Sigma^*$, and any~$\vth_0 \in \Theta$, there exists~$\ell_0\geq 1$ such that for all~$\ell\geq \ell_0$, we may find~$w_2\in \Sigma^*$ with the following properties:
  \begin{itemize}
    \item The word~$w_1w_2w_3$ is admissible, and~$\ell_K(w_1w_2w_3) = \ell$.
    \item We have~$\vth_{g_{w_1 w_2}}(x) = \vth_0$ for all $x\in[w_3]^\circ$.
  \end{itemize}
\end{proposition}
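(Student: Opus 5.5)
We will build $w_2$ as a concatenation of blocks, each of which does one of two jobs: it either adjusts the multiplier $\vth$ by a controlled element of $\Theta$, or it adjusts the $K$-length $\ell_K$ by exactly one without changing the multiplier. First we record how $\vth_{g_w}(x)$ depends on $x$. By the period relation~\eqref{pere} and the identity $\vth_g(x)=\chi'(g)j_\gamma([x]_\H)^{-k}$, for fixed $g$ the function $x\mapsto\vth_g(x)$ is locally constant on $\dD\setminus\{1,g^{-1}1\}$. It takes at most two values, which differ by the factor $(-1)^k$ across the two arcs cut out by $1$ and $g^{-1}1$. For an admissible word $w=a_1\dotsb a_m$ and $x\in[b]^\circ$ with $b\ne\hat{a_m}$, the point $g_w^{-1}1$ lies in $[\hat{a_m}]$. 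Since $1$ is a vertex, neither point lies in $[b]^\circ$, so $\vth_{g_w}$ is constant on $[b]^\circ$. Write $\vth(w;b)$ for this value. Then~\eqref{pere} gives the cocycle rule
\[
\vth(uv;b)=\vth(u;c)\,\vth(v;b),\qquad c\text{ the first letter of }v,
\]
since $g_v[b]^\circ\subset[c]^\circ$. This rule reduces the proposition to a combinatorial statement about words. We need to show that the values $\vth(w;b)$, for $w$ ranging over admissible words with fixed first and last letters and fixed $\ell_K$, exhaust $\Theta$ once $\ell_K$ is large.

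Second, we construct the basic gadgets. \emph{Connectors}: since $\abs{\Sigma}\ge8$, any two letters $a,c$ can be joined by an admissible word $a\,e\,c$ in which both transitions are non-cuspidal, meaning $e\ne\hat a$, $\hat a\pm1$, $\hat c$ and similarly on the other side. There are at least $8-6=2$ choices of $e$, and these choices let us avoid any bad letter. \emph{Loops}: for each generator $g_s$, we take a loop $L_s=c_1\,s\,c_2$, starting and ending at a fixed base letter and built from connectors. Its multiplier $\vth(L_s;\cdot)$ equals $\chi'(g_s)$ times a sign $\pm1$ determined by the $j$-factors. \emph{Length adjusters}: we find two loops with the same multiplier whose $K$-lengths differ by $1$. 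One way is to insert a cuspidal transition, $a_{i+1}=\hat{a_i}\pm1$, which does not increase $\ell_K$. Another is to compare the loop $L$ with $L$ followed by a word representing the identity in $G$; but since $G$ is free no such nontrivial word exists. So the comparison must instead use $L^{N}$ versus a conjugate, or a word $ww'$ with $\vth=1$. Because $\Theta$ is a finite group of order $r$, say, the loop $L^r$ has multiplier $1$. Hence the blocks $L_1^{r}$ and $L_2^{r}$, where $L_1,L_2$ are loops with $\ell_K$ differing by one, have trivial multiplier and $K$-lengths $r\ell_K(L_1)$ and $r\ell_K(L_2)$, which are coprime up to a common factor. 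Combining them gives all sufficiently large lengths in an arithmetic progression. If necessary we remove the common factor by also using blocks with $\gcd$ of lengths equal to $1$, for instance $L_1^{r}$ and $L_1^{r}$ with one extra cuspidal insertion.

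Third, we assemble the word. We set $w_2=C_1\,M\,P\,C_2$. Here $C_1$ and $C_2$ connect the last letter of $w_1$ and the first letter of $w_3$ to the base letter. The block $M$ is a product of loops $L_s$ realising the element $\vth_0\,\vth(w_1C_1;\cdot)^{-1}\cdots$. This is possible because the $\chi'(g_s)\eta$ together with the sign freedom generate $\Theta$, since $\Theta=\{\eta^k\chi(\gamma)\}$ and the $g_s$ generate $G$. The block $P$ is a padding made of trivial-multiplier blocks, chosen so that $\ell_K(w_1w_2w_3)=\ell$ for all $\ell\ge\ell_0$, using the Frobenius-type argument from the previous step. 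Since $\ell_K$ is additive over concatenation up to $\pm1$ at the junctions, it is controlled exactly by our choice of non-cuspidal connectors.

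The main obstacle is the sign bookkeeping. The factors $j_\gamma^{-k}$ contribute the extra $\eta^k$ signs, which depend on the relative position of $x$ and $g^{-1}1$. We have to check that the sign attached to a loop is genuinely independent of context, so that the cocycle rule gives a group homomorphism from loops at the base letter onto $\Theta$. We also need to make sure that both signs $\pm1$ occur when $k$ is odd, and this is exactly where the surplus of letters ($\abs{\Sigma}\ge8$) is used to choose the connectors freely. We would first try to compute the sign of a loop explicitly via the position of $g_{L}^{-1}1$ relative to $[b]$. If that fails, we would instead show surjectivity by using the loop and its inverse $\hat{L}$.
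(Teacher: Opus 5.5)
Your reduction to a problem about words is correct and matches the paper: $\vth_{g_w}$ is constant on $[b]^\circ$, the cocycle rule holds, and connectors exist because $E(a,c)\neq\emptyset$. But the two constructions that carry the proof are missing. \emph{The length padding.} Your trivial-multiplier blocks $L^r$, built with non-cuspidal connectors, have $\ell_K(L^r)=r\,\ell_K(L)$. So padding with them only changes $\ell_K$ by multiples of $r$, and you reach only one residue class of $\ell$. The proposed repair (``one extra cuspidal insertion'') does not work, because inserting a letter changes $g_w$ and hence the multiplier. This obstruction is real: the paper's $\Gamma_1(4)$ remark shows parity obstructions of exactly this kind occur when $\abs{\Sigma}$ is small. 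The missing idea is that a power of a cuspidal cycle is again a cuspidal cycle, so $\ell_K(P^r)=1$ for every $r$. Choosing $r$ with $\chi'(g_{P^r})=1$ gives blocks of $K$-length exactly one with trivial character. Since $\abs{\Sigma}\geq 8$, one can chain $P_{a_1,\hat{a_1+1}}\cdots P_{a_r,\hat{a_r+1}}$ with non-cuspidal junctions, so that each block adds exactly $1$ to $\ell_K$ (Lemma~\ref{lem:top-irred-U}). These blocks preserve $\chi'$ but not necessarily the sign, so at that stage the multiplier is controlled only up to $(\pm1)^k$.

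\emph{The sign.} You assert that the loop multipliers ``together with the sign freedom'' generate $\Theta$. For odd $k$ this is exactly what must be proved, and you leave it open. Note also that your loop $c_1\,s\,c_2$ has character $\chi'(g_{c_1}g_sg_{c_2})$, not $\chi'(g_s)$. The paper instead picks $g$ with $\chi'(g_{w_1}g)$ correct and writes it as a reduced word $T$. It then resolves the sign with Lemma~\ref{lem:multiplier}. Part (ii) gives $\vth_P(a_+)\,\vth_{\hat P}(a_+)=(-1)^{-k}$ for words whose first letter and hatted last letter differ from $a_+$. Part (i) shows that $\vth_{\hat{a_-}}(\cdot)$ does not depend on the following letter. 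Hence the gadget $\hat{a_-}\,P\,a_+\,b'$, with $P\in\{P_{u,v},\hat{P_{u,v}}\}$ both of trivial character, realises both signs with the same $\ell_K$-increment $4$ (Lemma~\ref{lem:top-irred-sign}). Your fallback of comparing $L$ with $\hat L$ points in this direction. It is incomplete without evaluating on the specific arc $[a_+]$, the letter conditions that make the two $j$-factors agree there, and the $\hat{a_-}$ prefix that neutralises the preceding factor.
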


We first need some lemmas.
In the following we denote by~$a_+$ the side of $R$ which precedes~$1$ proceeding counter-clockwise, and~$a_-$ the side which follows~$1$.

\begin{center}
  \begin{tikzpicture}[scale=1.4]
    \draw (0,0) circle (1);
    \interval{1}{0}{50} node at (1.2, -0.65) {$[a_+]$};
    \interval{1}{-70}{0} node at (1.25, 0.5) {$[a_-]$};
    \intervalpartial{1}{50}{100}{0.5};
    \intervalpartial{1}{-70}{240}{0.5};
    \fill (1,0) circle (0.015) node[right] {$1$};
  \end{tikzpicture}
\end{center}

\begin{lemma}\label{lem:multiplier}
  Let~$w = a_1 \dotsb a_\ell \in \Sigma^*$ with~$\ell\geq 1$. Then the map
  $$ x \mapsto \vth_{g_w}(x) $$
  is constant on each arc~$[b]^\circ$, for $b\neq \hat{a_\ell}$. Denote by
  $$ \vth_w(b) = \vth_{g_w}(x), \qquad (x\in [b]^\circ) $$
  the associated common value. The following properties hold.
  \begin{enumerate}[label=(\roman*)]
    \item\label{it:vth-stab} The quantity~$\vth_{\hat{a_-}}(b)$ is independent of~$b$ for~$b\neq a_-$.
    \item\label{it:vth-sign} Whenever~$a_1, \hat{a_\ell} \neq a_+$ and~$g_w\cdot 1 \neq 1$, we have~$\vth_w(a_+) = (-1)^{-k} \vth_{\hat w}(a_+)^{-1}$.
    % \item For any~$\eta \in \Theta$, we may find~$w\in \Sigma^*$ such that~$\vth_w(b) \in \{\pm \eta\}$ for any~$b\neq \hat{a_\ell}$.
    % \item Whenever~$a_\ell \neq \hat{a_1}$, we can find~$n\geq 1$ such that~$\vth_{w^n}(b) \in \{\pm 1\}$ for~$b\neq \hat{a_\ell}$.
  \end{enumerate}
\end{lemma}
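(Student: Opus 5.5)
The plan is to work directly from the definition $\vth_g(x) = \chi'(g)\, j_\gamma(y)^{-k}$, with $y = [x]_\H$ and $\gamma = [g]_\H = \smat{\ast&\ast\\ c&d}$. For real $y$ we have $j_\gamma(y) = \sign(cy+d)\in\{\pm1\}$. Hence $x\mapsto \vth_g(x)$ is locally constant on $\dD\setminus\{1, g^{-1}1\}$, and can only jump at the two points where $cy+d$ changes sign: $y=\gamma^{-1}\infty$, i.e.\ $x = g^{-1}1$, and $y=\infty$, i.e.\ $x=1$. It therefore suffices to show that these points do not lie in the relevant open arcs.

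\emph{Constancy.} Since $1$ is a vertex of $R$, it lies in no open arc $[b]^\circ$. For the other point, write $g_w^{-1}1 = g_{\hat w}1$. The word $\hat w = \hat{a_\ell}\dotsb\hat{a_1}$ is admissible, and $1\notin[a_1]^\circ$ (again because $1$ is a vertex). The mapping property $g_{\hat w}(\dD\setminus[a_1]^\circ) = [\hat w]\subset[\hat{a_\ell}]$ recalled in Section~\ref{sec:operator-cc} then gives $g_w^{-1}1\in[\hat{a_\ell}]$. As distinct closed arcs meet at most at endpoints, $[b]^\circ$ avoids both $1$ and $g_w^{-1}1$ for $b\neq\hat{a_\ell}$. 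The arc is connected, so $\vth_{g_w}$ is constant on it.

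\emph{Point (i).} Take $w=\hat{a_-}$. The two jump points are $1$ and $g_{\hat{a_-}}^{-1}1 = g_{a_-}1$, and by the same mapping property $g_{a_-}1\in[a_-]$. Since $1$ is an endpoint of $[a_-]$, both jump points lie in the closed arc $[a_-]$. Their complement in $\dD$ therefore has one component inside $[a_-]$ and another connected component containing every $[b]^\circ$ with $b\neq a_-$. On that component $\vth_{\hat{a_-}}$ is constant, which is (i). A degenerate case $g_{a_-}1=1$ only makes this easier.

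\emph{Point (ii).} Write $\gamma = [g_w]_\H = \smat{a&\ast\\ c&d}$, so that $\gamma^{-1} = [g_{\hat w}]_\H$ has bottom row $(-c, a)$, and $c\neq0$ because $g_w1\neq1$. By the constancy part applied to $w$ and to $\hat w$ (using $\hat{a_\ell}\neq a_+$ and $a_1\neq a_+$), both $\vth_{g_w}$ and $\vth_{g_{\hat w}}$ are constant on $[a_+]^\circ$. We may therefore evaluate them in the limit $x\to1$ along $[a_+]$, which corresponds to $|y|\to\infty$ with a fixed sign $\eta$. There $\sign(cy+d) = \sign(c)\eta$ and $\sign(-cy+a) = -\sign(c)\eta$, so $j_{\gamma^{-1}}(y) = -j_\gamma(y)$. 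Together with $\chi'(g_{\hat w}) = \chi'(g_w)^{-1}$ and $j_\gamma^{2k}=1$, this gives
$$\vth_w(a_+)\,\vth_{\hat w}(a_+) = j_\gamma(y)^{-k}(-j_\gamma(y))^{-k} = (-1)^{-k},$$
which is exactly (ii).

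The only delicate step is the arc-location argument: establishing $g_w^{-1}1\in[\hat{a_\ell}]$ in the presence of cuspidal vertices, and checking that shared endpoints are harmless because all arcs in question are open. Everything else is sign bookkeeping.
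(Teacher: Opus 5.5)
Your proof is correct and follows essentially the same route as the paper. Constancy comes from the only two sign changes of $j_{[g_w]_\H}$, at $1$ and at $g_w^{-1}1\in[\hat{a_\ell}]$; point (i) from both of these points lying in $[a_-]$; and point (ii) from $[x]_\H$ lying on the same side of $\gamma^{-1}\infty$ and $\gamma\infty$ for $x\in[a_+]^\circ$, while the lower-left entries of $\gamma$ and $\gamma^{-1}$ have opposite signs. The only difference is cosmetic: you evaluate at points of $[a_+]^\circ$ near $1$ (large $|y|$), whereas the paper determines $\sign([x]_\H-\gamma^{-1}\infty)=1$ on the whole arc.
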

\begin{proof}
  First we observe that if~$g_w$ fixes $1$ (\emph{i.e.}~$[g_w]_\H$ fixes $\infty$), then~$\vth_{g_w}(x) = d_w^k\chi'(g_w)$ for some $d_w\in\{\pm1\}$ and all the claimed results follow immediately. Thus we can assume $g_w\cdot 1 \neq 1$. In particular, we can write $\vth_{g_w}$ as $\vth_{g_w}(x)=c_w^{-k}\chi'(g_w)\sg_{w}(x)^k$ for some $c_w\in\{\pm1\}$ satisfying $c_w=-c_{\hat w}$ and where $\sg_{w}(x)=\sign([x]_\H-[g_w]_\H^{-1}\infty)$. Notice that $\sg_{w}(x)=1$ if $g_w^{-1}\cdot 1$ precedes $x$ proceeding counter-clockwise from $1$ and $\sg_{w}(x)=-1$ otherwise.

  Since~$1$ is a vertex of~$R$, and thus is not in $[a_1]^\circ$, we have~$g_w^{-1}\cdot 1 \in [\hat{a_\ell}]$. Thus for~$x\in [b]^\circ$ and~$b\neq \hat{a_\ell}$ we have that $\sg_{w}(x)$ depends only on~$b$ and not on~$x$. This justifies the definition of~$\vth_w(b)$.
  \begin{enumerate}[label=(\roman*)]
    \item If~$g_{\hat{a_-}}^{-1}\cdot1 = 1$, then $g_{\hat{a_-}}$ fixes~$1$ and the claim is clear as above.
    Now, assume $g_{\hat{a_-}}^{-1}\cdot1\neq 1$. Since~$1$ is a vertex of~$R$,
    we have $g_{\hat{a_-}}^{-1}\cdot1=g_{a_-}\cdot 1\in [a_-]$.
    This implies that $\sg_{\hat{a_-}}(x)=1$ for all $x\notin [a_-]$ and the claim follows.
    \item We have $g_w^{-1}\cdot 1\in [\hat{a_\ell}]$ and thus, by hypothesis, $g_w^{-1}\cdot 1\not\in[a_+]^\circ$ so that $\sg_w(x)=1$ for all $x\in[a_+]^\circ$. In the same way, $\sg_{\hat w}(x)=1$ for all $x\in[a_+]^\circ$ and the result follows since $c_w=-c_{\hat w}$.
    % \item By definition of~$\Theta$, there exists~$g\in G$ such that~$\chi'(g)\in \{\pm \eta\}$. Write~$g = g_w$ for some admissible word~$w\in \Sigma^*$. Then~$\vth_{w}(b) \in\{ \pm \chi'(g) \}= \{\pm \eta\}$ as claimed.
    % \item We have~$\vth_{w^n}(b) \in \{\pm \chi'(g_w)^n\}$. Since~$\chi'$ takes values in the roots of unity of some fixed order, it suffices to pick~$n$ to be the common order of these roots.\qedhere
  \end{enumerate}
\end{proof}

Given $w_1\in\Sigma^*$ we extend the definition of $\vth_{w_1}$ also to words $w_2\in\Sigma^*$, with $w_1w_2$ admissible, by letting $\vth_{w_1}(w_2):=\vth_{w_1}(a)$ where $a$ is the first letter of $w_2$.
The relation~\eqref{pere} then reads
\begin{equation}
  \label{eq:pere-vth}
  \vth_{w_1w_2}(w_3) = \vth_{w_1}(w_2) \vth_{w_2}(w_3)
\end{equation}
whenever~$w_1w_2w_3 \in \Sigma^*$ with~$w_i$ not empty.

We need some further notation. For any~$s_1, s_2\in \Sigma$ with~$s_1$ adjacent to~$\hat{s_2}$, we let $P_{s_1, s_2}\in\Sigma^*$ be a (fixed) cuspidal cycle --- as defined in~\eqref{ccy} --- with first letter~$s_1$ and last letter~$s_2$. Upon replacing~$P_{s_1, s_2}$ by a concatenation power~$P_{s_1, s_2}^r$ if necessary, we can impose that~$\chi'(g_{P_{s_1, s_2}}) = 1$.

For~$b_1, b_2\in \Sigma$, we denote
$$ E(b_1, b_2) = \Big\{b\in \Sigma, \quad \begin{aligned}&\hat{b_1} \text{ neither equal nor adjacent to } b, \\
  &\text{and } \hat{b} \text{ neither equal nor adjacent to } b_2. \end{aligned} \Big\} $$
the set of letters~$b$ for which both transitions~$b_1b$ and~$bb_2$ are admissible and counted by~$\ell_K$.
Note that since we assumed $|\Sigma|\geq8$ we have~$\abs{E(b_1, b_2)} \geq 8 - 3 - 3 \geq 2$. In particular,~$E(b_1, b_2)$ is never empty.

In the rest of the section we will repeatedly make use of the following simple facts about the function~$\ell_K$: let~$w_1$ and~$w_2$ be two non-empty admissible words. Assume that the last letter~$c_1$ of~$w_1$, and the first letter~$c_2$ of~$w_2$, are such that~$\hat{c_1}$ and~$c_2$ are neither equal nor contiguous. Then
\begin{equation}
  \ell_K(w_1w_2) = \ell_K(w_1) + \ell_K(w_2).\label{eq:additivity-flips}
\end{equation}
Also, for $s_1,s_2$ as above we have $\ell_K(P_{s_1, s_2})=1$.

\begin{lemma}\label{lem:top-irred-sign}
  There exists~$b\in \Sigma$ such that for any non-empty~$w_3\in \Sigma^*$, there is~$\vth_0\in \Theta$ such that for any choice of~$\eta\in \{(\pm 1)^{-k}\}$, we may find~$u_2\in \Sigma^*$ with $bu_2w_3$ admissible and
  \begin{equation}\label{eq:top-irred-sign}
    \vth_{bu_2}(w_3) = \eta \vth_0, \qquad \ell_K(bu_2w_3) = \ell_K(w_3) + 4.
  \end{equation}
\end{lemma}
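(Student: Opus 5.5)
We will take $b := \hat{a_-}$, so that Lemma~\ref{lem:multiplier}\ref{it:vth-stab} applies. The value $\vth_b(c)$ then does not depend on the letter $c \neq a_-$ that follows $b$. The word $u_2$ will be assembled from a short chain of letters taken from the sets $E(\cdot,\cdot)$, together with one palindromic-type block whose sign can be flipped using Lemma~\ref{lem:multiplier}\ref{it:vth-sign}. By the cocycle relation~\eqref{eq:pere-vth},
$$ \vth_{bu_2}(w_3) = \vth_b(u_2)\,\vth_{u_2}(w_3). $$
The first factor equals a constant $c_b$ by \ref{it:vth-stab}, provided the first letter of $u_2$ is not $a_-$. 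We will therefore concentrate on prescribing $\vth_{u_2}(w_3)$ up to the sign $\eta$.

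Let $c$ be the first letter of $w_3$. The plan is to take $u_2$ of the form $u_2 = e_1\, v\, e_2$ or $e_1\,\hat v\,e_2$, where $e_1,e_2$ are letters and $v$ is a fixed admissible word with $\ell_K(v)=2$ and $v$ beginning and ending with $a_+$ adjacent conditions chosen so that~\ref{it:vth-sign} applies. We choose $e_2\in E(\cdot,c)$ so that the junction with $w_3$ counts in $\ell_K$ and lands in $[a_+]^\circ$ if needed. More concretely, let $e$ be any letter of $E(\,\cdot\,,c)$. Then $\vth_{u_2}(w_3)=\vth_{e_1 v}(e)\,\vth_e(c)$, and $\vth_e(c)$ is fixed once $e$ depends only on $c$. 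Using \eqref{eq:pere-vth} once more, $\vth_{e_1 v}(e)=\vth_{e_1}(v)\vth_v(e)$. To access \ref{it:vth-sign} we arrange that $v$ is followed by the letter $a_+$. This means we insert $a_+$ as the penultimate letter, i.e. $u_2 = e_1\, v\, a_+\, e$, where the letters are chosen in the corresponding $E$-sets so that every junction is admissible and counted by $\ell_K$, and $v$ neither starts nor ends (in hat form) with $a_+$. By Lemma~\ref{lem:multiplier}\ref{it:vth-sign}, replacing $v$ by $\hat v$ changes $\vth_v(a_+)$ into $(-1)^{-k}\vth_v(a_+)^{-1}$. To turn this into a pure sign flip we choose $v$ with $\vth_v(a_+)\in\{\pm1\}$, so that the two choices give values differing exactly by $(-1)^{k}$. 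Such a $v$ can be obtained, for instance, as a suitable power (or a product with one of the $P_{s_1,s_2}$, which have $\chi'=1$) so that the character part is trivial and only the sign part survives. The prefix letter $e_1$ must also satisfy $\vth_{e_1}(v)=\vth_{e_1}(\hat v)$, which holds since the first letters of $v$ and $\hat v$ lie in the same region allowed by the first part of Lemma~\ref{lem:multiplier}. Alternatively we choose $v$ so that its first letter and the first letter of $\hat v$ coincide modulo this constancy.

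Setting $\vth_0 := c_b\vth_{e_1}(v)\vth_v(a_+)\vth_{a_+}(e)\vth_e(c)$ for one choice, the two choices $v,\hat v$ then realise both $\eta\in\{1,(-1)^{-k}\}$. The $\ell_K$ count uses~\eqref{eq:additivity-flips} at each counted junction. The total increase is a fixed number, and we will tune the lengths of the pieces (choosing $\ell_K(v)$ appropriately, possibly $v$ a single letter pair) so that it equals exactly $4$. The main obstacle is this bookkeeping. We have to find a $v$ for which $v$ and $\hat v$ are simultaneously admissible in the same slot, both satisfy the hypotheses of \ref{it:vth-sign} (in particular $g_v\cdot1\neq1$), and give exactly $\ell_K=4$. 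It is here that $|\Sigma|\geq 8$, through $|E(b_1,b_2)|\geq2$, will be used, to find letters avoiding all forbidden adjacencies.
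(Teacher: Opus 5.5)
\textbf{Verdict: genuine gap.} You have the right ingredients: $b=\hat{a_-}$ and Lemma~\ref{lem:multiplier}\ref{it:vth-stab} to make the prefix factor independent; a block followed by $a_+$, so that Lemma~\ref{lem:multiplier}\ref{it:vth-sign} relates $v$ and $\hat v$; and trivial character, so that the relation becomes a pure sign. But you defer the actual content of the lemma, namely exhibiting one block meeting all constraints at once with exactly $+4$. You call this ``the main obstacle'' and leave it open.

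The template you do commit to cannot work. With $u_2=e_1\,v\,a_+\,e$ and every junction admissible and counted, \eqref{eq:additivity-flips} gives
\[
\ell_K(bu_2w_3)=\ell_K(w_3)+4+\ell_K(v)\geq \ell_K(w_3)+5,
\]
so no choice of $v$ reaches $+4$.

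The auxiliary claims are also unsupported:
\begin{itemize}
\item The equality $\vth_{e_1}(v)=\vth_{e_1}(\hat v)$ does not follow from the first part of Lemma~\ref{lem:multiplier}. That part only gives constancy on each single arc $[f]^\circ$, and the value depends on the arc in general. This is precisely why \ref{it:vth-stab}, which is special to $\hat{a_-}$, is needed.
\item Multiplying $v$ by some $P_{s_1,s_2}$ with $\chi'=1$ does not change $\chi'(g_v)$.
\item A power $v^r$ multiplies $\ell_K$ unless $v$ is a cuspidal cycle.
\item The hypothesis $g_v\cdot 1\neq 1$ is never checked.
\end{itemize}

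\textbf{The missing idea.} Make the flippable block a cuspidal cycle, place it immediately after $b=\hat{a_-}$, and drop $e_1$. Cuspidal cycles have $\ell_K=1$ and remain cuspidal under powers, which is why one may fix $P_{s_1,s_2}$ with $\chi'(g_{P_{s_1,s_2}})=1$. The paper's construction is:
\begin{itemize}
\item choose $u\notin\{a_+-2,a_+-1,a_+,a_-,a_-+1\}$, which is possible since $|\Sigma|\geq 8$;
\item let $P\in\{P_{u,\hat{u+1}},\hat{P_{u,\hat{u+1}}}\}$, whose first letters are $u$ and $u+1$ and last letters $\hat{u+1}$ and $\hat u$;
\item fix $b'\in E(a_+,c)$, where $c$ is the first letter of $w_3$, and set $u_2=P\,a_+\,b'$.
\end{itemize}
For both choices of $P$, the exclusions on $u$ give:
\begin{itemize}
\item the junctions $\hat{a_-}\to P$ and $P\to a_+$ are admissible and counted;
\item the first letter of $P$ is not $a_-$, so \ref{it:vth-stab} applies;
\item $P$ satisfies the hypotheses of \ref{it:vth-sign}. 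In particular, the fixed point of $g_P$ is the vertex between sides $u$ and $u+1$, which is not $1$ since $u\neq a_+$.
\end{itemize}
Then \eqref{eq:pere-vth} gives
\[
\vth_{bu_2}(w_3)=\vth_{\hat{a_-}}(P)\,\vth_P(a_+)\,\vth_{a_+}(b')\,\vth_{b'}(c).
\]
Only $\vth_P(a_+)\in\{\pm1\}$ depends on the choice, and it changes by $(-1)^{k}$ when $P$ is replaced by $\hat P$. The four counted junctions $\hat{a_-}\to P\to a_+\to b'\to c$ give exactly $\ell_K(bu_2w_3)=\ell_K(w_3)+4$.
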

\begin{proof}
  We pick~$u\in\Sigma$ distinct from the letters
  $$ \{ a_+ - 2,a_+-1, a_+, a_-, a_-+1\}. $$

  Since we assumed~$\abs{\Sigma} \geq 8 > 5$, there is at least one such choice of~$u$. Let~$v = \hat{u+1}$, so that there is a cuspidal cycle~$P_{u, v}$. Let
  $$ P \in \{ P_{u,v}, \hat{P_{u,v}} \}, $$
  and fix~$b'\in E(a_+, c)$ with $c$ denoting the first letter of $w_3$. Let
  $$ b := \hat{a_-}, \qquad u_2 = P a_+ b'\in\Sigma^*.$$
  By the period relation~\eqref{eq:pere-vth} we have
  $$ \vth_{bu_2}(w_3) = \vth_{\hat{a_-}}(P) \vth_P(a_+) \vth_{a_+}(b') \vth_{b'}(c) = \vth_0 \vth_P(a_+) $$
  where by Lemma~\ref{lem:multiplier}.\ref{it:vth-stab} the number~$\vth_0\in \Theta$ is independent of our choice of~$P$.
  By Lemma~\ref{lem:multiplier}.\ref{it:vth-sign}, we have
  $$ \vth_{P}(a_+) = (-1)^{-k} \vth_{\hat{P}}(a_+), $$
  thus picking appropriately~$P$, we find a corresponding word~$u_2$ with~$\vth_{bu_2}(w_3) = \eta \vth_0$.
  Moreover, a quick computation using the hypothesis on $u$ and $b'$ shows that
  $$ \ell_K(bu_2w_3) = \ell_K(\hat{a_-} P a_+ b' w_3) = 4 + \ell_K(w_3). $$
  This proves~\eqref{eq:top-irred-sign}.
\end{proof}

\begin{lemma}\label{lem:top-irred-U}
  For any non-empty~$w_1\in \Sigma^*$, any letter~$b\in \Sigma$, and any~$\vth_0\in \Theta$, there exists~$\ell_0'\in \N$ such that for any~$\ell'\geq \ell_0'$, we may find a word~$u_1\in\Sigma^*$ with $w_1u_1b$ admissible and
  $$ \vth_{w_1u_1}(b) \in \{(\pm 1)^k \vth_0\}, \qquad \ell_K(w_1u_1b) = \ell'. $$
\end{lemma}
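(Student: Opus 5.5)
The plan is to build $u_1$ as a concatenation of three kinds of blocks: a block realizing the prescribed multiplier value up to sign, cuspidal cycles $P_{s_1,s_2}$ with $\chi'(g_P)=1$ as ``neutral'' fillers, and single-letter transitions from the sets $E(\cdot,\cdot)$, which are admissible and each raise $\ell_K$ by exactly one. The multiplier value is tracked with the cocycle relation~\eqref{eq:pere-vth}, and $\ell_K$ with the additivity~\eqref{eq:additivity-flips}.

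\textbf{Step 1: realize $\vth_0$ up to sign.} By the description $\Theta=\{\eta^k\chi(\gamma)\}$, there is $g\in G$ with $\chi'(g)\in\{(\pm1)^k\vth_0\}$. I write $g=g_v$ for a reduced, admissible word $v$; if $v$ is empty, I use a cuspidal cycle $P$ with $\chi'(g_P)=1$ instead. By the proof of Lemma~\ref{lem:multiplier}, whenever $g_v$ does not fix $1$ we have $\vth_{g_v}(x)=c_v^{-k}\chi'(g_v)\,\mathrm{sg}_v(x)^k\in\{(\pm1)^k\chi'(g_v)\}$ for every $x$. If $g_v$ fixes $1$, the same holds with $d_v^k$ in place of $c_v^{-k}\mathrm{sg}_v(x)^k$. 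In either case $\vth_v(c)\in\{(\pm1)^k\vth_0\}$ for every admissible next letter $c$. Note that the sign ambiguity is allowed in the statement; that freedom is exactly what makes this step work.

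\textbf{Step 2: connect $w_1$ to $v$ and $v$ to $b$.} Let $a$ be the last letter of $w_1$. I insert letters $e\in E(a,v_1)$ and $e'\in E(v_{\mathrm{last}},b)$, which is possible since $|E|\geq2$ when $|\Sigma|\geq8$. This gives $u_1=e\,v\,e'$ (or with extra fillers), with the cocycle
\[
\vth_{w_1u_1}(b)=\vth_{w_1}(e)\,\vth_e(v)\,\vth_v(e')\,\vth_{e'}(b).
\]
The unwanted factors $\vth_{w_1}(e)$, $\vth_e(v)$, $\vth_{e'}(b)$ are fixed elements of $\Theta$. To absorb them, I apply Step 1 to the target $\vth_0$ divided by their product rather than to $\vth_0$ itself. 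This is legitimate because $\Theta$ is a group and the factors do not depend on $v$: they depend only on the letters $a,e,v_1$ and $e',b$. To avoid this circularity, I first fix the letters $e,e'$ and the first and last letters of $v$. I can arrange those letters by prefixing and suffixing $v$ with neutral cycles $P_{s_1,s_2}$, which have $\chi'=1$ and only contribute signs $(\pm1)^k$.

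\textbf{Step 3: adjust the $K$-length.} Let $\ell_0'$ be $\ell_K$ of the word built so far. For $\ell'\geq\ell_0'$, I insert $\ell'-\ell_0'$ additional blocks just before $e'$, chosen so that $\ell_K$ increases by exactly one per block while the multiplier changes only by a sign. The candidate blocks are single letters $f\in E(\cdot,\cdot)$ followed by a neutral cycle $P$ returning to a fixed letter. Each such letter only multiplies the multiplier value by $\vth_f(\cdot)$, which is not a priori $\pm1$.

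\textbf{Main obstacle.} The hard part is this last length padding: controlling the multiplier factors $\vth_f(\cdot)$ introduced by padding letters. My first attempt is to pad with a long cuspidal cycle $P_{s_1,s_2}^r$, which has $\ell_K=1$ and $\chi'=1$. Its multiplier then lies in $\{(\pm1)^k\}$ by the formula from Step 1, so each padding block adds exactly $1$ to $\ell_K$ and only changes the sign. The transitions into and out of consecutive cycles must be counted by $\ell_K$, and the admissibility and contiguity conditions must be checked, using $|\Sigma|\geq8$ to find compatible letters $s_1,s_2$ at each junction.
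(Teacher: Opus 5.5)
Your proposal is correct. Its two essential ingredients are the paper's. The first is the fact, which you note in Step~1, that $\vth_w(c)\in\{(\pm1)^k\chi'(g_w)\}$ for every admissible next letter $c$. The second is padding by cuspidal cycles $P_{s_1,s_2}$ with $\chi'(g_P)=1$ and $\ell_K(P)=1$, with every junction admissible and counted by $\ell_K$. The paper takes $P_j=P_{a_j,\widehat{a_j+1}}$. Each junction then excludes at most $3$ letters, and the last cycle excludes $6$, which is where $|\Sigma|\geq 8$ enters. So your tentative ``first attempt'' in the last paragraph is exactly the paper's padding, and it works. The worry about single padding letters $f$ is moot.

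You differ from the paper in Step~2. The paper never uses the cocycle relation. It picks $g$ with $\chi'(g_{w_1}g)\in\{(\pm1)^k\vth_0\}$ directly. It reads off the multiplier only once, at the end, from $\vth_{w_1u_1}(b)\in\{(\pm1)^k\chi'(g_{w_1u_1})\}$ and the multiplicativity of $\chi'$.

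To make $w_1T$ admissible without connector letters, the paper uses that $\chi'$ takes abelian values. It cancels all pairs $d_i,\hat{d_j}$ in a word for $g$ and reorders the remaining letters so that the first is not $\hat{b_0}$, where $b_0$ is the last letter of $w_1$. When every letter equals $\hat{b_0}$, it uses the special word $e\hat{b_0}^{\,q}\hat e$ instead. At the other end no connector is needed, because the last padding cycle is chosen compatible with $b$. This is why $\ell_0'=\ell_K(w_1T)+2$, which forces at least one cycle.

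Your connectors $e\in E(a,v_1)$ and $e'\in E(v_{\mathrm{last}},b)$ make the exact correction factors depend on the end letters of $v$. You then break that dependence by anchoring the first and last letters of $v$ with neutral cycles, choosing the direction of each anchor cycle to keep admissibility. This is valid. It avoids the reordering trick and the special case, at the cost of extra bookkeeping, so the paper's version is shorter.
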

\begin{proof}
  By the definition of $\Theta$ there exists~$g\in G$ such that~$\chi'(g_{w_1}g) \in\{(\pm 1)^k \vth_0 \}$. Since~$G$ is generated by~$\{g_d, d\in \Sigma\}$ (see Section~\ref{sec:operator-cc}), we may write~$g = g_{d_1} \dotsb g_{d_q}$ for some non-negative integer~$q$ and~$d_j\in \Sigma$. We may assume~$d_i \neq \hat{d_j}$ for all~$i\neq j$, since such pairs can be removed without altering~$\chi'(g)$. Then, denoting by $b_0$ the last letter of $w_1$, we define $T\in\Sigma^*$ as follows:
  \begin{itemize}
    \item If~$q=0$ or if there exists~$j$ with~$d_j \neq \hat{b_0}$, in which case by reordering we may assume~$j=1$, we let~$T = d_1 \dotsb d_q$.
    \item Otherwise we have~$q\geq 1$ and~$g = g_{\hat{b_0}^q}$. We then pick~$e\in \Sigma \setminus\{b_0, \hat{b_0}\}$ and let~$T = e \hat{b_0}^q \hat e$.
  \end{itemize}
  Note that~$T$ is the empty word if~$q=0$.
  In any case, the word~$w_1 T$ is admissible and~$\chi'(g_{w_1} g_T) \in\{(\pm 1)^k \vth_0 \}$. We let~$\ell_0' := \ell_K(w_1 T)+2$, and consider~$\ell'\geq \ell_0'$ arbitrary.
  We let~$r := \ell' - \ell_K(w_1 T)-1 \geq 1$ and we pick~$a_1, \dotsc, a_{r}\in\Sigma$ subject to the following constraints:
  \begin{itemize}
    \item $a_1$ is neither equal nor adjacent to~$\hat c$, where~$c$ is the last letter of~$w_1T$,
    \item for all~$j\in \{2, \dotsc, r\}$, $a_j$ is neither equal nor adjacent to~$a_{j-1}+1$,
    \item $b$ is neither equal nor adjacent to~$a_r+1$.
  \end{itemize}
  The condition on~$a_j$ excludes~$6$ letters for~$j=r$ and~$3$ letters for~$j<r$, so that~$\abs{\Sigma}\geq 8$ ensures the existence of~$a_1, \dotsc, a_r$.
  For each~$j$, we consider the cuspidal cycles~$P_j := P_{a_j, \hat{a_j+1}}$.
  We let
  $$ u_1 = T P_1 \dotsb P_{r}\in\Sigma^*. $$
  By construction, we have~$\chi'(g_{w_1u_1}) \in\{(\pm 1)^k \vth_0 \}$, and therefore
  $$ \vth_{w_1u_1}(b) \in \{(\pm 1)^k \vth_0\}. $$
  Moreover, we have
  $$ \ell_K(w_1 u_1 b) = \ell_K(w_1 T P_{1} \dotsb P_{r} b) = \ell_K(w_1 T) + r+1 = \ell', $$
  as claimed.
\end{proof}

\begin{proof}[Proof of Proposition~\ref{prop:topological-irred}]
  Let~$w_1, w_3$ be two given admissible words. We apply Lemma~\ref{lem:top-irred-sign} for~$w_3$, and let~$b$ and~$\vth_1$ be the quantities whose existence is asserted. We then apply Lemma~\ref{lem:top-irred-U} for the triple~$(w_1, b, \vth_0\vth_1^{-1})$. Note that~$\ell_0'$ depends on~$w_1$, $b$ and~$\vth_0\vth_1^{-1}$. Since~$\Sigma$ and~$\Theta$ are finite, we can pick~$\ell_0 = \ell_0(w_1, w_3)$ large enough so that~$\ell_0 \geq \ell_0' + \ell_K(w_3)+3$. Then, Lemma~\ref{lem:top-irred-U} applied to $\ell'=\ell - \ell_K(w_3) -3\geq \ell_0'$ yields the existence of a word $u_1\in\Sigma^*$ with~$w_1u_1b$ admissible and a sign~$\eta\in \{(\pm 1)^k\}$ such that
  $$ \vth_{w_1u_1}(b) =\eta \vth_0\vth_1^{-1}, \qquad \ell_K(w_1u_1b) = \ell - \ell_K(w_3) -3.$$
  Since $b$ and~$\vth_1$ are as given in Lemma~\ref{lem:top-irred-sign}, by the conclusion of the lemma we may find~$u_2\in\Sigma^*$ for which~$bu_2w_3$ is admissible and
  $$ \vth_{bu_2}(w_3) = \eta^{-1} \vth_1, \qquad \ell_K(bu_2w_3) = \ell_K(w_3) + 4. $$
  We let~$w_2 = u_1 b u_2$. The word~$w_1w_2w_3$ is admissible since~$w_1u_1b$ and~$bu_2w_3$ both are. We readily get by~\eqref{eq:pere-vth}
  $$ \vth_{w_1w_2}(w_3) = \vth_{w_1u_1}(b) \vth_{bu_2}(w_3) = \vth_0, $$
  and check that
  $$ \ell_K(w_1w_2w_3) = \ell_K(w_1u_1b) + \ell_K(bu_2w_3) - 1 = \ell, $$
  as required.
\end{proof}

\begin{remark}
  The condition~$\abs{\Sigma} \geq 8$ in Proposition~\ref{prop:topological-irred} cannot be completely removed, as the example of the group~$\Gamma = \Gamma_1(4) = \{\gamma \equiv \smat{1&\ast\\0&1} \pmod{4}\}$ shows: using the disk model as in Section~\ref{sec:disk-model} we may choose the following fundamental domain for $[\Gamma]_\D \backslash \D$:
  \begin{center}
    \begin{tikzpicture}[scale=0.65]
      \draw (0, 0) circle (3);
      \interval{3}{0}{126.87} node at (3, 2) {$[a]$};
      \interval{3}{233.13}{360} node at (3, -3) {$[\hat a]$};
      \interval{3}{126.87}{180} node at (-3.3, 1.5) {$[b]$};
      \interval{3}{180}{233.13} node at (-3.3, -1.5) {$[\hat b]$};
      \draw (3, 0) node[right] {$1$};
      \draw (-1.2, 2.45) node[above left ]{$p_\D(-\frac12)$};
      \draw (-1.2, -2.45) node[below left] {$p_\D(\frac12)$};
      \draw (-3, 0) node[left] {$-1$};
    \end{tikzpicture}
  \end{center}
  It is easily seen that for any non-empty word~$w = a_1 \dotsb a_n\in \Sigma^*$, $\ell_K(w)$ is the number of blocks in an alternating decomposition of $w$ into subwords from~$\{a, \hat b\}^*$ and~$\{\hat a, b\}^*$.
  This implies, for instance, that any word~$u\in \Sigma^*$ with~$aua$ admissible must have~$\ell_K(aua)$ odd, so the conclusion of Proposition~\ref{prop:topological-irred} does not hold for $w_1=w_3=a$ and $\ell$ even. Numerically, it seems that there is no such objection when~$\abs{\Sigma} = 6$, which leaves open the question as to whether~$\abs{\Sigma}\geq 6$ is a sufficient condition in Proposition~\ref{prop:topological-irred}.
\end{remark}

\begin{remark}\label{rmk:ellc-mixing-gamma04}
  Consider the group~$\Gamma = \Gamma_1(4)$ equipped with the multiplier system given by~$u_\gamma(x) = \sign(cx + d)$.
  We have observed numerically that the conclusion of Proposition~\ref{prop:topological-irred} holds for the first few values of~$\ell$, provided that we replace~$\cF_K$ by~$\cF_c$, the cuspidal acceleration defined in~\cite[Section~\refmain{sec:cusp-acceleration}]{BettinEtAl} (for which~$\CC$ in~\eqref{eq:def-CC} is the transfer operator). The analogue of~$\ell_K$ is the number of cuspidal flips~$\ell_c$, defined in~\cite[eq.~\eqrefmain{eq:def-ellc}]{BettinEtAl}. This provides evidence that the cuspidal acceleration~$\cF_c$ is topologically mixing in this case.
\end{remark}

\section{Theta series}\label{sec:theta}
In this section we explain the changes needed to the above argument to prove~Theorem~\ref{thethe}.

Let $\chi$ be a primitive Dirichlet character modulo $N$ and let $\nu\in\{0,1\}$ be such that $\chi(-1)=(-1)^\nu$. Let $\theta_\chi$ be defined by
\begin{align}
  \theta_\chi (z)=y^{(1+2\nu)/4}\sum_{n\in\Z} n^\nu \chi(n)\e(n^2z),\qquad z\in\H.
\end{align}
For each $\gamma=(\smat{ a & b \\ c&d })\in\Gamma_0(4N^2)$ and $z\in\H$ we
let
\begin{align}
  u_\gamma(z)=\eps_d^{-1}\Big(\frac cd\Big)\chi(\gamma)j_\gamma(z)^{1/2+\nu},
\end{align}
where $\chi(\gamma):=\chi(d)$, $(\frac cd)$ is the Kronecker symbol,
$$\eps_d=\Big(\frac{-1}{d}\Big)^{1/2}=\begin{cases}
  1 &d\equiv 1\mod 4,\\
  i & d\equiv -1\mod 4,
\end{cases}
$$
and where the square root takes its principal value, i.e. its argument is in $(-\pi/2,\pi/2]$.
Then, by~\cite[Proposition 2.2]{Shimura1973} we have that $\theta_\chi$ is a Maa\ss{} form of weight $k=\frac12+\nu$ for $\Gamma_0(4N^2)$, that is
\begin{align}
  \theta_\chi(\gamma z)=u_\gamma (z) \theta_\chi(z)\qquad\forall \gamma\in\Gamma_0(4N^2).
\end{align}
Also, by~\cite[p.~36]{SerreStark1977} and~\cite[(10.54) and Corollary~10.7]{Iwaniec1997} we have that if $\chi$ is not totally even, then $\theta_\chi$ is cuspidal, whereas if $\chi$ is totally even, then the constant term in the Fourier expansion of $\theta_\chi$ at a cusp $\cb$ is of the shape $\theta_{\chi,\cb}(y)=A_{\cb}y^{1/4}$
(and in particular $B_\cb=0$ at all cusps). Thus, we set
\begin{align*}
  \rho_{\theta_{\chi}}:=\rho_\chi=\begin{cases}1/4 & \text{$\chi$ totally even},\\
    0 & \text{otherwise,}
  \end{cases}
\end{align*}
as well as $s_\phi=\frac{1 + 2\nu}4$. We warn that for~$\nu=0$ this choice deviates from the choice of~$s_\phi$ made in~\ref{hyp:laplacian}.

We also need to extend the definition of $u_\gamma$ to real arguments. Given $x\in\R\setminus\{\gamma^{-1}\infty\}$ and $\gamma\notin\Gamma_\infty$, we let
\begin{equation}
  \label{eq:theta-multiplier}
  \begin{aligned}
    u_\gamma(x):=\eps_d^{-1}\Big(\frac cd\Big)\chi(\gamma) \sign(c)^{-1/2-\nu}\Big(\frac{x-\gamma^{-1}\infty}{|x-\gamma^{-1}\infty|}\Big)^{1/2+\nu}=\lim_{y\to 0^+}u_\gamma(x+iy).
  \end{aligned}
\end{equation}
Notice that if $c<0$ and $x>\gamma^{-1}\infty$ this differs by a minus sign from what one obtains by simply setting $y=0$, but ensures that the cocycle condition still holds at real arguments.

\medskip

The bounds~\eqref{eq:decay-tail-phi}-\eqref{eq:decay-tail-phi-diff} and~\cite[eq.~(2.11)]{DrappeauNordentoft2022}\footnote{As usual $y^{-1/2-\eps}$ is to be replaced by $y^{-\rho_{\theta_\chi}-\eps}$.} hold also for $\phi=\theta_\chi$, as the same proofs still work in this case. In particular, we can replicate the arguments in Section~\ref{sec:estim-period-fun}\footnote{We took care throughout Section~\ref{sec:estim-period-fun} to choose the sign in the exponent in expressions such as $(-1)^{\pm k}$ in such a way that the claimed estimates hold true for~$k\in\frac12 \Z$ with the choice~$(-1)^k = \e^{\pi k i}$. Also,~$\chi(\gamma)$ in Section~\ref{sec:estim-period-fun} should be replaced by~${\tilde \chi}(\gamma) = \eps_d^{-1}\big(\frac c d\big) \chi(\gamma)$.} obtaining the analogues of Lemmas~\ref{lem:properties-cE},~\ref{lem:hE-at-half},~\ref{lem:Ex-Eab},~\ref{lem:Eab-Ex} and~\ref{lem:hE-gamman-holder} for $\theta_\chi$ with the only changes that
\begin{itemize}
  \item $\mathcal D_{\theta_\chi}:=\C\setminus\{1/4,3/4\}$ for $\chi$ totally even and otherwise $\mathcal D_{\theta_\chi}=\C$.
  \item The last (harmless) error term in~\eqref{eq:bound-cR} is to be multiplied by $|cj(\upsilon,\alpha)|^{\rho_\phi}$.
\end{itemize}
As a consequence, Lemma~\ref{lem:hE-at-half} and Propositions~\ref{prop:hE-upperbound-alls} and~\ref{prop:hE-gamman-estim} hold with the caveat that $\hE_\gamma$ has a possible pole at $s=\frac12-s_\phi=\frac14$ if $\chi$ is totally even (in particular, in this case we need to add the condition $s-1/4\gg1$ in the bounds~\eqref{eq:bound-hEgamma} and~\eqref{eq:hE-rough})
and that the asymptotics~\eqref{eq:estim-hEgamma-half} and~\eqref{eq:hE-half} need to be slightly adapted in a straightforward way. In any case the bounds~\eqref{eq:bound-hEgamma} and~\eqref{eq:hE-rough} suffice for our purpose.
The proof is again essentially identical with very minor changes. We signal in particular that
\begin{itemize}
  \item $\mathcal M_\beta(\phi,s)$ can still be absorbed in the bounds~\eqref{eq:bound-hEgamma} and~\eqref{eq:hE-rough}, since $B_{\cb}=0$ at all cusps;
  \item in the notation of Proposition~\ref{prop:hE-gamman-estim}, we have
  \begin{align*}
    [\phi^{\sigma}]^{\sigma^{-1}\gamma}(z)&=\Big(\lim_{t\to0^+}j_{\sigma^{-1}\gamma}(x+it)^{-k}\cdot \lim_{t\to0^+}j_{\sigma}(\sigma^{-1}\gamma (x+it))^{-k}\Big)u_\gamma(x) \phi(z)\\
    &=\sign(c_{\sigma^{-1}\gamma})^{k}\sign(x-\omega)^{-k}\sign(c_{\sigma})^{k}\sign(\sigma^{-1}\gamma x- \sigma^{-1}\infty)^{-k}u_\gamma(x) \phi(z)
  \end{align*}
and with this value the term~$\cE_x(\phi,s)$ in~\eqref{eq:expr-hE-final} arises again with the factor~$u_\gamma(x)$.
\end{itemize}

Finally,
the identity~\eqref{EtoL} is replaced by the classical Mellin formula
\begin{align}
  \cE_x(\theta_\chi, s)=(4\pi)^{k/2} (2\pi)^{-s+(1-2\nu)/4}\Gamma(s-(1-2\nu)/4)L(\theta_\chi,s,x),\qquad \Re(s)>3/4
\end{align}
where $L(\theta_\chi,s,x)=2(4\pi)^{-k/2} L_\chi(2s-1/2,x)$, with $L_\chi$ defined in~\eqref{lcdf}.

The rest of the argument then follows essentially identically, with items (1), (2) and (4) of Lemma~\ref{lem:estimates-vphig} holding unchanged, whereas~\eqref{eq:vphi-eisenstein-parab-estim} could either be slightly adapted or more simply replaced by the bound
$$ \vphi_h(x) \ll n^{1/4} +\abs{x-1}^{-1/4} $$
which ensures that $L(\theta_\chi,1/2,x)=2(4\pi)^{-k/2}L_\chi(1/2,x)$ has moments of any order less than $4$.

In order to obtain Theorem~\ref{thethe}, it is then sufficient to verify that also the multiplier system corresponding to $u_\gamma$ is irreducible with respect to the induced map $\mathcal F_K$, and to show that $\vmu=0$ and $\vSigma = \frac{\varphi (N)}{2N} \Id$.

Regarding the irreducibility, we use the smaller group $\Gamma(4N^2)$. % to ensure~$R$ has at least $8$ sides if $N=1$.
We then have $\Theta=\{\pm 1, \pm i\}$ and the arguments of Section~\ref{sec:top-irred} are easily adapted (we warn that~$\gamma \mapsto (\frac cd)$ is no longer multiplicative in this situation).

As for the mean and variance, one option would be to replicate the arguments of Section~\ref{sec:comp-moments}. However, in this case, we can also apply the approach of~\cite[p.~1416]{BettinDrappeau2022} and~\cite[Section~7.2]{DrappeauNordentoft2022}, using standard analytic methods (the approximate functional equation for the Hurwitz zeta function), which gives $\E_Q[L_\chi(1/2,x)]=o(\sqrt{\log Q})$, $\E_Q[L_\chi(1/2,x)^2]=o(\log Q)$ and $\E_Q[|L_\chi(1/2,x)|^2]\sim \frac{\varphi(N)}{N}\log Q$. In conjunction with the convergence of the third moment\footnote{In~\cite{BettinDrappeau2022} the fourth moment (9.6) is used. This is not available in this case when $\chi$ is totally even, but the third absolute moment can easily be bounded, and this suffices here.}, this yields the claim.

\medskip
\begin{remark}\label{tfeo}
  If $f:\Z\to\C$ is non-zero, $N$-periodic and odd or even, then Theorem~\ref{thethe} holds also for $L_f(s, x) := \sum_{n\geq 1}f(n) \frac{\e(n^2x)}{n^s}$ with variance $\sigma_f^2 := \frac{1}{2N} \sum_{n \pmod N} |f(n)|^2$. To see this, we express $f$ in terms of primitive characters of conductor dividing $N$ and deduce that for $z\in\H$
  \begin{equation}
    \label{eq:def-thetaf}
    \begin{aligned}
      \theta_{f} (z)&:=y^{(1+2\nu_{f})/4}\sum_{n\in\Z} n^{\nu_f} f(n)\e(n^2z)
      =\sum_{d|N}\sumstar_{\substack{\chi \bmod{d} \\ \chi(-1)=(-1)^{\nu_f}}}\sum_{\ell | N/d}\alpha_{\chi,\ell}\theta_\chi(\ell^2 z),
    \end{aligned}
  \end{equation}
  for some $\alpha_{\chi,\ell}\in\C$, where $\sum^*$ indicates the sum is restricted to primitive characters, and where $\nu_f=0$ if $f$ is even, and $\nu_f=1$ if $f$ is odd.
  Since $d\ell |N$, the old forms $\theta_\chi(\ell^2 \,\cdot)$ are also Maa\ss{} forms of weight $k=\frac12+\nu_{f}$ for $\Gamma_0(4N^2)$. Although the multiplier system does depend on $\chi$, this is bypassed by passing to the smaller subgroup $\Gamma_1(4N^2)$, where the characters become trivial. Consequently, $\theta_{f} (z)$ is a Maa\ss{} form of weight $k$ for $\Gamma_1(4N^2)$. By linearity we obtain the required properties and bounds for the period function associated to $\theta_{f}$. Thus, the central limit theorem follows as above from the moment estimates
  $ \E_Q[L_{f}(1/2,x)]=o(\sqrt{\log Q}),$ $\E_Q[L_{f}(1/2,x)^2]=o(\log Q)$ and $\E_Q[|L_{f}(1/2,x)|^2]\sim 2\sigma_{f}^2\log Q. $
\end{remark}

\begin{remark}\label{rmk:theta-general-f}
  The parity assumption in the previous remark is superficial, and could be removed by defining a variant of the operator $\LL$, keeping track of the actions of the multiplier systems~$u_\gamma^{(k)}(x)$ defined in~\eqref{eq:theta-multiplier} of both weights~$k=1/2$ and~$k=3/2$. While proofreading a preliminary version of this paper, an LLM observed that the two multipliers are actually conjugate of each other, and this leads to a simplification, since with the notation~\eqref{eq:def-thetaf}, the values~$L_{f_1}(1/2, x) + \bar{L_{f_2}(1/2, x)}$ satisfy the period relation~\eqref{eq:def-periodfun-informal} with the single multiplier~$u_\gamma^{(1/2)}(x)$ for every~$N$-periodic functions~$f_i:\Z\to\C$ with~$f_1$ even and~$f_2$ odd.  Then the CLT in Theorem~\ref{thethe} holds for these values.
  Now given~$z\in\C$, a non-zero $N$-periodic function~$f:\Z\to\C$ and writing~$f_+, f_-$ its even and odd parts, decompose
  $$ \Re(z L_f(1/2, x)) = \Re(z L_{f_+}(1/2, x) + \bar{z} \bar{L_{f_-}(1/2, x)}). $$
  The (real-valued) CLT for the right-hand side is a consequence of the above discussion with~$f_1 = z f_+$ and~$f_2 = z f_-$. One then concludes with the Cramér-Wold device~\cite{CramerWold1936}.
  We refer to~\cite[Section~11.5]{BruggemanEtAl2018} and~\cite[Section~3.3]{DrappeauNordentoft2022} for earlier occurrences of using conjugation to alter the weight, and~\cite[Section~5.3]{Marklof2007} for an alternative way of handling even and odd parts by altering the group instead.
\end{remark}

\bibliographystyle{alpha}

\bibliography{LLcfrac-ext}

\end{document}